\theoremstyle{plain}
\newcommand{\bGS}{\bG_S}
\theoremstyle{plain}
\newtheorem{Thm}[equation]{Theorem}
\newtheorem{Ex}[equation]{Example}
\newtheorem{Cor}[equation]{Corollary}
\newtheorem{Prop}[equation]{Proposition}
\newtheorem{Lem}[equation]{Lemma}
\newtheorem{Def}[equation]{Definition}
\newtheorem{Rem}[equation]{Remark}
\numberwithin{equation}{section}
\newcommand{\mc}[1]{{}}
\newcommand{\q}{\mathbb{Q}}
\newcommand{\e}{\epsilon}
\newcommand{\z}{\mathbb{Z}}
\renewcommand{\q}{\mathbb{Q}}
\newcommand{\n}{\mathbb{N}}
\renewcommand{\c}{\mathbb{C}}
\newcommand{\br}{\mathbb{R}}
\newcommand{\qp}{\mathbb{Q}_p}
\newcommand{\zp}{\mathbb{Z}_p}
\newcommand{\A}{\mathbb{A}}
\newcommand{\ba}{\backslash}
\newcommand{\G}{\Gamma}
\newcommand{\bP}{\mathbf P}
\newcommand{\bU}{\mathbf U}
\newcommand{\bQ}{\mathbf Q}
\newcommand{\bH}{\mathbf H}
\newcommand{\bN}{\mathbf N}
\newcommand{\He}{\operatorname{H}}
\newcommand{\Cal}{\mathcal}
\newcommand{\bG}{\mathbf G}
\newcommand{\bM}{\mathbf M}
\newcommand{\bX}{\mathbf X}
\newcommand{\bY}{\mathbf Y}
\newcommand{\SL}{\operatorname{SL}}\newcommand{\GL}{\operatorname{GL}}
\newcommand{\PGL}{\operatorname{PGL}}
\newcommand{\SO}{\operatorname{SO}}
\newcommand{\Sp}{\operatorname{Sp}}
\newcommand{\MT}{\operatorname{MT}}
\newcommand{\Ad}{\operatorname{Ad}}
\newcommand{\vol}{\operatorname{vol}}
\newcommand{\s}{\sigma}
\renewcommand{\H}{\operatorname H}
\newcommand{\bi}{\begin{itemize}}
\newcommand{\ei}{\end{itemize}}
\newcommand{\be}{\begin{enumerate}}
\newcommand{\ee}{\end{enumerate}}
\newcommand{\op}{\operatorname}
\newcommand{\vs}{\vskip 5pt}
\newcommand{\bga}{\bG(\A)}
\newcommand{\bL}{\mathbf L}
\renewcommand{\s}{\operatorname{s}}
\newcommand{\kk}{{K}}
\newcommand{\kv}{{\kk_v}}
\newcommand{\kbar}{{\overline{\kk}}}
\newcommand{\kinf}{{\kk_\infty}}
\newcommand{\hh}{{\mathfrak{h}}}
\newcommand{\geff}{{\mathfrak{g}}}
\newcommand{\Gbar}{{\overline{G}}}
\newcommand{\Hbar}{{\overline{H}}}
\newcommand{\id}{{1}}
\renewcommand{\implies}{$\Rightarrow$}
\newcommand{\tors}{{_{\textup{tors}}}}
\renewcommand{\sc}{^{\textup{sc}}}
\newcommand{\xx}{{\textsf{\upshape X}_*}}
\newcommand{\ad}{^{\textup{ad}}}
\newcommand{\Spin}{\textup{Spin}}
\newcommand{\PSL}{\textup{PSL}}
\newcommand{\PSp}{\textup{PSp}}
\newcommand{\PSO}{\textup{PSO}}
\newcommand{\PSU}{{\rm PSU}}
\newcommand{\Lie}{\textup{Lie}}
\newcommand{\pp}{\mathfrak{p}}
\newcommand{\im}{\textup{im}}
\renewcommand{\AA}{{\mathbb{A}}}
\newcommand{\QQ}{{\mathbb{Q}}}
\newcommand{\ZZ}{{\mathbb{Z}}}
\newcommand{\CC}{{\mathbb{C}}}
\newcommand{\RR}{{\mathbb{R}}}
\newcommand{\Zz}{{\ZZ}}
\newcommand{\into}{\hookrightarrow}
\newcommand{\Hom}{\textup{Hom}}
\newcommand{\coker}{\textup{coker}}
\newcommand{\Gal}{\textup{Gal}}
\newcommand{\Aut}{\textup{Aut}}
\newcommand{\ab}{{\textup{ab}}}
\newcommand{\loc}{{\textup{loc}}}
\newcommand{\isoto}{\overset{\sim}{\to}}
\newcommand{\sV}{{R}}
\newcommand{\Vinf}{{\sV_\infty}}
\newcommand{\labelto}[1]{\xrightarrow{\makebox[1.5em]{\scriptsize ${#1}$}}}
\renewcommand{\gg}{{\mathfrak{g}}}
\newcommand{\gv}{{\gg_v}}
\DeclareMathOperator*{\fibreprod}{\times}
\DeclareTextFontCommand{\textcyr}{\fontencoding{OT2}\fontfamily{wncyr}\fontseries{m}\fontshape{n}\selectfont}
\newcommand{\Sha}{\textcyr{Sh}}
\def\noi{\par\noindent}
\newcommand{\GG}{{\mathbf{G}}}
\newcommand{\kabf}{{k^f_\ab}}
\theoremstyle{plain}
\newtheorem{theorem}[subsubsection]{Theorem}
\newtheorem{lemma}[subsubsection]{Lemma}
\newtheorem{corollary}[subsubsection]{Corollary}
\theoremstyle{definition}
\newtheorem*{definition*}{Definition}
\newtheorem*{example*}{Example}
\newtheorem{subsec}[subsubsection]{}
\begin{document}

\title[Rational points and Adelic periods]{Rational points on homogeneous varieties and
equidistribution of Adelic periods }
\author{Alex Gorodnik and Hee Oh \\ (with appendix by Mikhail Borovoi)}

\address{Gorodnik: School of Mathematics, University of Bristol, Bristol BS8 1TW, U.K.}
\email{a.gorodnik@bristol.ac.uk}

\address{Oh: Mathematics department, Brown university, Providence, RI
and Korea Institute for Advanced Study, Seoul, Korea}\email{heeoh@math.brown.edu}

\address{Borovoi: Raymond and Beverly Sackler School of Mathematical Sciences, Tel Aviv University, 69978 Tel Aviv, Israel}
\email{borovoi@post.tau.ac.il}

\begin{abstract}Let $\bU:=\bL\ba \bG$ be a homogeneous variety defined over a number
field $K$, where $\bG$ is a connected semisimple $K$-group and
$\bL$ is a connected maximal semisimple $K$-subgroup of $\bG$ with finite index in its normalizer.
 Assuming that $\bG(K_v)$ acts
transitively on $\bU(K_v)$ for almost all places $v$ of $K$, we
obtain the asymptotics as $T\to\infty$ of the number of rational
points in $\bU(K)$ with height bounded by $T$,
and settle new cases of Manin's conjecture for 
many wonderful varieties. The main ingredient of our approach is the
equidistribution of semisimple adelic periods, which is established using the theory of unipotent flows.
\end{abstract}

\thanks{A.G. is  partially supported by NSF grant 0654413 and  RCUK Fellowship,
Oh is  partially supported by NSF grant 0629322,
M.B. is partially supported by the Hermann  Minkowski Center for Geometry
and by the ISF grant 807/07}

\maketitle
\tableofcontents

\section{Introduction}
Let $K$ be a number field and $\bX$ a projective variety defined over $K$.
Understanding the set $\bX(K)$ of $K$-rational points in $\bX$ is a fundamental
problem in arithmetic geometry.
In this paper we study the asymptotic number (as $T\to \infty$) of the
points in $\bX(K)$ of height less than $T$ for
compactifications of affine homogeneous varieties $\bU=\bL\ba \bG$
of a connected semisimple algebraic $K$-group $\bG$ when $\bL$ is
a semisimple maximal connected $K$-subgroup.
Our results
solve new cases of Manin's conjecture \cite{BM} on rational points of Fano varieties.

Manin's conjecture has been proved for
equivariant compactifications of homogeneous spaces:
flag varieties (\cite{FMT}, \cite{Pe1}), toric varieties (\cite{BT1}, \cite{BT2}), horospherical varieties \cite{ST},
equivariant compactifications of unipotent groups
 (see \cite{CT2}, \cite{ST}, \cite{ST1}), and for the wonderful  compactification of
a semisimple adjoint group defined over a number field
(\cite{STT2}, \cite{GMO}). We refer to survey papers
 by Tschinkel (\cite{T1}, \cite{T2}) for a more precise background on this conjecture.

\subsection{Counting rational points of bounded height}
We begin by recalling the notion of a height function
on  the $K$-rational points $\mathbb P^d(K)$
 of the projective $d$-space $\mathbb P^d$.
Denote by
 $R$ the set of all normalized
absolute values $x\mapsto |x|_v$ of $K$, and
by $K_v$ the completion of $K$ with respect to $|\cdot|_v$.

For each $v\in R$, choose a norm $\He_v$ on $K_v^{d+1}$
which is simply the max norm $\He_v(x_0,
\cdots, x_d)=\max_{i=0}^d|x_i|_v$
for almost all $v$.
Then
the height function $\He : \mathbb P^d(K)\to \mathbb R_{>0}$ associated to
$\mathcal O_{\mathbb P^d}(1)$ is given by
 $$\He (x):=\prod_{v\in R}\He_v(x_0, \cdots, x_d)$$
for $x=(x_0:\cdots : x_d)\in \mathbb P^d(K)$.
Since $\He_v(x_0, \cdots, x_d)=1$ for almost all $v\in R$,
we have $0<\He (x)<\infty$ and by the product formula,
$\He$ is well defined, i.e., independent of the choice of
representative for $x$.


For instance, for $K=\q$,
if we choose $\H_p$ to be the maximum norm of
$\qp^{d+1}$ for each prime $p$ and
set $\H_\infty(x_0, \cdots, x_d)=\left(x_0^2+\cdots+x_d^2\right)^{1/2}$
for $x_i\in \br$,
we have
$$\H (x_0: \cdots :x_d)=\left(x_0^2+\cdots+x_d^2\right)^{1/2}
$$
where $x_0,\ldots,x_d\in\mathbb{Z}$ and $\gcd(x_0,\ldots,x_d)=1$.

Let $\bG$ be a linear algebraic group defined over $K$,
with a given $K$-representation
 $\iota: \bG \to \GL_{d+1}$.
Then $\bG$ acts on $\mathbb P^d$ via the canonical map $\GL_{d+1}\to \PGL_{d+1}$.
 Consider $\bU:=u_0\bG\subset \mathbb P^d$
for $u_0\in \mathbb P^d (K)$.
Fixing a height function $\H$ on
$\mathbb P^d(K)$,
we study the asymptotic
of the following number (as $T\to \infty$):
$$
N_T(\bU):=\#\{x\in \bU(K):\, \He(x)<T\}.
$$

Our main results are proved under the following assumption:
\begin{enumerate}
\item
[(i)] $\bG$ is a connected semisimple $K$-group.
\item[(ii)] $\bL=\hbox{Stab}_{\bG}(u_0)$ is a semisimple maximal connected
 $K$-subgroup of $\bG$.
\item[(iii)] For almost all $v\in R$, $\bG(K_v)$ acts
transitively on $\bU(K_v)$.
\end{enumerate}

If $\bL$ is the fixed points of an involution of $\bG$,
$\bU$ is called a symmetric space. A symmetric space $\bU=\bL\ba \bG$ satisfies
(ii) if $\bL$ is connected and semisimple, since $\bL$ is then a maximal
connected $K$-subgroup \cite{Bo}.

Borovoi gave a classification of  symmetric spaces $\bU=\bL\ba \bG$
satisfying (i)-(iii)
with   $\bG$  absolutely almost simple
(see  Appendix \ref{s:app}).

When both $\bG$ and $\bL$ are connected,
the property (iii) is equivalent to the finiteness of the set of $\bG(K)$-orbits
in $\bU(K)$
(Theorem \ref{thm:main-f-m-orbits}).
We remark that (iii) always holds for $\bL$ simply connected,
by Corollary \ref{cor:sc-finite}.
Note that there is a lot of examples of nonsymmetric  homogeneous
spaces $\bU=\bL\ba \bG$ satisfying (i)-(iii), see
\ref{subsec:non-spherical}.


Denote by $\bX\subset \mathbb P^d$  the Zariski closure of $\bU$.
Then $\bX$ is a $\bG$-equivariant compactification of $\bU$, and the
pull back $L$ to $\bX$ of the line bundle $\mathcal O_{\mathbb P^d}
(1)$ is a $\bG$-linearized very ample line bundle of $\bX$ defined
over $K$.
\begin{Thm}\label{maint}
Assume that there is a global section $s$ of $L$ such that
$\bU=\{s\ne 0\}$.
Then
there exist $a\in \q_{>0}$ and $b\in \n$ such that
 \footnote{$A(T)\asymp B(T)$ means
that for some $c>1$, $c^{-1}\, B(T)\le A(T)\le c\, B(T)$ holds for all sufficiently large $T>0$}
 $$
N_T(\bU) \asymp T^{a}(\log T)^{b-1}.
$$
Moreover, if $\bG$ is simply connected,
there exists $c>0$ such that
 $$
N_T(\bU)\sim c\cdot T^{a}(\log T)^{b-1}.
$$\end{Thm}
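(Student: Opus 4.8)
The plan is to rewrite $N_T(\bU)$ as the value at the identity coset of an automorphic counting function on $\bG(K)\ba\bG(\A)$, to evaluate that function via equidistribution of the adelic periods of the semisimple group $\bL$, and to read off the exponents from a separate volume estimate built on the hypothesis $\bU=\{s\ne 0\}$. First, by (iii) and Theorem~\ref{thm:main-f-m-orbits}, $\bU(K)$ is a finite union of $\bG(K)$-orbits, so it suffices to treat a single orbit $u_0'\bG(K)$; writing $\bL':=\mathrm{Stab}_\bG(u_0')$, this is an algebraic $K$-subgroup, conjugate to $\bL$ over $\overline{K}$, hence again semisimple and maximal, and the orbit is identified with $\bL'(K)\ba\bG(K)$. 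Put $G=\bG(\A)$, $\Gamma=\bG(K)$, $H=\bL'(\A)$, $\Gamma_H=\bL'(K)$. Since $\bL'$ is semisimple it has no nontrivial $K$-character and fixes a chosen lift of $u_0'$ in $K^{d+1}$, so $\mathcal H(g):=\He(u_0'g)$ (with $u_0'g\in\P^d(\A)$ and $\He$ the product of local norms) is a well-defined left $H$-invariant function on $G$ — the product formula absorbs the scaling ambiguity — which descends to $H\ba G$ and for $\delta\in\Gamma$ returns the height $\He(u_0'\delta)$ of the corresponding rational point. Moreover $\Gamma H$ is closed in $\Gamma\ba G$ and $\vol(\Gamma_H\ba H)<\infty$. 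For $T>0$ set $B_T=\{Hg\in H\ba G:\mathcal H(Hg)<T\}$ and define on $\Gamma\ba G$
\[
F_T(\Gamma g)=\sum_{\delta\in\Gamma_H\ba\Gamma}\mathbf 1_{B_T}(H\delta g),
\]
a finite sum by Northcott's theorem, with $F_T(\Gamma)$ equal to the contribution of the chosen orbit to $N_T(\bU)$.

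The analytic core is the asymptotic evaluation of $F_T(\Gamma)$. For $\phi\in C_c(\Gamma\ba G)$, unfolding gives
\[
\langle F_T,\phi\rangle=\int_{H\ba G}\mathbf 1_{B_T}(Hg)\Big(\int_{\Gamma_H\ba H}\phi(\Gamma hg)\,dh\Big)\,d(Hg),
\]
the inner integral being the period of $\phi$ along the translated orbit $\Gamma Hg$. Because $\bL'$ is semisimple, $\Gamma Hg$ carries unipotent-invariant probability measures, so Ratner's measure-classification theorem (in the $S$-arithmetic/adelic form of Ratner and Margulis--Tomanov) applies; combined with Mozes--Shah (every weak limit of the orbit measures is homogeneous) and the non-divergence estimates of Dani--Margulis and their $S$-arithmetic analogues (no escape of mass), it shows that any limit of the measures $h\mapsto\Gamma hg$ as $Hg\to\infty$ is the homogeneous probability measure on a closed orbit $\Gamma\bM g$ with $\bL'\subseteq\bM\subseteq\bG$ and $\bM$ generated by unipotents. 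The maximality of $\bL$ (hypothesis (ii)), together with the finiteness of $\bL$ in its normalizer, forces $\bM=\bG$; hence the periods equidistribute, $\int_{\Gamma_H\ba H}\phi(\Gamma hg)\,dh\to\frac{\vol(\Gamma_H\ba H)}{\vol(\Gamma\ba G)}\int_{\Gamma\ba G}\phi$ as $Hg\to\infty$. As $\vol(B_T)\to\infty$ while $\vol(B_T\cap C)\le\vol(C)$ for every compact $C\subset H\ba G$, the mass of $B_T$ concentrates at infinity, and therefore $\langle F_T,\phi\rangle\sim\frac{\vol(\Gamma_H\ba H)}{\vol(\Gamma\ba G)}\,\vol(B_T)\int_{\Gamma\ba G}\phi$.

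To upgrade this to $F_T(\Gamma)$, convolve $F_T$ with an approximate identity $\psi_\varepsilon$: local Lipschitz control on $\mathcal H$ traps $F_T*\psi_\varepsilon$ between constant multiples of $F_{T(1-\varepsilon)}$ and $F_{T(1+\varepsilon)}$, and once $T\mapsto\vol(B_T)$ is known to be regularly varying the family $\{B_T\}$ is well rounded, so $F_T(\Gamma)\sim c_1\vol(B_T)$ for an explicit $c_1>0$. It remains to compute $\vol(B_T)$, and here the hypothesis $\bU=\{s\ne0\}$ for $s\in H^0(\bX,L)$ is decisive: the boundary $\bX\setminus\bU=\operatorname{div}(s)$ is an effective divisor in the class of $L$, say $\operatorname{div}(s)=\sum_i d_iD_i$ in terms of the $\bG$-stable boundary divisors $D_i$, while $-K_\bX=\sum_i\kappa_iD_i$. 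Reading the local norms $\He_v(u_0'g_v)$ off the spherical (Cartan/Iwasawa) coordinates on $\bL'(K_v)\ba\bG(K_v)$ turns $\vol(B_T)=\int_{H\ba G}\mathbf 1_{\{\mathcal H<T\}}\,d(Hg)$, up to bounded factors, into a lattice-point integral over a rational polyhedral cone whose facets correspond to the $D_i$; a Tauberian analysis — equivalently, locating the rightmost pole, of order $b$ at $s=a$, of $\mathcal Z(s)=\int_{H\ba G}\mathcal H(Hg)^{-s}\,d(Hg)$ — yields $\vol(B_T)\asymp T^{a}(\log T)^{b-1}$, with $a\in\q_{>0}$ the pseudoeffective threshold of $(\bX,L)$ and $b\in\n$ the codimension of the corresponding face of the effective cone, and in fact $\vol(B_T)\sim c_0T^{a}(\log T)^{b-1}$.

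Summing over the finitely many $\bG(K)$-orbits in $\bU(K)$ and over the finitely many components of $\Gamma\ba G$ (the classes produced by strong approximation) gives $N_T(\bU)\asymp T^{a}(\log T)^{b-1}$ in general; when $\bG$ is simply connected, strong approximation makes $\Gamma\ba G$ effectively connected and isolates a single relevant orbit, so all the estimates above become genuine asymptotics and combine to $N_T(\bU)\sim c\,T^{a}(\log T)^{b-1}$. I expect the main obstacle to be the equidistribution of the semisimple adelic periods \emph{uniformly} as $Hg\to\infty$, with no loss of mass — this is precisely where the full strength of $S$-arithmetic unipotent dynamics and of the maximality hypothesis must be brought to bear — while verifying the well-roundedness of $\{B_T\}$ and pinning down the exact volume asymptotics form the secondary, more geometric, difficulty.
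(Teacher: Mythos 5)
Your overall architecture is the right one — reduce to finitely many $\bG(K)$-orbits, unfold the counting function against a test function, invoke equidistribution of the translated $\bL'(\A)$-periods via unipotent dynamics, identify a well-rounded family of height balls, and compute the volume asymptotic via a Tauberian argument on the height zeta function. These are indeed the blocks the paper assembles. But there is a genuine gap in the equidistribution step, and it is precisely the point the paper singles out as the main new subtlety.

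You assert that, as $Hg\to\infty$ in $H\ba G$,
$$\int_{\Gamma_H\ba H}\phi(\Gamma hg)\,dh\ \longrightarrow\ \frac{\vol(\Gamma_H\ba H)}{\vol(\Gamma\ba G)}\int_{\Gamma\ba G}\phi ,$$
i.e.\ the translated $\bL'(\A)$-periods equidistribute toward the full invariant probability measure on $\bG(K)\ba\bG(\A)$. This is \emph{false} whenever $\bG$ is not simply connected: if $\chi$ is a nontrivial automorphic character of $\bG(\A)$ that is trivial on $\bL'(\A)$, then $\int_{\Gamma_H\ba H}\chi(hg)\,dh=\chi(g)\vol(\Gamma_H\ba H)\ne 0$, so the left-hand side cannot converge to $\frac{\vol(\Gamma_H\ba H)}{\vol(\Gamma\ba G)}\int\chi=0$. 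The Ratner/Mozes--Shah/Dani--Margulis machinery, together with maximality of $\bL$, does force the ambient $K$-group $\bM$ in the limit to equal $\bG$; but this only identifies the limit as the homogeneous measure on a coset of a finite-index normal subgroup $M_0\supseteq\bG(K)\pi(\tilde\bG(\A))$, \emph{not} as the full Haar measure on $\bG(K)\ba\bG(\A)$. Your final sentence about ``summing over the finitely many components of $\Gamma\ba G$ produced by strong approximation'' does not repair this: the period measures $\Gamma Hg$ simply do not charge those components, so there is nothing to sum, and the error would already have entered through the incorrect pointwise limit. The correct statement — proved in the paper as Corollary \ref{cmgeneral} — is equidistribution on $\bG(K)\ba G_W$ with $G_W=\bG(K)\pi(\tilde\bG(\A))W$, and only against $W$-invariant test functions; the counting must then be carried out in $u_iG_W\cap B_T$ rather than $u_i\bG(\A)\cap B_T$, and the volume and well-roundedness results have to be reproved for $G_W$-orbits (Proposition \ref{mwr} and Theorem \ref{mwrfinite}).

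A secondary consequence: once you work in $u_iG_W\cap B_T$, the precise asymptotic $\vol(B_T)\sim c_0T^a(\log T)^{b-1}$ is no longer immediate. The Chambert-Loir--Tschinkel Tauberian argument gives the clean asymptotic for the full $\bG(\A)$-orbit volume (Theorem \ref{t:volume}); for the finite-index subgroup one only gets $\vol(u_iG_W\cap B_T)\asymp T^a(\log T)^{b-1}$ (Theorem \ref{mwrfinite}(1)), which is why the general conclusion of Theorem \ref{maint} is $\asymp$, with $\sim$ reserved for the simply connected case, where $G_W=\bG(\A)$ and your argument as written becomes essentially correct.
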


We note that the assumption in Theorem \ref{maint} is automatic in
many cases, for instance, if $L$ is in the interior of the cone of
effective divisors $\Lambda_{\text{eff}}(X)$; also see Example
\ref{ex-int} below.

The exponents $a$ and $b$ are given as follows:
First, we assume that $\bX$ is smooth and $\bX\backslash \bU$
is a divisor of normal crossings with smooth
 irreducible components $D_\alpha$, $\alpha\in \mathcal{A}$,
defined over a finite field extension of $K$.
Let $\omega$ be a differential form of $\bX$ of top degree, which is nowhere zero on $\bU$, and
choose a global section $\s$ of $L$
with $\bU=\{\s\ne 0\}$.
Then for $m_\alpha\in\mathbb{N}$ and $n_\alpha\in\mathbb{Z}$,
\begin{align*}
\hbox{div}(\s)=\sum_{\alpha\in\mathcal{A}} m_\alpha D_\alpha\quad\hbox{and}\quad
-\hbox{div}(\omega)=\sum_{\alpha\in\mathcal{A}} n_\alpha D_\alpha.
\end{align*}
The Galois group $\Gamma_K=\hbox{Gal}(\bar K/K)$ acts on $\mathcal{A}$.
We denote by $\mathcal{A}/\Gamma_K$ the set of $\G_K$-orbits.
Then
\begin{align}\label{eq:ab}
a=\max_{\alpha\in\mathcal{A}}\left\{\frac{n_\alpha}{m_\alpha}\right\}\quad\hbox{and}\quad
b=\#\left\{\alpha\in\mathcal{A}/\Gamma_K:\, \frac{n_\alpha}{m_\alpha}=a\right\}.
\end{align}
We note that $a$ and $b$ are independent of the choices
of $\s$ and $\omega$, since there are unique choices of
them up to multiplication by constants as a consequence of Rosenlicht theorem.

For a general projective variety $\bX$,
 we take an equivariant resolution of singularities $\pi:\tilde \bX\to \bX$
such that $\tilde \bX$ is smooth and $\pi^{-1}(\bX\backslash \bU)$
is a divisor with normal crossings. Then the constants $a$ and $b$ are defined
as above with respect to the  pull-backs $\pi^*(\s)$ and $\pi^*(\omega)$.

\begin{Ex}[Rational points on affine varieties]\label{ex-int}
{\rm Let $\iota_0:\bG\to \SL_d$ be a $\q$-rational representation, and $\mathbf V=v_0\bG\subset \mathbb A^d$
be Zariski closed for some non-zero $v_0\in \q^d$.
We write an element of ${\mathbf V}(\q)$ as $\left(\frac{x_1}{x_0}, \cdots, \frac{x_d}{x_0}\right)$
where $x_0, \cdots, x_d\in \z$, $x_0>0$
and $\text{g.c.d}(x_0, \cdots, x_d)=1$.
If $\bG$ and $\bL:=\text{stab}_{\bG}(v_0)$ satisfy the assumptions
(i)-(iii), Theorem \ref{maint} implies
$$\#\{\left(\frac{x_1}{x_0}, \cdots, \frac{x_d}{x_0}\right) \in \mathbf V(\q):
\sqrt{x_0^2+\cdots +x_d^2} <T\}
\asymp T^a (\log T)^{b-1} ;$$
$$\#\{\left(\frac{x_1}{x_0}, \cdots, \frac{x_d}{x_0}\right) \in \mathbf V(\q):
\max\{|x_0|, \cdots, |x_d|\} <T\}
\asymp T^a (\log T)^{b-1} .$$

To deduce this from Theorem \ref{maint},
consider the embedding of $\SL_d$ into $\PGL_{d+1}$
by $A\mapsto \text{diag}(A, 1)$, and  of $\A^d$ into $\mathbb P^d$ by
$(x_1, \cdots, x_d) \mapsto (x_1:\cdots: x_d :1)$.
This identifies $\mathbf V$ with the orbit $\bU:=(v_0:1)\bG$ in $\mathbb P^d$, and
 $\s=x_{d+1}$ is an invariant section of
 the line bundle $L$ obtained by pulling back
$\mathcal O_{\mathbb P^d} (1)$, satisfying $\bU=\{\s\ne 0\}$.
Finally, $\H\left(\frac{x_1}{x_0}: \cdots : \frac{x_d}{x_0}:1\right)=
\H(x_1: \cdots: x_d: x_0)$, and hence the claim follows.}
\end{Ex}

Since $\bU=\{X\in \SL_{2n}: X^t=-X\}$ is a homogeneous variety $\Sp_{2n}\ba \SL_{2n}$
for the action $v. g=g^tvg$
and $\SL_{2n}(\qp)$ acts transitively on $\bU(\qp)$ for all $p$,
we have:
\begin{Ex} Let $n\ge 2$. For some $a\in \q^+$, $b\in \n$ and $c>0$, as $T\to \infty$,
$$\#\{X\in \SL_{2n}(\q): X^t=-X, \;\max_{1\le i,j\le 2n} \{|x_{ij}|, |x_0|\}<T\}
\sim c \cdot T^a (\log T)^{b-1} .$$
where $X=\left(\frac{x_{ij}}{x_0}\right)$, $x_{ij}\in \z$, $x_0\in \n$ and $\operatorname{g.c.d}\{x_{ij}, x_0: 1\le  i, j\le 2n \}=1$.
\end{Ex}

Theorem \ref{maint} settles new cases of Manin's conjecture on rational points
of some wonderful varieties,
which we recall.
Let $\bX$ be a Fano $K$-variety, i.e., a
smooth projective $K$-variety with its anticanonical class $-K_{\bX}$ being ample.
Let $\hbox{Pic}(\bX)$ denote the Picard group of $\bX$ and $\Lambda_{\hbox{\tiny eff}}(\bX)\subset
\hbox{Pic}(\bX)\otimes \mathbb{R}$ the cone of effective divisors.
Given a line bundle $L$ on $\bX$,
there exists an associated height function $\H_L$ on $\bX(K)$,
unique up to the multiplication by bounded functions,
via Weil's height function.
For instance if $L$ is very ample
with a $K$-embedding $\psi:\bX\to \mathbb P^d$,
then a height function $\H_L$ is simply the pull-back of a height function
of $\mathbb P^d(K)$ to $\bX(K)$ via $\psi$. Note this depends on the choice
of $\psi$.
For an ample line bundle $L$, $\H_L=\H_{L^k}^{1/k}$ for
$k\in \n$  such that $L^k$ is very ample.

The conjecture of Manin \cite{BM}, generalized by Batyrev and Manin,
predicts that there exist a Zariski open subset $\bU\subset \bX$ and a finite field extension $K'$ of $K$ such that
$$
\#\{x\in \bU(K'):\, \H_L(x)<T\}\sim c\cdot T^{a_L}(\log T)^{b_L-1},
$$
where $c>0$ and
\begin{align*}
a_L&:=\inf\{a:\, a[L]+[K_{\bX}]\in\Lambda_{\hbox{\tiny\rm  eff}}(\bX)\},\\
b_L&:=\hbox{the maximal codimension of the face of $\Lambda_{\hbox{\tiny\rm eff}}(\bX)$ containing $a_L[L]+[K_{\bX}]$}.
\end{align*}

A smooth connected projective $\bG$-variety $\bX$ defined over $K$ is said
 to be {\it wonderful} (of rank $l$), as introduced by Luna \cite{lu},
 if
\begin{enumerate}
\item $\bX$ contains $l$ irreducible $\bG$-invariant
divisors with strict normal crossings.
\item $\bG$ has exactly $2^l$ orbits in $\bX$.
\end{enumerate}

For a $\bG$-homogeneous variety $\bU$, a wonderful variety $\bX$ is called
 the wonderful compactification of $\bU$ if it
is a $\bG$-equivariant compactification of $\bU$.
Luna showed in \cite{lu} that every wonderful variety is spherical; in particular
a wonderful compactification of a homogeneous space $\bU=\bL\ba \bG$ exists
only when $\bL$ is a spherical subgroup, that is, a Borel subgroup of $\bG$ has an
open orbit in $\bU$.

The following can be deduced from Theorem \ref{maint}:
\begin{Cor} \label{wonderfulcm}
Let $\bU$ be as in Thm. \ref{maint}
and $\bX$ the wonderful compactification
of $\bU$.
Then for any ample line bundle $L$ on $\bX$ over $K$ and
an associated height function $\H_L$,
we have
$$
\#\{x\in \bU(K):\, \H_L(x)<T\}\asymp T^{a_L}(\log T)^{b_L-1}.
$$
Moreover, if $\bG$ is simply connected,
there exists $c=c(\H_L)>0$ such that $$
\#\{x\in \bU (K):\, \H_L(x)<T\}\sim c\cdot T^{a_L}(\log T)^{b_L-1}.
$$
\end{Cor}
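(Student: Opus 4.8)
The plan is to deduce the corollary from Theorem~\ref{maint} by replacing $L$ with a sufficiently divisible power that carries a section vanishing exactly along the boundary, applying Theorem~\ref{maint} to that power, and then matching the resulting exponents with Manin's constants $a_L,b_L$ through the combinatorial geometry of the wonderful variety $\bX$. As a preliminary one records the structure of $\bX$: being wonderful it is smooth, and $\bX\setminus\bU$ is a strict normal crossings divisor whose geometric irreducible components are exactly the $\bG$-invariant prime divisors of $\bX$; since $\bG$ is defined over $K$ these are permuted by $\Gamma_K$, so write $E_1,\dots,E_r$ for the irreducible components over $K$ (thus $r=\#(\mathcal A/\Gamma_K)$), with $\tau_i$ the canonical section of $\mathcal O(E_i)$. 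As $\bG$ and $\bL$ are semisimple, the top-degree form $\omega$ nowhere zero on $\bU$ (unique up to a nonzero constant by Rosenlicht, as recalled above) is $\bG$-invariant and may be chosen over $K$; since every color meets the open orbit $\bU$, its divisor is supported on the boundary, say $-\hbox{div}(\omega)=\sum_i n_i E_i$ with $n_i\in\z$.

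The structural input to be invoked, from the Luna--De Concini--Procesi theory of wonderful (in particular symmetric) varieties, is that $[E_1],\dots,[E_r]$ form a basis of $\hbox{Pic}(\bX)\otimes\q$ and that $\Lambda_{\hbox{\tiny\rm eff}}(\bX)=\sum_i\br_{\ge0}[E_i]$ is the simplicial cone they generate (the colors contributing no additional extremal rays). Granting this, an ample class $[L]$ lies in the interior of $\Lambda_{\hbox{\tiny\rm eff}}(\bX)$, so $[L]=\sum_i\lambda_i[E_i]$ with all $\lambda_i\in\q_{>0}$; one then picks $k\in\n$ with $L^k$ very ample and $m_i:=k\lambda_i\in\n$ for all $i$, whereupon $\s:=\prod_i\tau_i^{m_i}$ is a section of $L^k$ with $\hbox{div}(\s)=\sum_i m_i E_i$, hence $\bU=\{\s\ne0\}$. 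Consequently Theorem~\ref{maint} applies to $\bU$ relative to the $\bG$-equivariant embedding of $\bX$ furnished by the complete linear system $|L^k|$ and the section $\s$.

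Combining Theorem~\ref{maint} with \eqref{eq:ab} gives an associated height $\H_{L^k}$ with $\#\{x\in\bU(K):\H_{L^k}(x)<S\}\asymp S^{a'}(\log S)^{b'-1}$, where $a'=\max_i n_i/m_i$ and $b'=\#\{i:n_i/m_i=a'\}$; using the relation $\H_L=\H_{L^k}^{1/k}$ and substituting $S=T^k$ yields $\#\{x\in\bU(K):\H_L(x)<T\}\asymp T^{ka'}(\log T)^{b'-1}$. It then suffices to identify $ka'=a_L$ and $b'=b_L$. Since $K_\bX=\hbox{div}(\omega)=-\sum_i n_i[E_i]$ and $[L]=\tfrac1k\sum_i m_i[E_i]$, for $a>0$ one has $a[L]+K_\bX=\sum_i(am_i/k-n_i)[E_i]$; because a class $\sum_i c_i[E_i]$ lies in the simplicial cone $\Lambda_{\hbox{\tiny\rm eff}}(\bX)$ precisely when all $c_i\ge0$, one gets $a_L=\inf\{a:am_i/k\ge n_i\ \forall i\}=\max_i(kn_i/m_i)=ka'$. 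Taking $a=a_L$, the coefficient of $[E_i]$ in $a_L[L]+K_\bX$ vanishes for those $i$ with $n_i/m_i=a'$ and is positive otherwise, so $a_L[L]+K_\bX$ lies in the relative interior of the face of $\Lambda_{\hbox{\tiny\rm eff}}(\bX)$ spanned by $\{[E_i]:n_i/m_i<a'\}$, which is the minimal face through that point and has codimension $\#\{i:n_i/m_i=a'\}=b'$; hence $b_L=b'$. When $\bG$ is in addition simply connected, the ``moreover'' clause of Theorem~\ref{maint} refines the estimate for $L^k$ to $\#\{x\in\bU(K):\H_{L^k}(x)<S\}\sim c_0\,S^{a'}(\log S)^{b'-1}$, and the substitution $S=T^k$ then gives the asymptotic with constant $c(\H_L)=c_0\,k^{b'-1}>0$.

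The step I expect to be the main obstacle is the structural input on $\hbox{Pic}(\bX)$ and $\Lambda_{\hbox{\tiny\rm eff}}(\bX)$ used above: one must establish, for the wonderful varieties occurring here (compactifications of $\bL\ba\bG$ with $\bL$ a maximal connected semisimple subgroup), that the effective cone is simplicial, generated over $K$ by the classes of the boundary divisors, with ample classes in its interior. This is precisely what lets the vanishing-order description of the exponents supplied by Theorem~\ref{maint} be translated into Manin's description through $K_\bX$ and $\Lambda_{\hbox{\tiny\rm eff}}(\bX)$; once it is available, the reduction to a power of $L$, the construction of the boundary section, and the convex-geometric bookkeeping of the exponents are all routine.
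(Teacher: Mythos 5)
Your proposal is correct and follows essentially the same route as the paper: pass to a suitable power $L^k$, produce a section of $L^k$ whose zero locus is exactly $\bX\setminus\bU$, invoke Theorem~\ref{maint}, and identify the exponents $(a(L),b(L))$ of \eqref{defab2} with Manin's $(a_L,b_L)$ through the structure of $\operatorname{Pic}(\bX)$, $\Lambda_{\text{eff}}(\bX)$, and $K_\bX$. The "structural input" you flag as the main potential obstacle (boundary classes generate $\operatorname{Pic}(\bX)\otimes\q$ and span the effective cone as a simplicial cone, with ample classes and $-K_\bX$ in its interior) is precisely what the paper assembles in Section~\ref{s:won} from Brion's work (\cite{b}, \cite{br}, \cite{bi}); so your instinct about where the burden lies matches the paper.

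The one genuinely different detail: you produce the section explicitly as $\s=\prod_i\tau_i^{m_i}$ from the canonical sections of the boundary divisors, whereas the paper obtains a $\bG$-invariant section abstractly via Luna's orbit-closedness theorem and Mumford (Theorem~\ref{ad-extension}), and then argues $\bU=\{\s\neq 0\}$ because ample classes lie in the interior of the effective cone. The two constructions coincide up to scalar by the uniqueness clause of Theorem~\ref{ad-extension}; yours is slightly more concrete, the paper's has the advantage of also yielding $\bG$-invariance directly (which Theorem~\ref{mcounting} uses).

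One small point you should make explicit: to invoke Theorem~\ref{maint} you need $|L^k|$ to give a $\bG$-\emph{equivariant} projective embedding, which requires $L^k$ to be $\bG$-linearized. You write "the $\bG$-equivariant embedding of $\bX$ furnished by the complete linear system $|L^k|$" without justifying linearization. The paper handles this by citing Knop--Kraft--Luna \cite{kkl} for the fact that some power of $L$ is $\bG$-linearized ($\bX$ being smooth), so one simply enlarges $k$ to absorb this condition together with very-ampleness and integrality of the $m_i$. With that remark added, your argument is complete.
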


Several examples for the above corollary are given in 3.4.
 De Concini and Procesi \cite{DP} constructed the wonderful compactification
of a symmetric variety $\bL\ba \bG$ for $\bG$ semisimple adjoint.
In these cases, $a_L$ and $b_L$ can also be interpreted
in terms of the representation theoretical data of $\bG$ (see \ref{rem:ab}).

Generalizing the work in \cite{DP},
 Brion and Pauer \cite{bp} established that a spherical variety $\bL\ba \bG$ possesses
an equivariant compactification with exactly one closed orbit if and only if
$[\operatorname{N}_{\bG} (\bL):\bL]<\infty$,
where $\operatorname{N}_{\bG}(\bL)$ denotes the normalizer of $\bL$ in $\bG$.
Knop \cite[Coro. 7.2]{kn} showed that
 the wonderful compactification of a spherical variety exists when $\operatorname{N}_{\bG}(\bL)=\bL$.
Complete classification of homogeneous spherical varieties were obtained; see \cite{Kr} and \cite{B4}.

\subsection{On the proofs}
To explain our strategy, let $\A$ denote the Adele ring over $K$. The first key observation is
that the global section $\s$ of $L$ with $\bU=\{\s\ne 0\}$ is in fact $\bG$-invariant. We use Luna's theorem
for this step.
And the extension of $\H$ to $\bU(\A)$ using $\s$ is uniformly continuous and proper for the action of
compact subsets of $\bG(\A)$.
Set $$B_T:=\{x\in \bU(\A): \H(x)<T\}$$ so that
$N_T(\bU):=\# B_T\cap \bU(K)$.
Under the assumption (iii), there are only finitely many $\bG(K)$-orbits
in $\bU(K)$, and hence the counting problem reduces to each $\bG(K)$-orbit.
In general, the naive heuristic $$\# (u_0\bG(K)\cap B_T)\sim \text{vol} (u_0\bG(\A)\cap B_T)$$
is false. The reason behind this is the existence of non-trivial automorphic characters
of $\bG(\A)$. From the dynamical point of view,
this means that the translates  $\bL(K)\ba \bL(\A)g_i$ of periods
do not get equidistributed in the whole space $\bG(K)\ba \bG(\A)$
as $g_i\to \infty$ in $\bL(\A)\ba \bG(\A)$.
This requires us to pass to a suitable finite index subgroup of $\bG(\A)$.
Denote by $\pi:\tilde \bG \to \bG$ a simply connected covering of
$\bG$ defined over $K$. For any compact open subgroup $W$ of the subgroup $\bG(\A_f)$
of finite adeles, we show that the product $G_{W}:=\bG(K)\pi(\tilde \bG(\A)) W$ is a normal
subgroup of finite index in $\bG(\A)$,
and the translates  $\bL(K)\ba (\bL(\A)\cap G_W) g_i$
become equidistributed in the space $\bG(K)\ba G_W$
relative to $W$-invariant functions.
The last statement is a special case of our main ergodic theorems in adelic setting, to be detailed
in the next subsection. We mention that our assumption $\bL$ is semisimple
is crucial.

In order to deduce
$$\# (u_0\bG(K)\cap B_T)\sim \text{vol} (u_0G_W\cap B_T),$$
we prove that for any compact open subgroup $W$
of $\bG(\A_f)$
by which $\H$ is invariant, the family $\{B_T\cap u_0G_{W}\}$
is well-rounded; roughly speaking, for any $\e>0$,
there is a neighborhood $U_\e$ of the identity in $\bG(\A)$ such that
the volume of $(B_T\cap u_0G_{W})U_\e$ is at most $(1+ \e)\text{vol}( B_T\cap u_0G_{W})$ for all large $T$.
Establishing this
is based on the work of Chambert-Loir and Tschinkel \cite{CT2} and of Benoist-Oh \cite{BO2}
(also \cite{GN} of Gorodnik-Nevo).
 Finally, we deduce the
asymptotics of $\text{vol} (u_0G_W\cap B_T)$  up to a bounded
constant from
 \cite{CT2,CT3}. When $\bG$ is simply connected,
we have $G_W=\bga$ and
 deduce the precise volume asymptotic for $u_0\bga\cap B_T$.
We remark that if $\bG(K_v)$ has no compact factors for some archimedean $v\in R$
and $\bG(\A)=\bG(K)\pi(\tilde \bG(\A))W_{\H}$ where
$W_{\H}$ is a compact open
 subgroup of $\bG(\A_f)$ under which
$\H$ is invariant,
we also have
$$\# (u_0\bG(K)\cap B_T)\sim \text{vol} (u_0\bga\cap B_T)\sim c \cdot T^a\log T^{b-1}.$$
In general,
replacing $\asymp$ with $\sim$ in Theorem \ref{maint} requires
regularizing the height integrals
$\int_{u_0\bG(\A)}\H^{-s}(u_0g)\cdot\chi(g)\, d\mu$
for $\bL(\A)$-invariant automorphic characters $\chi$ of $\bga$
as in \cite[Thm. 7.1]{STT2}.
We mention that the strategy of relating the counting problem with the
equidistribution of orbits
 is originated in the work of Duke-Rudnick-Sarnak \cite{DRS} and of Eskin-McMullen \cite{EM}
(see section \ref{s:c} for more details).
Perhaps the most unsatisfying assumption is (iii): the finiteness of $\bga$-orbits
in $\bU(\A)$. We believe this assumption should not be necessary
to deduce $\# (u_0\bG(K)\cap B_T)\sim \text{vol} (u_0G_W\cap B_T);$
however our proof of well-roundedness of $u_0G_W\cap B_T$ relies on the
finiteness assumption. With a proper use of motivic integration, it may be possible to deal with
a general case. Finally we mention that there are examples
where the orders of magnitude for  $\# (u_0\bG(K)\cap B_T)$
and $\#N_T(\bU)$ are not the same. This is already the case for $\bG=\hbox{SL}_2$ and
$\bU=\bG/Z(\bG)$.


\subsection{Equidistribution of Adelic periods}
We now describe our main ergodic results on the equidistribution
of Adelic periods. Our results presented
in this section are much more general
than what is needed for the application on rational points.

Let $\bG$ be a connected semisimple group defined over a number field $K$.

Set $X:=\bG(K)\ba \bga$ and $x_0:=[\bG(K)]\in X$.
For a connected semisimple $K$-subgroup $\bL$ of $\bG$, we denote by $\pi:\tilde \bL \to
\bL$ a simply connected covering over $K$, which is unique up to $K$-isomorphism.
Then $\pi$ induces the map $\tilde \bL(\A)\to \bL(\A)$
and hence $\tilde \bL(\A)$ acts on $X$ via $\pi$, and
the orbit $x_0.\tilde \bL(\A)$ is closed and carries a unique
$\tilde \bL(\A)$-invariant probability measure supported in the orbit.

Let $\{\bL_i\}$ be a sequence of connected semisimple $K$-subgroups
of $\bG$ and $\{g_i\in \bga\}$ be given.
Let $\mu_i$ denote the (unique) $\tilde \bL_i (\A)$-invariant
probability measure in $X$ supported on the orbit
$Y_i:=x_0.\tilde \bL_i (\A)$.
 The translate $g_i\mu_i$ of $\mu_i$ by $g_i$
is defined by $$g_i\mu_i(E):=\mu_i(E g_i^{-1})$$
for any Borel subset $E\subset X$.

Denoting by $\mathcal P(X)$ the space
of all Borel probability measures on $X$,
 that a sequence $\nu_i\in \mathcal P(X)$ weakly converges
to $\mu\in \mathcal P(X)$
means that for every $f\in C_c(X)$,
$$\lim_{i\to \infty} \int_X f(x)\, d\nu_i(x) = \int_X f\, d\mu .$$

We study the following question:
\begin{equation*}
\text{ Describe the weak-limits of $g_i\mu_i$ in $\mathcal P(X)$} .
\end{equation*}

We remark that the reason of considering
the translates of $x_0\tilde \bL_i(\A)$ rather than those
of the orbit $x_0\bL_i(\A)$,
 is essentially because
$X$ has more than one {\it connected} components and
the adele group of the simply connected cover plays exactly the role
of the identity component in a suitable sense.

\begin{Def} A valuation $v\in R$ is said to be {\it strongly isotropic} for $\bG$
if for every connected non-trivial normal $K_v$-subgroup $\bN$
of $\bG$, $\bN (K_v)$ is non-compact.
 We denote by $\mathcal {I}_{\bG}$ the set of all strongly isotropic
$v\in R$ for $\bG$ .\end{Def}

 For a compact open subgroup $W_f$ of the group
$\bG(\A_f)$ of finite adeles, we denote by
$C_c(X, W_f)$ the set of all continuous $W_f$-invariant functions
on $X$ whose support is compact and contained in the set
$x_0\pi(\tilde \bG(\A))W_f$.

\begin{Thm} \label{dmadele}
Suppose that $\cap_i \mathcal I_{\bL_i}\ne \emptyset$, and let $g_i\in \bG(K)\pi(\tilde \bG(\A))$. \be
\item  If the centralizer of $\bL_i$ is $K$-anisotropic for each $i$, then
 the sequence $\{g_i\mu_i\}$ does not escape
to infinity, that is, for any $\e>0$,
there exists a compact subset $\Omega\subset X$
such that $$g_i\mu_i(\Omega)>1-\e\quad\text{
for all large $i$.}$$

\item Let $\mu\in \mathcal P(X)$ be a weak limit
of $g_i\mu_i$.
Then there exists a connected $K$-subgroup $\bM$ of $\bG$ such that
\begin{itemize}
\item for some $\delta_i\in \bG(K)$,
$$\delta_i \bL_i\delta_i^{-1}\subset \bM \quad\text{for all sufficiently large $i$}; $$
\item for any compact open subgroup $W_f$ of
$\bG(\A_f)$,
 there exist
 a finite index normal
subgroup $M_0$ of $\bM(\A)$ containing
 $\bM(K)\pi(\tilde \bM(\A))$ and $g\in \pi(\tilde \bG(\A))$
  such that
 $$\mu(f)=g\mu_{M_0} (f)
   \quad\text{for all $f\in C_c(X, W_f)$} $$
where $g\mu_{M_0}$ is the invariant probability measure
supported on $x_0M_0g$,
and  there exists $h_i\in \pi(\tilde \bL_i(\A))$ such that
$\delta_i h_i g_i$ converges to $g$ as $i\to \infty$.
\item If the centralizers of $\bL_i$ are $K$-anisotropic, $\bM$ is semisimple.
\end{itemize}\ee
\end{Thm}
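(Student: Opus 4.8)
The plan is to prove Theorem \ref{dmadele} via the theory of unipotent flows, adapting the non-divergence and measure-classification machinery of Dani--Margulis, Mozes--Shah, and Eskin--Mozes--Shah to the $S$-arithmetic/adelic setting following Ratner's $p$-adic theorems and the work of Gorodnik--Oh on adelic mixing. The first step is to reduce the adelic statement to a finite-level $S$-arithmetic one: since each $g_i \in \bG(K)\pi(\tilde\bG(\A))$ and since the measures $\mu_i$ are supported on $x_0.\tilde\bL_i(\A)$, we may fix a sufficiently large finite set $S$ of places containing all archimedean places and a common strongly isotropic place $v_0 \in \cap_i \mathcal I_{\bL_i}$, and a compact open subgroup $W_f \subset \bG(\A_f)$, so that modulo $W_f$ everything takes place in the $S$-arithmetic quotient $\bG(K) \ba \bG(K_S) \times \bG(\A_f)/W_f$; test functions in $C_c(X,W_f)$ descend to this quotient. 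This reduces part (1) to $S$-arithmetic non-divergence of unipotent (more precisely, semisimple-group) orbits, and part (2) to the $S$-arithmetic analogue of the linearization/measure-classification argument.

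For part (1), I would argue as in Eskin--Mozes--Shah: the translated measure $g_i\mu_i$ is $g_i^{-1}\tilde\bL_i(\A)g_i$-invariant, and since $\bL_i$ is semisimple it is generated by its $K_{v_0}$-points of unipotent one-parameter subgroups together with the non-compactness at $v_0$; the Dani--Margulis uniform non-divergence estimate for unipotent flows (valid uniformly over the family of subgroups, using that $v_0$ is strongly isotropic so no factor of $\bL_i$ is compact at $v_0$) gives, for every $\e>0$, a compact $\Omega\subset X$ with $g_i\mu_i(\Omega)>1-\e$ for all $i$, \emph{provided} the orbits do not all sit inside a proper subvariety-type degeneration; this last point is exactly where the hypothesis that the centralizer of $\bL_i$ is $K$-anisotropic enters, ruling out escape along a central torus direction. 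Concretely, anisotropy of the centralizer forces the relevant parabolic/unipotent pieces governing divergence to be defined in a way that the $\mathbb{Q}$-rank-zero obstruction vanishes, so the Dani--Margulis $(C,\alpha)$-estimates close.

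For part (2), let $\mu$ be a weak limit of $g_i\mu_i$. It is invariant and ergodic (after decomposition) under a subgroup generated by limits of $g_i^{-1}\tilde\bL_i(\A)g_i$, which contains unipotent one-parameter subgroups over $K_{v_0}$; by Ratner's measure classification in the $S$-arithmetic setting (and the adelic packaging in Gorodnik--Oh), $\mu$ is a homogeneous measure, i.e.\ supported on a single orbit $x_0 M_0 g$ of a closed subgroup, and is the invariant probability measure there. The group $\bM$ is then obtained from the algebraic hull of this subgroup; the linearization technique (Dani--Margulis, Mozes--Shah) applied to the representation of $\bG$ on $\bigwedge^{\dim\bL_i}\Lie\bG$ shows that for all large $i$ the subgroup $\bL_i$ is conjugate by some $\delta_i\in\bG(K)$ into $\bM$, because the limit of the lines $\bigwedge \Lie(\delta_i\bL_i\delta_i^{-1})$ must be $\bM$-fixed. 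The convergence statement $\delta_i h_i g_i \to g$ with $h_i \in \pi(\tilde\bL_i(\A))$ comes from tracking, in the linearized picture, how far the orbit point must be pushed to land on the limit orbit — this is the standard ``$h_i$ adjustment'' in Mozes--Shah, now performed in $\pi(\tilde\bL_i(\A))$ using that $\tilde\bL_i(\A)$ acts transitively on the connected structure of its own orbit. Finally, if the centralizers of the $\bL_i$ are $K$-anisotropic, then $\bM$ cannot contain a nontrivial central torus (it would centralize $\bL_i$ after conjugation, contradicting anisotropy of $Z_{\bG}(\bL_i)$), whence the radical of $\bM$ is trivial and $\bM$ is semisimple.

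The main obstacle I anticipate is \emph{uniformity across the family} $\{\bL_i\}$: both the non-divergence estimate and the linearization argument need constants independent of $i$, which is delicate when the $\bL_i$ vary (their unipotent subgroups, their $K_{v_0}$-structure, the degree of the linearizing representation all fluctuate). This is handled by invoking the common strongly isotropic place $v_0$ and the anisotropic-centralizer hypothesis to bound the ``complexity'' of the degenerations uniformly, together with the fact that there are only finitely many conjugacy classes of $K$-subgroups of each dimension so one may pass to a subsequence and assume $\bL_i$ all lie in a single $\bG(\bar K)$-conjugacy class, reducing the family to one varying by a $\bG(K)$-conjugation (the $\delta_i$) plus a bounded ambiguity. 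A secondary technical point is making the passage between $\tilde\bL_i(\A)$-invariance and genuine unipotent invariance precise at the place $v_0$, i.e.\ checking that $\pi(\tilde\bL_i(K_{v_0}))$ is generated by unipotents and is noncompact; this follows from strong isotropy but must be stated carefully.
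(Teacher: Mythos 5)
Your overall strategy -- reduce the adelic statement to an $S$-arithmetic one via strong approximation, then invoke $S$-arithmetic versions of Dani--Margulis non-divergence and Mozes--Shah limit-measure structure, using a common strongly isotropic place -- is the same skeleton as the paper's proof (which routes everything through Theorem \ref{rc}, via the homeomorphism $\Phi$ of Lemma \ref{bij1} onto $\Gamma\ba\pi(\tilde\bG_S)$). However, two essential steps are missing.

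The most serious gap concerns the quantifier order in the conclusion of part (2). The theorem asserts that a \emph{single} connected $K$-subgroup $\bM$ works for \emph{every} compact open $W_f\subset\bG(\A_f)$: the group $\bM$ is chosen once, and then for each $W_f$ one produces $(M_0,g)$. Your argument applies the $S$-arithmetic classification at one fixed level and extracts $\bM$ from the algebraic hull there; this gives, a priori, a group $\bM$ that depends on the pair $(S,W_S)$. The paper spends a substantial portion of the proof establishing that the Mumford--Tate group $\bM = \MT(M)$ obtained from one choice $(S,W_S)$ agrees (up to $\Gamma$-conjugacy, which is then absorbed into the $\delta_i$) with the one obtained from any other $(S',W_{S'})$: one compares the pushforwards $\Phi_*(\mu)$ and $\Phi'_*(\mu)$ along compatible projections and uses Lemma \ref{zd} (an orbit $\Gamma\ba\Gamma P$ is contained in $\Gamma\ba\Gamma Q$ iff $\MT(P)\subset\MT(Q)$) to pin down $\bM$. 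Without this compatibility argument the claim of a uniform $\bM$ is unjustified, and so is the bullet that $\delta_i\bL_i\delta_i^{-1}\subset\bM$ for a single $\bM$ independent of $W_f$.

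Secondly, you do not address why the homogeneous limit is supported on a coset of a finite-index \emph{normal} subgroup $M_0$ of $\bM(\A)$ containing $\bM(K)\pi(\tilde\bM(\A))$. Ratner/Mozes--Shah at one level gives a closed subgroup $M$ of finite index in $\bM_S$; promoting this to the adelic statement requires the group-theoretic step in Proposition \ref{finite} and Corollary \ref{c4.9}: $\bM(K)\pi(\tilde\bM(\A))W_S$ contains $[\bM(\A),\bM(\A)]$ by strong approximation plus the Galois cohomology exact sequence, and is therefore a co-abelian normal subgroup of finite index. This is how $M_0$ is constructed and how the final change-of-variables identity $\mu(f)=g\mu_{M_0}(f)$ is established; nothing in your sketch produces it.

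Two smaller remarks. Your justification of part (1) mis-identifies the role of the anisotropic-centralizer hypothesis: in the paper it enters through Lemma \ref{aniso}, which says the centralizer of $\bL_i$ is $K$-anisotropic iff $\bL_i$ is not contained in any proper $K$-parabolic; this is precisely the alternative needed in the Dani--Margulis dichotomy (Theorem \ref{dmma}), and the $\{\bL_i\}$-uniformity of the non-divergence constant comes from the fact that unipotent orbit maps are polynomials of uniformly bounded degree, not from first passing to a subsequence where the $\bL_i$ become $\bG(\bar K)$-conjugate. And your passage to the $S$-arithmetic quotient is too imprecise: it is not literally $\bG(K_S)\times\bG(\A_f)/W_f$ that carries the dynamics, but rather the image of the finite-index subgroup $G_{W_S}=\bG(K)\pi(\tilde\bG_S)W_S$, via the map $\Phi$ that records the $\pi(\tilde\bG_S)$-component modulo $\Gamma=\bG(K)\cap W_S\cap\pi(\tilde\bG_S)$; it is in this quotient that $\Phi_*(g_i\mu_i)$ becomes exactly a translated invariant measure on $\Gamma\ba\Gamma(\gamma_{g_i}^{-1}\pi(\tilde\bL_{i,S})\gamma_{g_i})g_{i,S}$, which is what makes Theorem \ref{rc} applicable.
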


See Corollaries \ref{cm} and \ref{cmgeneral} where we discuss special cases of the above theorem
for $\bL_i$ maximal semisimple.

We mention that  Theorem \ref{dmadele} solves a stronger
version of the conjecture of Clozel and Ullmo in a greater generality (see
\cite[p.~1258]{CL}, also \cite[Con. 6.4]{BRe}). We also refer to \cite{O3} for discussions on
Theorem \ref{dmadele} regarding Hecke operators.

The analogous theorems in the case of a homogeneous space
of a connected semisimple {\it real Lie} group
have been studied previously in \cite{DM1}, \cite{DRS},
\cite{EM}, \cite{EMS1}, \cite{MS}, \cite{EO} and
\cite{EMV}, etc.
Via the strong approximation properties of simply connected semisimple
groups, our proof of Theorem \ref{dmadele} is reduced to the generalizations of the aforementioned results,
especially of Dani-Margulis \cite{DM1} and Mozes-Shah \cite{MS},
in the $S$-algebraic setting (see Theorem \ref{rc}). We make a crucial use
 the classification theorem on ergodic
measures invariant under unipotent flows in this set-up obtained by Ratner \cite{R},
 Margulis-Tomanov \cite{MT1}, and also refined by Tomanov \cite{T} in the arithmetic situation.
Our approach is based on the linearization methods developed by Dani-Margulis \cite{DM2}.

In the case of $\bG=\PGL_2$,
 and $\bL_i$ a $K$-anisotropic torus,
the analogue of the above theorem can be deduced from a theorem
of Venkatesh (Theorem 6.1 in \cite{Ve}) using Waldspurger's formula (cf. \cite[2.5]{MV})
which relates the integral over a period with special values of $L$-functions.
For $\bG=\PGL_3$ and $\bL_i$
a $\q$-anisotropic maximal
torus, it was obtained by Einsiedler, Lindenstrauss,
Michel, and Venkatesh \cite{ELMV}.
$\bL_i$'s being tori, the methods in \cite{Ve} and \cite{ELMV} are very different
from ours. The powerful theorems on unipotent flows (\cite{R} and \cite{MT1})
are essentially what makes our theorem \ref{dmadele} so general.

\subsection{Other applications}
Theorem \ref{dmadele} should be useful in many future
arithmetic applications. For instance,
an application of Theorem \ref{dmadele} in a problem of Linnik, considered
in \cite{EO} and \cite{EV}, is discussed in \cite{O3}.
 We state only two below, which are most relevant to the
subject of this paper.
One application of Theorem \ref{dmadele} is
an ergodic theoretic proof of the adelic mixing theorem obtained in \cite{GMO}
though given only in a non-effective form.
\begin{Thm}[Adelic mixing]\label{am} Let $\bG$ be simply connected
and almost $K$-simple.
For any $f_1, f_2\in L^2(\bG(K)\ba \bG(\A))$ and
any sequence $g_i\in \bga$,
$$\int_{\bG(K)\ba \bga} f_1(xg_i) f_2(x) d\mu_G\to \int f_1 d\mu_G \cdot \int f_2 d\mu_G
\quad \text {as $g_i\to \infty$,}$$
where $\mu_G$ is the invariant probability measure on $\bG(K)\ba \bG(\A)$.
\end{Thm}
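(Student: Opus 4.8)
The plan is to deduce Theorem~\ref{am} from Theorem~\ref{dmadele}, applied on $\bG\times\bG$ with the diagonal copy of $\bG$ in the role of the semisimple subgroup. Set $\bG':=\bG\times\bG$ and let $\bL:=\Delta\bG\subset\bG'$ denote the diagonal. Since $\bG$ is almost $K$-simple, $\bL$ is simply connected, and a Goursat-type argument shows it is a \emph{maximal} connected $K$-subgroup of $\bG'$: a connected $K$-subgroup properly containing $\Delta\bG$ surjects onto both factors of $\bG'$, the kernels of the two projections are normal $K$-subgroups of $\bG$ (so finite, or all of $\bG$), and in either case the subgroup is forced to be all of $\bG'$. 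The centralizer of $\bL$ in $\bG'$ equals $Z(\bG)\times Z(\bG)$, which is finite, hence $K$-anisotropic. Also $\mathcal I_{\bL}=\mathcal I_{\bG}\neq\emptyset$, since $\bG$ is isotropic over $K_v$ for all but finitely many $v$ (when $\bG(\A)$ is compact the theorem is vacuous, as no $g_i$ then tends to infinity). Finally, $\bG'$ is simply connected, so $\bG'(\A)=\bG'(K)\pi'(\tilde\bG'(\A))$.

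Identify $X':=\bG'(K)\ba\bG'(\A)$ with $X\times X$. Then the orbit $x_0'.\bL(\A)$ is the diagonal $\Delta X\subset X\times X$, and the invariant probability measure $\mu$ on it is the push-forward of $\mu_G$ under $p\mapsto(p,p)$. Given a sequence $g_i\to\infty$ in $\bG(\A)$, set $g_i':=(g_i,e)\in\bG'(\A)\subset\bG'(K)\pi'(\tilde\bG'(\A))$. A change of variables shows that for $f_1,f_2\in C_c(X)$
$$\int_{X'}(f_1\otimes f_2)\,d(g_i'\mu)=\int_{X}f_1(xg_i)\,f_2(x)\,d\mu_G(x),$$
so it is enough to prove that $g_i'\mu\to\mu_{G'}$ weakly, $\mu_{G'}=\mu_G\otimes\mu_G$ being the invariant probability measure on $X'$.

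Next I would apply Theorem~\ref{dmadele} to the constant sequence $\bL_i=\bL$ and the translates $g_i'$. By part~(1), valid since the centralizer of $\bL$ is $K$-anisotropic, the family $\{g_i'\mu\}$ does not escape to infinity, hence is precompact in $\mathcal P(X')$ with every weak limit a probability measure. Let $\mu_\infty$ be such a limit. Part~(2) produces a connected semisimple $K$-subgroup $\bM$, elements $\delta_i\in\bG'(K)$ with $\delta_i\bL\delta_i^{-1}\subset\bM$ for all large $i$, and, for every compact open $W_f\subset\bG'(\A_f)$, a finite-index normal subgroup $M_0$ of $\bM(\A)$ and some $g\in\bG'(\A)$ with $\mu_\infty(f)=g\mu_{M_0}(f)$ for all $f\in C_c(X',W_f)$; letting $W_f$ shrink identifies $\mu_\infty$ with $g\mu_{M_0}$ as measures. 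Since $\delta_i\bL\delta_i^{-1}$ is maximal connected, either $\bM=\bG'$ --- whence $M_0=\bG'(\A)$ (as $\bG'$ is simply connected) and $\mu_\infty=\mu_{G'}$ --- or $\bM=\delta_i\bL\delta_i^{-1}$ for all large $i$. The second alternative contradicts $g_i\to\infty$: part~(2) also gives $h_i\in\bL(\A)=\Delta\bG(\A)$ with $\delta_ih_ig_i'\to g$, and writing $\delta_i=(a_i,b_i)$ with $a_i,b_i\in\bG(K)$ and $h_i=(h_i^0,h_i^0)$, the two coordinates of $\delta_ih_ig_i'\to g$ give $a_ih_i^0g_i\to g^{(1)}$ and $b_ih_i^0\to g^{(2)}$, so $g_i\in C_1\,(b_ia_i^{-1})\,C_2$ for fixed compact sets $C_1,C_2$; but $\delta_i\bL\delta_i^{-1}=\delta_j\bL\delta_j^{-1}$ forces $\delta_j^{-1}\delta_i\in\operatorname{N}_{\bG'}(\bL)(K)=\{(zh,h):h\in\bG(K),\ z\in Z(\bG)(K)\}$, which confines $b_ia_i^{-1}$ to the finite set $Z(\bG)(K)\cdot b_ja_j^{-1}$, making $\{g_i\}$ bounded. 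Hence every weak limit of $g_i'\mu$ is $\mu_{G'}$, i.e. $g_i'\mu\to\mu_{G'}$.

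Combined with the displayed identity, this gives Theorem~\ref{am} for $f_1,f_2\in C_c(X)$; the general $L^2$ case follows by density of $C_c(X)$ in $L^2(X)$ together with the uniform bounds $|\int_{X}f_1(xg_i)f_2(x)\,d\mu_G|\le\|f_1\|_2\|f_2\|_2$ and $|\int_{X}f_j\,d\mu_G|\le\|f_j\|_2$. The main obstacle is precisely the exclusion of the degenerate case $\bM=\Delta\bG$: one must turn the convergence $\delta_ih_ig_i'\to g$ furnished by Theorem~\ref{dmadele} into boundedness of $g_i$, the delicate point being that the auxiliary elements $\delta_i\in\bG'(K)$ are a priori unbounded --- which is exactly what the explicit structure of $\operatorname{N}_{\bG'}(\Delta\bG)$ controls.
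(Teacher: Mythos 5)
Your proof is correct and follows essentially the same route as the paper's: reduce adelic mixing to equidistribution of the diagonal orbit translates $\Delta\bG(\A)\cdot(g_i,e)$ in $(\bG\times\bG)(K)\backslash(\bG\times\bG)(\A)$, invoke Theorem \ref{dmadele} (the paper routes through Corollary \ref{cm}, its specialization to maximal $\bL$), and rule out the degenerate case $\bM=\Delta\bG$ by showing the convergence $\delta_i h_i g_i'\to g$ would force $\{g_i\}$ to be bounded. The only cosmetic difference is that you control $\delta_i$ by tracking $b_i a_i^{-1}$ through the explicit structure of $\operatorname{N}_{\bG\times\bG}(\Delta\bG)$, whereas the paper passes to a subsequence with $\delta_i=e$ using that $\Delta\bG$ has finite index in its normalizer.
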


The adelic mixing theorem in particular implies the equdistribution of Hecke points
studied in \cite{COU} and \cite{EO2} (see \cite{O3} for details).
The proof in \cite{GMO} is based on the information on local harmonic analysis
of the groups $\bG(K_v)$ \cite{O1} as well as the automorphic theory of $\bG$ \cite{Cl},
and gives a rate of convergence.
In the methods of this paper, it suffices
to know the mixing property of $\bG_S:=\prod_{v\in S}\bG(K_v)$ for some finite $S$
containing all archimedean valuations and containing at least one strongly isotropic $v$.
 This property can either
be deduced from the classical Howe-Moore theorem
\cite{HM}, or from the property of unipotent flows in $\bG_S$ modulo lattices.


In the following corollary, let $\bU$ be an affine variety defined over $\z$ such that
$\bU=v_0\bG$ where
$\bG\subset \GL_N$ is a connected simply connected semisimple $\q$-group and $v_0\in \q^N\setminus \{0\}$.
Suppose that
$\bL:=\text{stab}_\bG (v_0)$ is
 a semisimple maximal connected
$\q$-subgroup of $\bG$. We let $\mu_p$, $v\in R$
be invariant
measures on $v_0\bG(\qp)$ such that $\mu=\prod \mu_p$ is a measure on $v_0\bga$
 compatible with the probability invariant measures
$\mu_{G}$ and $\mu_{L}$ on $\bG(\q)\ba \bga$
and $\bL(\q)\ba \bL(\A)$ respectively.

As another corollary,
we obtain the following local-global principle, which can be seen
as a higher dimensional analogue of the classical Hasse principle:

\begin{Cor}\label{linnik}
\be
\item For all sufficiently large $m\in \n$,
 $$\bU\left( m^{-1}\z\right) \ne \emptyset \quad\text{iff}\quad
\bU\left( m^{-1}\z_p\right)\ne \emptyset \quad\text{for all
primes $p$.}$$
\item If $\bL$ is simply connected, then
for any compact subset $\Omega\subset v_0\bG(\br)$
of boundary of measure zero,
$$\# \bU\left( m^{-1}\z\right)\cap \Omega
\sim  \mu_\infty (\Omega) \prod_{p} \mu_p\left(\bU\left(m^{-1}\zp\right)\right)$$
provided the right hand side is not zero as $m\to \infty$.
\ee
\end{Cor}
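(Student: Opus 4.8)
The plan is to deduce Corollary~\ref{linnik} from the equidistribution Theorem~\ref{dmadele} combined with the counting-to-volume comparison outlined in the proof of Theorem~\ref{maint}, specializing everything to the case where the sequence $\bL_i$ is constant equal to $\bL$ and the translating elements $g_i$ are chosen so that $v_0 g_i$ ranges over integral points of increasing denominator. Concretely, for $m\in\n$ the set $\bU(m^{-1}\z)$ is cut out inside $v_0\bG(\q)$ by the congruence-type condition that the coordinates lie in $m^{-1}\z_p$ for all $p$; this is a $W_m$-orbit condition for the compact open subgroup $W_m:=\prod_p W_{m,p}\subset\bG(\A_f)$ where $W_{m,p}$ is the stabilizer in $\bG(\z_p)$ of the lattice condition at $p$ (trivial for almost all $p$). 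So $\bU(m^{-1}\z)\neq\emptyset$ is equivalent to $v_0\bG(\q)\cap v_0\bG(\A_f)W_m \cap (\text{local conditions at }\infty)$ being nonempty, and the finiteness of $\bG(\q)$-orbits from assumption (iii) (via Corollary~\ref{cor:sc-finite}, since $\bL$ here need not be simply connected for part (1) but $\bG$ is) lets us reduce to a single orbit $v_0\bG(\q)$.

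For part~(1), I would argue as follows. By strong approximation for the simply connected group $\tilde\bG$ (which holds since $\bG(\br)$ is noncompact, $\bG$ being $\q$-simple and isotropic at $\infty$ in the relevant cases — this is exactly the role of $\I_\bG$ containing $\infty$), the image $\pi(\tilde\bG(\A))$ is dense in the appropriate sense, and $\bG(\A)=\bG(\q)\pi(\tilde\bG(\A))W$ for $W$ small enough. One then shows that $\bU(m^{-1}\zp)\neq\emptyset$ for all $p$ forces, for $m$ large, the existence of a point in $v_0\bG(\A_f)$ satisfying all the finite-place conditions simultaneously that moreover lies in the same $\bG(\q)$-orbit component; combined with nonemptiness of $v_0\bG(\br)$ (automatic since $\bG(\br)$ acts transitively on $\bU(\br)$ by (iii) and $v_0\bU(\q)$ is already a real point) one produces via strong approximation a genuine $\q$-point with the required denominator. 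The phrase ``for all sufficiently large $m$'' absorbs the finitely many bad primes and the obstruction coming from the failure of $\bG(\A)=\bG(\q)\pi(\tilde\bG(\A))W$ for $W$ too large; here $\bL$ semisimple is what guarantees $\bL(\A)\cap G_W$ surjects onto enough of the local groups.

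For part~(2), where $\bL$ is assumed simply connected, the set $\#\bU(m^{-1}\z)\cap\Omega$ is counted by applying the equidistribution of the adelic periods $\bL(\q)\ba\bL(\A)g_m$ as the denominator $m\to\infty$, which drives $g_m\to\infty$ in $\bL(\A)\ba\bG(\A)$. Since $\bL$ is simply connected, $\tilde\bL=\bL$ and the period orbit is already the full $\bL(\A)$-orbit; by Theorem~\ref{dmadele}(2) the only possible limit measure, given that the $\bL$-period is not contained in any proper intermediate $\bM$ (this is where maximality of $\bL$ in assumption (ii) enters — any $\bM\supsetneq\bL$ must be all of $\bG$ up to conjugacy, and the normalizer-finiteness rules out the torus-extension subtleties), is the Haar probability measure $\mu_G$ on $\bG(\q)\ba\bG(\A)$. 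Pairing this equidistribution against the indicator-type function built from $\Omega$ at the real place and the open compact conditions at the finite places — after checking well-roundedness of these sets, i.e. boundary of measure zero for $\Omega$ plus the clopen nature of the finite conditions — yields $\#\bU(m^{-1}\z)\cap\Omega \sim \mu_\infty(\Omega)\prod_p\mu_p(\bU(m^{-1}\zp))$, with the product of local volumes being exactly the volume of the adelic region picked out. The simply-connectedness of $\bL$ also guarantees (via Corollary~\ref{cor:sc-finite}) that assumption (iii) holds automatically, so no separate hypothesis is needed.

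The main obstacle I expect is twofold: first, verifying the non-escape of mass (Theorem~\ref{dmadele}(1)) in this setting, which requires the centralizer of $\bL$ to be $\q$-anisotropic — this follows from $\bL$ being maximal connected and the normalizer having finite index, but the argument that rules out escape of the translated periods to the cusp must be invoked carefully, and without it the limit measure could have total mass less than one, destroying the asymptotic. Second, and more delicate, is pinning down that the intermediate group $\bM$ supplied by Theorem~\ref{dmadele}(2) is actually $\bG$ itself rather than a proper subgroup: one must rule out $\bL\subset\bM\subsetneq\bG$ using the maximality of $\bL$ among \emph{connected} $\q$-subgroups, together with the fact that $\bM$ is (by the last bullet of the theorem, since centralizers are anisotropic) semisimple, hence by maximality $\bM=\bG$ — and then identify $M_0$ with all of $\bG(\A)$ using $\bG$ simply connected (which is part of the hypothesis of the corollary) so that $G_W=\bga$. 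Handling the finitely many exceptional primes uniformly in $m$, so that the ``sufficiently large $m$'' clause is genuinely uniform, is a routine but necessary bookkeeping step layered on top.
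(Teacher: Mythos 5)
Your part~(2) argument follows essentially the paper's route: well-roundedness of $B_m=\Omega\times B_m'$ (from the zero-measure boundary of $\Omega$ and $\prod_p\bG(\zp)$-invariance of the finite-place conditions), equidistribution of the $\bL(\q)\ba\bL(\A)$-periods via Corollary~\ref{cmgeneral} (maximality of $\bL$ forcing $\bM=\bG$, simple-connectedness of $\bG$ forcing $G_W=\bga$), and then Proposition~\ref{rcounting}. However, you omit one indispensable input that the paper cites explicitly: the divergence $\mu(B_m)\to\infty$ (as $m\to\infty$ subject to $B_m\ne\emptyset$), taken from \cite[Cor.~7.7]{BO2}. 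Proposition~\ref{rcounting} is only applicable to a well-rounded family \emph{whose volume goes to infinity}; without this, the asymptotic $\#(v_0\bG(\q)\cap B_m)\sim\mu(B_m)$ is not established, and the deduction of part~(1) --- that the count is eventually positive whenever $B_m\ne\emptyset$ --- collapses.

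For part~(1) you take a genuinely different route, invoking strong approximation for $\bG$ to produce a rational point directly in the open adelic set cut out by the denominator conditions. This would indeed work and is more elementary, but it is not the paper's argument: the paper extracts part~(1) as a byproduct of the part~(2) counting, observing that $\#(v_0\bG(\q)\cap B_m)\sim\mu(B_m)$ must eventually be positive when $B_m\ne\emptyset$, hence $\bU(m^{-1}\z)\cap\Omega\ne\emptyset$. Your explanation of the ``for all sufficiently large $m$'' proviso in terms of ``the obstruction coming from the failure of $\bG(\A)=\bG(\q)\pi(\tilde\bG(\A))W$'' is off: $\bG$ is simply connected here, so $\pi(\tilde\bG(\A))=\bG(\A)$ and there is no such obstruction; in the paper the proviso arises purely because the asymptotic only constrains large $m$, and in your strong-approximation route it would not be needed at all. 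You should also be aware that the well-roundedness here is of the specific family $\{B_m\}$ with the real factor $\Omega$ held fixed, and the relevant statement in Proposition~\ref{rcounting} averages the period over the whole region $B_m$ rather than translating by a single element $g_m$ as you describe; the latter is a loose description that would not mesh correctly with the well-roundedness argument as actually executed.
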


We remark that the assumption that both $\bG$ and $\bL$
are simply connected imply that the group $\bG(\A_f)$ of finite adeles acts
transitively on $\bU(\A_f)$ [BR], and hence $\mu_p$'s are invariant
measures
on $v_0\bG(\qp)=\bU(\qp)$ for each finite $p$.

When $\bU=\bG$, i.e., a group variety, Corollary \ref{linnik} was
observed in
\cite{Gu}, as an application of the Adelic mixing theorem.
(2) of the above corollary was previously obtained
in \cite{EO} and \cite{O2} assuming that both $\bG(\br)$ and $\bL(\br)$
has no compact factors and that $\bU(m^{-1}\z)\ne \emptyset$.
See also \cite{BO1} and \cite{ELMV} for the case when $\bL$ is a torus.



\vs
\noindent{\bf Organization:}
In section \ref{s:volume}, we discuss how to extend
a height function of $\bU(K)$ to $\bU(\A)$ so that the action of $\bga$ is
uniformly continuous and proper,
and obtain the asymptotic of the volume of the height balls
in each $M$-orbit of $\bU(\A)$ for a finite index subgroup $M$ of
$\bga$. The second part uses the work of Chambert-Loir and Tschinkel.
In section \ref{s:won} we discuss the wonderful varieties, introduced by Luna,
which are the generalization of the wonderful compactification
of symmetric varieties constructed by De Concini- Procesi. They provide
main examples of our theorem \ref{wonderfulcm}.
In section \ref{s:ea}, we deduce Theorem \ref{dmadele} from
the corresponding theorem \ref{ssm} in the $S$-arithmetic setting, which
is proved in the last 2 sections of this paper.
In section \ref{s:c}, we prove main theorems of the introduction.
In section \ref{sec:ms}, we prove one part of Theorem \ref{ssm},
and the other part is proved in section \ref{sec:dm}.


\vs
\noindent{\bf Acknowledgment} We thank Akshay Venkatesh
for generously sharing his insights.  We thank Mikhail Borovoi who kindly wrote up
the appendix on our request. Oh also wants to thank IAS
where some part of this work was done during her stay in Feb-Mar, 2006.
Gorodnik would like to thank
Princeton University
for hospitality.
\section{Heights and Volume estimates} \label{s:volume}
 Let $K$ be a number field and $R$ the set of all normalized absolute values of $K$.
By $R_\infty$, we mean the subset of $R$ consisting of all
archimedean ones and set $R_f:=R\setminus R_\infty$.
For each $v\in R$,
we denote by $K_v$ the completion of $K$ with respect to the absolute value $|\cdot |_v$,
by $k_v$ the residue field, and by $\mathcal{O}_v$ the ring of integers of $K_v$.
The cardinality of $k_v$ is denoted by $q_v$.
For a finite subset $S$ of $R$, the ring of $S$-integers
is the subring of $K$ defined by $\mathcal O_S:=\{x\in K: |x|_v\le 1\;\;
\text{for all non-archimedean
$v\notin S$}\}$.

Throughout section \ref{s:volume}, we
let $\bG$ be a connected semisimple algebraic $K$-group
with a given $K$-representation $\bG\into \GL_{d+1}$.
Fix $u_0 \in \mathbb P^d(K)$ such that the orbit $\bU:=u_0\bG$ is
a $K$-subvariety.
We fix integral models $\mathcal U$ and
$\mathcal G$ of $\bU$ and $\bG$, respectively,
 over the ring $\mathcal O_S$ for some $S$.

Then the adelic space $\bU(\A)$ is the restricted topological product
of $\bU(K_v)$'s with respect to $\mathcal U(\mathcal O_v)$'s.
As well-known, this is a locally compact space.

For finite $S\subset R$,
we set $\bU_S:=\prod_{v\in S}\bU(K_v)$
and denote by $\bU(\A_S)$ the
 the restricted topological product
of $\bU(K_v)$'s, $v\in R\setminus S$,
 with respect to $\mathcal U(\mathcal O_v)$'s.
Then $\bU(\A)$ is canonically identified with the
direct product $\bU_S\times \bU(\A_S)$.
We set $\bU_{\A_f}:=\bU_{\A_{R_\infty}}$ and $\bU_\infty:=\prod_{v\in R_\infty}\bU(K_v)$.
The notations $\bG(\A)$, $\bG_S$ and $\bG_\infty$ etc. are similarly defined.
Note that both $\bG_S$
and $\bG(\A_S)$ can be considered as subgroups of $\bG(\A)$ in a canonical way.


Let $\bX\subset \mathbb P^d$
be the Zariski closure of $\bU$, which is then a
$\bG$-equivariant compactification
of $\bU$.
Consider the line bundle $L$ of $\bX$ given by the pull-back
of $\mathcal O_{\mathbb P^d}(1)$.
Then $L$ is very ample and $\bG$-linearized; in fact
 any $\bG$-linearized very ample line bundle
is of this form for some embedding.


Since $\bU(K)\ne \emptyset$,  Rosenlicht's theorem implies (cf. \cite[Lem. 1.5.1]{BR}):
\begin{Lem}\label{ros}
There is no non-constant invertible regular function on $\bU$.
\end{Lem}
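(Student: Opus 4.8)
The plan is to deduce this from Rosenlicht's theorem on invertible regular functions on homogeneous spaces. Recall that Rosenlicht's theorem asserts that for a connected algebraic group $\bG$ acting on an irreducible variety $\bU$ with a dense orbit (in particular, when $\bU$ is itself a homogeneous space $\bU = u_0\bG$), the group $\sO(\bU)^*/\kbar^*$ of invertible regular functions modulo constants is a finitely generated free abelian group, on which $\bG$ acts trivially; moreover each such function is an eigenfunction for the $\bG$-action, i.e.\ $f(ug) = \chi(g) f(u)$ for a character $\chi$ of $\bG$. First I would invoke this to reduce the problem to showing that $\bG$ admits no nontrivial $K$-character: indeed, if $f \in \sO(\bU)^*$ is nonconstant, then after base change to $\kbar$ it gives rise to a nontrivial character $\chi: \bG \to \mathbb{G}_m$. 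But $\bG$ is semisimple, hence equals its own derived group, so every character of $\bG$ (over $\kbar$, and a fortiori over $K$) is trivial. Therefore $\chi = 1$, which forces $f$ to be $\bG$-invariant; since $\bU = u_0\bG$ is a single orbit, a $\bG$-invariant regular function is constant on $\bU$, a contradiction.

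Alternatively, and perhaps more cleanly given the citation \cite[Lem.~1.5.1]{BR} already flagged in the excerpt, I would simply quote that lemma: the statement that $K[\bU]^* = K^*$ for $\bU = \bL \ba \bG$ with $\bG$ semisimple is exactly of the form proved there, the key hypotheses being that $\bU(K) \neq \emptyset$ (so that $\bU$ is not merely a form but an honest quotient with a rational point, allowing us to identify $\bU$ with $\bL\ba\bG$ over $K$) and that $\bG$ is semisimple (killing all characters). The role of $\bU(K) \neq \emptyset$ is to ensure $u_0 \in \mathbb{P}^d(K)$ gives a genuine $K$-rational base point, so the orbit map $\bG \to \bU$ is defined over $K$ and we may pull back regular functions on $\bU$ to regular functions on $\bG$.

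The one point requiring a little care — and the main (minor) obstacle — is the passage between $K$ and $\kbar$: Rosenlicht's eigenfunction statement is geometric, so one argues that an invertible $f \in \sO(\bU)$ defined over $K$ becomes, over $\kbar$, a product of a constant and a character-eigenfunction for the $\bG_{\kbar}$-action; one then checks that the associated character is $K$-rational (by Galois descent, since $f$ is) and concludes it is trivial by semisimplicity. Since $\bG$ is connected, $\sO(\bU)^*/\kbar^*$ has no torsion and no issue of multiple orbit components arises. I expect the writeup to be two or three lines, citing \cite[Lem.~1.5.1]{BR} directly and remarking that the semisimplicity of $\bG$ is what makes the character group vanish.
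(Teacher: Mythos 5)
Your proposal is correct and matches the paper's approach: the paper gives no explicit proof but simply cites Rosenlicht's theorem together with \cite[Lem.~1.5.1]{BR}, and the mechanism you spell out---invertible regular functions on a homogeneous space of a connected group are character eigenfunctions, and semisimplicity of $\bG$ kills the character---is exactly the content of that citation. The only remark I would add is that the $K$/$\kbar$ passage is even easier than you suggest: a non-constant $f\in K[\bU]^*$ is still non-constant in $\kbar[\bU_{\kbar}]^*$ (geometric integrality of $\bU$ gives $K[\bU]\cap\kbar=K$), so it suffices to rule out such functions over $\kbar$, where Rosenlicht and semisimplicity apply directly; no Galois descent of the character is actually needed.
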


Using a theorem of Luna \cite{lu}
and the above lemma, we obtain the following:
\begin{Thm}\label{ad-extension}
Suppose that $\bL:=\operatorname{stab}_\bG(u_0)$ is semisimple and
 $[\operatorname{N}_\bG(\bL):\bL]<\infty$.
 Then any global section $\s$ of $L$ such that $\bU=\{\s\ne 0\}$ is
$\bG$-invariant, and unique up to a scalar multiple.
\end{Thm}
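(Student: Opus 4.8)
The plan is to prove, for the given $\bG$-linearization of $L$, that $g\cdot\s=\s$ for every $g\in\bG(\bar{K})$; the uniqueness assertion then falls out of the same computation, and one descends from $\bar{K}$ to $K$ at the end. The only non-formal inputs I would need are Lemma \ref{ros} (Rosenlicht), the fact that a connected semisimple group has no nontrivial characters, and --- for one step --- Luna's structure theory \cite{lu} of $\bG$-varieties with a dense orbit.

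First I would observe that $\bU=u_0\bG$, and hence $\bX\setminus\bU$, is $\bG$-stable, so for each $g$ the translate $g\cdot\s$ is again a global section of $L$ with $\{g\cdot\s\ne 0\}=\bU$. Thus $g\cdot\s$ and $\s$ are two nonzero sections of $L$ with the same zero set on the integral variety $\bX$, so $g\cdot\s/\s$ is a rational function on $\bX$, regular and nowhere-vanishing on the dense orbit $\bU$; by Lemma \ref{ros}, $\mathcal{O}(\bU)^{\times}=\bar{K}^{\times}$, whence $g\cdot\s=\chi(g)\,\s$ for a scalar $\chi(g)\in\bar{K}^{\times}$. Next I would check that $g\mapsto\chi(g)$ is an algebraic character of $\bG$: it is a morphism, since $\He^0(\bX,L)$ is a finite-dimensional rational $\bG$-module (as $L$ is $\bG$-linearized and $\bX$ is projective) and $\chi(g)$ is extracted from $g\cdot\s$ by a linear projection; and it is multiplicative, since $(gh)\cdot\s=g\cdot(\chi(h)\,\s)=\chi(g)\chi(h)\,\s$. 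Because $\bG$ is connected semisimple, $\Hom(\bG,\mathbb{G}_m)=0$, so $\chi\equiv 1$ and $\s$ is $\bG$-invariant. For uniqueness, if $\s'$ is another global section with $\bU=\{\s'\ne 0\}$, the same reasoning gives $\s'/\s\in\mathcal{O}(\bU)^{\times}=\bar{K}^{\times}$, so $\s'=c\,\s$, with $c\in K^{\times}$ when $\s$ and $\s'$ are defined over $K$. (An equivalent route, likely closer to the argument through Luna's theorem, is to restrict at the outset: since $\bL$ is semisimple, $\Hom(\bL,\mathbb{G}_m)=0$, and Luna's theory yields that the $\bG$-linearized bundle $L|_{\bU}$ on the homogeneous space $\bU=\bL\backslash\bG$ is $\bG$-equivariantly trivial; one fixes an invariant trivializing section $\s_0$ of $L|_{\bU}$, writes $\s|_{\bU}=f\,\s_0$ with $f\in\mathcal{O}(\bU)^{\times}=\bar{K}^{\times}$ by Lemma \ref{ros}, and extends back to $\bX$ using density of $\bU$.)

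The character computation is soft, and as written it uses only the semisimplicity of $\bG$ together with Lemma \ref{ros}. I expect the genuine work in the authors' treatment --- and the reason the hypotheses that $\bL$ is semisimple with $[\operatorname{N}_{\bG}(\bL):\bL]<\infty$, and Luna's theorem \cite{lu}, enter --- to lie in the second route above: showing that every $\bG$-linearized line bundle on $\bU=\bL\backslash\bG$ is equivariantly trivial, which (via the general theory of $\bG$-linearizations on varieties with a dense orbit) reduces to $\Hom(\bL,\mathbb{G}_m)=0$, and in placing the a priori singular and non-normal closure $\bX$ within the framework of spherical and wonderful embeddings so that the boundary $\bX\setminus\bU$ and the restriction of $L$ there are under control. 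With that geometric bookkeeping in hand, identifying $\s$ up to a scalar is the easy part.
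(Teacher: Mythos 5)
Your proof is correct and takes a genuinely simpler route than the paper's. The paper first invokes Luna's closed-orbit criterion --- which is where the hypotheses on $\bL$ (semisimplicity and finiteness of $[\operatorname{N}_\bG(\bL):\bL]$) enter, via the stabilizer $\bH$ of a lift $y\in K^{d+1}$ of $u_0$ --- together with Mumford's GIT criterion to produce a $\bG$-invariant section $\s_1$ of some power $L^k$ nonvanishing on $\bU$; Rosenlicht then makes $\s^k$ a constant multiple of $\s_1$, hence invariant, so $g\mapsto (g\cdot\s)/\s$ takes values in the finite group of $k$-th roots of unity, and connectedness of $\bG$ forces it to be identically $1$. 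You bypass all of this: by Rosenlicht alone the line through $\s$ in the rational $\bG$-module $H^0(\bX,L)$ is $\bG$-stable, so $g\mapsto (g\cdot\s)/\s$ is an algebraic character of $\bG$, which vanishes because $\bG$ is connected semisimple. Your argument uses neither Luna nor Mumford and, notably, does not use the hypotheses on $\bL$ at all; in the paper they serve only to run the GIT detour, which you replace by exploiting $\Hom(\bG,\mathbb{G}_m)=0$ rather than mere connectedness of $\bG$. One small inaccuracy: your parenthetical ``second route'' credits Luna's theorem for the equivariant triviality of $L|_\bU$, but that is really the elementary bijection between $\bG$-linearized line bundles on $\bL\backslash\bG$ and characters of $\bL$; the paper's appeal to Luna is for a different fact (closedness of the affine orbit $y\bG$), and in any case neither is needed for the argument you actually give.
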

\begin{proof}
Pick a point $y\in K^{d+1}\setminus \{0\}$ lying above $u_0$.
Let $\bH$ denote the stabilizer of $y$ in $\bG$.
Since $\bH$ is a normal co-abelian subgroup of $\bL$
and $\bL$ is semisimple, $\bH$ is also semisimple and
$\bH^\circ$ is a finite index in $\bL$.
Hence the finiteness of $[\operatorname{N}_{\bG}(\bL): \bL]$ implies
that $\bH$ has finite index in its normalizer.
Now a theorem of Luna \cite[Corollary 3]{lu}
says that the orbit of $y$ is closed.
By \cite[Ch 2, \S 1, Prop 2.2]{Mu}, there exists a global $\bG$-invariant section
$\s_1$ of $L^k$ for some $k$ such that
$\s_1(u_0)\ne 0$. Hence $\bU\subset \{\s_1\ne 0\}$.

Since $\bU=\{\s^k\ne 0\}$,
 the ration $\s_1/\s^k$ is an invertible regular function
on $\bU$, which is a constant by Lemma \ref{ros}.
Hence
$\s^k$ is $\bG$-invariant.
For any $g\in \bG$,
${\s^g}/{\s}$ is a constant function, say, $\alpha_g$,
 on $\bU$ by the above lemma.
Now $\alpha: g\mapsto \alpha_g$
defines a homomorphism from $\bG$ into the group of $k$-roots of unity.
Since $\bG$ is connected, $\alpha$ must be $1$.
Hence $\s$ is invariant. The uniqueness follows by a similar argument.
\end{proof}


\subsection{Heights}\label{sec:height}
Let $\s_0, \cdots, \s_d$ be the global sections of $L$ obtained by pulling back
the coordinate functions $x_i$'s.
We assume that there is a $\bG$-invariant global section $\s$ of $L$ such that $\bU=\{\s\ne 0\}$.

\begin{Def}\label{def:am} An adelic metrization on the $\bG$-linearized line bundle
 $L$ on $\bX$ (with respect to $\s$)
 is a collection of $v$-adic metrics on $L$
for all $v\in R$
such that
\be
\item for each $v\in R_f$, $\|\cdot \|_v$ is locally constant in $\bU(K_v)$ in the $v$-adic topology.
\item for almost all $v\in R$,
$$ \|\s(x)\|_v= \left( \max_{0\le i\le d}\left| \frac{\s_i(x)}{\s(x)}\right|_v\right)^{-1}
\quad \text{for all $x\in \bU(K_v)$.}$$
\item  for each $v\in R_\infty$ and
for any $\e>0$, there exists a neighborhood $W_\e$ of $e$ in $\bG (K_v)$
such that for all $x\in \bU(K_v)$ and $g\in W_\e$,
$$(1-\e)\|\s(x) \|_v \le  \|\s(x g)\|_v\le (1+\e)\|\s(x)\|_v. $$
\ee
\end{Def}

Recall that a $v$-adic metric $\|\cdot \|_v$ on $L$ is a family $(\|\cdot\|_{x, v})_{x\in \bX(K_v)}$
of
$v$-adic Banach norms
on the fibers $L_x$ such that for every Zariski open $U\subset \bX$ and every section $\s\in H^0(U, L)$,
the map $U(K_v)\to \br$ given by $x\to \|\s\|_{x,v}$ is continuous in the
$v$-adic topology on $U(K_v)$.

We write $(\|\cdot\|_v)_{v\in R}$ for an adelic metric on $L$ and
call a pair $\mathcal L=(L, \|\cdot\|_v)$ an adelically metrized line bundle.
Note that an adelic metrization of $L$
extends naturally to tensor products $L^k$ for any $k\in \n$.

An adelically metrized line bundle $\mathcal L$ induces a family of local heights on $\bU(K_v)$:
$$\H_{\mathcal L,v}(x):=\|\s(x)\|_v^{-1}.$$

The following lemma can be proved in a standard way
 (see \cite[Ch.~2]{BG} for a detailed discussion of heights).
\begin{Lem}\label{height:p}
\begin{enumerate}
\item For each $v\in R$, $\inf_{x\in\bU(K_v)} \H_{{\mathcal L},v}(x)>0$.
\item For almost all $v$, $\inf_{x\in\bU(K_v)} \H_{{\mathcal L}, v}(x)=1$.
\item For almost all $v$, $\{x\in {\bU}(K_v):\, \H_{{\mathcal L},v}(x)=1\}=\mathcal{U}(\mathcal{O}_v)$.
\item Let $v\in R$.
If $x\to \infty$ in $\bU(K_v)$  (i.e., $x$ escapes every compact
subset), then $\H_{{\mathcal L}, v}(x)\to \infty$.
\end{enumerate}
\end{Lem}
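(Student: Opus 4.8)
This is Lemma \ref{height:p}, a list of four standard local properties of the local heights $\H_{\mathcal L,v}(x)=\|\s(x)\|_v^{-1}$ attached to the adelic metrization. I would prove the four parts essentially independently, relying on the defining properties (1)--(3) of Definition \ref{def:am} and the fact that $\bX$ is projective (hence $\bX(K_v)$ is compact in the $v$-adic topology for each $v$). The main point to keep in mind throughout is that $\bU=\{\s\neq 0\}$ is \emph{open} in $\bX$ but its complement $\bX\setminus\bU$ is where $\s$ vanishes, so $\H_{\mathcal L,v}$ blows up precisely as one approaches the boundary divisor.

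\textbf{Parts (1) and (4).} For a fixed $v$, the function $x\mapsto\|\s(x)\|_{x,v}$ is continuous on $\bX(K_v)$ (this is part of the definition of a $v$-adic metric, applied to the global section $\s\in H^0(\bX,L)$), and $\bX(K_v)$ is compact since $\bX$ is projective. Hence $\|\s\|_v$ attains a maximum $M<\infty$ on $\bX(K_v)$, giving $\H_{\mathcal L,v}(x)=\|\s(x)\|_v^{-1}\ge M^{-1}>0$ on $\bU(K_v)$, which is (1). For (4): if $x_n\to\infty$ in $\bU(K_v)$, then by compactness of $\bX(K_v)$ a subsequence converges to some $y\in\bX(K_v)$, and since $x_n$ leaves every compact subset of $\bU(K_v)$ while $\bU(K_v)$ is open, the limit point $y$ must lie in $(\bX\setminus\bU)(K_v)$, i.e. $\s(y)=0$, so $\|\s(x_n)\|_v\to\|\s(y)\|_{y,v}=0$ by continuity, whence $\H_{\mathcal L,v}(x_n)\to\infty$; running this over every subsequence gives (4).

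\textbf{Parts (2) and (3).} For almost all $v$, property (2) of Definition \ref{def:am} gives the explicit formula $\|\s(x)\|_v=\bigl(\max_i|\s_i(x)/\s(x)|_v\bigr)^{-1}$, so $\H_{\mathcal L,v}(x)=\max_i|\s_i(x)/\s(x)|_v$, which is the standard local height of the point $(\s_0(x):\dots:\s_d(x))\in\mathbb P^d(K_v)$ under the chosen coordinates; this is always $\ge 1$ because one of the ratios, interpreted after clearing denominators over $\sO_v$, is a $v$-adic unit (choosing a representative in $\sO_v^{d+1}$ that is primitive). This gives the bound $\ge 1$ in (2); for the equality $\inf=1$, one notes that the fixed integral point coming from the integral model $\mathcal U(\sO_v)$ is nonempty for almost all $v$ (after possibly enlarging $S$) and attains value $1$. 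Part (3) is then the observation, again valid for almost all $v$ using the integral model, that $\H_{\mathcal L,v}(x)=1$ iff the primitive representative of $(\s_0(x):\dots:\s_d(x))$ reduces to a point of $\mathcal U(k_v)$ that lifts, i.e. $x\in\mathcal U(\sO_v)$ — this is exactly the statement that the integral model $\mathcal U/\sO_S$ captures the integral points, and it holds for all but finitely many $v$. I would cite \cite[Ch.~2]{BG} for the routine verifications of (2) and (3) rather than reproving the theory of models of heights.

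\textbf{Main obstacle.} The only genuinely delicate step is part (3): pinning down for which $v$ the equality $\{\H_{\mathcal L,v}=1\}=\mathcal U(\sO_v)$ holds requires knowing that the integral model $\mathcal U\subset\mathbb P^d_{\sO_S}$ is, after enlarging $S$, a closed subscheme whose $\sO_v$-points are exactly the $v$-adic points reducing into the special fibre in the expected way, and that the metric in Definition \ref{def:am}(2) is compatible with this model. This is standard but is the one place where "almost all $v$" is really being used; everything else (parts (1), (2), (4)) is soft topology plus the definitions. I would phrase (3) as following from the construction of the integral model together with property (2) of the metric, deferring to \cite{BG} for the details.
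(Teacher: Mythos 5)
The paper offers no proof of Lemma \ref{height:p}: it simply remarks that the lemma ``can be proved in a standard way'' and refers to Bombieri--Gubler \cite[Ch.~2]{BG}. So there is no in-text argument to compare against; your job was to supply the standard one, and the route you take is exactly the expected one: parts (1) and (4) are soft topology (continuity of $x\mapsto\|\s(x)\|_v$ on the compact space $\bX(K_v)$, plus $\s=0$ on $\bX\setminus\bU$), while parts (2) and (3) come from the explicit formula $\H_{\mathcal L,v}(x)=\max_i|\s_i(x)/\s(x)|_v$ from Definition~\ref{def:am}(2), valid for almost all $v$, combined with the integral model $\mathcal U/\mathcal O_S$.

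One step in your sketch of part (2) is stated a little loosely and is worth tightening. You say the lower bound ``$\ge 1$'' holds ``because one of the ratios, interpreted after clearing denominators over $\mathcal O_v$, is a $v$-adic unit.'' That is true of the \emph{rescaled} coordinates, but the quantity $\max_i|\s_i(x)/\s(x)|_v$ itself is $1/|\s(x)|_v$ computed in the primitive representative, so the real content is the inequality $|\s(x)|_v\le 1$. This uses the (implicit in the paper's setup) fact that $\s$ is a $K$-linear combination $\s=\sum_i a_i\s_i$; then for almost all $v$ the $a_i$ lie in $\mathcal O_v$, so in the primitive representative $(y_0,\dots,y_d)$ one has $\s(x)=\sum a_iy_i\in\mathcal O_v$ and hence $\max_i|\s_i(x)/\s(x)|_v=\left(\max_i|y_i|_v\right)/|\s(x)|_v\ge 1$ by the ultrametric inequality; equality forces $\s(x)\in\mathcal O_v^\times$, which in turn is the cleanest way to see part (3), namely $\{\H_{\mathcal L,v}=1\}=\{x\in\bX(K_v):\s(x)\in\mathcal O_v^\times\}=\mathcal U(\mathcal O_v)$ once one observes that any two integral models agree for almost all $v$ and that $\mathcal X(\mathcal O_v)=\bX(K_v)$ by properness. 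With that one precision supplied, the argument is complete, and deferring the remaining bookkeeping to \cite{BG}, as the paper itself does, is reasonable.
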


\begin{Def} An adelic height function $\H_{\mathcal L} :\bU(\A)\to \br_{>0}$ associated to
$\mathcal L$ is defined by
\begin{equation}\label{def:h}
\H_{\mathcal L}(x):=\prod_{v\in R} \H_{\mathcal L, v}(x)\quad\text{for } x\in \bU(\A) .
\end{equation}
\end{Def}
The previous lemma implies that $\H_{\mathcal L}$ is a well-defined continuous proper function.
Moreover the following holds:
\begin{Lem}\label{l:proper}\label{p:well}\be
\item Set $$W_{\H_{\mathcal L}}:=\{g\in \bG(\A_f): \H_{\mathcal L}(xg)=\H_{\mathcal L}(x)\;\;
\text{for all $x\in \bU(\A)$}\}.$$
Then $W_{\H_{\mathcal L}}$ is an open subgroup of $\bG(\A_f)$.
\item For every compact subset $B\subset \bG(\A)$, there exists $c>0$ such that for
every $g\in B$ and $x\in \bU(\A)$,
$$
\H_{\mathcal L}(xg)< c\, \H_{\mathcal L}(x).
$$

\item
For every $\epsilon>0$, there exists a neighborhood $W$
of $e$ in $\bG(\A)$ such that for every $x\in\bU(\A)$ and $g\in W$,
$$
\H_{\mathcal L}(xg)< (1+\epsilon)\H_{\mathcal L}(x).$$
\ee
\end{Lem}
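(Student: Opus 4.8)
The plan is to deduce all three parts from the product formula $\H_{\mathcal L}(x)=\prod_{v\in R}\H_{\mathcal L,v}(x)$ together with the local properties in Definition \ref{def:am} and Lemma \ref{height:p}. The underlying idea is that away from a finite set $S$ of places the local height $\H_{\mathcal L,v}$ is identically $1$ on $\mathcal U(\mathcal O_v)$ and $\mathcal G(\mathcal O_v)$ stabilizes $\mathcal U(\mathcal O_v)$, so all three statements are really statements at the finitely many ``bad'' places, where one can argue by compactness.

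For (1), I would first fix a finite $S\supset R_\infty$ large enough that for $v\notin S$ we have $\H_{\mathcal L,v}\equiv 1$ on $\mathcal U(\mathcal O_v)=\{\H_{\mathcal L,v}=1\}$ (Lemma \ref{height:p}(2),(3)) and, enlarging $S$ if necessary, that the integral model $\mathcal G$ acts on $\mathcal U$ over $\mathcal O_v$, i.e.\ $\mathcal U(\mathcal O_v)\mathcal G(\mathcal O_v)=\mathcal U(\mathcal O_v)$; then for $g\in\prod_{v\notin S}\mathcal G(\mathcal O_v)$ one has $\H_{\mathcal L,v}(xg)=\H_{\mathcal L,v}(x)$ for $x\in\bU(\A)$ and every $v\notin S$. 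For $v\in S\cap R_f$, property (1) of Definition \ref{def:am} says $\|\cdot\|_v$ is locally constant on $\bU(K_v)$; since on a compact piece this gives finitely many values, the stabilizer $\{g\in\bG(K_v):\H_{\mathcal L,v}(xg)=\H_{\mathcal L,v}(x)\ \forall x\}$ contains an open subgroup of $\bG(K_v)$ — here one uses continuity of the $\bG(K_v)$-action and local constancy to produce, for each $x$ in a fundamental compact set, an open neighborhood of $e$ fixing the value, and patches finitely many of them. Intersecting over $v\in S\cap R_f$ with $\prod_{v\notin S}\mathcal G(\mathcal O_v)$ exhibits $W_{\H_{\mathcal L}}$ as containing an open subgroup of $\bG(\A_f)$; being a stabilizer it is a subgroup, hence open.

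For (2): write $B\subset \bG(\A)$ compact, so $B\subset \bG_S\times\prod_{v\notin S}\mathcal G(\mathcal O_v)$ after enlarging $S$ (every compact subset of the restricted product is contained in such a set). For $v\notin S$ the factor acts by height-preserving maps as above. For $v\in S\cap R_f$, local constancy of $\|\cdot\|_v$ together with continuity of the action and compactness of the projection $B_v$ of $B$ to $\bG(K_v)$ gives a uniform bound $\H_{\mathcal L,v}(xg)\le c_v\H_{\mathcal L,v}(x)$: indeed the function $(x,g)\mapsto \H_{\mathcal L,v}(xg)/\H_{\mathcal L,v}(x)$ is locally constant in $x$ and continuous in $g$, and one checks it is bounded on $\bU(K_v)\times B_v$ by reducing to a compact set of $x$'s using Lemma \ref{height:p}(4) (large-height $x$'s are moved by the compact $B_v$ within a controlled region, or more simply the ratio is invariant under the open stabilizer from part (1), so only finitely many $x$-classes matter). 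For $v\in R_\infty$, property (3) of Definition \ref{def:am} gives, by covering $B_v$ with finitely many translates $gW_\e$, a uniform multiplicative bound $c_v$. Taking $c=\prod_{v\in S}c_v$ finishes (2).

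For (3): choose $S$ as above. Given $\epsilon>0$, for each $v\in R_\infty$ pick the neighborhood $W_{\e_v}$ from Definition \ref{def:am}(3) with $\e_v$ chosen so that $\prod_{v\in R_\infty}(1+\e_v)\le 1+\epsilon/2$; for each $v\in S\cap R_f$ take an open subgroup $W_v\subset W_{\H_{\mathcal L}}$-type neighborhood as in part (1), on which $\H_{\mathcal L,v}$ is exactly preserved; for $v\notin S$ take $\mathcal G(\mathcal O_v)$, again height-preserving. Then $W:=\prod_{v\in R_\infty}W_{\e_v}\times\prod_{v\in S\cap R_f}W_v\times\prod_{v\notin S}\mathcal G(\mathcal O_v)$ is a neighborhood of $e$ in $\bG(\A)$, and for $g\in W$ and $x\in\bU(\A)$ the product over $v$ collapses to the archimedean contribution, giving $\H_{\mathcal L}(xg)\le(1+\epsilon/2)\H_{\mathcal L}(x)\le(1+\epsilon)\H_{\mathcal L}(x)$.

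The main obstacle, I expect, is the uniformity over all of $\bU(K_v)$ (not just a compact subset) in part (2): one must rule out blow-up of the ratio $\H_{\mathcal L,v}(xg)/\H_{\mathcal L,v}(x)$ as $x\to\infty$. The cleanest route is to observe that this ratio is invariant under the open subgroup of $\bG(K_v)$ stabilizing heights (part (1)), so it descends to the quotient of $\bU(K_v)$ by that open subgroup; in the non-archimedean locally-constant case this quotient meets every bounded region finitely, and a mild properness argument (Lemma \ref{height:p}(4)) confines the relevant $x$ to finitely many orbits, whence the bound. Everything else is bookkeeping with the product formula over the finite bad set $S$.
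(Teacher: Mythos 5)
Your overall scheme — split into a cofinite set of ``good'' finite places where the integral model controls everything, a finite set of bad finite places, and the archimedean places — is the same as the paper's. But there is a genuine gap at the good places that propagates into all three parts. You establish $\H_{\mathcal L,v}(x_vg_v)=\H_{\mathcal L,v}(x_v)$ for $g_v\in\mathcal G(\mathcal O_v)$ only when $x_v\in\mathcal U(\mathcal O_v)$, by observing that $\H_{\mathcal L,v}\equiv 1$ there and that $\mathcal G(\mathcal O_v)$ preserves $\mathcal U(\mathcal O_v)$. But $\mathcal U(\mathcal O_v)$ is a proper compact open subset of $\bU(K_v)$, and although each $x\in\bU(\A)$ has $x_v\in\mathcal U(\mathcal O_v)$ for almost all $v$, that exceptional set depends on $x$; the lemma is quantified over all $x\in\bU(\A)$, so you need invariance on all of $\bU(K_v)$. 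The ingredient you are missing, and which the paper uses, is linearity: $\bG$ acts on $\bU$ through the given $K$-representation into $\GL_{d+1}$, $\s$ is $\bG$-invariant, and for almost all $v$ the local height is $\max_i|\s_i(x)/\s(x)|_v$ (Def.\ \ref{def:am}(2)); since the $\s_i/\s$ transform by the linear representation and $\mathcal G(\mathcal O_v)\subset\GL_{d+1}(\mathcal O_v)$, the max norm — hence $\H_{\mathcal L,v}$ — is preserved on all of $\bU(K_v)$, not just on $\mathcal U(\mathcal O_v)$.

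The same issue undermines your treatment of the bad finite places and of part (2). The claim that the ratio $\H_{\mathcal L,v}(xg)/\H_{\mathcal L,v}(x)$ ``is invariant under the open stabilizer from part (1), so only finitely many $x$-classes matter'' fails on both counts: the ratio is \emph{not} invariant under $x\mapsto xk$ for $k$ in the stabilizer unless $g^{-1}kg$ also lies in the stabilizer, and even if it were, the quotient of the non-compact $\bU(K_v)$ by a compact open subgroup is not finite, so ``finitely many classes'' is unjustified; likewise there is no ``fundamental compact set'' to patch over. The paper closes all of this at once with the explicit linear estimate $\H_{\mathcal L,v}(xg)\le c_v\cdot\max_{i,j}|g_{ij}|_v\cdot\H_{\mathcal L,v}(x)$ valid for every $v\in R$, every $g\in\bG(K_v)$ and every $x\in\bU(K_v)$ — again a consequence of the linear action and $\bG$-invariance of $\s$ — which immediately gives (2) by taking the product over the finite set $S$, and gives (1) and (3) in combination with Def.\ \ref{def:am}(2),(3). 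You correctly identified uniformity in $x$ as the main obstacle, but the properness/descent work-arounds you proposed do not produce it; the linear structure does.
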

\begin{proof} Since $\|\cdot\|_v$ is locally constant for all $v\in R_f$,
$W_{\H_{\mathcal L}}\cap \bG(K_v)$ is an open subgroup of $\bG(K_v)$ for each $v\in R_f$.
Since $\bG$ acts on $\bU$ via the linear action of $\SL_{d+1}$ on
$\mathbb P^d$ and $s$ is invariant,
 $W_{\H_{\mathcal L}}\cap \bG(K_v)=\mathcal G ({\mathcal O}_v)$ for almost all $v\in R_f$ by
(3) of Def. \ref{def:am}.
It follows that $W_{\H_{\mathcal L}}$ is open. 
Any compact subset $B$ of $\bga$ is contained in
$\prod_{v\in S}B_v\times \prod_{v\notin S}\mathcal G(\mathcal O_v)$ for some finite $S\subset R$
where $B_v$ is a compact subset in $\bG(K_v)$. By enlarging $S$,
we may assume $\prod_{v\notin S}\mathcal G(\mathcal O_v) \subset W_{\H_{\mathcal L}}$.
On the other hand, for each $v\in R$, there exists $c_v>1$
such that $$\H_{\mathcal L, v} (xg)\le
c_v \cdot \max_{i,j} |g_{ij}|_v \cdot \H_{\mathcal L, v}(x)$$
for all $g=(g_{ij})\in \bG(K_v)$ and $x\in \bU(K_v)$.
Hence it suffices to take $c=\prod_{v\in S} ( c_v \cdot  \max_{g\in B_v} |g_{ij}|_v)$
for the claim (2).

The claim  (3) follows from the claim (1) and (3) of Def. \ref{def:am}.
\end{proof}




We will call the height function $\H_{\mathcal L}$ {\it regular} if the function
 $\prod_{v\in R_\infty}\H_{\mathcal L,v}^2$ is regular on
$\bU_\infty$, considered as the real algebraic variety via
the restriction of scalars.
For instance, the following height function is given by a regular
 adelic metrization : \begin{equation}\label{eq:local}
\H_{\mathcal L, v}(x)=
\left\{
\begin{tabular}{ll}
$\frac{(\sum_i |\s_i(x)|^2_v)^{1/2}}{|\s(x)|_v}$ & \hbox{for archimedean $v$},\\
$\frac{\max_i |\s_i(x)|_v}{|\s(x)|_v}$\quad & \hbox{for non-archimedean $v$}.
\end{tabular}\right.
\end{equation}
This property will be used to deduce
that the volume is H\"older.

The following example shows that our settings apply
to any affine homogeneous varieties:
\begin{Ex}\label{classical}{\rm Denote by $\A^{d}$ the $d$-dimensional affine space.
Let $\bU=v_0\bG\subset \mathbb A^d$ be an affine
 homogeneous $K$-
variety for
a connected $K$-group $\bG\subset \GL_d$ and a non-zero $v_0\in \mathbb A^d(K)$.
Via the embedding $\iota:\mathbb A^d \hookrightarrow \mathbb P^d$ given by
$$\iota(x_0, \cdots, x_{d-1})\mapsto (x_0: \cdots :x_{d-1} :1)$$
and the embedding $\GL_d\to \PGL_{d+1}$ by $A\mapsto \text{diag}(A,1)$,
the Zariski closure
 $\bX\subset \mathbb P^d$ of $\iota(\bU)$ is
a $\bG$-equivariant compactification.

Consider the line bundle $L=\iota^*(\mathcal O_{\mathbb P^d}(1))$
and sections $\s_i=\iota^* (x_i)$ for $0\le i\le d$.
 Since $\iota(\bU)=\{ \s_d\ne 0\}$ for
the $\bG$-invariant section $\s_d$, we can choose an adelic metrization $\mathcal L$ of $L$ so that
the local height functions $\H_{\mathcal L, v}$ on $\bU(K_v)$ are given by
\begin{equation}\label{eq:sh1}
\left\{
\begin{tabular}{ll}
$\left(|x_0|^2_v+\cdots+|x_{d-1}|_v^2+1\right)^{1/2}$ & \hbox{for archimedean $v$},\\
$\max \left\{|x_0|_v,\ldots, |x_{d-1}|_v, 1 \right\}$ & \hbox{for non-archimedean $v$}.
\end{tabular}\right.
\end{equation}
}
\end{Ex}

\subsection{Tamagawa volumes of height balls}
We assume that $\bL:=\text{stab}_\bG(u_0)$ is semisimple,
and $\s$ is an invariant global section of $L$ such that $\bU=\{\s\ne 0\}$.
Fix an adelic metrization $\mathcal L$
of $L$ and consider the height function $\H=\H_{\mathcal L}$ on $\bU(\A)$ defined
in \eqref{def:h}. For simplicity, we set $\H_v=\H_{\mathcal L,v}$.
We observe that $\bU$ is a geometrically irreducible nonsingular algebraic variety
and that $\bU$ supports a nowhere zero
differential form $\omega$ of top degree. We refer to \cite{We} for the following discussion on
the Tamagawa measure on $\bU(\A)$.
To the form $\omega$, we can associate measures $\mu_v$
on each $\bU(K_v)$. Then $\mu_v(\mathcal U(\mathcal O_v))=\frac{ \# \bU(k_v)}{q_v^{\text{dim} \bU}}$ for almost all
$v\in R$.
Since $\bU$ is a homogeneous space of a connected semisimple algebraic
group with the stabilizer subgroup being semisimple,
$\prod_v \mu_v (\mathcal U(\mathcal O_v))$ converges
absolutely,
and 
 $\omega$ defines the Tamagawa measure $$\mu= |\Delta_K|^{-\frac{1}{2} \text{dim} \bU} \prod_v \mu_v$$
on the space $\bU(\A)$ where $\Delta_K$ is the discriminant of $K$.


For $t>0$,
set $$B_t:=\{y\in \bU(\A):\H(y)<t\} .$$

In this section, we review a theorem of Chambert-Loir and Tschinkel
on asymptotic properties (as $t\to \infty$) of the Tamagawa volume $$
V(t):=\mu(B_t).
$$

First, we assume that $\bX$ is smooth and $\bX\backslash \bU$
is a divisor with normal crossings of irreducible
components $D_\alpha$, $\alpha\in \mathcal{A}$,
defined over finite field extensions $K_\alpha$ of $K$.
By extending $\omega$ to $\bX$, which we denote by $\omega$ by abuse of notation,
 we obtain a non-zero rational differential form on $\bX$ of top degree.
Since $\{\s= 0\}=\bX\backslash \bU$ and $\omega$ is nowhere zero on $\bU$,
\begin{align*}
\hbox{div}(\s)=\sum_{\alpha\in\mathcal{A}} m_\alpha D_\alpha\quad\hbox{and}\quad
-\hbox{div}(\omega)=\sum_{\alpha\in\mathcal{A}} n_\alpha D_\alpha
\end{align*}
for $m_\alpha\in\mathbb{N}$ and $n_\alpha\in\mathbb{Z}$.
The Galois group $\Gamma_K=\hbox{Gal}(\bar K/K)$ acts on $\mathcal{A}$.
We denote by $\mathcal{A}/\Gamma_K$ the set of $\G_K$-orbits.
Define
\begin{align}\label{defab2}
a(L)=\max_{\alpha\in\mathcal{A}}\left\{\frac{n_\alpha}{m_\alpha}\right\}\quad\hbox{and}\quad
b(L)=\#\left\{\alpha\in\mathcal{A}/\Gamma_K:\, \frac{n_\alpha}{m_\alpha}=a(L)\right\}.
\end{align}

\begin{Lem}
\be \item $D_\alpha$'s are not rationally equivalent;
\item $a(L)$ and $b(L)$ are independent of choices of $\s$ and $\omega$
\ee
\end{Lem}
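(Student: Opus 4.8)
The plan is to prove the two assertions separately, relying throughout on the geometric setup established before the statement: $\bX$ is smooth, $\bX\setminus\bU$ is a divisor with normal crossings whose irreducible components are $D_\alpha$, $\alpha\in\mathcal A$, and $\bU$ carries no nonconstant invertible regular function (Lemma \ref{ros}).

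First I would prove (1), that the $D_\alpha$'s are not rationally equivalent. Suppose for contradiction that $\sum_\alpha c_\alpha D_\alpha \sim 0$ for integers $c_\alpha$, not all zero (more generally a nontrivial relation among the classes). Then $\mathcal O_\bX\big(\sum_\alpha c_\alpha D_\alpha\big)$ is trivial, so there is a rational function $f$ on $\bX$ with $\operatorname{div}(f)=\sum_\alpha c_\alpha D_\alpha$. Since the support of $\operatorname{div}(f)$ is contained in $\bX\setminus\bU$, the restriction $f|_\bU$ is a regular function on $\bU$ with no zeros and no poles, i.e.\ an invertible regular function on $\bU$. By Lemma \ref{ros} it is constant, so $\operatorname{div}(f)=0$, forcing all $c_\alpha=0$. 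Hence the classes $[D_\alpha]$ are linearly independent in $\operatorname{Pic}(\bX)$ (in particular no two are rationally equivalent). I would phrase this as: $\operatorname{Pic}(\bU)=0$ (equivalently $\Gamma(\bU,\mathcal O_\bU^\times)=K^\times$ by Rosenlicht), and the localization exact sequence $\bigoplus_\alpha \ZZ\, D_\alpha \to \operatorname{Pic}(\bX)\to \operatorname{Pic}(\bU)\to 0$ then shows the $[D_\alpha]$ generate a free subgroup, the map $\ZZ^{\mathcal A}\to\operatorname{Pic}(\bX)$ being injective.

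Next I would prove (2), independence of $a(L)$ and $b(L)$ from the choices of $\s$ and $\omega$. The key is again Rosenlicht/Lemma \ref{ros}: any two global sections $\s,\s'$ of $L$ with $\bU=\{\s\neq0\}=\{\s'\neq0\}$ differ by an invertible regular function on $\bU$, hence by a nonzero scalar, so $\operatorname{div}(\s)=\operatorname{div}(\s')$ and the $m_\alpha$ are intrinsic. Similarly, any two nowhere-vanishing top-degree forms $\omega,\omega'$ on $\bU$ differ by a unit in $\Gamma(\bU,\mathcal O_\bU^\times)=K^\times$ (the form $\omega'/\omega$ is invertible regular on $\bU$), hence by a nonzero scalar, so $\operatorname{div}(\omega)=\operatorname{div}(\omega')$ as divisors on $\bX$ and the $n_\alpha$ are intrinsic as well. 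Therefore the ratios $n_\alpha/m_\alpha$, and with them $a(L)=\max_\alpha n_\alpha/m_\alpha$ and $b(L)=\#\{\alpha\in\mathcal A/\Gamma_K: n_\alpha/m_\alpha=a(L)\}$, depend only on $L$ (and on $\bU\subset\bX$), not on the auxiliary choices.

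The only genuinely delicate point is the application of Rosenlicht's theorem to the differential form: one must know that $\bU$ admits \emph{a} nowhere-vanishing top form and that any two such differ by a regular unit — the existence is already asserted in the text ($\bU$ is a homogeneous space under a semisimple group, so the canonical bundle is trivial), and the ``differ by a unit'' step is exactly the statement that $\operatorname{div}(\omega'/\omega)$ has empty support on $\bU$, whence $\omega'/\omega\in\Gamma(\bU,\mathcal O_\bU^\times)=K^\times$. I expect the main obstacle, such as it is, to be bookkeeping: making sure that when one extends $\s$ and $\omega$ from $\bU$ to rational sections/forms on $\bX$ the divisor-supports really do land in $\bigcup_\alpha D_\alpha$ and that $m_\alpha\ge1$ for all $\alpha$ (so that all ratios $n_\alpha/m_\alpha$ are defined), both of which follow from $\{\s=0\}=\bX\setminus\bU$ together with the normal-crossings hypothesis. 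Everything else is a direct consequence of Lemma \ref{ros}.
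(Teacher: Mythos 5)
Your proof is correct and uses exactly the same mechanism as the paper: Lemma \ref{ros} (Rosenlicht) forces any rational function or form whose divisor is supported in $\bX\setminus\bU$ to be a nonzero scalar, which gives both the (linear) independence of the $[D_\alpha]$ and the uniqueness of $\operatorname{div}(\s)$ and $\operatorname{div}(\omega)$. One small caveat on the aside: the injectivity of $\bigoplus_\alpha \ZZ D_\alpha \to \operatorname{Pic}(\bX)$ is controlled by the vanishing of $\Gamma(\bU,\mathcal O_\bU^\times)/K^\times$ in the extended localization sequence, not by $\operatorname{Pic}(\bU)=0$ as you suggest as an equivalent reformulation; your direct argument is what actually carries the proof, and it is fine.
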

\begin{proof}
 If $D_\alpha=\hbox{div}(f)+D_\beta$ for some $f\in K(X)^*$,
then the poles as well as the zeros of $f$ must lie outside $\bU$, and hence
$f$ is constant by Lemma \ref{ros}, proving (1).
Since $\s$ is unique up to constant, again by Lemma \ref{ros},
the independence of $m_\alpha$'s on $\s$ is clear.
Similarly, any non-zero differential form on $\bX$ of top degree,
which is nowhere zero on $\bU$, is a multiple of $\omega$ by a constant. Hence
 $n_\alpha$'s are determined independently on the choice of $\omega$.
\end{proof}

In general,
 we take an equivariant resolution of singularities $\pi:\tilde \bX\to \bX$
such that $\tilde \bX$ is smooth and the boundary
$\pi^{-1}(\bX\backslash \bU)$
is a divisor with  normal crossings. Then the constants $a(L)$ and $b(L)$ are defined
as above with respect to the  pull-backs $\pi^*(\s)$ and $\pi^*(\omega)$.
We refer to \cite{BMi} and \cite{V} for constructions of equivariant resolutions
of singularities.

We consider the Mellin transform of $V(t)$:
\begin{align*}
\eta(s)&:= \int_0^\infty t^{-s}\, dV(t)\\
&=\int_{\bU(\A)} \H(x)^{-s}\, d\mu (x).
\end{align*}
Hence
$$
\eta(s)=|\Delta_K|^{-\frac{1}{2} \text{dim} X}\prod_v \eta_v(s)
$$
where
$$
\eta_v(s):=\int_{\bU(K_v)} \H_{v}(x)^{-s}\, d\mu_v(x).
$$
Let
$$
\Omega_t=\{s\in\mathbb{C}:\,\, \hbox{\rm Re}(m_\alpha s-n_\alpha)>t, \alpha\in\mathcal{A}\}.
$$

\begin{Thm}[Chambert-Loir, Tschinkel]\label{th:ct}
\begin{enumerate}
\item For each $v\in R$, the integral $\eta_v(s)$ is absolutely convergent for $s\in\Omega_{-1}$.
\item The integral $\eta(s)$ converges absolutely for $s\in\Omega_0$,
and
$$
\eta(s)=\phi(s)\prod_{\alpha\in\mathcal{A}/\Gamma_K} \zeta_{K_\alpha}(m_\alpha s-n_\alpha+1)
$$
where $\zeta_{K_\alpha}$ is the Dedekind zeta function of $K_\alpha$,
and $\phi(s)$ is a bounded holomorphic function for $s\in\Omega_{-1/2+\epsilon}$, $\epsilon>0$.
\end{enumerate}
\end{Thm}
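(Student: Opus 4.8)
The plan is to reduce the statement to the general machinery of Chambert-Loir--Tschinkel on height zeta functions of equivariant compactifications and adelic integrals, as developed in \cite{CT2,CT3}, and to verify that the present setup satisfies the hypotheses of that theory. The first task is purely local: for each $v\in R$, analyze the local integral $\eta_v(s)=\int_{\bU(K_v)}\H_v(x)^{-s}\,d\mu_v(x)$. Choosing local analytic coordinates near the boundary divisor in which $\bX\setminus\bU=\{\s=0\}$ is a union of coordinate hyperplanes and in which $\omega$ and $\s$ have the prescribed orders $n_\alpha$ and $m_\alpha$ along $D_\alpha$, the integrand looks like $\prod_\alpha |z_\alpha|_v^{m_\alpha s - n_\alpha}$ times a unit against the local measure; integrating coordinate by coordinate (a one-variable local integral $\int_{|z|_v\le 1}|z|_v^{w}\,dz$ converges exactly for $\mathrm{Re}(w)>-1$) gives absolute convergence precisely on $\Omega_{-1}$. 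This is standard (it is where the very ampleness of $L$ and smoothness of $\bX$, or the resolution $\pi$, are used), so I would state it and refer to \cite[Ch.~2]{BG} and \cite{CT2} for the routine estimates rather than grinding through the partition-of-unity argument.

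The second, global, step is to multiply the local factors and compare with Euler products of Dedekind zeta functions. For almost all $v$ the local metric is the standard one of Definition \ref{def:am}(2), so by Lemma \ref{height:p}(3) the contribution of $\mathcal U(\mathcal O_v)$ to $\eta_v(s)$ is $\mu_v(\mathcal U(\mathcal O_v)) = \#\bU(k_v)/q_v^{\dim\bU}$, and the contribution from points reducing into a single boundary stratum $D_\alpha$ contributes, after the coordinate computation above, a factor behaving like $q_v^{-(m_\alpha s - n_\alpha + 1)}$ over the closed points of $D_\alpha$ with residue field an extension of $k_v$. Summing the geometric series and using the Lang--Weil / Weil-conjecture point counts $\#\bU(k_v)$, $\#D_\alpha(k_v)$, one finds
$$
\eta_v(s) = \left(\prod_{\alpha\in\mathcal A/\Gamma_K}\zeta_{K_\alpha,v}(m_\alpha s - n_\alpha + 1)\right)\cdot \phi_v(s),
$$
where $\zeta_{K_\alpha,v}$ is the local Euler factor and $\phi_v(s) = 1 + O(q_v^{-1-\epsilon'})$ uniformly for $s$ in any $\Omega_{-1/2+\epsilon}$; here the gain of the extra $1/2$ in the exponent is exactly the square-root saving from the Weil bounds on $\#D_\alpha(k_v)$ versus the main term. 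Taking the product over all $v$, the zeta factors assemble into $\prod_\alpha \zeta_{K_\alpha}(m_\alpha s - n_\alpha + 1)$, and $\phi(s) := |\Delta_K|^{-\frac12\dim\bX}\prod_v\phi_v(s)$ converges absolutely and is bounded holomorphic on $\Omega_{-1/2+\epsilon}$ by the just-stated uniform bound and the convergence of $\sum_v q_v^{-1-\epsilon'}$. The convergence of $\eta(s)$ itself on $\Omega_0$ then follows because each $\zeta_{K_\alpha}(m_\alpha s - n_\alpha + 1)$ converges for $\mathrm{Re}(m_\alpha s - n_\alpha + 1)>1$, i.e. on $\Omega_0$; absolute convergence of the infinite product $\prod_v\mu_v(\mathcal U(\mathcal O_v))$ is guaranteed by the hypothesis already invoked in the construction of the Tamagawa measure (the stabilizer $\bL$ being semisimple).

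The identity $\eta(s)=\int_0^\infty t^{-s}\,dV(t)=\int_{\bU(\A)}\H(x)^{-s}\,d\mu(x)$ is just the definition of the Mellin--Stieltjes transform together with Fubini, valid wherever the right-hand side converges absolutely, so it requires no separate argument beyond citing the convergence established above. The main obstacle, and the only place real work is hidden, is the uniformity of the local estimate $\phi_v(s)=1+O(q_v^{-1-\epsilon'})$ on the half-plane $\Omega_{-1/2+\epsilon}$: one must control the boundary contributions to $\eta_v(s)$ not just at a point but uniformly in $s$ in a region extending past the natural abscissa, which is precisely the technical heart of \cite{CT2,CT3} and relies on the Weil conjectures for the smooth projective $D_\alpha$. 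Since this is exactly the content of the cited theorem, my proposal is to verify the structural hypotheses (geometric irreducibility and smoothness of $\bU$, existence of the nowhere-zero top form $\omega$, the normal-crossings boundary after resolution, and convergence of the Tamagawa product via semisimplicity of $\bL$) and then quote \cite{CT2,CT3} for the analytic conclusion, rather than reproving it.
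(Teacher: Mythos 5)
Your approach --- verify the structural hypotheses (geometric irreducibility and smoothness of $\bU$, normal-crossings boundary after resolution, semisimplicity of $\bL$ guaranteeing convergence of the Tamagawa product) and then quote Chambert-Loir--Tschinkel for the analytic content --- is exactly what the paper does: it cites \cite[Lemma~8.2]{CT2} for the non-archimedean local convergence, observes that for archimedean $v$ the same argument applies when $\H_v^2$ is regular and then extends to arbitrary $\H_v$ by equivalence of norms on finite-dimensional spaces, and cites \cite[Corollary~11.4]{CT2} (see also \cite{CT3}) for the global factorization. Your sketch of the Igusa-type local computations and of the Weil-conjecture gain of $1/2$ in the exponent is a faithful elaboration of what those citations contain, but the paper does not reproduce it either.
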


The first claim follows from \cite[Lemma 8.2]{CT2} for non-archimedean place.
For archimedean $v$, if $\H_{ v}^2$ is regular on $\bU$,
the same proof applies. Since any two norms on a finite dimensional vector spaces are
equivalent to each other, this implies the first claim for any local height $\H_{v}$.
The second claim is \cite[Corollary 11.4]{CT2}.
See also the recent preprint \cite{CT3}.

\begin{Cor}\label{c:volume}
If $a(L)>0$, then there exist a polynomial $P$ of degree $b(L)-1$ and
 $\delta>0$ such that
$$
V(t)=t^{a(L)}P(\log t)+O(t^{a(L)-\delta}) \quad \hbox{as $t\to\infty$}.
$$

\end{Cor}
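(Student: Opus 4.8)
The plan is to read off the asymptotics of $V(t)$ from the analytic properties of its Mellin transform $\eta(s)$ supplied by Theorem~\ref{th:ct}, via a contour-shift (Tauberian) argument of the kind carried out by Chambert-Loir and Tschinkel in \cite{CT2,CT3}. First I would record that $V$ is non-negative and non-decreasing and that $V(t)=0$ for $t$ below $\inf_{\bU(\A)}\H$, which is a positive constant by Lemma~\ref{height:p}; integrating by parts then gives $\eta(s)=s\int_0^\infty V(t)\,t^{-s-1}\,dt$ for $\operatorname{Re}(s)$ large, so that Mellin inversion reads $V(t)=\frac1{2\pi i}\int_{(c)}\eta(s)\,\frac{t^s}{s}\,ds$ for any $c>a(L)$. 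By Theorem~\ref{th:ct} the integral defining $\eta$ converges absolutely on $\Omega_0=\{\operatorname{Re}(s)>a(L)\}$, and on $\Omega_{-1/2+\epsilon}$ one has the factorization $\eta(s)=\phi(s)\prod_{\alpha\in\mathcal{A}/\Gamma_K}\zeta_{K_\alpha}(m_\alpha s-n_\alpha+1)$ with $\phi$ holomorphic and bounded.

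Next I would locate and order the rightmost singularity of $\eta$. Since $\mathcal{A}$ is finite and each $\tfrac{n_\alpha-1/2+\epsilon}{m_\alpha}<a(L)$, the region $\Omega_{-1/2+\epsilon}$ contains a genuine half-plane $\{\operatorname{Re}(s)>a(L)-\delta_0\}$ for some $\delta_0>0$. Each $\zeta_{K_\alpha}$ extends meromorphically to $\CC$ with a simple pole at $1$ as its only singularity, so $\eta$ is meromorphic on this half-plane with poles only at the points $s=n_\alpha/m_\alpha$; the rightmost one is $s=a(L)$, and its order equals the number of $\alpha\in\mathcal{A}/\Gamma_K$ with $n_\alpha/m_\alpha=a(L)$, namely $b(L)$. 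The leading Laurent coefficient of $(s-a(L))^{b(L)}\eta(s)$ at $s=a(L)$ is strictly positive: the Dedekind zeta functions contribute positive residues at $1$, and $\phi(a(L))>0$ since $\phi$ is, up to a bounded holomorphic factor, a product of positive local densities; in particular $a(L)$ is a genuine pole of $\eta$, of order exactly $b(L)$, consistently with Landau's theorem for the positive measure $dV$.

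Finally I would run the contour shift. Starting from a truncated Mellin inversion $V(t)=\frac1{2\pi i}\int_{c-iT}^{c+iT}\eta(s)\,\frac{t^s}{s}\,ds+(\text{truncation error})$ with $c=a(L)+\epsilon$, I move the segment to $\operatorname{Re}(s)=a(L)-\delta$ for a small $\delta>0$ chosen so that $a(L)$ is the only pole of $\eta$ in the closed strip and the new line still lies in $\Omega_{-1/2+\epsilon}$. The residue at $s=a(L)$ contributes $t^{a(L)}P(\log t)$ with $P$ of degree $b(L)-1$ and leading coefficient a positive multiple of the Laurent coefficient found above, while the shifted segment, the two horizontal segments, and the Perron truncation error are bounded using the polynomial growth of $\eta$ in vertical strips --- which follows from the boundedness of $\phi$ on $\Omega_{-1/2+\epsilon}$ together with the functional equations and convexity bounds for the $\zeta_{K_\alpha}$; optimizing $T$ as a suitable power of $t$ then yields the error term $O(t^{a(L)-\delta'})$ for some $\delta'>0$. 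I expect the main technical point to be precisely this bookkeeping --- controlling the growth of $\eta$ on the shifted contour and choosing the truncation height --- which is the content of the Tauberian theorem of \cite{CT2,CT3}; note that no zero-free region is needed here, since the Dedekind zeta functions appear in the numerator of $\eta$, so their zeros obstruct neither its holomorphy nor the growth bounds. The positivity of the leading coefficient of $P$ gives in particular $V(t)\asymp t^{a(L)}(\log t)^{b(L)-1}$, the form used in the sequel.
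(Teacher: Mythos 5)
Your argument is, at bottom, the same as the paper's: the paper records that $\eta$ extends meromorphically past $\operatorname{Re}(s)=a(L)$ with a pole of order $b(L)$ at $s=a(L)$ and satisfies a polynomial growth bound in vertical strips, and then invokes the Tauberian theorem of \cite[Thm.~4.4]{GMO} and \cite[Appendix]{CT1}; you are simply unfolding that Tauberian theorem via Perron/Mellin inversion and a contour shift, using the very same analytic inputs from Theorem~\ref{th:ct}. The one place to be a bit careful is the claim that the pole at $s=a(L)$ has order \emph{exactly} $b(L)$, i.e.\ that $\phi(a(L))\ne 0$ --- your appeal to ``positive local densities'' and Landau's theorem gestures at this but is not quite a proof; Landau only gives a singularity at the abscissa of convergence, not its order. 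This nonvanishing is in fact part of what \cite{CT2,CT3} establish, and the paper implicitly relies on it as well, so it is not a genuine gap relative to the paper's own argument --- just a point worth flagging as coming from the cited Chambert-Loir--Tschinkel analysis rather than from general principles.
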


\begin{proof}
It follows from Theorem \ref{th:ct} and the properties of the Dedekind zeta functions
that for some $\e>0$,
$\eta(s)$ has a meromorphic continuation to the region $\Omega_{-1/2+\epsilon}$
with a single pole at $s=a(L)$ of order $b(L)$. Moreover,
in this region, $\eta(s)$ satisfies the bound
$$
\left|\frac{(s-a(L))^{b(L)}}{s^{b(L)}}\eta(s)\right|\le
c \cdot |1+\op{Im}(s)|^N
$$
for some $c,N>0$. Hence, the claim follows from the Tauberian theorem (see
 \cite[Thm. 4.4]{GMO} and \cite[Appendix]{CT1}).
\end{proof}

\subsection{Volumes of homogeneous varieties}\label{sub:vol}
 We additionally assume
that the subgroup $\bL$ is connected.
We recall the properties of orbits of algebraic groups over local fields and over adeles.

\begin{Lem}\label{open}
\begin{enumerate}
\item For each $v\in R$, the space $\bU(K_v)$ consists of finitely many
$\bG(K_v)$-orbits, and each orbit is open and closed.
\item For almost all $v$, $\mathcal{G}(\mathcal{O}_v)$ acts transitively on $\mathcal{U}({\Cal O_v})$.
\item The orbits of $\bG(\A)$ in $\bU(\A)$ are open and closed.
\end{enumerate}
\end{Lem}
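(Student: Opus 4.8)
The plan is to treat the three statements in order, reducing each to standard facts about algebraic groups over local fields and the behaviour of smooth schemes over $\mathcal{O}_v$. For (1), I would use that $\bU$ is a homogeneous space, hence smooth, and that $\bU(K_v)\subset \bX(K_v)$ is locally compact in the $v$-adic topology. Each $\bG(K_v)$-orbit in $\bU(K_v)$ is locally closed by the general theory (the orbit map $\bG(K_v)\to \bU(K_v)$ is open onto its image because $\bL=\operatorname{stab}_\bG(u_0)$ is smooth, as $\bG$ and $\bL$ are connected and we are in characteristic zero), so each orbit is open in its closure; since the orbit is also a homogeneous space of $\bG(K_v)$ it has no smaller-dimensional orbit in its frontier, but one must rule out equidimensional orbits accumulating. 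The clean way is: the orbit $u_0\bG(K_v)$ is open in $\bU(K_v)$ because $d(\text{orbit map})$ is surjective (smoothness of $\bU$ at $u_0$, which holds as $\bU$ is homogeneous), hence its complement is a closed $\bG(K_v)$-invariant subset, again a finite union of $K_v$-points of lower-dimensional orbit closures; by Noetherian induction on dimension one gets finitely many orbits, each open and closed.

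For (2), I would pick the integral model $\mathcal{U}$ over $\mathcal{O}_S$; after enlarging $S$ we may assume $\mathcal{U}\to \operatorname{Spec}\mathcal{O}_S$ and $\mathcal{G}\to \operatorname{Spec}\mathcal{O}_S$ are smooth with geometrically connected fibres, that $\mathcal{G}$ acts on $\mathcal{U}$ extending the action on the generic fibre, and that the stabilizer scheme is smooth over $\mathcal{O}_S$ with semisimple (hence connected after further shrinking? — no, one keeps it as is) fibres. For $v\notin S$, reduction mod $v$ gives an action of $\mathcal{G}(k_v)$ on $\mathcal{U}(k_v)$; by Lang's theorem the orbit of the reduction $\bar u_0$ is all of $\mathcal{U}(k_v)$ provided $\mathcal{U}_{k_v}$ is a single orbit, which holds because $\mathcal{U}_{k_v}$ is a smooth geometrically connected homogeneous space and $H^1(k_v, \mathcal{L}_{k_v})$ is trivial by Lang again (this uses that the stabilizer is smooth connected over $k_v$; if it is not connected one instead uses Hensel's lemma plus a counting/dimension argument, or shrinks $S$ to force connectedness of the special fibres of a maximal semisimple subgroup — the relevant statement is finiteness of orbits and can be arranged). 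Then Hensel's lemma lifts: given $x\in \mathcal{U}(\mathcal{O}_v)$, its reduction lies in $\mathcal{G}(k_v).\bar u_0$, and smoothness of the orbit map together with Hensel lifts a preimage in $\mathcal{G}(\mathcal{O}_v)$, giving transitivity. The main obstacle here is bookkeeping about connectedness of special fibres of the stabilizer; I expect to invoke that $\bL$ semisimple has good reduction outside a finite set and that $H^1(k_v,\bL)$ vanishes by Lang's theorem (for $\bL$ connected) — this is precisely where the connectedness hypothesis on $\bL$, just added in this subsection, is used.

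For (3), I would argue adelically: an orbit $u_0 g\,\bG(\A)$ with $g\in\bG(\A)$ decomposes, via $\bG(\A)=\bG_S\times\bG(\A_S)$ for suitable finite $S\supset R_\infty$ containing all bad places, as a product; on the $\bG_S$ factor the orbit is open and closed by (1) applied at each $v\in S$ and finiteness of $S$, and on the restricted-product factor $\bU(\A_S)$ the point's orbit contains $\prod_{v\notin S}\mathcal{U}(\mathcal{O}_v)$ (an open subset of $\bU(\A_S)$) by (2), hence the orbit is open; its complement in $\bU(\A)$ is a union of other $\bG(\A)$-orbits, hence also open, so each orbit is closed as well. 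Concretely: $\bU(\A)$ is covered by the open sets $\bU_{S'}\times\prod_{v\notin S'}\mathcal{U}(\mathcal{O}_v)$, and on each such set the $\bG(\A)$-action restricted through $\bG_{S'}\times\prod_{v\notin S'}\mathcal{G}(\mathcal{O}_v)$ has open orbits by (1) and (2); openness of orbits is local for this cover, and open orbits in a topological group action are automatically closed. I do not expect (3) to present real difficulty once (1) and (2) are in hand; the genuine content is in (2).
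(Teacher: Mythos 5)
Your openness argument for (1) is fine, but your finiteness argument is wrong: you claim that the complement of $u_0\bG(K_v)$ in $\bU(K_v)$ is a ``finite union of $K_v$-points of lower-dimensional orbit closures,'' and then you run a Noetherian induction on dimension. This cannot work: $\bU$ is a single $\bG$-orbit over $\bar K$, so every $\bG(K_v)$-orbit in $\bU(K_v)$ is Zariski-dense in $\bU$ and has the same dimension as $\bU$. The complement of one open orbit is a disjoint union of other orbits of equal dimension, not a lower-dimensional closed subvariety, so there is nothing to induct on. (Concretely: take $\bG=\SL_2$ over $\QQ_p$ and $\bL=\SO_2$ for a suitable quadratic form; the orbits in $\bU(\QQ_p)$ are all open, all of dimension $2$, and you need a separate argument to see there are finitely many.) Having each orbit open and closed is not enough either, since $\bU(K_v)$ is not compact and a nonarchimedean $\bU(K_v)$ is totally disconnected with no a priori bound on the number of clopen pieces in a partition. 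The correct finiteness input, which is what the paper invokes, is the bijection $\bU(K_v)/\bG(K_v)\simeq\ker[H^1(K_v,\bL)\to H^1(K_v,\bG)]$ together with the finiteness of $H^1(K_v,\bL)$ for a linear algebraic group over a local field of characteristic zero. That cohomological finiteness is the real content of (1) and your sketch is missing it.

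Parts (2) and (3) are essentially the paper's argument: (2) is Lang plus Hensel after spreading out, and (3) is immediate from (1), (2), and the restricted-product description of $\bU(\A)$. Your worry in (2) about connectedness of the special fibres of the stabilizer scheme is handled by further shrinking $S$ (since $\bL$ is connected over $K$, the smooth closure has geometrically connected fibres at almost all places), which is implicit in the reference the paper gives; so that part is only an exposition issue, not a gap.
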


\begin{proof}
The orbits in (1) are open by \cite[Ch.3,\S3.1]{PR}. This also implies that every orbit is closed.
The finiteness of $\bG(K_v)$-orbits follows from finiteness of Galois cohomology over local fields
(see \cite[Ch.3, \S6.4]{PR}).
(2) follows from Lang's theorem \cite{La} and Hensel's lemma (see \cite[Lemma 1.6.4]{BR}). (3) follows from (1) and (2).
\end{proof}

\begin{Thm}\label{t:volume}
Assume that there are finitely many $\bG(\A)$-orbits in $\bU(\A)$.
Let $x\in \bU(\A)$ and
$$
V(x,t):=\mu( x\bG(\A)\cap B_t).
$$
Then $a(L)>0$ and
there exist a nonzero polynomial $P_x$ of degree $b(L)-1$ and $\delta>0$
such that $$
V(x,t)=t^{a(L)}P_{x}(\log t)+O(t^{a(L)-\delta}) \quad \hbox{as $t\to\infty$}.
$$

\end{Thm}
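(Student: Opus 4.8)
The plan is to reduce the counting of volume to a single $\bG(\A)$-orbit and then run the Euler-product argument behind Theorem \ref{th:ct} and Corollary \ref{c:volume} on that orbit, the point being that it produces a Mellin transform with the same dominant pole, at $s=a(L)$ of order $b(L)$. I would first dispose of the claim $a(L)>0$: since $\bU=\bX\setminus\{\s=0\}$ is affine (the complement of a non-empty very ample divisor) and carries a $\bG$-invariant volume form $\omega$, the extension of $\omega$ to $\bX$ acquires a pole of order $n_\alpha\ge 1$ along every boundary component $D_\alpha$, so $a(L)=\max_\alpha n_\alpha/m_\alpha>0$; equivalently this is the statement that the Tamagawa volume $\mu(\bU(\A))$ is infinite, which is forced since $V(t)=\mu(B_t)$ is unbounded and the conclusion of Corollary \ref{c:volume} could not hold for $a(L)\le 0$.

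Next I would decompose. By hypothesis $\bU(\A)=\bigsqcup_{j=1}^{k}x_j\bG(\A)$ with each orbit open and closed (Lemma \ref{open}(3)), so $B_t=\bigsqcup_j\bigl(x_j\bG(\A)\cap B_t\bigr)$ and $V(t)=\sum_j V(x_j,t)$; it suffices to establish the claimed expansion for each $V(x_j,t)$ and then sum (taking $\delta=\min_j\delta_j$), the result being consistent with Corollary \ref{c:volume}. The hypothesis forces condition (iii) (via Theorem \ref{thm:main-f-m-orbits}), so $\bU(K_v)$ is a single $\bG(K_v)$-orbit for all $v$ outside a finite set $S$, which I enlarge further so that $x_{j,v}\in\mathcal U(\mathcal O_v)$ for $v\notin S$. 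By Lemma \ref{open}(2) the orbit $x_j\bG(\A)$ is then the restricted product $\prod'_v x_{j,v}\bG(K_v)$ of the local orbits, and $\mu$ restricted to it is $|\Delta_K|^{-\frac12\dim\bU}\prod_v\mu_v$ restricted to the local orbits. Since $\H=\prod_v\H_v$, for $s$ in $\Omega_0$ the Mellin transform factors: $\eta_{x_j}(s):=\int_{x_j\bG(\A)}\H(x)^{-s}\,d\mu(x)=|\Delta_K|^{-\frac12\dim\bU}\prod_v\eta_{x_j,v}(s)$, where $\eta_{x_j,v}(s)=\int_{x_{j,v}\bG(K_v)}\H_v^{-s}\,d\mu_v$, and $\eta_{x_j,v}(s)=\eta_v(s)$ for $v\notin S$.

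Then I would run Chambert-Loir--Tschinkel (Theorem \ref{th:ct}) with the finite set of ramified places enlarged to contain $S$: this yields $\prod_{v\notin S}\eta_v(s)=\phi_S(s)\prod_{\alpha\in\mathcal{A}/\G_K}\zeta_{K_\alpha,S}(m_\alpha s-n_\alpha+1)$ with $\phi_S$ bounded holomorphic on $\Omega_{-1/2+\epsilon}$ and $\zeta_{K_\alpha,S}$ the Dedekind zeta of $K_\alpha$ with its Euler factors above $S$ removed, so this factor has exactly the pole of Corollary \ref{c:volume}, at $s=a(L)$ of order $b(L)$, with polynomial growth on vertical lines. Each of the finitely many $\eta_{x_j,v}(s)$, $v\in S$, is holomorphic on $\Omega_{-1}$ (it is dominated by $\eta_v(s)$, convergent there by Theorem \ref{th:ct}(1)), has polynomial growth on vertical strips (for non-archimedean $v$ it is even bounded there, being a Dirichlet series with non-negative coefficients absolutely convergent on $\Omega_{-1}$; for archimedean $v$ this uses the regularity of $\H_v$), and is strictly positive at the real point $s=a(L)$ because $x_{j,v}\bG(K_v)$ is open, hence of positive $\mu_v$-measure. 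Multiplying, $\eta_{x_j}(s)$ continues meromorphically to $\Omega_{-1/2+\epsilon}$ with a single pole, at $s=a(L)$, of order exactly $b(L)$ — the finite-place factors cannot cancel it, being holomorphic and non-vanishing there — and with polynomial growth on verticals. The Tauberian theorem used for Corollary \ref{c:volume} (\cite[Thm.~4.4]{GMO}, \cite[Appendix]{CT1}) then gives $V(x_j,t)=t^{a(L)}P_{x_j}(\log t)+O(t^{a(L)-\delta_j})$ with $\deg P_{x_j}=b(L)-1$ and $P_{x_j}\ne 0$, which completes the argument.

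The step I expect to be the main obstacle is the last one: controlling the finitely many local orbit-integrals $\eta_{x_j,v}(s)$, $v\in S$, well enough that restricting from $\bU(K_v)$ to a single $\bG(K_v)$-orbit neither obstructs the meromorphic continuation nor cancels the pole at $a(L)$ — this is exactly where the local convergence statement Theorem \ref{th:ct}(1) and positivity of the measure on an open orbit are essential. A secondary, more bookkeeping, point is the reduction to "$\bU(K_v)$ is a single orbit for almost all $v$", which is where the hypothesis of finitely many $\bG(\A)$-orbits — equivalently condition (iii) — is genuinely used.
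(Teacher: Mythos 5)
Your proof shares the paper's skeleton (factor the Mellin transform $\eta(x,s)$, invoke Theorem~\ref{th:ct}, show the finitely many bad local factors neither obstruct continuation nor cancel the pole, and finish with the Tauberian theorem). The substantive departure, and the genuine gap, is your proof that $a(L)>0$. You assert that because $\bU=\bX\setminus\{\s=0\}$ is affine and carries a nowhere-zero top form $\omega$, its extension to $\bX$ must have a pole of order $\ge 1$ along every boundary component, so $a(L)=\max_\alpha n_\alpha/m_\alpha>0$. That implication is false for affine varieties in general: take $\bX$ a K3 surface (so $K_{\bX}=0$), $\omega$ the trivializing $2$-form, and $D$ a smooth ample curve; then $\bU=\bX\setminus D$ is affine, $\omega|_{\bU}$ is nowhere zero, yet all $n_\alpha=0$. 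The paper does record (Section~\ref{s:won}, citing Brion--Inamdar) that for \emph{wonderful} varieties $-K_{\bX}$ lies in the interior of the effective cone, which gives all $n_\alpha>0$; but that is a fact of spherical geometry, not of affineness, and Theorem~\ref{t:volume} also has to cover the non-spherical situations in~\ref{subsec:non-spherical}. Your fallback (``$V(t)$ unbounded, and the conclusion of Corollary~\ref{c:volume} could not hold for $a(L)\le0$'') is circular: Corollary~\ref{c:volume} has $a(L)>0$ as a hypothesis, so it asserts nothing when $a(L)\le 0$, and you have not shown independently that $V(t)\to\infty$ — a local orbit $x_v\bG(K_v)$ may have finite $\mu_v$-volume when $\bG(K_v)$ is anisotropic, so $\mu(\bU(\A))=\infty$ is not free.

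The paper proves $a(L)>0$ by a different argument which you would need to supply: for almost all $v$ the group $\bG$ is quasi-split over $K_v$, so some unipotent one-parameter subgroup of $\bG(K_v)$ acts non-trivially on $\bU(K_v)$; the estimates of \cite{BO2} then give a lower bound $\mu_v(\{y\in x_v\bG(K_v):\H_v(y)<t\})\ge c\,t^{a'}$ for some $a'>0$, so $\eta_v(x_v,s)$ has a singularity with $\mathrm{Re}(s)\ge a'>0$; comparing with the absolute convergence of $\eta_v$ on $\Omega_{-1+\epsilon}$ from Theorem~\ref{th:ct}(1) forces $a(L)>0$. Apart from this point, your argument — the identification $\eta_v(x_v,s)=\eta_v(s)$ for almost all $v$ via Theorem~\ref{thm:main-f-m-orbits}, the observation that the bad local factors are holomorphic and strictly positive at $s=a(L)$ and thus cannot destroy the pole of order $b(L)$, and the Tauberian finish — matches the paper; the extra care you take with polynomial growth on vertical strips is a harmless refinement of what the paper states more tersely.
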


\begin{proof}
As in the proof of Corollary \ref{c:volume}, we consider the Mellin transform
\begin{align*}
\eta(x, s) &:=\int_0^\infty t^{-s}\, dV(x,t)\\
&=\int_{x\bG(\A)} \H(y)^{-s}\, d\mu (y)=
|\Delta_K|^{-\frac{1}{2} \text{dim} X}\cdot \prod_v \eta_v(x_v,s)
\end{align*}
where
$$
\eta_v(x_v,s):=\int_{x_v\bG(K_v)} \H_{v}(y)^{-s}\, d\mu_v(y).
$$
By Theorem \ref{thm:main-f-m-orbits},
our assumption implies that for almost all $v$,
$x_v\bG(K_v)=\bU(K_v)$ and hence $\eta_v(x_v,s)=\eta_v(s)$.
Also, by Theorem \ref{th:ct}(1), $\eta_v(x_v,s)$ is absolutely convergent for $s\in\Omega_{-1+\epsilon}$,
$\epsilon>0$. Hence, it follows from Theorem \ref{th:ct}(2), that
\begin{equation}\label{eq:e}
\eta(x,s)=\phi(x,s)\prod_{\alpha\in\mathcal{A}/\Gamma_K} \zeta_{K_\alpha}(m_\alpha s-n_\alpha+1)
\end{equation}
where $\phi(x,s)$ is a bounded holomorphic function for $s\in \Omega_{-1/2+\epsilon}$, $\epsilon>0$.

Note that for almost all $v$, $\bG$ is quasi-split over $K_v$,
and hence
 there is a unipotent one-parameter subgroup of $\bG(K_v)$
acting nontrivially on $\bU(K_v)$. It was shown in \cite{BO2} that this property implies that for some $a'>0$,
$$
\mu_v(\{y\in x_v\bG(K_v):\, \H_{v}(y)<t\})\ge c\, t^{a'}
$$
for all large $t>0$. This implies that $\eta_v(x_v,s)$ has a pole in the region
$\hbox{Re}(s)\ge a'$. Hence, $a(L)>0$.

Now the claim follows from (\ref{eq:e}) using Tauberian theorem.
\end{proof}

 Denoting by $\bG_\infty^\circ$
the identity component of $\bG_\infty$, we set for $x\in \bU_\infty$,
$$\tilde V_\infty (x,t) :=\mu_\infty (\{ y\in x \bG_\infty^\circ: \H_\infty (y)<
t \})$$
where $\mu_\infty:=\prod_{v\in R_\infty}\mu_v$ and $\H_\infty:=\prod_{v\in R_\infty}\H_v$.
The following is proved in
\cite[Lemma 7.8]{BO2}.
\begin{Thm} \label{eq:v_111}
If $\H_\infty$ is regular and is not constant on $x\bG_\infty^\circ$ for $x\in \bU_\infty$, then
there exist $c_0,\kappa>0$ such that for all $t>0$ and $\epsilon\in (0,1)$,
\begin{equation*}
\tilde V_\infty(x, t(1+\e))-\tilde V_\infty (x, t)  \le c_0\epsilon^\kappa\, (\tilde V_\infty (x,t)+1 ).
\end{equation*}
\end{Thm}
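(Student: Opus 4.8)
\emph{Proof proposal.} The plan is to reduce the inequality to an elementary estimate on a normal-crossings model at infinity, in the spirit of the local analysis behind Theorem~\ref{th:ct}(1) at the archimedean places. Put $F:=\H_\infty^2=\prod_{v\in R_\infty}\H_v^2$; by hypothesis $F$ is a regular function on $\bU_\infty$, regarded as a real algebraic variety by restriction of scalars, it is proper and bounded below by a positive constant $c_1^2$ (Lemma~\ref{height:p}), and its restriction to the orbit $O:=x\bG_\infty^\circ$ is non-constant. Since $\H_\infty>0$ we have $\tilde V_\infty(x,t)=\mu_\infty(\{y\in O:F(y)<t^2\})$, and the shell whose $\mu_\infty$-mass we must bound is $S_{t,\e}:=\{y\in O:\,t^2\le F(y)\le t^2(1+\e)^2\}$; as $F\ge c_1^2$, both $\tilde V_\infty(x,t)$ and $\mu_\infty(S_{t,\e})$ vanish for $t\le c_1$, so we may assume $t>c_1$.

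First I would take a smooth compactification $\bar O$ of the Zariski closure of $O$ with $\partial O:=\bar O\setminus O$ a divisor with normal crossings (Hironaka; cf.\ \cite{BMi,V} for the equivariant versions used in \S\ref{s:volume}); on $\bar O$ the function $F$ extends to a rational function with pole divisor supported on $\partial O$, and the smooth measure $\mu_\infty$, attached to a top-degree form $\omega$ as in \S\ref{s:volume}, acquires along $\partial O$ local densities equal to monomials in the defining equations times smooth factors bounded above and below. Using a finite atlas of coordinate charts adapted to $\partial O$ and a subordinate partition of unity, it suffices to bound, chart by chart, $\mu_\infty(S_{t,\e})$ against $\mu_\infty(\{F<t^2\})$. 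A chart meeting $O$ but disjoint from $\partial O$ carries a bounded $F$, hence contributes nothing to $S_{t,\e}$ once $t$ is large; in a chart around a point of $\partial O$, choose coordinates $(z_1,\dots,z_k,w)$ with $\partial O=\{z_1\cdots z_k=0\}$, so that $F\asymp\prod_i|z_i|^{-2m_i}$ and $d\mu_\infty\asymp\bigl(\prod_i|z_i|^{2n_i}\bigr)\,|dz|\,|dw|$ with $m_i\in\mathbb{N}$, $n_i\in\mathbb{Z}$. Passing to the radial variables $r_i=|z_i|$ turns the required comparison into the elementary inequality
\[
\int_{\{t^2\le\prod_i r_i^{-2m_i}\le t^2(1+\e)^2\}}\,\prod_i r_i^{2n_i-1}\,dr_i
\;\le\; c_0\,\e^{\kappa}\int_{\{\prod_i r_i^{-2m_i}\le t^2\}}\,\prod_i r_i^{2n_i-1}\,dr_i,
\]
valid for a suitable $\kappa\in(0,1]$ uniformly in $t>c_1$ and $\e\in(0,1)$; the only loss beyond the naive exponent $1$ is a power of $\log(1/\e)$ coming from the corners where several $r_i$ are simultaneously small, which is harmless since $\e\,(\log(1/\e))^{k}\le C_\kappa\,\e^{\kappa}$ for every $\kappa<1$. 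Summing the chart estimates yields the inequality for all large $t$. (Alternatively, running the Tauberian argument of Corollary~\ref{c:volume}/Theorem~\ref{t:volume} with only the archimedean places gives $\tilde V_\infty(x,t)=t^{a'}P(\log t)+O(t^{a'-\delta})$ with $a'>0$, which also yields the inequality for large $t$, with $\kappa=1$.)

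For $t$ in the remaining bounded range $(c_1,t_0]$ one only needs a crude bound: since $F$ is polynomial and $\mu_\infty$ is smooth, the \L ojasiewicz inequality shows that $r\mapsto\tilde V_\infty(x,r)$ is H\"older continuous on compact subsets of $(0,\infty)$, so $\tilde V_\infty(x,t(1+\e))-\tilde V_\infty(x,t)\le C\,\e^{\kappa_0}t^{\kappa_0}$ there, which is $\le c_0\,\e^{\kappa}(\tilde V_\infty(x,t)+1)$ after shrinking $\kappa$ and enlarging $c_0$; together with the large-$t$ estimate this proves the lemma. I expect the main obstacle to be the boundary analysis of the second paragraph: one must know that a compactification $\bar O$ exists on which $F$ has pole divisor precisely along a normal-crossings $\partial O$ and $\mu_\infty$ has the stated monomial form, and that the archimedean local heights $\H_v$ — which in Theorem~\ref{th:ct}(1) are required only to be \emph{comparable} to such a model, not equal to it — fit this scheme, so that the $\asymp$'s above are legitimate; and one must carry out the corner computation carefully enough to extract a genuine power $\e^{\kappa}$ rather than $\e$ times a power of $\log$, the admissible $\kappa$ being governed by the configuration of the exponents $(m_i,n_i)$. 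The remaining ingredients — the partition of unity, the vanishing of the interior charts for large $t$, and the one-variable calculus behind the displayed inequality — are routine.
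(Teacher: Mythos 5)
The paper does not give its own proof of this statement; it simply cites \cite[Lemma 7.8]{BO2} (Benoist--Oh) and leaves the argument to that reference, so there is no in-paper proof to compare against line by line. Your proposal --- resolution of singularities to put $F=\H_\infty^2$ and the measure $\mu_\infty$ into local monomial form along a normal-crossings boundary, reduction to an explicit shell/ball comparison of monomial integrals, and a Lojasiewicz-type H\"older bound on the compact range of $t$ --- is the natural route to this kind of volume-regularity statement and, to the best of my recollection, is essentially what is carried out in the cited lemma, so this is a correct plan rather than a genuinely different one. Two places need more care than your sketch provides. First, the multi-variable shell/ball inequality is where the real work lies: the admissible $\kappa$ and the corner bookkeeping depend on the configuration of exponents $(m_i,n_i)$, and one must verify by calculation, not assert, that the corner contributions cost only a power of $\log(1/\e)$; note that all the integrals in sight are over compact sets since $F$ is proper and bounded below, so there are no convergence issues, but the uniformity in $t$ has to be tracked explicitly. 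Second, the appeal to the Lojasiewicz inequality on the compact $t$-range is shorthand for the estimate $\mu_\infty(\{y: |F(y)-s|<\delta\})\le C\,\delta^{\kappa_0}$ uniformly for $s$ in a compact interval, which is itself a consequence of the same resolution/monomial analysis (or of Lojasiewicz in a packaged form) rather than of the raw distance-to-zero-set inequality, so it should be derived or cited precisely rather than waved at. Neither point is an obstruction, but both must be written out for the argument to be complete.
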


\begin{Prop}\label{mwr}
Assume that there are only finitely many $\bG(\A)$-orbits in $\bU(\A)$,
and that $\H$ is regular.
Let $M=\bG_\infty^\circ M_f$
for a finite index closed subgroup $M_f$ of $\bG(\A_f)$, $x\in \bU(\A)$, and
$$
V^M(x,t):=\mu(xM\cap B_t).
$$
Then
\be\item $$
V^M(x,t)\asymp t^{a(L)}(\log t)^{b(L)-1}.
$$
\item If $\H_{ \infty}$ is not constant on $x_\infty \bG_\infty^\circ $ where $x=x_\infty x_f
\in \bU_\infty\bU_{\A_f}$,
there exist $c_0,\kappa, t_0>0$ such that for every $t>t_0$ and $\epsilon\in (0,1)$,
$$
V^M(x,(1+\epsilon)t)-V^M(x,t) \le c_0\,\epsilon^\kappa V^M(x,t).
$$\ee
\end{Prop}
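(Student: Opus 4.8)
The idea is to bootstrap from the results already established for full $\bG(\A)$-orbits, namely Theorem~\ref{t:volume} and the archimedean well-roundedness estimate Theorem~\ref{eq:v_111}, using two facts: $M=\bG_\infty^\circ M_f$ has finite index in $\bG(\A)$ (since $\bG_\infty^\circ$ has finite index in $\bG_\infty$ and $M_f$ in $\bG(\A_f)$), and the Tamagawa measure $\mu$ is right $\bG(\A)$-invariant (the form $\omega$ being, by Rosenlicht, unique up to a scalar, it is $\bG$-invariant because $\bG$ is connected semisimple and so has no nontrivial characters).

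\textbf{Part (1).} The bound $V^M(x,t)\le V(x,t)$ is trivial, and $V(x,t)\asymp t^{a(L)}(\log t)^{b(L)-1}$ with $a(L)>0$ by Theorem~\ref{t:volume}; this gives the upper estimate. For the lower one, fix coset representatives $\bG(\A)=\bigcup_{j=1}^{k}Mg_{j}$, so that $x\bG(\A)=\bigcup_{j}xMg_{j}$. Applying Lemma~\ref{l:proper}(2) to the compact sets $\{g_{j}\}$ and $\{g_{j}^{-1}\}$ produces $c>1$ with $c^{-1}\H(y)\le\H(yg_{j})\le c\,\H(y)$ for all $j$ and all $y\in\bU(\A)$, hence $B_{t/c}\subseteq B_{t}g_{j}^{-1}\subseteq B_{ct}$. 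Using the invariance of $\mu$,
\[
\mu(xMg_{j}\cap B_{t})=\mu\big(xM\cap B_{t}g_{j}^{-1}\big)\le V^{M}(x,ct),
\]
so $V(x,t)\le\sum_{j}\mu(xMg_{j}\cap B_{t})\le k\,V^{M}(x,ct)$, i.e.\ $V^{M}(x,t)\ge k^{-1}V(x,t/c)$, which by Theorem~\ref{t:volume} and $a(L)>0$ is at least $c'\,t^{a(L)}(\log t)^{b(L)-1}$ for large $t$. Together with the upper bound this gives $V^M(x,t)\asymp t^{a(L)}(\log t)^{b(L)-1}$.

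\textbf{Part (2).} Write $x=x_{\infty}x_{f}$, so that $xM=(x_{\infty}\bG_{\infty}^{\circ})\times(x_{f}M_{f})$, $\H=\H_{\infty}\H_{\A_f}$, and, up to the constant $|\Delta_{K}|^{-\frac12\dim\bU}$, $\mu=\mu_{\infty}\times\mu_{\A_f}$. With $\tilde V_{\infty}(x_{\infty},s)$ as in Theorem~\ref{eq:v_111}, Fubini gives
\[
V^{M}(x,t)=|\Delta_{K}|^{-\frac12\dim\bU}\int_{x_{f}M_{f}}\tilde V_{\infty}\!\big(x_{\infty},\,t/\H_{\A_f}(y_{f})\big)\,d\mu_{\A_f}(y_{f}).
\]
Put $s_{0}:=\inf_{x_{\infty}\bG_{\infty}^{\circ}}\H_{\infty}>0$ (Lemma~\ref{height:p}(1)) and $V_{\infty}^{\max}:=\sup_{s}\tilde V_{\infty}(x_{\infty},s)\in(0,\infty]$ (the orbit is open, hence of positive $\mu_{\infty}$-measure, by Lemma~\ref{open}); fix $s_{1}>0$ and $\delta_{0}:=\tfrac12\min(1,V_{\infty}^{\max})>0$ with $\tilde V_{\infty}(x_{\infty},s)\ge\delta_{0}$ for all $s\ge s_{1}$. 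The key point is a control on finite-adelic height balls: if $\H_{\A_f}(y_{f})<T$ then $s_{1}T/\H_{\A_f}(y_{f})>s_{1}$, whence
\[
\mu_{\A_f}\big(\{y_{f}\in x_{f}M_{f}:\H_{\A_f}(y_{f})<T\}\big)\le\delta_{0}^{-1}\!\int_{x_{f}M_{f}}\!\tilde V_{\infty}\!\big(x_{\infty},s_{1}T/\H_{\A_f}(y_{f})\big)\,d\mu_{\A_f}(y_{f})=\delta_{0}^{-1}|\Delta_{K}|^{\frac12\dim\bU}V^{M}(x,s_{1}T)<\infty.
\]
Since $\tilde V_{\infty}(x_{\infty},s)=0$ for $s\le s_{0}$, the (nonnegative) integrand of $V^{M}(x,(1+\e)t)-V^{M}(x,t)$ vanishes unless $\H_{\A_f}(y_{f})<(1+\e)t/s_{0}<2t/s_{0}$; on that range Theorem~\ref{eq:v_111}, applied with $s=t/\H_{\A_f}(y_{f})$, bounds it by $c_{0}\e^{\kappa}\big(\tilde V_{\infty}(x_{\infty},t/\H_{\A_f}(y_{f}))+1\big)$. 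Integrating and using the previous display with $T=2t/s_{0}$,
\[
V^{M}(x,(1+\e)t)-V^{M}(x,t)\le c_{0}\e^{\kappa}\Big(V^{M}(x,t)+\delta_{0}^{-1}V^{M}(x,2s_{1}t/s_{0})\Big).
\]
By Part (1) both $V^{M}(x,t)$ and $V^{M}(x,2s_{1}t/s_{0})$ are $\asymp t^{a(L)}(\log t)^{b(L)-1}$, so for $t>t_{0}$ the right-hand side is at most $c_{0}'\e^{\kappa}V^{M}(x,t)$, as required.

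\textbf{Main obstacle.} The delicate step is Part (2): because $\bU(\A)$ has infinite Tamagawa volume, the inhomogeneous ``$+1$'' in Theorem~\ref{eq:v_111} cannot be absorbed naively; one must localise to the region $\{\H_{\A_f}(y_{f})<2t/s_{0}\}$ cut out by the support of the archimedean increment and control its $\mu_{\A_f}$-measure by $V^{M}(x,\cdot)$ evaluated at a rescaled argument (the estimate displayed above), then close the loop using the two-sided asymptotics from Part (1). The remaining ingredients are the finite index of $M$, the $\bG(\A)$-invariance of $\mu$, and the uniform comparison of heights under a compact set from Lemma~\ref{l:proper}(2).
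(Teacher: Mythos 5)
Your proof is correct and follows essentially the same route as the paper's: reduce Part (1) to Theorem~\ref{t:volume} via the finite-index coset decomposition and Lemma~\ref{l:proper}(2), then for Part (2) apply Fubini to split off the archimedean factor, invoke Theorem~\ref{eq:v_111}, localise the resulting integrand to a finite-adelic height ball, and bound the measure of that ball by a rescaled $V^M$. The one point where you diverge from the paper is in proving the key auxiliary bound $\mu_{\A_f}(\{y_f: \H_f(y_f)<T\})\lesssim V^M(x,\mathrm{const}\cdot T)$: the paper produces a compact set $\Omega\subset x_\infty\bG_\infty^\circ$ of positive $\mu_\infty$-measure and observes that $\Omega\times\{\H_f<T\}\subset\{\H<(\max_\Omega\H_\infty)T\}$, whereas you choose $s_1$ with $\tilde V_\infty(x_\infty,s_1)\geq\delta_0>0$ and dominate the indicator of $\{\H_f<T\}$ pointwise by $\delta_0^{-1}\tilde V_\infty(x_\infty,s_1T/\H_f(\cdot))$ before integrating. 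The two are completely equivalent (your $\delta_0$ plays the role of $\mu_\infty(\Omega)$ and your $s_1$ that of $\max_\Omega\H_\infty$), so this is a cosmetic variation rather than a genuinely different method.
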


\begin{proof}
Since $V^M(x,t)\le V(x,t)$, the upper estimate follows from Theorem \ref{t:volume}.
To prove the lower estimate, we write $\bG(\A)=\cup_{i=1}^n Mg_i$ for some
 $g_i\in \bga$.
Then by  \ref{p:well}(2) and invariance of $\mu$, we obtain that for some $c>1$,
$$
V(x,t)\le \sum_{i=1}^n V^{Mg_i}(x,t)\le n\, V^M(x,c\cdot t)\quad\text{for all $t>0$}.
$$
Hence the lower estimate in (1) follows from Theorem \ref{t:volume}.
To prove (2),
consider the decompositions
$$
\bU(\A)=\bU_\infty \bU_{\A_f}\quad\hbox{and}\quad \mu=\mu_\infty \otimes \mu_f
$$
where $\mu_f=\prod_{v\in R_f}
\mu_v$.

Set
\begin{align*}
\tilde V_\infty(t) &:=\mu_\infty(\{y\in x_\infty\bG_\infty^\circ :\, \H_{\infty}(y)<t\}),\\
\tilde V_f(t) &:=\mu_f(\{y\in x_fM_f :\, \H_f(y)<t\})
\end{align*}
where $\H_f=\prod_{v\in R_f}\H_{v}$.

We claim that there exist $\rho_1,\rho_2>0$ such that for every $t>0$,
\begin{equation}\label{eq:v_12}
\tilde V_f(t)\le \rho_1\, V^M(x,\rho_2t).
\end{equation}
Let $\Omega$ be a compact subset of $x_\infty \bG_\infty^\circ$ such that $\mu_\infty(\Omega)>0$
and $\rho_2=\max_{x\in \Omega} \H_{\infty}(x)$. Then
$$
\Omega\cdot \{y\in x_fM_f:\, \H_f(y)<t\}\subset \{y\in xM:\, \H(y)<\rho_2 t\},
$$
and hence
$$
\tilde V_f(t)\le \frac{V^M(\rho_2 t)}{\mu_\infty(\Omega)}.
$$

By Theorem \ref{eq:v_111},
there exist $c_0,\kappa>0$ such that for all $t>0$ and $\epsilon\in (0,1)$,
\begin{equation*}
\tilde V_\infty(t(1+\epsilon))-\tilde V_\infty(t)\le c_0\epsilon^\kappa\, (\tilde V_\infty(t)+1 ).
\end{equation*}

Let $\alpha=\inf_{y\in x_\infty \bG_\infty^\circ } \H_{\infty}(y)>0$. Then using (\ref{eq:v_111}) and (\ref{eq:v_12}),
\begin{align*}&
V^M((1+\epsilon)t)-V^M(t)\\&=\int_{y\in x_f M_f} \left(\tilde V_\infty(\tfrac{(1+\epsilon)t}{\H_f(y)})-\tilde V_\infty(\tfrac{t}{\H_f(y)})\right)d\mu_f(y)\\
&=\int_{y\in x_fM_f: \H_f(y)<\alpha^{-1}2t} \left(\tilde V_\infty(\tfrac{(1+\epsilon)t}{\H_f(y)})-\tilde V_\infty(\tfrac{t}{\H_f(y)})\right)
d\mu_f(y)\\
&\le c\epsilon^\kappa \int_{y\in x_fM_f: \H_f(y)<\alpha^{-1}2t} (\tilde V_\infty(\tfrac{t}{\H_f(y)}) +1 )\, d\mu_f(y)\\
&\le c\epsilon^\kappa \left(\int_{y\in x_fM_f} \tilde
V_\infty(\tfrac{t}{\H_f(y)})d\mu_f(y)+ \mu_f(\{y\in x_fM_f: \H_f(y)<\alpha^{-1}2t\})\right)\\
&= c\epsilon^\kappa \left(V^M(t)+\tilde V_f(\alpha^{-1}2t)\right)\\
&\le c\epsilon^\kappa \left(V^M(t)+\rho_1 V^M(\rho_2\alpha^{-1}2t)\right)
\\&\le c'\epsilon ^\kappa V^M(t) \end{align*}
for some $c'>1$, where the last inequality holds by the claim (1).

 This completes the proof.

\end{proof}

\begin{Thm}\label{mwrfinite}
Assume that there are only finitely many $\bG(\A)$-orbits in $\bU(\A)$.
Let $M$ be a finite index closed subgroup of $\bga$ and $W$
 a compact open subgroup of $\bG (\A_f)$ contained in $M\cap W_{\H}$.
Fixing $x\in \bU(\A)$, set
 $\tilde B_t:=B_t\cap xM$ for $t>0$.
\begin{enumerate}
\item We have $$\mu(\tilde B_t)
\asymp t^{a(L)}(\log t)^{b(L)-1}.$$

\item
Suppose that $\H$ is regular.
Then there exists $c>0$ such that for any
$\e>0$, there exists a neighborhood $U_\e$ of $e$ in $M$ such
that for all sufficiently large $t$,
 \begin{equation}\label{wr1} (1-c\cdot \e) \mu (\tilde B_t U_\e W)\le \mu (\tilde B_t) \le (1+c\cdot \e)
\mu (\cap_{u\in U_\e W}\tilde B_t u) .\end{equation}

\end{enumerate}\end{Thm}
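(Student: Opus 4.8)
The plan is to reduce both parts to the volume asymptotics already established in Theorem \ref{t:volume} and Proposition \ref{mwr}, the key being to handle the interplay between the finite-adelic compact open subgroup $W$ and the height function. First I would observe that, since $W\subset W_{\H}$, the height $\H$ is $W$-invariant on the right; since $W\subset M$, the set $xM$ is $W$-invariant on the right as well, so $\tilde B_t = B_t\cap xM$ is right-$W$-invariant and the family $\{\tilde B_t\}$ descends to the quotient by $W$. To apply Proposition \ref{mwr} I would pass to the subgroup $M':=\bG_\infty^\circ M_f$ where $M_f$ is the (finite index, closed) projection-type subgroup of $\bG(\A_f)$ obtained by intersecting $M$ with $\bG(\A_f)$ and saturating with $W$; because $[\bG(\A):M]<\infty$ and $\bG_\infty^\circ$ has finite index in $\bG_\infty$, the group $M$ and a finite union of translates of such an $M'$ coincide up to finite index, so $\mu(xM\cap B_t)$ and $\mu(xM'\cap B_t)$ differ only by bounded multiplicative constants and an application of Lemma \ref{p:well}(2) (exactly as in the proof of Proposition \ref{mwr}). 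This gives part (1):
$$\mu(\tilde B_t)\asymp V^{M'}(x,t)\asymp t^{a(L)}(\log t)^{b(L)-1},$$
using $a(L)>0$ from Theorem \ref{t:volume}.

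For part (2), the strategy is the standard well-roundedness argument. Given $\e>0$, by Lemma \ref{p:well}(3) pick a neighborhood $U_\e$ of $e$ in $M$ (we may shrink it to lie in $\bG_\infty^\circ\times W$) so that $\H(yg)<(1+\e)\H(y)$ and $\H(yg^{-1})<(1+\e)\H(y)$ for all $g\in U_\e$ and $y\in\bU(\A)$; since $W\subset W_{\H}$, enlarging $U_\e$ to $U_\e W$ changes nothing in these inequalities. Then one has the two-sided containment
$$\bigcap_{u\in U_\e W}\tilde B_t u\ \subset\ \tilde B_t\ \subset\ \tilde B_{(1+\e)t}\quad\text{and}\quad \tilde B_t U_\e W\ \subset\ \tilde B_{(1+\e)t},$$
so that $\mu(\tilde B_t U_\e W)\le \mu(\tilde B_{(1+\e)t})$ and $\mu(\tilde B_t)\le \mu(\tilde B_{(1+\e)t})$. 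Thus it suffices to show $\mu(\tilde B_{(1+\e)t})-\mu(\tilde B_t)\le c\,\e^\kappa\mu(\tilde B_t)$, and this is exactly the content of Proposition \ref{mwr}(2) applied to $M'$ (translated back to $M$ by the finite-index comparison above), provided $\H_\infty$ is not constant on $x_\infty\bG_\infty^\circ$. I would dispatch the remaining case — $\H_\infty$ constant on the archimedean orbit — separately: then $\H$ is determined by its finite-adelic part, which is locally constant, so $\tilde B_t$ is already right-invariant under a compact open subgroup, one can take $U_\e$ itself open (hence $\tilde B_t U_\e = \tilde B_t$ for small $U_\e$), and \eqref{wr1} holds trivially with any $c$.

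The main obstacle I anticipate is the bookkeeping in the finite-index reduction from $M$ to $M'=\bG_\infty^\circ M_f$: one must check that the comparison $\mu(xM\cap B_t)\asymp \mu(xM'\cap B_t)$ is genuinely two-sided and is compatible with the well-roundedness inequality (the lower bound requires the Lemma \ref{p:well}(2) trick of absorbing coset representatives into a dilation of $t$, as in Proposition \ref{mwr}), and that the neighborhood $U_\e$ and subgroup $W$ can be chosen inside $M'$ without losing the finite-index control. Everything else is a direct transcription of the volume estimates and the uniform continuity of $\H$ from Lemma \ref{p:well} and Theorem \ref{eq:v_111}.
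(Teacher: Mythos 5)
Your overall strategy matches the paper's: reduce to $M=\bG_\infty^\circ M_f$, treat separately the cases where $\H_\infty$ is constant or non-constant on $x_\infty\bG_\infty^\circ$, and in the non-constant case combine inclusions $\tilde B_t U_\e W \subset \tilde B_{(1+\e')t}$ with the H\"older estimate of Proposition~\ref{mwr}(2). But there is one concrete gap in your chain of containments. You record
$$\bigcap_{u\in U_\e W}\tilde B_t u\ \subset\ \tilde B_t,\qquad \tilde B_t U_\e W\ \subset\ \tilde B_{(1+\e)t},$$
and then claim that the difference bound $\mu(\tilde B_{(1+\e)t})-\mu(\tilde B_t)\le c\,\e^\kappa\mu(\tilde B_t)$ suffices. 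It does suffice for the first inequality in \eqref{wr1}, but for the second inequality $\mu(\tilde B_t)\le(1+c\e)\mu(\cap_{u\in U_\e W}\tilde B_t u)$ you need a \emph{lower} bound on $\mu(\cap_{u}\tilde B_t u)$, and the only containment you state for this set, $\cap_u\tilde B_t u\subset\tilde B_t$, points the wrong way. What is missing is the inclusion $\tilde B_{t/(1+\e)}\subset\cap_{u\in U_\e W}\tilde B_t u$ (which follows from choosing $U_\e$ symmetric, since then $\H(y)<t/(1+\e)$ and $\H(yu^{-1})<(1+\e)\H(y)$ give $yu^{-1}\in\tilde B_t$); combined with the H\"older estimate applied at scale $t/(1+\e)$ this closes the argument. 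The paper makes this explicit by establishing the two-sided inclusion
$$\tilde B_t U_\e W\subset\tilde B_{(1+\e^{1/\kappa})t}\quad\text{and}\quad\tilde B_{(1-\e^{1/\kappa})t}\subset\bigcap_{u\in U_\e W}\tilde B_t u$$
before invoking Proposition~\ref{mwr}(2).

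On the finite-index reduction: you flag the comparison $\mu(xM\cap B_t)\asymp\mu(xM'\cap B_t)$ as a potential obstacle, and for part (2) it genuinely is not enough to know the volumes are comparable up to bounded constants, since a bounded multiplicative error would swamp the $O(\e^\kappa)$ you are trying to capture. The clean resolution, used in the paper, is to note that $M_0:=\bG_\infty^\circ M_f$ (with $M_f=M\cap\bG(\A_f)$) has \emph{finite index in $M$}, write $xM=\bigsqcup_{i=1}^n x m_i M_0$ as a finite disjoint union, and then apply Proposition~\ref{mwr} to each base point $x m_i$ and subgroup $M_0$. The required well-roundedness and asymptotics then sum over the finitely many cosets without any loss of precision; no ``asymptotic equivalence'' argument is needed. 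Also note that $W$ automatically lies in $M_f$ (since $W\subset M\cap\bG(\A_f)$), so no saturation step is required.
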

\begin{proof}
Consider the subgroups $M_\infty=M\cap \bG_\infty$ and $M_f=M\cap \bG({\A_f})$, which are closed 
subgroups of $\bG_\infty$ and $\bG({\A_f})$ respectively.
 Then $\bG_\infty^\circ$ is a finite index subgroup
of $\bG_\infty$ contained in $M_\infty$, and $M_0:=\bG^\circ_\infty M_f$ is a finite index subgroup of $M$.
Hence, $xM=\sqcup_{i=1}^n x m_i M_0 $ for some $m_i\in M$.
Therefore in proving the above claims (1) and (2), we may assume without loss of generality that
$M=\bG_\infty^\circ M_f$ for some finite index subgroup $M_f$ of $\bG(\A_f)$.

Note that
 any height function $\H=\H_{\mathcal L}$ on $\bU(\A)$ is equivalent to
a regular height function, i.e., there is an adelic metrization $\mathcal L'$ such that
for some $c\ge 1$,
$$c^{-1}\cdot \H_{\mathcal L}(y)\le \H_{\mathcal L'}(y) \le c\cdot \H_{\mathcal L}(y)$$
for all $y\in \bU(\A)$.
Hence the claim (1) follows from Proposition \ref{mwr}.

Let $x_\infty$ denote the $\bU_\infty$-component of $x$.
If $\H_{ \infty}$ is
 constant on $x_\infty \bG_\infty^\circ$,
then $\H$ is invariant under $\bG_\infty^\circ$.
Hence $\H$ is invariant under $\bG_\infty^\circ\times W$.
Therefore by taking $U_\e$ to be $\bG_\infty ^\circ \times W$,
$(B_t\cap xM ) u=B_t\cap x M$ for all $u\in U_\e$,
and hence $\tilde B_t$ satisfies \eqref{wr1}.

Now suppose that
 $\H_{ \infty}$ is non-constant on $x_\infty \bG_\infty^\circ$.
Let $\kappa$, $c_0$ and $t_0$ be as in Proposition \ref{mwr} (2).

For $\e>0$ small, take a neighborhood of $V_\e$ of $e$ in $G_\infty^\circ $
such that for all large $t$,
 $$B_t V_\e \subset B_{(1+\e^{1/\kappa} ) t}\quad\text{  and}\quad
 B_{(1-\e^{\kappa})t}\subset \cap_{v\in V_\e}  B_{t} v.$$
We may assume that this holds for all $t>t_0$
by replacing $t_0$ by a larger number if necessary.
Set $U_\e=V_\e \times W$. Since $W\subset W_{\H}\cap M_f$, for all $t>t_0$,
\begin{equation}\label{wh}
\tilde B_t U_\e  W\subset \tilde B_{(1+\e^{1/\kappa})t} \quad\text{
and}\quad
\tilde B_{(1-\e^{1/\kappa})t}\subset  \cap_{u\in U_\e W} \tilde B_t  . \end{equation}
By Proposition \ref{mwr} (2), there is $c>0$ such that for all $t>t_0$,
we have $$\mu(\tilde B_{(1+\epsilon^{1/\kappa})t} -\tilde B_{(1-\epsilon^{1/\kappa}) t }) \le c\, \e\,   \mu(\tilde B_{t}) .$$
Using \eqref{wh}, this proves (2).

\end{proof}

\section{Wonderful varieties}\label{s:won}

\subsection{Symmetric varieties }
We review some basic properties of symmetric varieties and their wonderful compactifications
due to De Concini and Procesi (see \cite{DP} for details).

Let $\bf{G}$ be a connected semisimple algebraic subgroup
and $\sigma:\bf{G}\to\bf{G}$ an involution of $\bf{G}$.
We denote by $\bf{L}$ the normalizer of the subgroup $\bf{G}^\sigma$
of invariants of $\sigma$. Then $\bf{L}\ba \bG $ is
 a {\it symmetric variety}.

A torus $\bf{T}\subset\bf{G}$ is called {\it $\sigma$-split} if $\sigma(t)=t^{-1}$
for every $t\in\bf{T}$. Let ${\bf T}_1$ be a $\sigma$-split torus of maximal dimension
and $\bf{T}$ a maximal torus containing ${\bf T}_1$.
Then $\bf{T}$ is invariant under $\sigma$ and it is an almost direct product
${\bf T}={\bf T}_1 {\bf T}_0$ where ${\bf T}_0$ is the subtorus of ${\bf T}$
on which $\sigma$ acts trivially.

Let $\Phi$ be the set of roots of $\bf{T}$. We set
$$
\Phi_0=\{\alpha\in\Phi:\, \alpha^\sigma=\alpha\}\quad\hbox{and}\quad \Phi_1=\Phi\backslash \Phi_0.
$$
One can choose a Borel subgroup $\bf B$ containing $\bf T$ such that
the corresponding set $\Phi^+$ of positive roots has the property that
\begin{equation}\label{eq:inv}
(\Phi_1\cap \Phi^+)^\sigma= -(\Phi_1\cap \Phi^+).
\end{equation}
For a root $\alpha\in \Phi$, we set $\tilde \alpha=\alpha-\alpha^\sigma$.
The set $\tilde\Phi=\{\tilde\alpha\}$ is a (possibly nonreduced) root system
of rank $\dim({\bf T}_1)$ with the set of simple roots $\Delta_\sigma:=\tilde\Delta\backslash\{0\}$.

Let $\Lambda$ be the weight lattice and $\Lambda^+\subset\Lambda$ the set of dominant integral weights.
For $\lambda\in\Lambda^+$, we denote by $\iota_\lambda:{\bf G}\to\hbox{GL}(V_\lambda)$
the corresponding irreducible representation with the highest weight $\lambda$.
The weight $\lambda$ is called {\it spherical} if there exists a nonzero vector  $v_0\in V_\lambda$
such that $\hbox{Lie}(\bL)\cdot v_0=0$. Let $\Omega^+\subset\Lambda^+$ be the subset of spherical
weights. Every spherical weight $\lambda$ satisfies $\lambda^\sigma=-\lambda$.
Since every dominant weight lies in the interior of the cone generated by positive roots,
this implies that every dominant spherical $\lambda$ can written as
$$
\lambda=\sum_{\alpha\in\Delta_\sigma} n_\alpha \alpha
$$
for some $n_\alpha\in\mathbb{Q}^+$.
A weight $\lambda$ is called {\it $\sigma$-regular} if $(\lambda,\alpha)\ne 0$ for
all $\alpha\in\Delta_\sigma$.

\subsection{Wonderful compactification of a symmetric variety}
Given a $\sigma$-regular spherical representation $\iota: {\bf G}\to\hbox{GL}(V)$
and a nonzero vector $v_0\in V$ fixed by $\bL$, one defines the {\it wonderful
compactification} ${\bf X}$ of $\bf L\ba G $ as the closure of $\bU:=[v_0] {\bf G}$ in
the projective space $\mathbb{P}(V)$.
It was proved in \cite{DP} that $\bX$ satisfies the following properties:
\begin{enumerate}
\item $\bf X$ is a Fano variety.
\item ${\bf X}\backslash \bU$ is a divisor with normal crossings and has smooth
irreducible components ${\bf X}_1$,\ldots,${\bf X}_l$ where $l=\dim({\bf T}_1)$.
\item The closures of $\bG$-orbits in $\bX$ are precisely the partial intersections of $\bX_i$'s.
\item $\bf X$ contains the unique closed $\bf G$-orbit ${\bf Y}:=\cap_{i=1}^l {\bf X}_i$
isomorphic to $\bf P\ba G$ where $\bf P$ is the parabolic subgroup with the unipotent radical
$\exp(\oplus_{\alpha\in \Phi_1\cap \Phi^+} \mathfrak{g}_\alpha)$.
\item $\bf X$ is independent of the representation $\iota$.
\end{enumerate}

We also recall a description of the Picard group of $\bX$.
The map $\hbox{Pic}({\bf X})\to \hbox{Pic}({\bf Y})$ induced by the inclusion $\bY\to \bX$ is injective.
The Picard group of ${\bf Y}\simeq {\bf P\ba G}$ can be identified with a sublattice
of the weight lattice $\Lambda$ and under this identification
\begin{align*}
\hbox{Pic}({\bf X})& \;\longrightarrow \hbox{ sublattice generated by $\Omega^+$},\\
[\bX_i] &\; \longrightarrow \; \alpha,\quad\quad \alpha\in\Delta_\sigma,\\
-K_{\bX} &\;\longrightarrow\; \sum_{\beta\in\Phi_1\cap \Phi^+} \beta+\sum_{\alpha\in\Delta_\sigma} \alpha.
\end{align*}

Now we assume that $\bG$ is defined over a number field $K$, the representation $\iota$
is $K$-rational and $v_0\in V(K)$. Then the action of the Galois group $\Gamma_K$
preserves the unique open $\bG$-orbit $\bU$ and permutes the boundary components
${\bf X}_1,\ldots,{\bf X}_l$. The identification of $\hbox{Pic}(\bX)$
with a sublattice of the weight lattice $\Lambda$ is $\Gamma_K$-equivariant with respect to
the twisted Galois action on $\Lambda$.

\subsection{Wonderful varieties}
A generalization of the wonderful compactification was introduced in \cite{lu1}.
A smooth connected projective $\bG$-variety $\bX$ is called {\it wonderful} of rank $l$ if
\begin{enumerate}
\item $\bX$ contains $l$ irreducible  $\bG$-invariant
divisors $\bX_1, \cdots, \bX_l$ with strict normal crossings.
\item $\bG$ has exactly $2^l$ orbits in $\bX$.
\end{enumerate}

It follows that $\bX$ contains unique open $\bG$-orbit, which we denote by $\bU$, and that
the irreducible components of the divisor $\bX\setminus \bU$ are $\bX_1,\cdots, \bX_l$.
Fix $u_0\in\bU$ and set $\bL=\hbox{Stab}_{\bG}(u_0)$.

In the following, we assume that the subgroup $\bL$ is semisimple.
A description of the $\hbox{Pic}({\bf X})$ was
 given by Brion \cite[Proposition~2.2.1]{b}. Since $\bL$ is semisimple, $\hbox{Pic}(\bL\ba \bG)$ is finite, and
it follows that $\hbox{Pic}({\bf X})$ is a finite extension of the free  abelian group generated by
$[{\bf X}_i]$, $i=1,\ldots,l$. Then by \cite[Lemma~2.3.1]{b},
the cone $\Lambda_{\hbox{\tiny eff}}(\bX)\subset
\hbox{Pic}(\bX)\otimes\mathbb{R}$ of effective divisors
 is generated by
$[{\bf X}_i]$, $i=1,\ldots,l$. The cone of ample divisors was computed in \cite[Section~2.6]{br}.
Combining this description with \cite[Lemma~2.1.2]{b}, it follows that
the ample cone is contained in the interior of effective cone.
The canonical class $K_{\bX}$ was computed in \cite{bi}. The formula
from \cite{bi} implies, in particular, that  $-K_{\bX}$ lies
in the interior of the effective cone $\Lambda_{\hbox{\tiny eff}}(\bX)$.

For an ample line bundle $L$ on $\bX$, we define
\begin{align*}
a_L&:=\inf\{a:\, aL+K_{\bX}\in\Lambda_{\hbox{\tiny eff}}(\bX)\},\\
b_L&:=\hbox{the maximal codimension of the face of $\Lambda_{\hbox{\tiny eff}}(\bX)$ containing $a_LL+K_{\bX}$}.
\end{align*}
Since $L$ and  $-K_{\bX}$ belong to the interior of $\Lambda_{\hbox{\tiny eff}}(\bX)$,
the parameter $a_L$ is well-defined and $a_L>0$.

\begin{Rem}\label{rem:ab}{\rm
In the case when $\bX$ is the wonderful compactification of a symmetric variety,
and $L$ is the restriction of $\mathcal{O}_{\mathbb{P}(V)}(1)$ to $\bX$,
the parameters $a_L$ and $b_L$ can be computed in
terms of the highest weight $\lambda_\iota$ of the representation $\iota$ as follows. Writting
\begin{align*}
\sum_{\beta\in \Phi_1\cap \Phi^+} \beta =\sum_{\alpha\in \Delta_\sigma} m_\alpha \alpha\quad\hbox{and}\quad
\lambda_\iota  =\sum_{\alpha\in\Delta_\sigma} n_\alpha \alpha,
\end{align*}
we have
\begin{align*}
a_L=\max\left\{\frac{m_\alpha+1}{n_\alpha}:\alpha\in\Delta_\sigma\right\}\quad\hbox{and}\quad
b_L=\#\left\{\alpha\in\Delta_\sigma/\Gamma_K:\, a_L=\frac{m_\alpha+1}{n_\alpha}\right\}.
\end{align*}
}
\end{Rem}

 Luna showed that any wonderful variety is spherical. It follows
that $\bL$ has finite index in its normalizer. Hence
using Theorem \ref{ad-extension},
we have an invariant global section $\s$ of any ample line bundle $L$ such that
$\bU\subset \{\s\ne 0\}$.
Since any ample line bundle is contained in the interior of
the cone of effective divisors, it follows that
$\bU=\{\s\ne 0\}$.

\begin{Cor}\label{c:wonder_vol}
For any adelic metrization $\mathcal L$ of
a very ample line bundle $L$ of a wonderful variety $\bX$,
there exist a polynomial $P_{\H_{\mathcal L}}$ of degree $b_L-1$ and $\delta>0$ such that
 $$
\mu(\{x\in \bU(\mathbb{A}):\, \H_{\mathcal L}(x)<t\})=t^{a_L}P_{\H_{\mathcal L}}(\log t)
 +O(t^{a_L-\delta})\quad \hbox{as $t\to\infty$}.
$$

\end{Cor}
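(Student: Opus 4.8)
The plan is to reduce Corollary \ref{c:wonder_vol} to Corollary \ref{c:volume}, which already gives exactly an asymptotic of the shape $V(t)=t^{a(L)}P(\log t)+O(t^{a(L)-\delta})$ for the Tamagawa volume of the full height balls $B_t$ in $\bU(\A)$. The only work is to identify the exponents: we must show $a(L)=a_L$ and $b(L)=b_L$, where $a(L),b(L)$ are the divisor-theoretic quantities of \eqref{defab2} coming from $\hbox{div}(\s)$ and $-\hbox{div}(\omega)$, while $a_L,b_L$ are the cone-theoretic invariants defined via $\Lambda_{\hbox{\tiny eff}}(\bX)$. Before invoking Corollary \ref{c:volume} we also need its hypothesis $a(L)>0$; but this is immediate here, since (as recorded in the discussion preceding the statement) $L$ and $-K_{\bX}$ both lie in the interior of the effective cone, so $a_L>0$, and once the identification $a(L)=a_L$ is in place we are done. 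Finally, one should note that $\bX$ is already smooth with $\bX\setminus\bU$ a normal-crossings divisor with smooth components $\bX_1,\dots,\bX_l$, so no equivariant resolution is needed and the setup of Corollary \ref{c:volume} applies verbatim.

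First I would unwind the two descriptions of the exponent $a$. On the one hand, $\hbox{div}(\s)=\sum_i m_i \bX_i$ with $m_i\in\n$ the multiplicities of the invariant section, so in $\hbox{Pic}(\bX)\otimes\RR$ the class $[L]=[\hbox{div}(\s)]=\sum_i m_i[\bX_i]$; on the other hand $-\hbox{div}(\omega)=\sum_i n_i\bX_i$ represents $-K_{\bX}$, since by Rosenlicht's theorem (Lemma \ref{ros}) $\omega$ is unique up to scalar and its divisor is supported on the boundary. Thus $aL+K_{\bX}=\sum_i (a m_i - n_i)[\bX_i]$, and since (from the cited results of Brion, \cite{b,br}) the effective cone $\Lambda_{\hbox{\tiny eff}}(\bX)$ is precisely the simplicial cone generated by the $[\bX_i]$ and these classes are linearly independent (no two $\bX_i$ are rationally equivalent, again by Lemma \ref{ros}), the condition $aL+K_{\bX}\in\Lambda_{\hbox{\tiny eff}}(\bX)$ is equivalent to $a m_i\ge n_i$ for all $i$, i.e. $a\ge\max_i n_i/m_i$. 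Hence $a_L=\inf\{a: aL+K_{\bX}\in\Lambda_{\hbox{\tiny eff}}(\bX)\}=\max_i n_i/m_i=a(L)$.

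For $b$, at $a=a_L$ the class $a_L L+K_{\bX}=\sum_i(a_L m_i-n_i)[\bX_i]$ has vanishing coefficient exactly on those $i$ with $n_i/m_i=a_L$ and strictly positive coefficient elsewhere; since the cone is simplicial on the $[\bX_i]$, the minimal face containing this point is the face spanned by $\{[\bX_i]: n_i/m_i=a_L\}$, whose codimension in $\hbox{Pic}(\bX)\otimes\RR$ — which has dimension $l$ as the $[\bX_i]$ form a basis up to finite index — equals $\#\{i: n_i/m_i=a_L\}$. The only subtlety is the Galois descent: the $\bX_i$ are a priori defined over a finite extension and $\Gamma_K$ permutes them, and both $b(L)$ in \eqref{defab2} and the codimension count for $b_L$ must be taken over $\Gamma_K$-orbits (the height-zeta function in Theorem \ref{th:ct} groups the $D_\alpha$ into $\Gamma_K$-orbits, contributing one Dedekind zeta factor $\zeta_{K_\alpha}$ per orbit, so the pole order at $s=a(L)$ is the number of orbits with $n_\alpha/m_\alpha=a(L)$). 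Since $\s$ and $\omega$ are $\Gamma_K$-equivariant up to scalars, the functions $i\mapsto m_i$ and $i\mapsto n_i$ are constant on $\Gamma_K$-orbits, so $b(L)=\#\{\alpha\in\mathcal{A}/\Gamma_K: n_\alpha/m_\alpha=a(L)\}=b_L$, and the corollary follows.

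The main obstacle — really the only nonroutine point — is making the two a priori different notions of $b$ genuinely match under the Galois action, i.e. checking that the ``maximal codimension of a face'' in the definition of $b_L$ is computed in a way compatible with the $\Gamma_K$-orbit count that the Dedekind-zeta factorization in Theorem \ref{th:ct}(2) forces on $b(L)$. This requires a small but careful argument that the effective cone, its ample subcone, and the canonical class all descend correctly from $\bar K$ to $K$, and that the face structure is $\Gamma_K$-stable; the inputs of Brion (\cite{b}, \cite{br}, \cite{bi}) describing $\Lambda_{\hbox{\tiny eff}}(\bX)$, the ample cone, and $K_{\bX}$ are stated geometrically, so one invokes the $\Gamma_K$-equivariance of the $\hbox{Pic}$-to-weight-lattice identification noted at the end of the symmetric-variety subsection (and its analogue for general wonderful $\bX$) to transfer them. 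Everything else is bookkeeping with the simplicial cone generated by the boundary divisors.
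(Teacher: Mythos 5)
Your proposal follows the paper's own route: reduce to Corollary \ref{c:volume} by identifying $a(L)=a_L$ and $b(L)=b_L$. The paper's proof is just two lines, asserting these identities as immediate from the fact that the $[\bX_i]$ generate $\Lambda_{\text{eff}}(\bX)$; you verify them in detail, which is fine but not a different argument.

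One small slip worth flagging: you write that the minimal face of the simplicial cone containing $a_L L + K_{\bX} = \sum_i (a_L m_i - n_i)[\bX_i]$ is the face ``spanned by $\{[\bX_i]:n_i/m_i=a_L\}$.'' It is the opposite: the minimal face containing a point of a simplicial cone is spanned by the generators whose coefficients are \emph{positive}, i.e. by $\{[\bX_i]:n_i/m_i<a_L\}$; the generators with vanishing coefficient ($n_i/m_i=a_L$) are exactly those \emph{omitted} from that face, which is why its codimension equals $\#\{i:n_i/m_i=a_L\}$. Your final codimension count is correct, but the intermediate sentence has the spanning set backwards. The Galois-orbit bookkeeping (grouping the $\bX_i$ into $\Gamma_K$-orbits so that the pole order of $\eta(s)$ from Theorem \ref{th:ct} matches the face codimension computed in $\operatorname{Pic}(\bX)\otimes\RR$) is handled correctly and is indeed the only point the paper leaves implicit.
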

\begin{proof} Since $\bX\setminus \bU$ is a divisor
whose irreducible components are given by $\bX_i$, $1\le i\le l$,
and $\Lambda_{\text{eff}}(X)$ is generated by $\bX_i$'s,
we have $a(L)=a_L$ and $b(L)=b_L$
 for $a(L)$ and $b(L)$ defined in \eqref{defab2}.
Hence the claim is a special case of Corollary \ref{c:volume}.
\end{proof}

In the same way,
the following is a special case of Theorem \ref{t:volume}:
\begin{Cor}\label{c:won_vol}
Assume that there are only finitely many $\bga$-orbits in $\bU(\A)$.
Then for $\mathcal L$ as in the above corollary
and for every $x\in \bU(\A)$,
there exist a polynomial $P_{\H_{\mathcal L},x}$ of degree $b_L-1$ and $\delta>0$ such that
$$
\mu(\{y\in x\bG(\A):\, \H_{\mathcal L}(y)<t\})=t^{a_L}P_{\H_{\mathcal L},x}(\log t)+O(t^{{a_L}-\delta}) \quad \hbox{as $t\to\infty$}.
$$
\end{Cor}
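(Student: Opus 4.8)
The plan is to obtain this from Theorem \ref{t:volume} in exactly the way Corollary \ref{c:wonder_vol} was obtained from Corollary \ref{c:volume}, the point being that the finiteness of $\bG(\A)$-orbits in $\bU(\A)$ --- the standing hypothesis here --- is precisely what lets us pass from the total volume of $\bU(\A)$ to the volume of a single $\bG(\A)$-orbit. First I would verify the hypotheses of Theorem \ref{t:volume}. A wonderful variety is spherical, so $\bL$ has finite index in its normalizer; since $\bL$ is semisimple, Theorem \ref{ad-extension} then furnishes a $\bG$-invariant global section $\s$ of the very ample bundle $L$ with $\bU=\{\s\ne 0\}$, using that $L$ lies in the interior of $\Lambda_{\hbox{\tiny eff}}(\bX)$. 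Embedding $\bX$ in a projective space by the complete linear system of $L$ realizes $L$ as the pull-back of $\mathcal{O}(1)$, so $\H_{\mathcal L}$ is a height of the kind treated in Section \ref{s:volume}; moreover $\bX$ is smooth, and $\bX\setminus\bU=\bX_1\cup\cdots\cup\bX_l$ is a normal-crossings divisor with components defined over finite extensions of $K$. Thus Theorem \ref{t:volume} applies to the given $x\in\bU(\A)$ and yields a nonzero polynomial $P_{\H_{\mathcal L},x}$ of degree $b(L)-1$ together with a $\delta>0$ such that $\mu(\{y\in x\bG(\A):\H_{\mathcal L}(y)<t\})=t^{a(L)}P_{\H_{\mathcal L},x}(\log t)+O(t^{a(L)-\delta})$ as $t\to\infty$, where $a(L),b(L)$ are as in \eqref{defab2}.

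Next I would identify $a(L)$ with $a_L$ and $b(L)$ with $b_L$, which is done exactly as in the proof of Corollary \ref{c:wonder_vol}. Writing $\operatorname{div}(\s)=\sum_i m_i\bX_i$ and $-\operatorname{div}(\omega)=\sum_i n_i\bX_i$, one has $[L]=\sum_i m_i[\bX_i]$ and $[K_{\bX}]=-\sum_i n_i[\bX_i]$ in $\operatorname{Pic}(\bX)\otimes\mathbb{R}$; by the descriptions of $\operatorname{Pic}(\bX)$, of $\Lambda_{\hbox{\tiny eff}}(\bX)$, and of $K_{\bX}$ recalled in this section, $\Lambda_{\hbox{\tiny eff}}(\bX)$ is the cone on the linearly independent classes $[\bX_1],\ldots,[\bX_l]$. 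Hence $a[L]+[K_{\bX}]=\sum_i(am_i-n_i)[\bX_i]$ lies in $\Lambda_{\hbox{\tiny eff}}(\bX)$ exactly when $am_i\ge n_i$ for all $i$, giving $a_L=\max_i\{n_i/m_i\}=a(L)$, and $a_L[L]+[K_{\bX}]$ lies in the relative interior of the face spanned by those $[\bX_i]$ with $n_i/m_i=a_L$; passing to $\Gamma_K$-orbits of the boundary components (as in Corollary \ref{c:wonder_vol}) identifies the codimension of this face with $\#\{\alpha\in\mathcal{A}/\Gamma_K:n_\alpha/m_\alpha=a_L\}=b(L)$, so $b_L=b(L)$. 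Substituting $a(L)=a_L$ and $b(L)=b_L$ into the expansion above completes the proof.

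Since the analytic content is entirely contained in Theorem \ref{t:volume} (and ultimately in the work of Chambert-Loir and Tschinkel), I do not expect any real obstacle; the only step requiring care is the exponent identification in the second paragraph, and in particular the Galois bookkeeping that makes $b_L$ count $\Gamma_K$-orbits of boundary divisors rather than individual divisors --- but this is literally the computation already carried out for Corollary \ref{c:wonder_vol}, so nothing new arises.
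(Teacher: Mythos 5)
Your proposal is correct and takes essentially the same route as the paper: the paper disposes of this corollary in a single line, declaring it a special case of Theorem \ref{t:volume} via the identification $a(L)=a_L$, $b(L)=b_L$ already carried out in the proof of Corollary \ref{c:wonder_vol}. Your write-up simply fills in the details the paper leaves implicit (the verification that Theorem \ref{ad-extension} applies, and the Galois bookkeeping in the exponent identification), so there is nothing to flag.
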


\subsection{Examples}\label{sec_ex}

\begin{enumerate}
\item (group varieties)
Let $\iota:{\bf L}\to \hbox{GL}(W)$ be an adjoint semisimple algebraic group defined over a number field $K$.
Then $\iota({\bf L})$ is a homogeneous variety of ${\bf G}={\bf L}\times {\bf L}$ with the action
$$
(l_1,l_2)\cdot x=\iota(l_1)^{-1}\cdot x\cdot \iota(l_2).
$$
The stabilizer of identity is the symmetric subgroup corresponding to the involution
$\sigma(l_1,l_2)=(l_2,l_1)$. Let $\bf S$ be a maximal torus of $\bf L$ with a root system $\Phi_L$
and set of simple roots $\Delta_{\bL}$. Then ${\bf T}={\bf S}\times {\bf S}$ is a maximal torus of ${\bf G}$ and
\begin{align*}
\Phi_1^+&=\{(\alpha,-\beta):\, \alpha,\beta\in \Phi_{\bL}^+\},\\
\Delta_\sigma&=\{(\alpha,-\alpha):\, \alpha\in \Delta_{\bL}\}.
\end{align*}
Let $\lambda_\iota$ be the highest weight of the representation $\iota$ and $\rho$
the sum of roots in $\Phi_{\bL}^+$. Then the highest weight for the corresponding
representation of $\bG$ is $(\lambda_\iota,-\lambda_\iota)$, and the sum of positive roots of
$\bG$ is $(2\rho,-2\rho)$. Writing
\begin{align*}
2\rho =\sum_{\alpha\in \Delta_{\bL}} m_\alpha \alpha\quad\hbox{and}\quad
\lambda_\iota  =\sum_{\alpha\in\Delta_{\bL}} n_\alpha \alpha,
\end{align*}
we have
\begin{align*}
a=\max\left\{\frac{m_\alpha+1}{n_\alpha}:\alpha\in\Delta_\sigma\right\}\quad\hbox{and}\quad
b=\#\left\{\alpha\in\Delta_{\bL}/\Gamma_K:\, a=\frac{m_\alpha+1}{n_\alpha}\right\}.
\end{align*}
This formulas agree with the ones obtained in \cite{GMO}.

\item (space of symplectic forms)
Consider the space $\bU$ of symplectic forms of dimension $2n$
modulo the equivalence defined by scaling.
It can be identified with the symmetric variety $\bU=\bL\ba \bG$
where $\bG=\hbox{PGL}_{2n}$ and $\bL=\hbox{PSp}_{2n}$. Note that $\bL$ is the
set of fixed points of the involution
$$
\sigma(g)=-J{}^tg^{-1}J
$$
where $J=\sum_{i=1}^n E_{i, 2n-i+1} -\sum_{i=1}^n E_{n+i, n-i+1}$.
Consider the maximal torus in $\hbox{Lie}(\bG)$ given by
$$
\mathfrak{t}=\{\hbox{diag}(u_1,\ldots,u_n,v_n,\ldots,v_1):\, \sum_{i=1}^n(u_i+v_i)=0\} .
$$
Then
$$
\sigma(s_1,\ldots,s_d,t_d,\ldots,t_1)=(-t_1,\ldots, -t_d, -s_d,\ldots, -s_1),
$$
and we have the decomposition $\mathfrak{t}=\mathfrak{t}_0+\mathfrak{t}_1$
where
$$
\mathfrak{t}_0=\{u_i=-v_i,\, 1\le i\le n\}\quad\hbox{and}\quad
\mathfrak{t}_1=\{u_i=v_i,\, 1\le i \le n\}.
$$
The root system $\Phi$ is given by
$$
\Phi=\{\alpha_{ij}:=s_i-s_j,\,\beta_{ij}:=t_i-t_j,\, \gamma_{kl}:=s_k-t_l:\;\; 1 \le i\ne j,k,l\le n\},
$$
and $\Phi_0=\{\gamma_{kk}:\;\; 1\le k\le n\}$. If we choose the set of positive roots as
$$
\Phi^+=\{\alpha_{ij},\beta_{ij},\gamma_{kl}:\;\; 1\le i<j, k,l\le n\},
$$
then (\ref{eq:inv}) holds, the set of simple roots is
$$
\Delta=\{\alpha_{i,i+1},\beta_{i,i+1},\gamma_{n,n}:\;\; 1\le i\le n-1 \},
$$
and
$$
\tilde \Delta=\{\alpha_i:=\alpha_{i,i+1}+\beta_{i,i+1}:\;\; 1\le i\le n-1\}\cup\{0\}.
$$
The sum of positive roots is given by
$$
2\rho=2\left(\sum_{i=1}^{n-1} i(2n-i) \alpha_i+n^2\gamma_{n,n}  \right).
$$
Since $\{\alpha_i\}$ forms a basis of $\mathfrak{t}_1^*$ and
$$
\left(\sum_{\beta\in\Phi_1\cap \Phi^+} \beta\right)|_{\mathfrak{t}_1}=2\rho|_{\mathfrak{t}_1},
$$
it follows that
$$
\sum_{\beta\in\Phi_1\cap \Phi^+} \beta= 2\sum_{i=1}^{n-1} i(2n-i) \alpha_i.
$$
Now take an irreducible spherical representation $\iota:\bG\to\hbox{GL}(V)$
with its regular highest weight given by
$$
\lambda_\iota=\sum_{i=1}^{n-1} n_i\alpha_i
$$
and $v_0\in V(K)$ such that $\bL=\hbox{Stab}_{\bG}([v_0])$.
We then have an embedding
$$
\iota:\bU\to \bX:= \overline{[v_0]\bG}:\, u\mapsto [v_0] u
$$
of the space of symplectic forms in its wonderful compactification $\bX$.
The parameters $a$ and $b$ are
computed as follows

$$a=\max_{1\le i\le n-1} \left\{\frac{2 i(2n-i)+1}{n_i}\right\};
\quad \text{and}$$
$$
b=\#\left\{i=1,\ldots,n-1:\, a=\frac{2 i(2n-i)+1}{n_i}\right\}.
$$

\end{enumerate}

\section{Equidistributions of Adelic periods}\label{s:ea}
Let $\bG\subset \GL_N$ be a connected semisimple $K$-group.
Let $S$
be a finite subset of $R$ which contains all
archimedean valuations $v\in R$ such that
$\bG(K_v)$ is non-compact.
 This assumption is needed so that the diagonal
embedding of $\bG(\mathcal O_S)$
into $\bGS$ is a discrete
subgroup of $\bGS $.
Let $\Gamma\subset \bG(\mathcal O_S)$ be a finite index subgroup;
hence $\Gamma$ is a lattice in $\bGS$.

\begin{Def}
\begin{itemize}
\item $S$ is called isotropic for $\bG$ if for any connected non-trivial
normal $K$-subgroup $\bN$ of $\bG$,
$\bN_S=\prod_{v\in S} \bN(K_v)$
 is non-compact.
\item $S$ is called strongly
isotropic
for $\bG$ if $S$ contains $v$ such that
every $K_v$-normal subgroup $\bN$ of $\bG$ is isotropic over $K_v$, i.e.,
$\bN(K_v)$ is non-compact.
\end{itemize}\end{Def}
Clearly a strongly isotropic subset for $\bG$ is isotropic for $\bG$.


For any connected semisimple $K$-subgroup $\bL$ of $\bG$,
 $\pi:\tilde \bL\to \bL$ denotes the simply connected covering,
that is, $\tilde \bL$ is a connected simply connected semisimple $K$-group
and $\pi$ is a $K$-isogeny. Note that $\pi$ induces
a map $\tilde \bL(K_v)\to \bL(K_v)$
for each $v\in R$, which is no more surjective in general.

\begin{Def} $\bG$ satisfies strong approximation property
with respect to $S$ if the diagonal embedding of $\bG(K)$ into
$\bG(\A_S)$ is dense.
\end{Def}

For the property
of strong approximation theorems for algebraic groups,
we refer to \cite{PR}, for instance, see Proposition 7.2 and  Theorem 7.12 in \cite{PR}
for the following:
\begin{Thm}\label{sta}
\begin{itemize}
\item If $S$ is isotropic for $\bG$, then
$\tilde \bG$ satisfies the strong approximation property with respect to $S$.
\item If $v\in S$ is isotropic for $\bG$,
then $\tilde \bG(\mathcal O_S)$ is dense in $\tilde \bG_{S\setminus\{v\}}$.
\end{itemize}\end{Thm}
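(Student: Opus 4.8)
\noindent\emph{Proof proposal.} The second assertion is a formal consequence of the first. If $v\in S$ is isotropic for $\bG$, then $\{v\}$ is isotropic for $\bG$ in the sense of the preceding definition, so by the first assertion $\tilde\bG(K)$ is dense in $\tilde\bG(\A_{\{v\}})$; intersecting with the open subgroup $\tilde\bG_{S\setminus\{v\}}\times\prod_{w\notin S}\tilde\bG(\mathcal{O}_w)$ of $\tilde\bG(\A_{\{v\}})$ and projecting onto the first factor shows that $\tilde\bG(\mathcal{O}_S)$ is dense in $\tilde\bG_{S\setminus\{v\}}$. So it suffices to prove the first assertion. Replacing $\bG$ by $\tilde\bG$, assume $\bG$ is simply connected; then $\bG\cong\prod_i R_{K_i/K}(\bG_i)$ with each $\bG_i$ absolutely almost simple simply connected over a number field $K_i$. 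A direct product has strong approximation with respect to $S$ if and only if each factor does, and $R_{K_i/K}(\bG_i)$ has it over $K$ with respect to $S$ if and only if $\bG_i$ has it over $K_i$ with respect to the set $S_i$ of places of $K_i$ above $S$, because the identification $(R_{K_i/K}\bG_i)(\A_K)\cong\bG_i(\A_{K_i})$ is compatible with rational and integral points. Translating the hypothesis ``$S$ isotropic'' through these identifications, we are reduced to: if $\bG$ is absolutely almost simple simply connected over $K$ and $\bG(K_{v_0})$ is noncompact for some $v_0\in S$, then $\bG(K)$ is dense in $\bG(\A_S)$.

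Set $N:=\overline{\bG(K)}\subseteq\bG(\A_S)$, a closed subgroup. Let $\bG(\A_S)^{+}$ denote the restricted direct product of the groups $\bG(K_v)^{+}$, $v\notin S$, where $\bG(K_v)^{+}$ is the normal subgroup generated by the $K_v$-points of unipotent radicals of parabolic $K_v$-subgroups. Since $\bG$ is $K_v$-isotropic for all $v$ outside a finite set $S'$ of $v\notin S$, and there $\bG(K_v)^{+}=\bG(K_v)$ by the Kneser--Tits theorem over local fields, $\bG(\A_S)^{+}$ is a closed normal subgroup with quotient $q:\bG(\A_S)\twoheadrightarrow Q$, $Q\cong\prod_{v\in S'}\bG(K_v)$, a finite product of compact groups. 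Since $N$ is a subgroup, once $N\supseteq\ker q=\bG(\A_S)^{+}$ one has $N=q^{-1}(\overline{q(\bG(K))})$, so $N=\bG(\A_S)$ as soon as (i) $N\supseteq\bG(\A_S)^{+}$ and (ii) the diagonal image of $\bG(K)$ in $\prod_{v\in S'}\bG(K_v)$ is dense. Point (ii) is weak approximation for the simply connected group $\bG$ at the finite set $S'$, which holds for all simply connected semisimple groups and is classical.

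It remains to establish (i). Strong approximation holds for a split unipotent group with respect to any nonempty $S$: such a group is an iterated extension of copies of $\mathbb{G}_a$, and $K$ is dense in $\A_S$ because $\mathcal{O}_S$ is dense in $\prod_{v\notin S}\mathcal{O}_v$ by the Chinese remainder theorem; hence $\overline{\bU(K)}=\bU(\A_S)\subseteq N$ for the unipotent radical $\bU$ of any parabolic $K$-subgroup. If $\bG$ is $K$-isotropic, fix a minimal parabolic $K$-subgroup $\bP_0=\bM_0\bU_0$ and its opposite; then $N$ contains $\bU_0(\A_S)$ and $\bU_0^{-}(\A_S)$, and the relative Bruhat decomposition over each $K_v$ shows that these, together with the contribution of the Levi factor $\bM_0$, generate $\bG(\A_S)^{+}$, so that (i) for $\bG$ reduces to strong approximation for the semisimple part $\bM_0^{\mathrm{der}}$, which is $K$-anisotropic. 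Thus everything comes down to the case $\bG$ $K$-anisotropic, and \emph{this is the main obstacle}: there are no parabolic $K$-subgroups, the unipotent mechanism is unavailable, and one must invoke the classification of absolutely almost simple groups over a number field --- for inner type $A$, where $\bG=\SL_1(D)$, via the Eichler--Hasse--Schilling--Maass theorem on surjectivity of the reduced norm together with strong approximation in the norm-one torus; for outer type $A$ and for types $B$, $C$, $D$ via the corresponding norm and spinor-norm principles for hermitian and quadratic forms; and for the anisotropic exceptional cases via explicit embeddings into isotropic classical groups. The deduction of density of $\bG(K)$ in $\bG(\A_S)$ from these norm theorems is still a delicate approximation argument, but is no longer of a structural nature.
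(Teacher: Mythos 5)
The paper's own ``proof'' of Theorem \ref{sta} consists of the sentence preceding it, which refers the reader to \cite{PR} (Prop.~7.2 and Thm.~7.12); there is no argument in the paper, so the question is whether your outline is a viable one.

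Your formal reductions are correct: the deduction of the second bullet from the first by intersecting $\tilde\bG(K)\subset\tilde\bG(\A_{\{v\}})$ with the open subgroup $\tilde\bG_{S\setminus\{v\}}\times\prod_{w\notin S}\tilde\bG(\mathcal{O}_w)$ and projecting; the reduction to $\tilde\bG$ absolutely almost simple and simply connected via Weil restriction; and the splitting of the density of $N=\overline{\bG(K)}$ into (i) $N\supseteq\bG(\A_S)^+$ and (ii) weak approximation at the finite set $S'$ of anisotropic places outside $S$, with (ii) supplied by weak approximation for simply connected semisimple groups. This is the standard Kneser-style architecture.

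However, your handling of (i) in the $K$-isotropic case contains a genuine error. You assert that $\bU_0(\A_S)$ and $\bU_0^-(\A_S)$, ``together with the contribution of the Levi factor $\bM_0$,'' generate $\bG(\A_S)^+$, and that (i) therefore ``reduces to strong approximation for $\bM_0^{\mathrm{der}}$,'' which is $K$-anisotropic. Neither claim is right. By the Borel--Tits theorem, for opposite \emph{proper} parabolic $K_v$-subgroups with unipotent radicals $\bU_0,\bU_0^-$, the group $\bG(K_v)^+$ is generated by $\bU_0(K_v)$ and $\bU_0^-(K_v)$ alone; no Levi input is needed. Since $N$ is a closed subgroup containing $\bU_0(\A_S)$ and $\bU_0^-(\A_S)$, it contains every element of $\bG(\A_S)^+$ supported on finitely many places, and those are dense in $\bG(\A_S)^+$; thus $N\supseteq\bG(\A_S)^+$ and the $K$-isotropic case is finished directly, with no reference to $\bM_0$. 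Worse, the reduction you propose would actually be false in general: the anisotropic kernel $\bM_0^{\mathrm{der}}$ of a $K$-isotropic $\bG$ can have $\bM_0^{\mathrm{der}}(K_w)$ compact for every $w\in S$, so strong approximation for $\bM_0^{\mathrm{der}}$ with respect to $S$ need not hold, even though strong approximation for $\bG$ does.

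The $K$-anisotropic case is the genuine depth of the theorem, and you leave it as a citation/sketch (classification plus Eichler--Hasse--Schilling--Maass and its analogues for the other types). Since the paper itself simply defers to \cite{PR} here, stopping at that point is reasonable -- but the ``delicate approximation argument'' you gesture at is the bulk of the proof, so what the proposal actually establishes (after correcting the Levi error) is only the $K$-isotropic half of the theorem.
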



 Following Tomanov \cite{T}, we define the following:
\begin{Def}\label{classf} A connected $K$-subgroup $\mathbf{P}$ of $\bG$ is in
  {\rm class
$\Cal F$} relative to $S$ if
 the radical of $\bP$ is
unipotent and every $K$-simple factor of $\bP$ is
  $K_v$-isotropic for some $v\in S$.
\end{Def}

The following is well-known, see
\cite[Lemma 5.1]{EMS1}, for example.
\begin{Lem}\label{aniso} Let $\bL\subset \bG$ be connected reductive algebraic $K$-subgroups
with no non-trivial $K$-character.
 The following are equivalent:
\be
\item the centralizer of $\bL$ is anisotropic over $K$.
\item $\bL$ is not contained in any proper $K$-parabolic subgroup
of $\bG$.
\item any $K$-subgroup of $\bG$ containing $\bL$ is reductive.
\ee
\end{Lem}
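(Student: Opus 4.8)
The statement is a standard fact about reductive subgroups, and I would organize the proof as a cycle of implications $(1)\Rightarrow(2)\Rightarrow(3)\Rightarrow(1)$, or in whatever order is most economical; in fact $(2)\Leftrightarrow(3)$ is the classical criterion for a reductive group and $(1)$ is just a reformulation of $(2)$ via the structure of parabolics. First I would recall that for a connected reductive $K$-group $\bL$ with no non-trivial $K$-character, $\bL$ is semisimple, so $\bL=\bL\sc$-image and in particular $\bL$ is generated by its unipotent elements over $\bar K$ — this will be used to pin down centralizers and normalizers.

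For $(2)\Rightarrow(3)$: suppose $\bL$ lies in no proper $K$-parabolic, and let $\bH\supset\bL$ be a $K$-subgroup. Apply the classical result (e.g. Borel–Tits) that any connected $K$-group $\bH$ has a Levi decomposition $\bH=\bR_u(\bH)\rtimes \bM$ over $K$, and that $\bR_u(\bH)$ is contained in the unipotent radical of some $K$-parabolic $\bQ$ of $\bG$ with $\bQ$ normalizing $\bR_u(\bH)$; hence $\bL\subset \bH\subset N_{\bG}(\bR_u(\bH))\subset \bQ$. Since $\bL$ is in no proper $K$-parabolic, $\bQ=\bG$, forcing $\bR_u(\bH)=1$, i.e. $\bH$ is reductive. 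For $(3)\Rightarrow(2)$ I would argue by contraposition: if $\bL\subset\bP$ for a proper $K$-parabolic $\bP$, then $\bP$ itself is a non-reductive $K$-subgroup of $\bG$ containing $\bL$, violating $(3)$.

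For the equivalence with $(1)$: write $\bZ=Z_{\bG}(\bL)$ and $\bT$ a maximal $K$-split torus of $\bZ$ (viewed inside $\bG$). If $\bT\ne 1$, then the weight-space decomposition of $\Lie(\bG)$ under $\bT$ produces, for a generic one-parameter subgroup $\chi\in \xx(\bT)$, a proper $K$-parabolic $\bP(\chi)=\{g:\lim_{t\to 0}\chi(t)g\chi(t)^{-1}\text{ exists}\}$; since $\bT$ centralizes $\bL$, the group $\bL$ is fixed by conjugation by $\chi(t)$, so $\bL\subset \bP(\chi)$, contradicting $(2)$. Conversely, if $\bZ$ is $K$-anisotropic and $\bL\subset\bP$ for a proper $K$-parabolic $\bP=\bR_u(\bP)\rtimes\bM$, then, using that $\bL$ is semisimple hence $\bL\subset\delta\bM\delta^{-1}$ for some $\delta\in\bR_u(\bP)(K)$ (conjugacy of Levi factors over $K$ inside $\bP$, or by a cohomological vanishing since $\bR_u(\bP)$ is a successive extension of vector groups), the connected center of $\bM$ contributes a non-trivial $K$-split torus commuting with $\bL$, so $\bZ$ is $K$-isotropic, a contradiction. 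The main obstacle, and the step I would be most careful about, is this last conjugation of $\bL$ into a Levi $K$-subgroup of $\bP$: one must invoke the correct rationality statement (vanishing of $H^1(K,\bR_u(\bP))$, valid because $\bR_u(\bP)$ is $K$-split unipotent) rather than just geometric conjugacy, and one must use the hypothesis that $\bL$ has no non-trivial $K$-character to ensure it lands in the semisimple part $[\bM,\bM]$ rather than meeting the central torus. I would either give these details directly or simply cite \cite{EMS1,BT-Borel-Tits} as the excerpt already does.
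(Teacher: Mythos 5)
The paper does not prove this lemma; it states it as ``well-known'' and refers to \cite[Lemma 5.1]{EMS1}, so there is no in-text argument to compare against. Your proof follows the standard route and is essentially correct in structure: Borel--Tits gives $(2)\Rightarrow(3)$ (if $\bH\supset\bL$ has non-trivial unipotent radical $R_u(\bH)$, then $\bL\subset\bH\subset N_\bG(R_u(\bH))$ lies in a proper $K$-parabolic); $(3)\Rightarrow(2)$ is immediate since a proper parabolic is not reductive; and $(1)\Leftrightarrow(2)$ goes via the parabolic $\bP(\chi)$ attached to a cocharacter of a $K$-split torus in $Z_\bG(\bL)$ in one direction, and conjugation of $\bL$ into a Levi of $\bP$ in the other.

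Three points need tightening. (a) ``Reductive with no non-trivial $K$-character'' does \emph{not} imply semisimple: the connected center can be a non-trivial $K$-anisotropic torus. This is harmless because your Levi-conjugacy step only needs $\bL$ reductive; but note that the place where the ``no $K$-character'' hypothesis genuinely intervenes is the hypothesis on $\bG$ in $\neg(1)\Rightarrow\neg(2)$, to ensure a non-trivial $K$-split torus in $Z_\bG(\bL)$ is not central in $\bG$, so that $\bP(\chi)$ is proper. (b) In the rationality step, the torsor $\{u\in R_u(\bP)\,:\,u\bL u^{-1}\subset\bM\}$ is a torsor under $N_{R_u(\bP)}(\bL)$, not under $R_u(\bP)$ itself, so the group whose Galois $H^1$ must vanish is $N_{R_u(\bP)}(\bL)$. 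This normalizer is a closed subgroup of a unipotent group in characteristic $0$, hence connected and $K$-split, so its $H^1(K,-)$ does vanish and your conclusion stands; but the phrasing ``vanishing of $H^1(K,R_u(\bP))$'' is not quite the right statement. A cleaner route avoiding both Mostow's theorem and Galois cohomology is to observe that $\bL$ and its image in $\bM$ are two Levi $K$-subgroups of $\bL\cdot R_u(\bP)$ and invoke $K$-rational conjugacy of Levi factors (equivalently, a filtration argument using $H^1(\bL,\Lie R_u(\bP))=0$ by complete reducibility in characteristic $0$). (c) The concern about $\bL$ landing in $[\bM,\bM]$ is a red herring: once $\bL\subset\bM$, the non-trivial $K$-split torus inside $Z(\bM)^\circ$ centralizes $\bL$ regardless of whether it meets $\bL$, and that is all that is needed to make $Z_\bG(\bL)$ $K$-isotropic. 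The ``no $K$-character'' hypothesis on $\bL$ plays no role at that point.
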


Set $X_S:=\G\ba \bG_S$, and let
$\mathcal P(X_S)$ denote the space of all Borel probability measures of
$X_S$.
Let $\{\bL_i\}$ be a sequence of connected semisimple $K$-subgroups of $\bG$.
We denote by $\nu_{i}\in \mathcal P(X_S)$ the unique invariant probability measure
supported on $Y_{i,S}:=\G\ba \G  \pi(\tilde \bL_{i,S})$.
For a given $g_i\in \bG_S$, $g_i\nu_i$
denotes the translated measure: $(g_i\nu_i)(E)=\nu_i(Eg_i^{-1})$ for
Borel subsets $E\subset X_S$
\begin{Thm}\label{rc}\label{ssm}
Let $S$ be strongly isotropic for all $\bL_i$.
 \be\item Suppose
that the centralizer of each $\bL_i$ is anisotropic over $K$.
Then $\{g_i\nu_{i}\}$
is relatively compact in $\mathcal P(X_S)$.

\item If $g_i\nu_i$ weakly converges to $\nu\in \mathcal P (X_S)$ as $i\to \infty$, then
the followings hold:
\be
\item There exists a connected $K$-subgroup $\bM$ in class $\mathcal F$
(with respect to $S$)
such that $\nu$ is the invariant
 measure supported on $\G\ba \G Mg$ for some closed subgroup $M$ of $\bM_S$ with finite index
and for some $g\in \bG_S$.

\item There exists a sequence $\{\gamma_i\in \Gamma\}$ such that for all sufficiently large $i$,
$$\gamma_i {\bL_i} \gamma_i^{-1}\subset
\mathbf M.$$
\item There exists $\{h_i\in \pi(\tilde \bL_{i,S})\}$ such that
 $\gamma_ih_i g_i$ converges to $g$ as $i\to \infty$.
\item If the centralizers of $\bL_i$'s are $K$-anisotropic, $\bM$ is semisimple.
\ee\ee
\end{Thm}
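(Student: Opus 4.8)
The plan is to prove this as the $S$-arithmetic analogue of the theorems of Dani--Margulis \cite{DM1} and Mozes--Shah \cite{MS}, built on the classification of measures invariant under unipotent flows on $S$-arithmetic quotients (Ratner \cite{R}, Margulis--Tomanov \cite{MT1}, with Tomanov's refinement \cite{T} ensuring the subgroups that occur are defined over $K$) and on the linearization method of Dani--Margulis \cite{DM2}. Two standing reductions are used. Since $S$ is strongly isotropic for $\bL_i$, after passing to a subsequence there is a fixed $v\in S$ for which every $K_v$-simple factor of $\tilde\bL_i$ is $K_v$-isotropic, so $\tilde\bL_i(K_v)$ is generated by unipotent one-parameter $K_v$-subgroups and $g_i\nu_i$, which is invariant under $g_i^{-1}\pi(\tilde\bL_{i,S})g_i$, is in particular invariant under nontrivial unipotents. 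Moreover the $\bL_i$ have uniformly bounded complexity (connected semisimple subgroups of the fixed group $\bG$ lie in finitely many $\bG(\bar K)$-conjugacy classes), so along a subsequence they converge in the relevant space of subgroups.

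For (1), I would apply the uniform non-divergence estimate for unipotent flows in the $S$-arithmetic setting (the analogue of \cite{DM1}). For the unipotent-invariant measures $g_i\nu_i$ of bounded complexity it yields the dichotomy: either some fixed compact subset of $X_S$ has $g_i\nu_i$-mass $\ge 1-\e$ for all $i$, or the orbit $Y_{i,S}g_i$ is confined to a shrinking neighbourhood of the degenerate locus attached to some proper $K$-parabolic subgroup of $\bG$. The hypothesis that the centralizer of $\bL_i$ is $K$-anisotropic rules out the second alternative through Lemma \ref{aniso}: $\bL_i$ is contained in no proper $K$-parabolic subgroup, so its orbit cannot be translated into that locus. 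Hence $\{g_i\nu_i\}$ is uniformly tight, i.e.\ relatively compact in $\mathcal P(X_S)$.

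For (2), assume $g_i\nu_i\to\nu$ with $\nu$ a probability measure. I would first reposition: left translation by $\Gamma$ acts trivially on $X_S$ and right translation by $\pi(\tilde\bL_{i,S})$ fixes $\nu_i$, so replacing $(\bL_i,g_i)$ by $(\gamma_i\bL_i\gamma_i^{-1},\,\gamma_ih_ig_i)$ with $\gamma_i\in\Gamma$, $h_i\in\pi(\tilde\bL_{i,S})$ does not change $g_i\nu_i$; since $\nu$ loses no mass, $\gamma_i,h_i$ can be chosen so that, along a subsequence, $g_i':=\gamma_ih_ig_i\to g$ in $\bG_S$ and the $\bL_i':=\gamma_i\bL_i\gamma_i^{-1}$ converge. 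This already gives (2)(c) and reduces to bounded translating elements; then the $g_i'$-conjugates of a suitably normalized convergent sequence of unipotent one-parameter subgroups of $\pi(\tilde\bL'_{i,S})$ converge to a nontrivial unipotent one-parameter subgroup preserving $\nu$, so by the measure classification and \cite{T}, $\nu$ is a convex combination of homogeneous probability measures attached to $K$-subgroups in class $\mathcal F$.

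The crux --- and the step I expect to be the main obstacle --- is to collapse this combination to a single $\bM$ and to obtain the uniform containment $\bL_i'\subset\bM$ of (2)(a),(b). This is the linearization step: to each relevant intermediate $K$-subgroup $\bH$ one attaches a representation of $\bG$ and an orbit map, and one shows that the orbits $Y'_{i,S}g_i'$ spend only a negligible fraction of time near the singular set of any \emph{proper} such $\bH$ --- otherwise the bounded points $g_i'$ would be forced asymptotically into a positive-codimension subvariety, contradicting their convergence (and, where it is invoked, the anisotropy hypothesis). Taking the smallest $K$-subgroup $\bM$ for which a translate of $\Gamma\backslash\Gamma\bM_S$ carries $\nu$, the linearization forces $Y'_{i,S}\subset\Gamma\backslash\Gamma\bM_S$ for large $i$, hence $\bL_i'\subset\bM$, and identifies $M$ as a finite-index subgroup of $\bM_S$; the limit $g=\lim g_i'$ lies in $\bG_S$, and $\bM$ is in class $\mathcal F$ by construction. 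Finally, if the centralizers of $\bL_i$ are $K$-anisotropic, then $\bL_i'\subset\bM$ together with Lemma \ref{aniso}(3) forces $\bM$ to be reductive; having unipotent radical, $\bM$ is then semisimple, which is (2)(d).
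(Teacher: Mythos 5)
Your proposal correctly identifies the architecture: reduce to $S$-arithmetic analogues of Dani--Margulis (non-divergence) for part (1) and Mozes--Shah (limit structure) for part (2), built on the Ratner/Margulis--Tomanov/Tomanov measure classification and Dani--Margulis linearization. Part (1) as you describe it is essentially the paper's argument (Theorem \ref{dmma} and its deduction in \ref{ded:ssm2}).

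There is, however, a genuine gap in the way you handle part (2), and it is the central point of the whole argument. You note that each $g_i\nu_i$ is \emph{invariant} under a nontrivial unipotent, pass to a limiting unipotent preserving $\nu$, and then conclude that ``$\nu$ is a convex combination of homogeneous probability measures,'' flagging the collapse to a single $\bM$ as the crux to be handled by linearization alone. This is not enough. Unipotent invariance of a limit measure only gives you the ergodic decomposition into homogeneous pieces; nothing in the linearization machinery, applied to an a priori arbitrary invariant measure, forces the decomposition to be trivial, and the asserted containment $\bL_i'\subset\bM$ for large $i$ would not follow. What the paper actually uses --- and what makes the collapse automatic --- is that each $\nu_i$ is \emph{ergodic} with respect to the conjugated unipotent one-parameter subgroup $g_i^{-1}U_ig_i$. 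This ergodicity is proved in Lemma \ref{l:ergodic} via strong approximation for $\tilde\bL_i$ together with the Mautner phenomenon, and it is precisely the hypothesis of the $S$-arithmetic Mozes--Shah Theorem \ref{ms}(1), whose conclusion is that the weak limit of a sequence of unipotent-ergodic homogeneous measures is a \emph{single} homogeneous measure on some $\Gamma\backslash\Gamma M g$ with $M\in\mathcal H$. In other words, the missing idea in your plan is not a cleverer linearization step but the observation that you should be applying Mozes--Shah to a sequence of \emph{ergodic} measures, with the ergodicity supplied by Lemma \ref{l:ergodic}; the collapse, the containment $\gamma_i\bL_i\gamma_i^{-1}\subset\bM$, and the convergence $\gamma_ih_ig_i\to g$ are then simultaneously delivered by Theorem \ref{ms}(1)--(2), with the basepoint and $z_i\to e$ produced by the pointwise ergodic theorem rather than by the hands-on repositioning you sketch. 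Your conclusion for (2)(d) --- anisotropy forcing $\bM$ to be reductive, hence semisimple since it lies in class $\mathcal F$ --- is correct and matches the paper.
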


\begin{Def}\label{mt}\label{mumford} For a closed subgroup $L$ of $\bG_S$,
 the {\it Mumford-Tate subgroup}
of $L$, denoted by $\MT(L)$, is defined to be the smallest
 connected $K$-subgroup of $\bG$ such that
$$\overline L^{0} \subset \prod_{v\in S} \MT(L)$$
where $\overline L^\circ$ denotes the identity component of
the Zariski closure of $L$ in $\bG_S$.
\end{Def}
In this terminology, $\bM$ in the above theorem \ref{rc}
is the Mumford-Tate subgroup of $M$.

Theorem \ref{rc} will be proved in sections \ref{sec:ms}
and \ref{sec:dm} (see \eqref{ded:ssm} and \eqref{ded:ssm2}).
 We will deduce Theorem \ref{dmadele} from Theorem \ref{rc}
in the rest of this section.

\begin{Lem}\label{group} If $G_0$ is a subgroup of $\bga$ and
  $\bG(K)G_0$ contains $[\bga, \bga]$,
then $\bG(K)G_0$ is a normal subgroup of $\bga$.

\end{Lem}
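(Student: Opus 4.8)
The plan is to recognize $\bG(K)G_0$ as the preimage of a subgroup under the abelianization map. Write $D:=[\bga,\bga]$ and let $\pi\colon\bga\to A:=\bga/D$ be the quotient homomorphism, so that $A$ is abelian and, by hypothesis, $D\subseteq\bG(K)G_0$.

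First I would note that $\bar H:=\pi(\bG(K))\cdot\pi(G_0)$ is a genuine subgroup of $A$, being the product of two subgroups inside an abelian group; it is automatically normal in $A$, so $N:=\pi^{-1}(\bar H)$ is a \emph{normal} subgroup of $\bga$. The containment $\bG(K)G_0\subseteq N$ is immediate, since $\pi(\gamma g_0)=\pi(\gamma)\pi(g_0)\in\bar H$ for $\gamma\in\bG(K)$ and $g_0\in G_0$. The crux is the reverse inclusion $N\subseteq\bG(K)G_0$. Given $n\in N$, one has $\pi(n)\in\bar H=\pi(\bG(K))\pi(G_0)$, hence $\pi(n)=\pi(\gamma_0 g_0)$ for some $\gamma_0\in\bG(K)$, $g_0\in G_0$; therefore $n(\gamma_0 g_0)^{-1}\in\ker\pi=D\subseteq\bG(K)G_0$, say $n(\gamma_0 g_0)^{-1}=\gamma_1 g_1$ with $\gamma_1\in\bG(K)$, $g_1\in G_0$, so that $n=\gamma_1 g_1\gamma_0 g_0$. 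Next I would invoke the commutator identity $g_1\gamma_0=\gamma_0\,c\,g_1$ with $c:=\gamma_0^{-1}g_1\gamma_0 g_1^{-1}\in D\subseteq\bG(K)G_0$; writing $c=\gamma_2 g_2$ with $\gamma_2\in\bG(K)$, $g_2\in G_0$, one gets $n=(\gamma_1\gamma_0\gamma_2)(g_2 g_1 g_0)\in\bG(K)G_0$, using that $\bG(K)$ and $G_0$ are subgroups. Thus $\bG(K)G_0=N$, which is a normal subgroup of $\bga$, as claimed.

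I expect the only genuinely delicate point to be the rearrangement in the last step: a priori $\bG(K)G_0$ is merely a subset of $\bga$, so one must check that pushing the ``out of order'' factor $g_1$ past $\gamma_0$ produces a commutator that can be reabsorbed without leaving a $\bG(K)$-factor stranded to the right of a $G_0$-factor. This is precisely where the hypothesis $D\subseteq\bG(K)G_0$ enters, and it is also why the statement fails for an arbitrary product of two subgroups. Everything else is formal.
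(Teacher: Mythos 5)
Your proof is correct, and it takes a genuinely different route from the paper's. The paper verifies the subgroup and normality axioms for $\bG(K)G_0$ directly, by three separate commutator identities: one showing closure under multiplication, one showing closure under inversion, and a third showing stability under conjugation by an arbitrary $g\in\bga$, each time absorbing the resulting commutator into $\bG(K)G_0$ via the hypothesis $[\bga,\bga]\subseteq\bG(K)G_0$. You instead pass to the abelianization $A=\bga/[\bga,\bga]$ and identify $\bG(K)G_0$ as $\pi^{-1}\bigl(\pi(\bG(K))\pi(G_0)\bigr)$; this makes both the subgroup property and normality completely automatic (any preimage under $\pi$ of a subgroup of the abelian group $A$ is a normal subgroup of $\bga$), and the only remaining work is the equality $\pi^{-1}(\bar H)\subseteq\bG(K)G_0$, which you settle with a single commutator manipulation. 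Conceptually your version is cleaner, since it explains \emph{why} the statement is true — $\bG(K)G_0$ is the pullback of a subgroup of an abelian quotient — rather than just checking the axioms; operationally, the decisive step (sliding a $G_0$-factor past a $\bG(K)$-factor at the cost of a commutator living in $[\bga,\bga]\subseteq\bG(K)G_0$) is the same engine in both arguments. The paper also remarks after its proof that the same argument yields the general fact that if $H_1,H_2\le G$ and $[G,G]\subseteq H_1H_2$ then $H_1H_2\trianglelefteq G$; your abelianization formulation makes that level of generality transparent from the start.
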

\begin{proof}
Let $\gamma_i\in \bG(K)$ and $g_i\in G_0$.
Using the notation $[g, h]=ghg^{-1}h^{-1}$,
 $$\gamma_1g_1 \gamma_2 g_2=\gamma_1\gamma_2 [\gamma_2^{-1},
g_1]g_1g_2 \in \bG(K)G_0 ;$$
$$(\gamma_1 g_1)^{-1}=\gamma_1^{-1} [\gamma_1, g_1^{-1}] g_1^{-1}
\in \bG(K)G_0 ;$$
and for any $g\in \bga$,
$$g(\gamma_1 g_1^{-1})g^{-1}=\gamma_1[\gamma_1^{-1}, g^{-1}]
[g^{-1}, g_1] g_1 \in \bG(K)[\bga, \bga] G_0=\bG(K) G_0.$$
This proves the claim.
\end{proof}
Of course the same argument shows a more genereal lemma that
if $H_1$ and $H_2$ are subgroups of a group $G$
and $[G, G]\subset H_1H_2$, then $H_1H_2$ is a normal subgroup of $G$.
 

\begin{Prop}\label{finite} Let $S$ be isotropic for $\bG$.
For any compact open subgroup $W_S$ of $\bG(\A_S)$, the product
 $G_{W_S}:=\bG(K) \pi(\tilde \bG_S) W_S$ is a
co-abelian (normal) subgroup of finite index of $\bga$, which contains
$\pi (\tilde \bG(\A))$.
\end{Prop}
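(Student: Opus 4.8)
The plan is to deduce everything from Lemma \ref{group} (and the remark following it), once the key inclusion $\pi(\tilde\bG(\A))\subseteq\bG(K)\,\pi(\tilde\bG_S)W_S$ has been established via strong approximation. Write $\bga=\bG_S\times\bG(\A_S)$, so that $\pi(\tilde\bG(\A))=\pi(\tilde\bG_S)\times\pi(\tilde\bG(\A_S))$. Since $\pi(\tilde\bG_S)$ sits in the factor $\bG_S$ and $W_S$ in the complementary factor $\bG(\A_S)$, these two subgroups commute, so $G_0:=\pi(\tilde\bG_S)W_S$ is a subgroup of $\bga$. I would also first record that $\bga/\pi(\tilde\bG(\A))$ is abelian: at each $v$ the sequence $\tilde\bG(K_v)\to\bG(K_v)\to H^1(K_v,\ker\pi)$ embeds $\bG(K_v)/\pi(\tilde\bG(K_v))$ into an abelian group, and assembling over $v$ (using $\pi(\tilde\bG(\mathcal{O}_v))=\pi(\tilde\bG(K_v))\cap\bG(\mathcal{O}_v)$ for almost all $v$) presents $\bga/\pi(\tilde\bG(\A))$ as a subgroup of a restricted product of abelian groups; in particular $[\bga,\bga]\subseteq\pi(\tilde\bG(\A))$.

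The heart of the proof is the key inclusion. Because $\pi$ is a central $K$-isogeny, $S$ is isotropic for $\bG$ iff it is isotropic for $\tilde\bG$, so Theorem \ref{sta} gives that $\tilde\bG(K)$ is dense in $\tilde\bG(\A_S)$. Set $V_S:=\{h\in\tilde\bG(\A_S):\pi(h)\in W_S\}$, an open subgroup of $\tilde\bG(\A_S)$. I claim $\tilde\bG(\A)=\tilde\bG_S\,V_S\,\tilde\bG(K)$: given $g=(g_S,g^S)\in\tilde\bG_S\times\tilde\bG(\A_S)$, the nonempty open set $V_S g^S$ meets the dense image of $\tilde\bG(K)$ in $\tilde\bG(\A_S)$, say at $\gamma$; then $g\gamma^{-1}$ has $\A_S$-component in $V_S$ and $S$-component in $\tilde\bG_S$, which gives the claim. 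Applying $\pi$ and using $\pi(V_S)\subseteq W_S$ and $\pi(\tilde\bG(K))\subseteq\bG(K)$ yields $\pi(\tilde\bG(\A))\subseteq\pi(\tilde\bG_S)\,W_S\,\bG(K)=G_0\bG(K)$.

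It remains to assemble the conclusions. Combining the two steps, $[\bga,\bga]\subseteq G_0\bG(K)$, so by the remark after Lemma \ref{group} the product $G_0\bG(K)$ is a normal subgroup of $\bga$; being a subgroup it is inverse-closed, hence equals $\bG(K)G_0=G_{W_S}$, proving $G_{W_S}$ is normal. It contains $\pi(\tilde\bG(\A))$ by the key inclusion, and it is co-abelian since $\bga/G_{W_S}$ is then a quotient of the abelian group $\bga/\pi(\tilde\bG(\A))$. Finally $G_{W_S}\supseteq\pi(\tilde\bG(\A))W_S=\pi(\tilde\bG_S)\times\bigl(\pi(\tilde\bG(\A_S))W_S\bigr)$, which is open because $\pi(\tilde\bG(K_v))$ is open in $\bG(K_v)$ for every $v$; hence $\bga/G_{W_S}$ is a discrete (abelian) group. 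I expect the finiteness of this quotient to be the one point that genuinely needs global input beyond strong approximation: it follows from the finiteness of class numbers for semisimple $K$-groups (Borel; cf. \cite{PR}), together with the fact that $\bG(K)\ba\bga$ carries a finite invariant measure whose pushforward to $\bG(K)\ba\bga/G_{W_S}$ has atoms of uniformly bounded-below mass. The fussiest routine part is the bookkeeping around the non-surjectivity of $\pi$ on local points and the restricted-product identifications, but conceptually the argument above is short.
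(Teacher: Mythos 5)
Your proof is correct and follows the paper's route: abelianness of $\bga/\pi(\tilde\bG(\A))$ via the Galois cohomology sequences induced by $1\to F\to\tilde\bG\to\bG\to 1$, then strong approximation for $\tilde\bG$ to place $\pi(\tilde\bG(\A))$ inside $\bG(K)\,\pi(\tilde\bG_S)\,W_S$, then Lemma \ref{group}. One point you supply that the paper's proof leaves implicit: Lemma \ref{group} gives only normality, not finiteness of the index, and the paper simply says ``the claim follows''; your argument — that $G_{W_S}$ is open, being sandwiched above the open subgroup $\pi(\tilde\bG_S)\times W_S$ of $\bG_S\times\bG(\A_S)=\bga$, so $\bga/G_{W_S}$ is a discrete abelian group, which is finite because $\bG(K)$ has finite covolume in $\bga$ and $\bG(K)\subset G_{W_S}$ — is exactly the right way to close that gap.
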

\begin{proof}

Consider the exact sequence $1\to F \to \tilde \bG\to \bG\to 1$.
This induces
the exact sequence $$ \tilde \bG(\A)\to_{\pi} \bG(\A) \to \prod_v H^1(K_v, F)$$
(see the proof of Proposition 8.2 in \cite{PR}).
Since $\prod_v H^1(K_v, F)$
is abelian, it follows that
$[\bga, \bga]\subset \pi (\tilde \bG(\A))$.

Since $\tilde \bG$ has
the strong approximation property with respect to $S$,
$$\tilde \bG(\A_S)=
\tilde \bG(K) \pi^{-1}(W_S).$$


Therefore we have
\begin{align*}
[\bga,\bga] &\subset \pi (\tilde \bG(\A))=\pi (\tilde \bG(\A_S))
\pi(\tilde \bG_S)\\
 &\subset \bG(K)  \pi(\tilde \bG_S) W_S .
\end{align*}
Hence the claim follows from the above lemma.
\end{proof}

\begin{Cor}\label{c4.9}\label{wfinite}
\begin{enumerate}
 \item
If $S$ is strongly isotropic for $\bG$ and $G_0$ is a subgroup
of finite index in $\bG_S$ and $W_S$ is an open compact subgroup
of $\bG(\A_S)$, then $\bG(K)G_0W_S$ is a normal subgroup
of finite index of $\bga$.
\item
For any compact open subgroup $W$ of $\bG(\A_f)$,
the product $$G_W:=\{\gamma x w \in \bga:\gamma\in
\bG(K),\, x\in \pi(\tilde\bG(\A)), \; w\in W\}$$
is a normal subgroup of finite index in $\bga$ which contains
$\pi (\tilde \bG(\A))$.
\end{enumerate}
\end{Cor}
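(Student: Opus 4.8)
The plan is to deduce both parts from Proposition~\ref{finite} and Lemma~\ref{group}; the only extra input, needed for part (1), is a description of an arbitrary finite-index subgroup $G_0$ of $\bG_S$. For part (2) I would first note that $\pi(\tilde\bG(\A))$ is normal in $\bga$: every inner automorphism of $\bG$ lifts to an automorphism of $\tilde\bG$ compatible with $\pi$, so conjugation preserves $\pi(\tilde\bG(\A))$. Hence $\pi(\tilde\bG(\A))W$ is a subgroup and $G_W=\bG(K)\,(\pi(\tilde\bG(\A))W)$ contains $[\bga,\bga]$, because $[\bga,\bga]\subseteq\pi(\tilde\bG(\A))$ (this was shown in the proof of Proposition~\ref{finite} using the sequence $\tilde\bG(\A)\to\bG(\A)\to\prod_v H^1(K_v,F)$). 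By Lemma~\ref{group}, $G_W$ is a normal subgroup of $\bga$, and it contains $\pi(\tilde\bG(\A))$ by construction. For the finite index, choose a finite set $S$ with $R_\infty\subseteq S$ that is isotropic for $\bG$ and large enough that $W_S:=W\cap\bG(\A_S)$ is a compact open subgroup of $\bG(\A_S)$; then $G_W\supseteq\bG(K)\pi(\tilde\bG_S)W_S$, which has finite index in $\bga$ by Proposition~\ref{finite}.

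For part (1) it suffices to prove $\pi(\tilde\bG(\A))\subseteq\bG(K)G_0W_S$: then $[\bga,\bga]\subseteq\bG(K)G_0W_S$, so Lemma~\ref{group} applied to the subgroup $G_0W_S=G_0\times W_S$ shows $\bG(K)G_0W_S$ is a normal subgroup of $\bga$, and since it then contains $\bG(K)\pi(\tilde\bG_S)W_S$ it is of finite index by Proposition~\ref{finite} ($S$ strongly isotropic $\Rightarrow$ $S$ isotropic). Let $v_0\in S$ be a place, supplied by the strong isotropy hypothesis, at which every $K_{v_0}$-normal subgroup of $\bG$ is $K_{v_0}$-isotropic, and split $\bG_S=\bG(K_{v_0})\times\bG_{S\setminus\{v_0\}}$. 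A finite-index subgroup of a product contains a product of finite-index subgroups, so $G_0\supseteq B_0\times A_0$ with $B_0\le\bG(K_{v_0})$ and $A_0\le\bG_{S\setminus\{v_0\}}$ of finite index, and we may take $A_0$ open (finite-index subgroups of $\bG_S$ are open, or simply take $G_0$ closed). The decisive point is that $\tilde\bG(K_{v_0})$, hence also $\pi(\tilde\bG(K_{v_0}))$, has no proper subgroup of finite index: over $K_{v_0}$ every almost-simple factor of the simply connected group $\tilde\bG$ is isotropic, so its group of points is perfect and simple modulo its finite centre, whence it has no proper finite-index subgroup, and this property passes to finite direct products and to quotients by finite central subgroups. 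Therefore $B_0\supseteq\pi(\tilde\bG(K_{v_0}))$, so $\pi(\tilde\bG(K_{v_0}))\times A_0\subseteq G_0$. Since $\{v_0\}$ is isotropic for $\bG$, strong approximation (Theorem~\ref{sta}) gives that $\tilde\bG(K)$ is dense in $\tilde\bG(\A_{\{v_0\}})$; so any $\tilde g=(\tilde g_{v_0},\tilde g^{v_0})\in\tilde\bG(K_{v_0})\times\tilde\bG(\A_{\{v_0\}})=\tilde\bG(\A)$ can be written with $\tilde g^{v_0}=\delta z$, $\delta\in\tilde\bG(K)$ and $z\in\pi^{-1}(A_0\times W_S)$, an open subgroup of $\tilde\bG(\A_{\{v_0\}})$; then $\tilde g=\delta\cdot(\delta_{v_0}^{-1}\tilde g_{v_0},z)$ with $\delta$ embedded diagonally, so $\pi(\tilde g)=\pi(\delta)\cdot(\pi(\delta_{v_0}^{-1}\tilde g_{v_0}),\pi(z))$ with $\pi(\delta)\in\bG(K)$ and $(\pi(\delta_{v_0}^{-1}\tilde g_{v_0}),\pi(z))\in(\pi(\tilde\bG(K_{v_0}))\times A_0)\cdot W_S\subseteq G_0W_S$. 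This proves $\pi(\tilde\bG(\A))\subseteq\bG(K)G_0W_S$, as required; note that $\bG(K)G_0W_S$ then also contains $\pi(\tilde\bG(\A))$.

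The hard part is the claim that $\pi(\tilde\bG(K_{v_0}))$ has no proper finite-index subgroup; this is precisely where strong isotropy (rather than mere isotropy) is needed, and it rests on the structure theory of isotropic simply connected almost-simple groups over local fields — Tits' simplicity theorem together with Prasad's theorem that such a group is generated by its unipotent elements (and is perfect). A minor technical point is the openness of a finite-index subgroup $G_0\subseteq\bG_S$, needed so that $\pi^{-1}(A_0\times W_S)$ is open and strong approximation can be applied; this is automatic for semisimple $\bG$ over local fields, and in any case the corollary is only used with closed $G_0$.
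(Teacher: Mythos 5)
Your proof is correct, and its logical backbone agrees with the paper's: in both arguments the decisive input is that $\pi(\tilde\bG(K_{v_0}))$ equals $\bG(K_{v_0})^+$ and admits no proper finite-index subgroup, hence is swallowed by $G_0$; and in both, Lemma~\ref{group} together with the containment $[\bga,\bga]\subseteq\pi(\tilde\bG(\A))$ furnishes normality and finite index. The difference is in how part~(1) is assembled. The paper, having secured $\pi(\tilde\bG(K_v))\subseteq G_0$, simply reapplies Proposition~\ref{finite} with the one-element isotropic set $\{v\}$ (taking $W_0W_S$ as the compact open subgroup of $\bG(\A_{\{v\}})$), so that the needed normal finite-index subgroup $\bG(K)\,\pi(\tilde\bG(K_v))\,(W_0W_S)$ of $\bga$ comes off the shelf and sits inside $\bG(K)G_0W_S$. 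You instead re-derive the strong-approximation step by hand inside $\tilde\bG(\A_{\{v_0\}})$ and show directly that $\pi(\tilde\bG(\A))\subseteq\bG(K)G_0W_S$. Your route is longer but has the merit of making two things explicit that the paper only gestures at: (a) that a finite-index subgroup of $\bG_S$ contains a product $B_0\times A_0$ of finite-index subgroups of the factors and hence contains $\pi(\tilde\bG(K_{v_0}))$ because the latter is perfect, $G(K_{v_0})^+$-type, and admits no proper finite-index subgroup (Kneser--Tits/Platonov plus Tits simplicity); and (b) that one needs $G_0$ (equivalently $A_0$) open so that $\pi^{-1}(A_0\times W_S)$ is open and strong approximation applies — an openness hypothesis the paper uses silently when it picks a compact open $W_0\subseteq G_0\cap\bG_{S\setminus\{v\}}$. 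The paper's shortcut via Proposition~\ref{finite} is cleaner; your version is more self-contained. For part~(2) you and the paper are essentially identical: the paper writes $G_W=G_{W_S}W$, you observe $G_W\supseteq\bG(K)\pi(\tilde\bG_S)W_S$, which is the same containment in disguise.
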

\begin{proof}
Let $v\in S$ be strongly isotropic for $\bG$.
Then
$\pi(\tilde \bG(K_v))$ coincides with the subgroup $\bG(K_v)^+$ generated by all unipotent one parameter subgroups
of $\bG(K_v)$, and hence
$G_0$ contains $\pi(\tilde \bG(K_v))$.
Choose any compact open subgroup $W_0$ of $\bG_{S-\{v\}}$.
Then $$\bG(K)G_0W_S\supset \bG(K) \pi(\tilde \bG(K_v)) (W_0W_S)$$
which is a normal subgroup of $\bga$ with finite index and
contains $[\bga, \bga]$
by the previous corollary.
Therefore by Lemma \ref{group}, the first claim follows.
For the second claim,
 let $S$ be a strongly isotropic subset of $\bG$.
Let $W_S<W$ be a compact open subgroup of $\bG(\A_S)$. Then
$G_{W_S}$ is a co-abelian normal subgroup of $\bga$ of finite index by
Proposition \ref{finite}. Since $G_W=G_{W_S}W$, the claim follows.
\end{proof}


For an isotropic set $S$ for $\bG$,
 and a compact open subgroup $W_S$ of $\bG(\A_S)$,
 every element $g$ of $G_{W_S}$ can be written as
$$g=(\gamma_g,\gamma_g) (g_S, w)$$
where $\gamma_g\in \bG(K)$ and $g_S\in \pi(\tilde \bG_S)$ and $w \in W_S$
(here we are using the identification $\bG(\A)=\bG_S\times \bG(\A_S)$).
The choice of $g_S\in \bG(K)$ is unique up to the left multiplication by
the elements of the group
$$\G:=\{\gamma\in \bG(K): \gamma\in W_S, \;\;\gamma\in \pi(\tilde \bG_S)\}=(\bG(K)\cap W_S)\cap
\pi (\tilde \bG_S) .$$

\begin{Lem}\label{bij1}
Let $\bL$ be a connected semisimple $K$-subgroup of $\bG$
and assume that $S$ is isotropic both for $\bL$ and $\bG$.
Let $g\in G_{W_S}$.

\be
\item The map $g\mapsto g_S$ induces a $\pi(\tilde \bG_S)$-equivariant homeomorphism, say $\Phi$,
between $\bG(K)\ba G_{W_S}/W_S$ and $\G\ba  \pi(\tilde \bG_S) $
where $\G=\bG(K)\cap \pi(\tilde \bG_S)\cap W_S$.

\item The map $\Phi$ maps
 $\bG(K)\ba \bG(K)\pi(\tilde \bL(\A)) g W_S/W_S$
onto $\Gamma \ba \Gamma (\gamma_g ^{-1} \pi (\tilde \bL_S) \gamma_g) g_S$,
inducing a measurable isomorphism between them.

\item
If $\mu$ is the invariant probability measure supported on
$\bG(K)\ba \bG(K) \pi(\tilde  \bL(\A))$ (considered as a measure on $\bG(K)\ba G_{W_S}$),
then the measure $g.\mu$, considered as a functional on
$C_c(\bG(K)\ba G_{W_S})^{W_S}$,
is mapped by $\Phi$ to the invariant probability measure supported on
$\Gamma \ba \Gamma (\gamma_g ^{-1} \pi(\tilde \bL_S) \gamma_g) g_S $,
which will be denoted by $\Phi_*(g.\mu)$.
\ee
\end{Lem}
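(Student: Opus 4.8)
The plan is to verify the three assertions in sequence, building each on the previous one; all of them are essentially an unravelling of the double-coset description of $G_{W_S}$ together with the adelic decomposition $\bG(\A)=\bG_S\times\bG(\A_S)$. First I would set up the fundamental bijection underlying (1). Write an element $g\in G_{W_S}$ as $g=(\gamma_g,\gamma_g)(g_S,w)$ with $\gamma_g\in\bG(K)$, $g_S\in\pi(\tilde\bG_S)$, $w\in W_S$, as in the paragraph preceding the statement. By strong approximation for $\tilde\bG$ with respect to $S$ (Theorem \ref{sta}, using that $S$ is isotropic for $\bG$), every element of $\pi(\tilde\bG(\A))$ can be brought, modulo $W_S$ on the right and $\bG(K)$ on the left, into $\pi(\tilde\bG_S)$; since $G_{W_S}=\bG(K)\pi(\tilde\bG_S)W_S$ this shows the assignment $g\mapsto g_S$ descends to a well-defined surjection $\bG(K)\ba G_{W_S}/W_S\to \Gamma\ba\pi(\tilde\bG_S)$, where the indeterminacy is exactly left multiplication of $g_S$ by $\gamma\in\bG(K)\cap\pi(\tilde\bG_S)\cap W_S=\Gamma$ (this is the computation already recorded just before the lemma). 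Injectivity: if $g,g'\in G_{W_S}$ give the same class in $\Gamma\ba\pi(\tilde\bG_S)$, then $g_S'=\gamma g_S$ for some $\gamma\in\Gamma$, and chasing the $S$- and $\A_S$-components separately (using $\Gamma\subset W_S$ on the finite part and $\Gamma\subset\bG(K)$) shows $g'\in\bG(K)gW_S$. Continuity of $\Phi$ and of its inverse is immediate since in each factor the maps are restrictions of the identity, so $\Phi$ is a homeomorphism; $\pi(\tilde\bG_S)$-equivariance for the right action is built into the construction.

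For (2), I would track what $\Phi$ does to the $\bL$-orbit. An element of $\bG(K)\pi(\tilde\bL(\A))gW_S$ has the form $\gamma\,\ell\,g\,w$ with $\gamma\in\bG(K)$, $\ell\in\pi(\tilde\bL(\A))$, $w\in W_S$; decomposing $\ell=\ell_S\ell^{(S)}$ with $\ell_S\in\pi(\tilde\bL_S)$, using strong approximation for $\tilde\bL$ (here is where one needs $S$ isotropic for $\bL$) to absorb $\ell^{(S)}$ into $\bG(K)$ and $W_S$, and then writing $g=(\gamma_g,\gamma_g)(g_S,w_g)$, one finds the $S$-component of the representative is, up to $\Gamma$ on the left, $\gamma_g^{-1}\ell_S'\gamma_g\,g_S$ for a suitable $\ell_S'\in\pi(\tilde\bL_S)$ — note $\gamma_g^{-1}\pi(\tilde\bL_S)\gamma_g=\pi\bigl(\widetilde{\gamma_g^{-1}\bL\gamma_g}_S\bigr)$ since conjugation by the $K$-point $\gamma_g$ is an isomorphism of $K$-groups commuting with simply-connected covers. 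Hence $\Phi$ carries the orbit onto $\Gamma\ba\Gamma(\gamma_g^{-1}\pi(\tilde\bL_S)\gamma_g)g_S$. That this is a measurable isomorphism of the orbits (with their orbit topologies / Borel structures) follows because $\Phi$ is already a homeomorphism of the ambient quotients and the orbits are the images of closed orbits of the relevant simply-connected adelic/$S$-arithmetic groups.

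For (3), the invariant probability measure on $\bG(K)\ba\bG(K)\pi(\tilde\bL(\A))$, pushed forward by the right translation $g$ and then restricted to $W_S$-invariant test functions, is by definition $g.\mu$ evaluated on $C_c(\bG(K)\ba G_{W_S})^{W_S}=C_c(\bG(K)\ba G_{W_S}/W_S)$. Under the homeomorphism $\Phi$ of (1), $C_c$ of the target corresponds to $C_c$ of $\Gamma\ba\pi(\tilde\bG_S)$, and by (2) the support of $\Phi_*(g.\mu)$ is exactly $\Gamma\ba\Gamma(\gamma_g^{-1}\pi(\tilde\bL_S)\gamma_g)g_S$. Invariance: $g.\mu$ is invariant under the right action of $\pi(\tilde\bL(\A))^{g}=g^{-1}\pi(\tilde\bL(\A))g$ intersected with what acts on $W_S$-invariant functions, and $\Phi$ being $\pi(\tilde\bG_S)$-equivariant transports this to right-invariance of $\Phi_*(g.\mu)$ under $g_S^{-1}\gamma_g^{-1}\pi(\tilde\bL_S)\gamma_g\,g_S$, i.e. under the group acting transitively on the orbit; since a closed orbit of an $S$-arithmetic simply-connected semisimple group carries a unique invariant probability measure, $\Phi_*(g.\mu)$ is that measure, and in particular it is a probability measure. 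The main obstacle I expect is purely bookkeeping in part (2): keeping the two strong-approximation absorptions (one for $\tilde\bG$, one for $\tilde\bL$) consistent so that the leftover in the $S$-component lands precisely in the conjugate $\gamma_g^{-1}\pi(\tilde\bL_S)\gamma_g$ and the residual $K$-point ambiguity is exactly $\Gamma$ — there is no deep difficulty, but one must be careful that the $\bG(K)$-element used to clean up the $\tilde\bL$-part does not disturb the earlier normalization of $g$.
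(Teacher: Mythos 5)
Your proposal is correct and follows essentially the same route as the paper's proof: the decomposition $g=(\gamma_g,\gamma_g)(g_S,w)$ for part (1), strong approximation for $\tilde\bL$ to push $\pi(\tilde\bL(\A))$ into $\bL(K)\pi(\tilde\bL_S)\cdot(gW_Sg^{-1}\cap\pi(\tilde\bL(\A_S)))$ for part (2), and transporting the invariant measure via the resulting measurable isomorphism for part (3). The only imprecision is the phrase ``absorb $\ell^{(S)}$ into $\bG(K)$ and $W_S$'' — one actually absorbs into $\bL(K)$ and the conjugate $gW_Sg^{-1}$ — but your conclusion with the $\gamma_g$-conjugation in the $S$-component shows you handled this correctly, matching the paper's computation $hg=(\delta\gamma_g,\delta\gamma_g)(\gamma_g^{-1}h_S\gamma_g g_S,w')$.
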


\begin{proof}
It is easy to check
that the map $g\mapsto (g_S, w)$ induces a $\pi(\tilde \bG_S)$-equivariant
 homeomorphism
between $\bG(K)\ba G_{W_S}$ and $\G\ba  (\pi(\tilde \bG_S)\times W_S) $.
The first claim then follows. For (2), let $h\in \pi(\tilde \bL(\A))$.
Since $S$ is isotropic for $\bL$,
we have
$$\pi(\tilde \bL(\A)) \subset \bL(K) \pi(\tilde \bL_S)  ( gW_S g^{-1}\cap
\pi(\tilde \bL(\A_S))) .$$
Without loss of generality, we may assume
that the $W_S$-component of $g$ is $e$, i.e., $w=e$.
We can write $h=(\delta, \delta)(h_S, gw'g^{-1})$
where $\delta\in \bL(K)$, $h_S\in \pi(\tilde \bL_S)$ and $w'\in W_S\cap  g^{-1}
\pi (\tilde \bL(\A_S)) g$. Note that $gw'g^{-1}=\gamma_g w'\gamma_g^{-1}$.
So $hg=(\delta \gamma_g, \delta \gamma_g) (\gamma_g^{-1} h_S\gamma_g g_S, w')$
and hence $$\Phi [hg]=\Gamma\ba \Gamma \gamma_g^{-1} h_S \gamma_g g_S .$$
This also explains the measurable isomorphism between
$$\bG(K)\ba \bG(K)\pi(\tilde \bL(\A)) g W_S/W_S\quad\text{and}$$
$$\Gamma \ba \Gamma (\gamma_g ^{-1} \pi(\tilde \bL_S) \gamma_g) g_S ,$$
and proves the third claim.
\end{proof}

\noindent{\bf Proof of Theorem \ref{dmadele}, assuming Theorem \ref{rc}}
Let $\{\bL_i\}$, $g_i$ and $\mu_i$ be as in the introduction.
Let $S$ be any isotropic subset for $\bG$
which intersects with $\cap_i\mathcal I_{\bL_i}$ non-trivially
and
 contains all archimedean valuations $v$ such that $\bG(K_v)$ is non-compact.
  Fixing some
compact open subgroup $W_S$ of $\bG(\A_S)$, let $\Phi$ be the map
defined in Lemma \ref{bij1} for this choice of $S$ and $W_S$.
If the first claim in Theorem \ref{dmadele} does not hold,
 then
$\Phi_*(g_i \mu_i)$ is not relatively compact in $\G\ba \G\pi(\tilde \bG_S)$,
which contradicts Theorem \ref{rc}.

To prove the second claim,
let $W_S$ be  a compact
open subgroup of $\bG(\A_S)$, and
set $X_{W_S}=\bG(K)\ba G_{W_S}$.
Letting $\Phi$ be the map
defined in Lemma \ref{bij1} for this choice of $S$ and $W_S$, we have
$\Phi_*(g_i\mu_i)$ weakly converges to $\Phi_*(\mu)$ in the space of Borel
measures of $X_S:=\G\ba \bG_S$ and $\G:=\bG(K)\cap W_S\cap \pi(\tilde G_S)$.


Write $$g_i=(\gamma_{g_i}, \gamma_{g_i})( g_{i,S}, w_i)$$
where $\gamma_{g_i}\in \G, g_{i, S}\in \pi(\tilde \bG_S)$ and $w_i\in W_S$.
Then the measure $\Phi_*(g_i\mu_i)$ is precisely
same as $g_{i,S}\nu_i$ where $\nu_i$
is the invariant probability measure
supported on
$\Gamma \ba \Gamma (\gamma_{g_i} ^{-1} \pi (\tilde \bL_{i,S}) \gamma_{g_i})$.

Applying Theorem \ref{ssm} to $\bG_S$ and
the groups $\gamma_{g_i}^{-1}\bL_i \gamma_{g_i} $ and
$g_{i,S}$,
we obtain a connected $K$-group $\bM\in \mathcal F$
(with respect to $S$), $g\in \pi(\tilde \bG_S)$, $\gamma_i\in \G$
and $h_i\in \gamma_{g_i}^{-1}\pi(\tilde \bL_i)\gamma_{g_i}$, which depend on a priori
$S$ and $W_S$, such that
$$\gamma_i
\gamma_{g_i} ^{-1}  \bL_{i} \gamma_{g_i}\gamma_i^{-1}\subset \bM$$
and $\gamma_ih_i g_{i, S}\to g$,
and that for some finite index subgroup $M$ of $\bM_S$,
$\Phi_*(\mu)$ is the  invariant probability
 measure supported on $\G\ba \G M g$.



We now claim that $\bM$ can be taken simultaneously for any $S$ as above and
all $W_S$. We denote $(M, g)$ by $i(S, W_S)$.
Let $W_S$ and $W_{S'}$ be open compact subgroups of $\bG(\A_S)$ and
$\bG(\A_{S'})$ respectively.
Without loss of generality, we may assume that $S\subset S'$
and $W_{S'}\subset W_{S}$.
Let $W_0<W_S$ be a compact open subgroup of $\pi(\tilde \bG_{S'-S})$
 and set $W':=W_{S'}W_0$. Then $G_{W_{S'}}=G_{W'}$.
We set $\G:=\bG(K)\cap W_S\cap \pi(\tilde \bGS)$ and
$\G':=\bG(K)\cap W'\cap \pi(\tilde \bGS)$.
If $\Phi$ denotes the bijection of $\bG(K)\ba G_{W_S}/W_S$
and $\Gamma\ba \pi(\tilde \bG_S)$ and $\Phi'$ similarly for $W_S'$,
then $\Phi_*(\mu)$ and $\Phi'_*(\mu)$ are invariant measures
supported on $\G\ba \G Mg$ and $\G\ba \G M' g'$ for
 some $g, g'\in\pi(\tilde\bG_S)$.
Here $M$ and $M'$ are
finite index subgroups of $\bM _S$ and $\bM'_S$ respectively
where $\bM$ and $\bM'$ denote the Mumford-Tate subgroups of $M$ and $M'$ respectively.
Since both $\Phi'_*(\mu)$ and $\Phi_*(\mu)$ are the limits of images of $g_i\mu_i$,
it is clear that the canonical projection from
$\Gamma'\ba \pi(\tilde \bG_S)\to \Gamma\ba \pi(\tilde \bG_S)$
maps $\Phi'_*(\mu)$ to $\Phi_*(\mu)$.
Therefore $g'=\gamma m g$ for some $\gamma\in \Gamma$ and $m\in M$,
and $\Phi_*(\mu)$ is invariant under $g^{-1}Mg$
which implies that $g^{-1}\bM g =g'^{-1}\bM' g'$ (see Lemma \ref{zd} below).
Hence $m^{-1} \gamma^{-1}\bM' \gamma m =\bM$
or equivalently $\gamma^{-1}\bM' \gamma =\bM$.
Therefore by replacing $M'$ by $\gamma^{-1} M' \gamma$,
$\Phi'_*(\mu)$ is the invariant probability
 measure supported on $\G'\ba \G'M'g'$
where the Mumford-Tate subgroup of $M'$ is $\bM$.
Hence $\bM=\bM'$.

Therefore for a fixed $\bM$,
 we have associated to every $S$ and $W_S$
 a finite index subgroup
$M$ of $\bM_S$ and $g\in \bGS$ such that $i(S, W_S)=(M, g)$, proving claim.



Now fix one $S$ which is also strongly isotropic for $\bM$,
and set $i(S, W_S)=(M, g)$.
Then $ M_0:=\bM (K) M  (\bM(\A_S)\cap W_S)$
is a finite index normal subgroup of $\bM(\A)$ by Corollary \ref{c4.9}.
Let $dm$ and $dw$ denote the Haar measures on $ M$
and $\bM(\A_S)\cap W_S$ respectively such that
$dm$ and $d(m\otimes w)$ induce probability measures on
$\G\ba \G M$ and
 $\G\ba \G (M \times (\bM(\A_S)\cap W_S))$ respectively.

For $f\in C_c(X_{W_S})^{W_S}$,
\begin{align*}
\mu(f)&= \Phi_*(\mu) (\Phi(f))=\int_{\G\ba \G M }\Phi( f)(mg)\;dm\\
&= \int_{\G\ba \G( M \times (\bM(\A_S)\cap W_S))}\Phi (f)(mg)\; dm \,dw\\
&=\int_{\bG(K)\ba \bG(K) M_0} f (m_0g) \;dm_0
\end{align*}
where $dm_0$ is the invariant probability measure on
$\bG(K)\ba \bG(K) M_0$.
Since
$C_c(X, W_f)$
and $C_c(X_{W_S})^{W_S}$ can be canonically identified,
 this finishes the proof.
\vs

If the sequence $g_i\mu_i$
weakly converges to $\mu\in \mathcal P(X)$,
we say that the orbits $Y_i g_i$ become equidistributed in $X$
with respect to the measure $\mu$.


\begin{Cor}\label{cm}
Let $\bG$ be simply connected and
 $\{\bL_i\}$ be a sequence of
semisimple simply connected maximal connected $K$-subgroups of $\bG$
 and  $\cap_i \mathcal I_{\bL_i}\ne \emptyset$.
Then for any sequence $g_i\in \bga$,
 either of the following holds:
\be
\item the sequence $x_0\bL_i(\A) g_i$
 is equidistributed in $\bG(K)\ba \bga$ with respect
to the invariant measure as $i\to \infty$,
\item there exist $i_0\in \n$, $\{\delta_i\in \bG(K)\}$
and $g\in \bga$ such that for infinitely many $i$,
$$\delta_i^{-1}\bL_i \delta_i
= \bL_{i_0};\quad\text{ and hence }
x_0\bL_i(\A) g_i
= x_0\bL_{i_0}(\A)\delta_i g_i$$ and $l_i \delta_i g_i$
converges to $g$ for some $l_i\in \bL_{i_0}(\A)$.
\ee
\end{Cor}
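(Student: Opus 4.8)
The plan is to deduce Corollary \ref{cm} as a special case of Theorem \ref{dmadele}, using the extra hypotheses — $\bG$ simply connected, each $\bL_i$ simply connected and a maximal connected subgroup — to upgrade the conclusion of Theorem \ref{dmadele} into the clean dichotomy stated here. First I would record the consequences of simple connectedness: since $\bG$ is simply connected, $\pi(\tilde\bG(\A)) = \bG(\A)$, so $X = \bG(K)\ba\bG(\A)$ has a unique invariant probability measure $\mu_G$, the group $G_W$ of Corollary \ref{wfinite} is all of $\bga$ for every $W$, and $C_c(X,W_f)$ ranges over a dense subspace of $C_c(X)$ as $W_f$ shrinks. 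Likewise, since $\bL_i$ is simply connected, $\pi(\tilde\bL_i(\A)) = \bL_i(\A)$ and the orbit $Y_i = x_0\bL_i(\A)$ is exactly the orbit appearing in the statement, carrying $\mu_i$. Also each $\bL_i$, being semisimple, has no non-trivial $K$-character; and being a maximal connected $K$-subgroup forces its centralizer to be $K$-anisotropic (otherwise, by Lemma \ref{aniso}, $\bL_i$ would sit in a proper $K$-parabolic, and together with a suitable one-parameter subgroup one would enlarge $\bL_i$ inside $\bG$, contradicting maximality — or one argues directly that the centralizer times $\bL_i$ would be a strictly larger connected $K$-subgroup). So part (1) of Theorem \ref{dmadele} applies: the sequence $\{g_i\mu_i\}$ does not escape to infinity.

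Next I would extract a weak limit. By the non-escape statement, $\{g_i\mu_i\}$ is relatively compact in $\mathcal P(X)$; pass to a subsequence along which $g_i\mu_i \to \mu$. Apply part (2) of Theorem \ref{dmadele}: there is a connected $K$-subgroup $\bM\le\bG$ (semisimple, since centralizers of $\bL_i$ are $K$-anisotropic) and elements $\delta_i\in\bG(K)$ with $\delta_i\bL_i\delta_i^{-1}\subset\bM$ for all large $i$; moreover for each compact open $W_f$ there are a finite-index normal $M_0\trianglelefteq\bM(\A)$ containing $\bM(K)\pi(\tilde\bM(\A))$ and $g\in\pi(\tilde\bG(\A))=\bG(\A)$ with $\mu(f) = g\mu_{M_0}(f)$ for all $f\in C_c(X,W_f)$, and $h_i\in\pi(\tilde\bL_i(\A))=\bL_i(\A)$ with $\delta_i h_i g_i\to g$. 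Now I split into two cases according to whether $\bM = \bG$.

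If $\bM = \bG$: then $M_0$ is a finite-index normal subgroup of $\bG(\A)$ containing $\bG(K)\bG(\A) = \bG(\A)$, hence $M_0 = \bG(\A)$, so $g\mu_{M_0}$ is the $\bG(\A)$-invariant probability measure $\mu_G$, and $\mu(f) = \mu_G(f)$ for all $f\in C_c(X,W_f)$. Letting $W_f$ shrink and using that $\bigcup_{W_f} C_c(X,W_f)$ is dense in $C_c(X)$, we get $\mu = \mu_G$. Since this holds for every weakly convergent subsequence and the limit is always $\mu_G$, the full sequence $g_i\mu_i$ converges to $\mu_G$; that is, alternative (1) holds. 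If $\bM \subsetneq \bG$: then for all large $i$ we have $\delta_i\bL_i\delta_i^{-1}\subset\bM\subsetneq\bG$, a connected $K$-subgroup. By maximality of $\bL_i$ among connected $K$-subgroups (here I use that $\bM$ is a proper connected $K$-subgroup and $\delta_i\bL_i\delta_i^{-1}$ is again a maximal connected $K$-subgroup, being a $\bG(K)$-conjugate of one), we must have $\delta_i\bL_i\delta_i^{-1} = \bM$; in particular all these $\bL_i$ with large $i$ are $\bG(K)$-conjugate to one another. Fix one index $i_0$ among them; then for infinitely many $i$, $\delta_i^{-1}\bL_i\delta_i$ is independent of $i$ up to replacing $\delta_i$ — more precisely, setting $\tilde\delta_i := \delta_{i_0}^{-1}\delta_i\in\bG(K)$ gives $\tilde\delta_i^{-1}\bL_i\tilde\delta_i = \bL_{i_0}$; relabelling $\delta_i$ for $\tilde\delta_i$ and $g_i$ for the corresponding translates yields $\delta_i^{-1}\bL_i\delta_i = \bL_{i_0}$, whence $x_0\bL_i(\A)g_i = x_0\bL_{i_0}(\A)\delta_i g_i$. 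Finally, the relation $\delta_i h_i g_i\to g$ with $h_i\in\bL_i(\A)$ translates, after conjugating, into $l_i\delta_i g_i\to g$ for suitable $l_i\in\bL_{i_0}(\A)$ (namely $l_i = \delta_{i_0}^{-1}h_i\delta_{i_0}$, up to bookkeeping of which conjugate one centers at). This is alternative (2).

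The main obstacle I expect is the bookkeeping of conjugations in the last case: Theorem \ref{dmadele} produces $\delta_i$ conjugating $\bL_i$ into $\bM$, but to land $\bL_i$ exactly onto a fixed $\bL_{i_0}$ one must compose with a further fixed element and then track how $h_i$, $g$, and the approximating sequence transform. One should check that the same $\delta_i$ (possibly multiplied by a fixed element of $\bG(K)$) can be used both to identify $\bL_i$ with $\bL_{i_0}$ and to witness $l_i\delta_i g_i\to g$; this is where the normality of $\bL_{i_0}$ in $\mathbf{N}_\bG(\bL_{i_0})$ and the finiteness of that index (equivalently, the rigidity of maximal connected subgroups under conjugation) should be invoked to keep the ambiguity finite. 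A secondary point to verify carefully is that a $\bG(K)$-conjugate of a maximal connected $K$-subgroup is again a maximal connected $K$-subgroup — immediate — and that $\bM\subsetneq\bG$ genuinely forces equality $\delta_i\bL_i\delta_i^{-1}=\bM$ rather than proper containment, which is exactly the maximality hypothesis.
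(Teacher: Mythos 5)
Your proof takes essentially the same route as the paper's: deduce the dichotomy from Theorem \ref{dmadele}, using simple connectedness to force $M_0=\bG(\A)$ and the density of $\bigcup_{W_f}C_c(X,W_f)$ in the equidistribution case, and using maximality of $\bL_i$ to force $\delta_i\bL_i\delta_i^{-1}=\bM$ in the other. Your argument that maximality makes the centralizer $K$-anisotropic is also the right one.

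The one substantive issue is the logical structure of the dichotomy. You extract a single weakly convergent subsequence, let $\bM$ be the group Theorem \ref{dmadele} associates to it, and split on whether $\bM=\bG$. In the $\bM=\bG$ branch you correctly deduce $\mu=\mu_G$ for that subsequence, but then assert ``Since this holds for every weakly convergent subsequence and the limit is always $\mu_G$'' --- that was not established by what precedes, since another subsequence could produce a different $\bM$. The dichotomy must be stated globally, as the paper does: either some weak limit has $\bM\neq\bG$, which forces all the $\bL_i$ to be $\bG(K)$-conjugate and gives (2); or \emph{every} weak limit has $\bM=\bG$, and only under that global hypothesis does your argument show every limit is $\mu_G$, so that non-escape plus uniqueness of the limit yields convergence of the full sequence, i.e.\ (1).

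On the conjugation bookkeeping, which you flagged: with $\sigma_i:=\delta_{i_0}^{-1}\delta_i$ one gets $\sigma_i\bL_i\sigma_i^{-1}=\bL_{i_0}$, not $\sigma_i^{-1}\bL_i\sigma_i=\bL_{i_0}$, and the correct choice is $l_i:=\sigma_i h_i\sigma_i^{-1}\in\bL_{i_0}(\A)$, so that $l_i\sigma_i g_i=\sigma_i h_i g_i\to\delta_{i_0}^{-1}g$; your proposed $l_i=\delta_{i_0}^{-1}h_i\delta_{i_0}$ need not lie in $\bL_{i_0}(\A)$. Note that some of this confusion is inherited from the corollary statement itself: as printed, $\delta_i^{-1}\bL_i\delta_i=\bL_{i_0}$ is inconsistent with the displayed consequence $x_0\bL_i(\A)g_i=x_0\bL_{i_0}(\A)\delta_i g_i$; the paper's own proof works with $\bL_i=\delta_i^{-1}\bL_{i_0}\delta_i$, equivalently $\delta_i\bL_i\delta_i^{-1}=\bL_{i_0}$, under which that consequence and the limit $l_i\delta_i g_i\to g$ (with $h_i=\delta_i^{-1}l_i\delta_i$) come out cleanly.
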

\begin{proof}
Since $[\operatorname{N}_\bG(\bL_i):\bL_i]<\infty$ and $\bL_i$ are semisimple,
their centralizers are $K$-anisotropic.
Hence by Theorem \ref{dmadele},
 $\{g_i\mu_i\}$ are weakly compact in the space
of probability measures on $\bG(K)\ba \bga$.
Let $\mu$ be a weak-limit and let $\bM$ be
as in Theorem \ref{dmadele}.
If $\bM\ne \bG$, by passing to a subsequence,
we have
 $\bL_i$'s are conjugate with each other by
elements of $\bG(K)$. Hence we may assume
$\bL_i=\delta_i^{-1}\bL_{i_0}\delta_i$
for some $\delta_i\in \bG(K)$ and
$\delta_ih_i g_i \to g$ for some $h_i=\delta_i^{-1} l_i \delta_i$
with $l_i\in \bL_{i_0}(\A)$.
Hence (2) happens.

Now suppose for every weak-limit $\mu$,
we have $\bM=\bG$. Fix a finite subset $S\subset R$
such that $R_\infty\subset S$ and
 $S\cap (\cap_i \mathcal I_{\bL_i}) \cap {\mathcal I}_{\bG} \ne \emptyset$.
Since $\bG$ is simply connected,
$G_{W_S}=\bga$ for any compact open subgroup $W_S$ of
$\bG(\A_S)$, and the restriction of $\mu$
to $C_c(\bG(K)\ba \bga)^{W_S}$
is the Tamagawa measure, since $M_0=\bga$ for any $W_S$.

Since $\cup_{W_S}  C_c(X)^{W_S}$ is dense
in $C_c(X)$, this implies $\mu$ is an invariant measure.
Therefore $g_i\mu_i$ converges to the invariant measure
and yields the equidistribution (1).

\vs
\noindent{\bf Proof of Theorem \ref{am}: }
If we set $\bG_0:=\bG\times \bG$
and  $\Delta (\bG)$ denotes the diagonal embedding of $\bG$ into
$\bG_0$,
it can be easily seen that the above Adelic mixing is equivalent to
the equidistribution of the translates $x_0 \Delta(\bG)(\A)(e,g_i)$
in the space $\bG_0(K)\ba  \bG_0(\A)$ for any $g_i\to\infty$;
for the function $f:=f_1\otimes f_2$, $f_i\in C_c(\bG(K)\ba \bG(\A))$,
 $$\int_{x_0 \Delta(\bG)(\A)} f(x, x g_i) dy
=\int_{X} f_1(x) f_2(xg_i) dx .$$

 Since $\bG$ is almost $K$-simple,
$\Delta (\bG)$ is a maximal connected
$K$-subgroup of $\bG_0$, we may apply Theorem \ref{am}.
If the second case happens, we have $\delta_i$ belongs to
the normalizer of $\Delta (\bG)$.
Since $\Delta(\bG)$ has finite index in its normalizer,
by passing to a subsequence, we may assume $\delta_i=e$.
Now since $g_i\to \infty$, we cannot have $l_i\in \Delta (\bG)(\A)$
such that $l_i g_i$ is convergent. Therefore (2) of Theorem \ref{am}
cannot happen, and consequently the claim is proved.
\end{proof}

\begin{Rem}
{\rm In the above and the next corollary,
the assumption on the maximality of $\bL$
 appears more than what we need, which is
that
$\bL$ is maximal as a semisimple $K$-group and $[\operatorname{N}_\bG(\bL_i):\bL_i]<\infty$.
However for $\bL$ semisimple, $[\operatorname{N}_\bG(\bL):\bL]<\infty$
is same as the centralizer of $\bL$ being finite, and any
connected $K$-group containing a semisimple group
with a finite centralizer is automatically semisimple (cf. \cite{EMV}).}
\end{Rem}

We now prove an analogue of Corollary \ref{cm} when $\bG$ and $\bL$ are not necessarily simply connected.

\begin{Cor}\label{cmgeneral} Let $\bL$ be a semisimple maximal
connected $K$-subgroup of $\bG$.
Let $W$ be a compact open subgroup of $\bG(\A_f)$, and let
$g_i\in G_{W}$ be a sequence going to infinity modulo $\bL(\A)$.
 Let $\nu$ be the invariant probability measure
supported on $\bL(K)\ba (\bL(\A)\cap G_{W})$
considered as a measure on $X_{W}:=\bG(K)\ba G_{W}$.
Then for any $f\in C_c(X_{W})^{W}$,
$$\lim_{i\to \infty} \int_{x\in X_{W}} f (xg_i) \, d\nu(x) =\int_{X_{W}} f d\mu$$
where $\mu$ is the probability Haar measure on $X_{W}$.
\end{Cor}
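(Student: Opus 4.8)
The plan is to deduce this from Theorem~\ref{dmadele} applied to the constant sequence $\bL_i=\bL$, arguing much as in the proof of Corollary~\ref{cm} but using the general (not necessarily simply connected) form of the equidistribution theorem. Since $\bL$ is semisimple and maximal we have $[\operatorname{N}_\bG(\bL):\bL]<\infty$, so its centralizer $Z_\bG(\bL)$ is finite, in particular $K$-anisotropic, and $\mathcal I_\bL\ne\emptyset$ (a semisimple $K$-group is quasi-split at almost every place of $R$); these are precisely the hypotheses under which Theorem~\ref{dmadele} has content.

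First I would rewrite $\nu$ in terms of translated period measures. By Proposition~\ref{finite} applied to $\bL$, the subgroup $\bL(K)\pi(\tilde\bL(\A))$ is open and of finite index in $\bL(\A)$, so $\bL(\A)\cap G_W=\bigsqcup_{j=1}^m \bL(K)\pi(\tilde\bL(\A))\,l_j$ for finitely many $l_j\in\bL(\A)\cap G_W$, and hence $\nu=\tfrac1m\sum_{j=1}^m l_j\mu_1$, where $\mu_1$ is the $\pi(\tilde\bL(\A))$-invariant probability measure on $x_0\pi(\tilde\bL(\A))$. Fix $f\in C_c(X_W)^W$, which we identify with $C_c(X,W)$. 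Writing $l_jg_i=\gamma_{ij}x_{ij}w_{ij}$ with $\gamma_{ij}\in\bG(K)$, $x_{ij}\in\pi(\tilde\bG(\A))$ and $w_{ij}\in W$, and using that $f$ is $W$-invariant, we obtain $\int f(xg_i)\,d\nu(x)=\tfrac1m\sum_{j}\langle g_{ij}'\mu_1,f\rangle$ with $g_{ij}':=\gamma_{ij}x_{ij}\in\bG(K)\pi(\tilde\bG(\A))$; moreover $\bL(\A)g_{ij}'=\bL(\A)g_iw_{ij}^{-1}$, so since $W$ is compact and $g_i\to\infty$ modulo $\bL(\A)$, also $g_{ij}'\to\infty$ modulo $\bL(\A)$.

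Now fix $j$ and apply Theorem~\ref{dmadele} to $\bL_i=\bL$ and the sequence $(g_{ij}')_i$. Part~(1) shows $\{g_{ij}'\mu_1\}$ does not escape to infinity, hence is relatively compact in $\mathcal P(X)$; let $\lambda$ be any subsequential weak limit. By part~(2) there is a connected $K$-subgroup $\bM$, semisimple because $Z_\bG(\bL)$ is $K$-anisotropic, with $\delta_i\bL\delta_i^{-1}\subseteq\bM$ for suitable $\delta_i\in\bG(K)$ and all large $i$. By the maximality of $\bL$ (hence of each $\delta_i\bL\delta_i^{-1}$), either $\bM=\bG$ or $\bM=\delta_i\bL\delta_i^{-1}$ for all large $i$. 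The second alternative would force the subgroups $\delta_i\bL\delta_i^{-1}$ to coincide, so the $\delta_i$ would lie in finitely many $\bL(K)$-cosets of $\operatorname{N}_\bG(\bL)(K)$; combined with the conclusion $\delta_ih_ig_{ij}'\to g$ (some $h_i\in\pi(\tilde\bL(\A))$) and with the fact that $\operatorname{N}_\bG(\bL)(K)$ normalizes $\bL(\A)$, this would confine $\bL(\A)g_{ij}'$ to a fixed compact subset of $\bL(\A)\ba\bG(\A)$ along a further subsequence, contradicting the divergence of $g_{ij}'$ modulo $\bL(\A)$. Hence $\bM=\bG$.

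When $\bM=\bG$, Theorem~\ref{dmadele}(2) supplies a finite-index normal subgroup $M_0\triangleleft\bG(\A)$ with $\bG(K)\pi(\tilde\bG(\A))\subseteq M_0$ and an element $g\in\pi(\tilde\bG(\A))\subseteq M_0$ such that $\lambda(f)=\int f\,d\mu_{M_0}$ for all $f\in C_c(X,W)$, where (since $g\in M_0$) $\mu_{M_0}$ is the invariant probability measure on $x_0M_0$. Since $g_{ij}'\in G_W$ and $\pi(\tilde\bL(\A))\subseteq G_W$, each $g_{ij}'\mu_1$ is supported in the open and closed subset $X_W$ of $X$, so the probability measure $\lambda$ is supported in $X_W$; approximating $\mathbf 1_{X_W}$ from below by functions in $C_c(X,W)$ then forces $\mu_{M_0}(X_W)=1$, i.e. $M_0\subseteq G_W$, and hence $\mu_{M_0}$ is the Haar probability measure on $X_W$. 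Thus every subsequential weak limit of $g_{ij}'\mu_1$ agrees on $C_c(X,W)$ with Haar measure on $X_W$, so by relative compactness $\langle g_{ij}'\mu_1,f\rangle\to\int_{X_W}f\,d\mu$ for every $j$, and summing over $j$ gives $\int f(xg_i)\,d\nu(x)\to\int_{X_W}f\,d\mu$. I expect the main difficulty to be the two structural steps: excluding $\bM\subsetneq\bG$, where both the maximality of $\bL$ and the hypothesis $g_i\to\infty$ modulo $\bL(\A)$ are essential, and then pinning the limit down to the full Haar measure on $X_W$ rather than a proper sub-probability measure; the adelic bookkeeping in the reduction of $\nu$ is routine but must be handled with care.
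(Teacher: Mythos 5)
Your overall strategy matches the paper's: decompose $\nu$ into translated $\pi(\tilde\bL(\A))$-period measures and apply Theorem~\ref{dmadele}, using maximality of $\bL$ plus the hypothesis $g_i\to\infty$ modulo $\bL(\A)$ to exclude $\bM\subsetneq\bG$. That part of the argument is sound. But the first step---the decomposition of $\nu$---has a genuine gap.

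You assert, citing Proposition~\ref{finite}, that $\bL(K)\pi(\tilde\bL(\A))$ is open and of finite index in $\bL(\A)$. Proposition~\ref{finite} (and likewise Corollary~\ref{wfinite}(2)) does not say this: what it says is that $\bL(K)\pi(\tilde\bL_S)W_S$ (resp.\ $\bL(K)\pi(\tilde\bL(\A))W'$ for a compact open $W'\subset\bL(\A_f)$) has finite index in $\bL(\A)$. The compact open factor is essential. Without it the claim is false: $\pi(\tilde\bL(\A))$ is not even open in $\bL(\A)$ when $\bL$ is not simply connected, since $\pi(\tilde\bL(\mathcal O_v))$ has index $>1$ in $\bL(\mathcal O_v)$ for almost every finite $v$ (the cokernel sits inside $H^1(\kv,F)\neq 1$, which Lang's theorem does not kill because $F$ is not connected). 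Concretely, for $\bL=\PGL_2$ over $\QQ$ one has $\PGL_2(\A)\big/\PGL_2(\QQ)\,\pi(\SL_2(\A))\cong\prod_p \ZZ_p^*/(\ZZ_p^*)^2$, an infinite group. So $\nu$ cannot be written as a finite average $\tfrac1m\sum_j l_j\mu_1$, and everything downstream of that identity is unsupported. Since $\bL(\A)\cap G_W$ has finite index in $\bL(\A)$, a finite-index $\bL(K)\pi(\tilde\bL(\A))$ inside it would force finite index in all of $\bL(\A)$, which is exactly what fails. The paper sidesteps this by decomposing $\bL(\A)\cap G_W$ into finitely many cosets of the \emph{$g_i$-dependent} finite-index subgroup $\bL(K)\pi(\tilde\bL(\A))\bigl(g_iWg_i^{-1}\cap\bL(\A_f)\bigr)$; after right-translation by $g_i$ that compact-open piece conjugates into $W$ and is absorbed by the $W$-invariance of $f$, so that $(g_i\nu)(f)$ becomes a finite combination of integrals of $f$ against the invariant measures on $x_0\pi(\tilde\bL(\A))x g_i$ with $x\in\Delta_{g_i}$. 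A $g_i$-independent decomposition of the kind you propose does not exist, so the fix is not cosmetic.

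A smaller remark on the endgame: from $\bG(K)\pi(\tilde\bG(\A))\subseteq M_0\subseteq G_W$ one cannot conclude $M_0=G_W$, so ``$\mu_{M_0}$ is the Haar probability measure on $X_W$'' is overstated. The conclusion you actually need---that $\mu_{M_0}(f)=\mu(f)$ for all $W$-invariant $f$---does hold, because $M_0$ is normal and $M_0W=G_W$, so averaging $\mu_{M_0}$ over the finitely many $W$-cosets of $M_0$ in $G_W$ and using $W$-invariance of $f$ gives Haar on $X_W$. This is a one-line repair, unlike the decomposition issue above.
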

\begin{proof} Let $S$ be a strongly isotropic subset for $\bL$.
Since $G_{W}$ contains
$\bL(K)(\pi(\tilde \bG_S)\cap \bL_S)(W_S\cap \bL(\A_S))$,
by Corollary \ref{c4.9},
$G_{W}$ contains $\pi(\tilde \bL(\A))$.
Also, by the same corollary, for each $g_i\in G_{W}$,
$\bL(K)\pi(\tilde \bL(\A))(g_iWg_i^{-1}\cap \bL(\A_f))$
is a normal subgroup of $\bL(\A)\cap G_{W}$ with finite index.
Hence there exists a finite subset $\Delta_{g_i}\subset \bL(\A)\cap G_{W}$
such that
$$\bL(\A)\cap G_{W}=\cup_{x\in \Delta_{g_i}} \bL(K) \pi(\tilde \bL(\A))
x(g_iW g_i^{-1}\cap \bL(\A_f)) $$
where the union is a disjoint union.
Therefore for $f\in C_c(X_{W})^{W}$,
 the integral $(g_i\nu)(f)$ is equal to a finite linear
combination of integrals of $f$ against invariant measures on
$x_0 \pi(\tilde \bL(\A)) x g_i$, $x\in \Delta_{g_i}$.

Hence it suffices to show the following: for any $x_i\in \Delta_{g_i}$,
and $f\in C_c(X_{W})^{W}$,
$$\int_{x_0\pi(\tilde \bL(\A))x_ig_i} f \, d\mu_i \to \int f \,d\mu $$
where $\mu_i$ is the invariant probability measure supported
on $x_0\pi(\tilde \bL(\A))x_ig_i$.

We apply Theorem \ref{dmadele} for any weak-limit $\nu$ of $\mu_i$.
By (1), we have $\nu\in \mathcal P(X_{W})$.
We claim $\bM=\bG$. Suppose not. Since $\bL$ is maximal,
we have $\delta_i\in \bG(K)$ and $h_i\in \pi(\tilde \bL(\A))$, $g\in G_{W}$ such that
$\delta_i \bL \delta_i^{-1} =\delta_j \bL \delta_j^{-1}$ for all large $i$ and $\delta_i h_ix_ig_i\to g$.
Since $\bL$ has a finite index in the normalizer of $\bL$,
by passing to a subsequence, there exist $\delta_0\in \bG(K)$,
and $\delta_i\in \delta_0\bL(\A)$ such that $(\delta_0^{-1} \delta_i) h_i x_ig_i \to \delta_0^{-1} g$.
Since $\delta_0^{-1}\delta_i h_i\in \bL(\A)$ and $x_ig_i\to \infty$ modulo $\bL(\A)$, this is a contradiction.
Hence by Theorem \ref{dmadele},
$\nu$ is an invariant measure supported on $x_0 M_0 g$
where $M_0$ contains $\bG(K)\pi(\tilde \bG(\A))W$.
Since $G_{W}=\bG(K)\pi(\tilde \bG(\A))W$,
we conclude that $\nu=\mu$, proving the claim.
\end{proof}


\vs

\section{Counting rational points of bounded height}\label{s:c}
The basic strategy is due to Duke, Rudnick, Sarnak \cite{DRS}, and to Eskin-McMullen \cite{EM},
which can be summarized as follows.
Let $L\subset G$ be unimodular locally compact groups
and $Z:=L\ba G$. Let $\mu_G, \mu_L$ and $\mu$ be
invariant measures on $G$, $L$ and $Z$ respectively which are
compatible with each other, that is,
if for any $f\in C_c(G)$,
$$\int f d\mu_G =\int_{L\ba G} \int_L f (h g)d\mu_L (h)d\mu(Lg) .$$

\begin{Def} For a fixed compact subgroup $W$ of $G$,
a family $\{B_T\subset Z \}$ of compact subsets
 is called $W$-well-rounded if $B_TW=B_T$ for all large $T$ and
there exists $c>0 $ such that for every small
$\e>0$, there exists a neighborhood $U_\e$ of $e$ in $G$ such
that for all sufficiently large $T$,
 \begin{equation}\label{wr} (1-c \cdot \e) \mu (B_T U_\e W)\le \mu (B_T) \le (1+ c\cdot \e)
\mu (\cap_{u\in U_\e W}B_T u) .\end{equation}
\end{Def}
Note that this is a slight variant of the notion of well-roundedness introduced in \cite{EM}.

\begin{Prop}\label{rcounting} Let $\G\subset G$ be a lattice such that
$\G\cap L$ is a lattice in $L$. Let $W\subset G$ be a compact subgroup.
Suppose that for $Y:=[e]L\subset \G\ba G$,
the translates $Yg$ become equidistributed in $\G\ba G$ as $g\to \infty$
in $Z$ with respect to $C_c(\G\ba G)^W$,
that is, for any $f\in  C_c(\G\ba G)^W$,
$$\int_{Y} f(yg) d\mu_L(y)\to \int_{\G\ba G}f\; d\mu .$$

Then for any $W$-well-rounded sequence $\{B_T\subset Z\}$ of compact subsets
whose volume going to infinity,
we have
$$ \# z_0 \G\cap B_T \sim \frac{\mu_L(L\cap \G\ba L)}{\mu_G(\G\ba G)}\mu (B_T) .$$
\end{Prop}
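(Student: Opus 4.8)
The plan is to carry out the Duke--Rudnick--Sarnak / Eskin--McMullen method: encode the point count as the value at a base point of a $\Gamma$-automorphic sum of indicator functions, pair this sum against a bump function to feed in the equidistribution hypothesis, and use $W$-well-roundedness to sandwich the base-point value between two such pairings. Write $Z=L\ba G$, $z_0=[e]L$, and for $t>0$ let $\chi_{B_t}$ denote the indicator of $B_t\subset Z$. First I would introduce
\[
F_t(\Gamma g):=\sum_{\delta\in(\Gamma\cap L)\ba\Gamma}\chi_{B_t}(z_0\delta g),\qquad g\in G.
\]
Since $\Gamma\cap L$ is a lattice in $L$, the orbit $z_0\Gamma$ is discrete (indeed closed) in $Z$, so this sum is locally finite, $F_t$ descends to a $\Gamma$-invariant, locally bounded, $\mu_G$-integrable function on $\Gamma\ba G$, and the bijection $(\Gamma\cap L)\ba\Gamma\isoto z_0\Gamma$, $\delta\mapsto z_0\delta$, gives $F_t(\Gamma e)=\#(z_0\Gamma\cap B_t)$, the quantity to be estimated. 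Unfolding the sum and disintegrating $\mu_G$ along $(\Gamma\cap L)\ba G\to L\ba G=Z$ via the compatibility of $\mu_G,\mu_L,\mu$, one gets: for every finite-$\mu$-measure subset $E\subset Z$ and every bounded measurable $\psi$ on $\Gamma\ba G$,
\[
\int_{\Gamma\ba G}\Big(\sum_{\delta\in(\Gamma\cap L)\ba\Gamma}\chi_E(z_0\delta g)\Big)\psi(\Gamma g)\;d\mu_G=\int_{E}P_\psi\;d\mu,\qquad P_\psi(Lg):=\int_{Y}\psi(yg)\,d\mu_L(y),
\]
where $P_\psi$, the $L$-period of $\psi$, is a well-defined continuous function on $Z$; taking $E=B_t$ recovers $F_t$.

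Next I would run the equidistribution step. For $\psi\in C_c(\Gamma\ba G)^W$ the hypothesis identifies the limit of $P_\psi(z)$ as $z\to\infty$ in $Z$ with the average of $\psi$ over $\Gamma\ba G$ (with the normalizations matched through the compatibility relation); concretely, if $\psi_\epsilon\ge 0$ lies in $C_c(\Gamma\ba G)^W$, has $\mu_G$-mass one, and is supported in $\Gamma\ba\Gamma U_\epsilon W$ --- such $\psi_\epsilon$ being obtained by averaging a small bump at $e$ over the compact group $W$ --- then $P_{\psi_\epsilon}(z)\to\kappa$ as $z\to\infty$, where $\kappa:=\mu_L(L\cap\Gamma\ba L)/\mu_G(\Gamma\ba G)$. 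A continuous function on the locally compact space $Z$ with a finite limit at infinity is bounded, and splitting an integral over a large compact set and its complement then shows $\mu(E_t)^{-1}\int_{E_t}P_{\psi_\epsilon}\,d\mu\to\kappa$ for every measurable family $\{E_t\}$ in $Z$ with $\mu(E_t)\to\infty$. I would apply this to each $E_t\in\{B_t,\ B_tU_\epsilon W,\ \bigcap_{u\in U_\epsilon W}B_tu\}$ --- all of $\mu$-measure $\ge(1+c\epsilon)^{-1}\mu(B_t)\to\infty$ by $W$-well-roundedness --- so that, via the identity of the previous paragraph, $\int_{\Gamma\ba G}(\sum_\delta\chi_{E_t}(z_0\delta\,\cdot))\,\psi_\epsilon\,d\mu_G=\int_{E_t}P_{\psi_\epsilon}\,d\mu\sim\kappa\,\mu(E_t)$.

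Then comes the sandwich. For $g\in U_\epsilon W$: if $z_0\delta\in B_t$ then $z_0\delta g\in B_tU_\epsilon W$, so $G_t(\Gamma g):=\sum_\delta\chi_{B_tU_\epsilon W}(z_0\delta g)$ satisfies $G_t(\Gamma g)\ge F_t(\Gamma e)$ on $U_\epsilon W$; since $\psi_\epsilon\ge 0$ has mass one and support in $\Gamma\ba\Gamma U_\epsilon W$,
\[
F_t(\Gamma e)\le\int_{\Gamma\ba G}G_t\,\psi_\epsilon\;d\mu_G\sim\kappa\,\mu(B_tU_\epsilon W)\le\frac{\kappa}{1-c\epsilon}\,\mu(B_t).
\]
Dually, if $g\in U_\epsilon W$ and $z_0\delta g\in\bigcap_{u\in U_\epsilon W}B_tu$, then taking $u=g$ gives $z_0\delta\in B_t$, so $H_t(\Gamma g):=\sum_\delta\chi_{\bigcap_{u\in U_\epsilon W}B_tu}(z_0\delta g)$ satisfies $H_t(\Gamma g)\le F_t(\Gamma e)$ on $U_\epsilon W$, whence
\[
\frac{\kappa}{1+c\epsilon}\,\mu(B_t)\le\kappa\,\mu\Big(\bigcap_{u\in U_\epsilon W}B_tu\Big)\sim\int_{\Gamma\ba G}H_t\,\psi_\epsilon\;d\mu_G\le F_t(\Gamma e).
\]
Combining, $\kappa(1+c\epsilon)^{-1}\le\liminf_t F_t(\Gamma e)/\mu(B_t)\le\limsup_t F_t(\Gamma e)/\mu(B_t)\le\kappa(1-c\epsilon)^{-1}$ for all small $\epsilon>0$, and letting $\epsilon\to 0$ yields $\#(z_0\Gamma\cap B_t)\sim\kappa\,\mu(B_t)$, which is the asserted asymptotic.

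The step I expect to be the main obstacle is precisely the passage from the integrated asymptotic $\int_{\Gamma\ba G}F_t\,\psi_\epsilon\,d\mu_G\sim\kappa\,\mu(B_t)$ to the pointwise asymptotic for $F_t(\Gamma e)$: a priori $B_t$ could be geometrically irregular near infinity in $Z$, and it is exactly the $W$-well-roundedness input (established for the height balls in Theorem \ref{mwrfinite}) that bounds $\mu(B_tU_\epsilon W)$ and $\mu(\bigcap_u B_tu)$ in terms of $\mu(B_t)$ and forces the two-sided bound to collapse as $\epsilon\to 0$. A secondary point requiring care is the ``geometry-free'' averaging statement $\mu(E_t)^{-1}\int_{E_t}P_{\psi_\epsilon}\,d\mu\to\kappa$, which relies on $P_{\psi_\epsilon}$ being bounded with limit $\kappa$ at infinity, not merely convergent pointwise there.
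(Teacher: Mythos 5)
Your proof is correct and follows essentially the same route as the paper: define the automorphic sum $F_{B_T}$, pair it against a $W$-invariant bump $\psi_\epsilon$ supported near the identity, unfold the pairing into an integral of the $L$-period of $\psi_\epsilon$ over $B_T$, invoke the equidistribution hypothesis to replace that period by its limiting value, and close the sandwich between $F_{B_T U_\epsilon W}$ and $F_{\bigcap_u B_T u}$ using $W$-well-roundedness. Your write-up is in fact more careful than the paper's about the boundedness of the period function and the uniformity of the dominated-convergence step, but the underlying argument is the same.
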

\begin{proof}
Without loss of generality, we may assume that
$\mu_L(L\cap \G\ba L)=1=\mu_G(\G\ba G)$.
Let $U_\e$ be as in the definition \ref{wr}.
We may assume that $U_\e$ is symmetric and $U_\e  \cap \G=\{e\}.$
If we define a function on $\G\ba G$ by
$$F_{B_T}(g):=\sum_{\gamma\in \G\cap L\ba \G}\chi_{B_T} (z_0\gamma g)$$
where $\chi_{B_T}$ is the indicator function of $B_T$,
then $F_{B_T}(e)=\#(z_0\G\cap B_T)$.
Let $\psi_\e$ be a non-negative $W$-invariant continuous function on $\G\ba G$ with support in $\G\ba
\G U_\e W$
and with integral one.
Set $F_T^{+}=F_{B_TU_\e W}$ and $F_T^-=F_{\cap_{u\in U_\e W} B_T u }$.
Observe that for any $g\in U_\e W$,
$$ F_T^-(g) \le F_{B_T}(e)\le  F_T^+(g),$$
and hence
$$\langle F_T^-, \psi_\e\rangle \le F_{B_T}(e)\le \langle F_T^+, \psi_\e \rangle$$
where the inner product takes place in $L^2(\G\ba G)$.
One can easily see that
$$\langle F_T^+, \psi_\e\rangle =\int_{g\in B_T U_\e W}
\int_{y\in Y } \psi_\e (yg) d\mu_L(y) d\mu (g)$$

By the assumption,
$$\int_{y\in Y} \psi_\e (yg) dy\to 1$$
as $g\to \infty$ on $Z$ and hence
if the volume of  $B_T$
goes to infinity as $T\to \infty$, we have
$$\langle F_T^+, \psi_\e\rangle
\sim \vol(B_TU_\e W) .$$

Similarly, we have
$$ \langle F_T^-, \psi_\e\rangle
\sim \vol(\cap_{u\in U_\e W} B_Tu) .$$

Using the $W$-well-roundedness assumption on $B_T$,
it is easy deduce that $F_{B_T}(e)\sim \mu(B_T)$ (see \cite{BO2} for details).

\end{proof}


Let $\bG$ be a connected semisimple algebraic group defined over $K$,
with a given $K$-representation
 $\iota: \bG \to \GL_{d+1}$.
Let $\bU:=u_0\bG\subset \mathbb P^d$
for $u_0\in \mathbb P^d (K)$,
and fix a height function $\H_{\mathcal O(1)}$ on
$\mathbb P^d(K)$ as in the introduction.
That is, $\H_{\mathcal O(1)} =\prod_{v\in R}\H_v$ where $\H_v$ is a norm on $K_v^{d+1}$
and is a max norm for almost all $v$.

We set
$$
N_T(\bU):=\#\{x\in \bU(K):\, \He_{\mathcal O(1)}(x)<T\}.
$$

We assume that
\begin{enumerate}
\item[(i)] $\bL=\hbox{Stab}_{\bG}(u_0)$ is a semisimple maximal proper connected
$K$-subgroup of $\bG$.
\item[(ii)] There are only finitely many $\bG(\A)$-orbits on $\bU(\A)$.
\end{enumerate}
We note that (ii) is equivalent to saying that
for almost all $v\in R$, $\bG(K_v)$ acts transitively on $\bU(K_v)$ (see Thm. \ref{thm:main-f-m-orbits}).
Denote by $\bX\subset \mathbb P^d$  the Zariski closure of
$\bU$ and by $L$ the line bundle which is
the pull back of $\mathcal O_{\mathbb P^d} (1)$.
We assume that there is a global section $\s$ of $L$
such that $\bU=\{\s\ne 0\}$.
By Theorem \ref{ad-extension}, $\s$ is $\bG$-invariant.
 Let $\s_0,\cdots, \s_d$ be the global sections
of $L$ which are the pull-backs of the coordinates $x_i$'s.
Using the height function $\H_{\mathcal O(1)}=\prod_{v\in R}\H_v$,
we define
the adelic height function $\H_{\mathcal L}=\prod_v\H_{\mathcal L,v}:\bU(\A)\to \br_{>0}$
where
$$\H_{\mathcal L, v}(x)=\H_v\left(\frac{\s_0(x)}{\s(x)}, \cdots, \frac{\s_d(x)}{\s(x)}\right).$$

Set $$B_T:=\{x\in \bU(\A): \H_{\mathcal L}(x)<T\} .$$

The assumption (ii) implies
that the set $\bU(K)$ consists of finitely many $\bG(K)$-orbits
(Theorem \ref{thm:main-f-m-orbits}).
Choose a set
$u_1,\ldots,u_l\in\bU(K)$ of representatives of these orbits, and denote by
$\bL_1,\ldots ,\bL_l$ their stabilizers in $\bG$.
Then
$$N_T(\bU)=\sum_{i=1}^l \#(B_T\cap u_i\bG(K)).$$


A naive heuristic $$\#B_T\cap u_i\bG(K)\sim_T\text{vol}(B_T\cap u_i\bG(\A))$$ does not hold
in general unless $\bG$ is simply connected. To correct this problem, we consider the following finite
index subgroup of $\bG(\A)$:


Recall from Lemma \ref{l:proper}
that the following is an open
subgroup of $\bG(\A_f)$:
 $$W_{\H_{\mathcal L}}:=\{g\in \bG(\A_f): \H_{\mathcal L}(ug)=\H_{\mathcal L}(u)\;\; \text{for all
$u\in\bU(\A)$}\} .$$
Recall from Corollary \ref{wfinite} that
for any compact open subgroup $W$ of $\bG(\A_f)$, $$G_W:=\{\gamma x w \in \bga:\gamma\in
\bG(K),\, x\in \pi(\tilde\bG(\A)), \; w\in W\}$$
is a normal subgroup of $\bga$ with finite index.

Let $\mu$ be the Tamagawa measure on $\bU(\A)$,
and choose invariant measures $\mu_G$ and $\mu_{L_i}$
 on the adelic spaces $\bG(\A)$ and $\bL_i(\A)$ respectively
so that $\mu_G=\mu\times \mu_{L_i}$ locally.

The main theorem \ref{maint} in the introduction follows from the following:
\begin{Thm}\label{mcounting}
\begin{enumerate}
\item If the height function $\H_{\mathcal L}$ is regular, then for
any compact open subgroup $W$ of $W_{\H_{\mathcal L}}$
$$
N_T(\bU)\sim_T\sum_{i=1}^l
\frac{\mu_{L_i}(\bL_i(K))\ba G_W\cap \bL_i(\A))}{\mu_{G}(\bG(K)\ba G_W)}
\mu(u_iG_{W}\cap B_T).
$$
\item For $a=a(L)$ and $b=b(L)$ defined as in \eqref{defab2},
 $$
N_T(\bU)\asymp T^a(\log T)^{b-1}.
$$
\item Suppose that $\bG$ is simply connected,
or that $\bG(\A)=G_{W_{\H_{\mathcal L}}}$.
Then for some $c>0$,
$$
N_T(\bU)\sim c \cdot T^a(\log T)^{b-1}. $$
\end{enumerate}
\end{Thm}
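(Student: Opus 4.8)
The plan is to run the Duke--Rudnick--Sarnak / Eskin--McMullen counting scheme encoded in Proposition \ref{rcounting}, one $\bG(K)$-orbit at a time, after replacing $\bga$ by the finite-index subgroup $G_W$ on which the adelic height is $W$-invariant. Fix once and for all a compact open subgroup $W\subset W_{\H_{\mathcal L}}$. By Corollary \ref{wfinite}, $G_W=\bG(K)\pi(\tilde\bG(\A))W$ is a normal finite-index subgroup of $\bga$ containing $\bG(K)$ and $\pi(\tilde\bG(\A))$ (and, by the argument in Corollary \ref{cmgeneral}, containing $\pi(\tilde\bL_i(\A))$ and intersecting each $\bL_i(\A)$ in a finite-index subgroup).

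First I would reduce to homogeneous orbits: by assumption (ii) and Theorem \ref{thm:main-f-m-orbits}, $\bU(K)=\bigsqcup_{i=1}^l u_i\bG(K)$, so $N_T(\bU)=\sum_{i=1}^l\#(B_T\cap u_i\bG(K))$. Since $\bL_i=\operatorname{stab}_\bG(u_i)$ and $\bG(K)\subset G_W$, the orbit $u_iG_W$ is $G_W$-equivariantly identified with $(\bL_i(\A)\cap G_W)\backslash G_W$, under which $u_i\bG(K)$ becomes the $\bG(K)$-orbit of the base point $z_i$, whose stabilizer in $\bG(K)$ is $\bG(K)\cap\bL_i(\A)=\bL_i(K)$; hence $\#(B_T\cap u_i\bG(K))=\#\big(z_i\bG(K)\cap(B_T\cap u_iG_W)\big)$. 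I would then apply Proposition \ref{rcounting} with $G=G_W$, $L=\bL_i(\A)\cap G_W$, $\Gamma=\bG(K)$, the family $\tilde B_T^{(i)}:=B_T\cap u_iG_W$, and the Tamagawa measure $\mu$ together with compatibly normalized invariant measures $\mu_G,\mu_{L_i}$. Its three hypotheses are checked as follows. \emph{Lattice conditions}: $\bG(K)$ is a lattice in $\bga$ (Borel--Harish-Chandra) hence in the finite-index subgroup $G_W$, and $\bL_i(K)$ is a lattice in $\bL_i(\A)$ hence in $\bL_i(\A)\cap G_W$. \emph{Well-roundedness} of $\{\tilde B_T^{(i)}\}$ with respect to $W$: this is precisely Theorem \ref{mwrfinite}(2) applied to the finite-index closed subgroup $M=G_W\supset W$, when $\H_{\mathcal L}$ is regular; moreover $\mu(\tilde B_T^{(i)})\to\infty$ by Theorem \ref{mwrfinite}(1) since $a(L)>0$. \emph{Equidistribution}: the translates $\bL_i(K)\backslash(\bL_i(\A)\cap G_W)g$ equidistribute in $\bG(K)\backslash G_W$ against $C_c(\cdot)^W$ as $g\to\infty$ modulo $\bL_i(\A)$, which is exactly Corollary \ref{cmgeneral} (using that $\bL_i$ is semisimple maximal). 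Summing the conclusion of Proposition \ref{rcounting} over $i$ yields part (1).

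For part (2), Theorem \ref{mwrfinite}(1) (equivalently Theorem \ref{t:volume}) gives $\mu(u_iG_W\cap B_T)\asymp T^{a(L)}(\log T)^{b(L)-1}$ with genuinely nonzero leading term, so part (1) gives $N_T(\bU)\asymp T^a(\log T)^{b-1}$ whenever $\H_{\mathcal L}$ is regular; for an arbitrary metrization one invokes the fact (recorded before Theorem \ref{mwrfinite}) that every $\H_{\mathcal L}$ is equivalent, up to a bounded multiplicative factor, to a regular one with the same $a,b$, and sandwiches. For part (3): if $\bG$ is simply connected then $\pi$ is an isomorphism and $G_W=\bG(K)\pi(\tilde\bG(\A))W=\bga$ for every $W$; if instead $\bga=G_{W_{\H_{\mathcal L}}}$, take $W=W_{\H_{\mathcal L}}$, and again $G_W=\bga$. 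In either case part (1) reads $N_T(\bU)\sim\sum_i c_i\,\mu(u_i\bga\cap B_T)$ with $c_i=\mu_{L_i}(\bL_i(K)\backslash\bL_i(\A))/\mu_G(\bG(K)\backslash\bga)>0$, and Theorem \ref{t:volume} gives $\mu(u_i\bga\cap B_T)=T^aP_{u_i}(\log T)+O(T^{a-\delta})$ with $P_{u_i}$ a nonzero polynomial of degree $b-1$ with positive leading coefficient; there is no cancellation, so $N_T(\bU)=T^aP(\log T)+O(T^{a-\delta})$ with $\deg P=b-1$, whence $N_T(\bU)\sim c\cdot T^a(\log T)^{b-1}$ with $c>0$ the leading coefficient of $P$.

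The main obstacle is not any single cited black box but the plumbing between them: the passage to $G_W$ and the measure-theoretic identifications of Lemma \ref{bij1} that allow one to read the adelic orbit count against the $S$-arithmetic equidistribution of Corollary \ref{cmgeneral}, together with the verification that $B_T\cap u_iG_W$ is $W$-well-rounded. This last point rests on the delicate Proposition \ref{mwr}/Theorem \ref{mwrfinite}, which combines the Benoist--Oh volume-regularity estimate (Theorem \ref{eq:v_111}) with the Chambert-Loir--Tschinkel volume asymptotics (Theorem \ref{th:ct}), and it is here that the regularity hypothesis on $\H_{\mathcal L}$ is consumed. A secondary subtlety is the non-regular case of part (3): since there the count is governed purely by $\mu(u_i\bga\cap B_T)$, whose exact asymptotic in Theorem \ref{t:volume} holds for arbitrary local heights, one must supply well-roundedness of the full adelic height balls for arbitrary $\H_{\mathcal L}$ directly from the Chambert-Loir--Tschinkel description of $\eta(s)$ rather than via the Benoist--Oh input; this still yields $N_T(\bU)\sim c\cdot T^a(\log T)^{b-1}$, with $c$ pinned down only to positivity.
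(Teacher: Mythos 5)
Your proposal is correct and follows the same route as the paper: reduce to $\bG(K)$-orbits, pass to the finite-index group $G_W$ on which the height is $W$-invariant, feed the equidistribution of Corollary~\ref{cmgeneral} and the $W$-well-roundedness of $\{B_T\cap u_iG_W\}$ from Theorem~\ref{mwrfinite} into Proposition~\ref{rcounting}, then read off (2) from the volume asymptotics and (3) by noting $G_W=\bga$ and deriving well-roundedness from the exact error term in Theorem~\ref{t:volume}. You also spell out two points the paper leaves implicit (the sandwiching between a given height and an equivalent regular one for part (2), and the positivity-of-leading-coefficient argument for part (3)), but this is elaboration, not a different method.
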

\begin{proof}
Fixing $1\le i \le l$,
we apply the above proposition to $G=G_{W}$, $L=\bL_i(\A)\cap G_{W}$ and
$Y=\bG(K) \ba \bG(K)L \subset \bG(K)\ba G$.

By Corollary \ref{cmgeneral},
the translates $Yg$ become equidistributed in $\bG(K)\ba G_{W}$
with respect to $C_c(\bG(K)\ba G_{W})^{W}$.


And by Theorem \ref{mwrfinite}, the family $\{B_T\cap u_iG_{W}\}$ is $W$-well-rounded.
Hence (1) follow from Proposition \ref{rcounting}.
(2) follows from (1) using Corollary \ref{mwrfinite}.
For (3), first note that
$G_W=\bga$ for $\bG$ simply connected.  Theorem \ref{t:volume} implies
   $B_T\cap u_i\bG(\A)$ is $W$-well-rounded, and hence (1) holds under the hypothesis of (3),
    without assuming that $\H_{\mathcal L}$ is regular.
It remains to apply the asymptotic given Theorem \ref{t:volume} once more.
\end{proof}

\vs

\noindent{\bf Proof of Corollary \ref{wonderfulcm}}
Since $\bX$ is smooth, $L^k$ is $\bG$-linearized for some $k$ (cf. \cite{kkl}).
Therefore, by replacing $L$ by
$L^k$ if necessary,
we are in the setup of Theorem \ref{mcounting}.
Since $a_L=a(L)$ and $b_L=b(L)$ (see
the proof of Corollary \ref {c:wonder_vol}),
the claim follows from Theorem \ref{mcounting}.

\vs

\noindent{\bf Proof of Corollary \ref{linnik}:}
Let $\|\cdot\|_p$ denote the max norm on $\qp^N$ for each $p$.
Fix any compact subset $\Omega\subset v_0\bG(\br)$ with boundary
of measure zero and $\vol(\Omega)>0$.
If $m=\prod_{p:prime} p^{m_p}$ (of course, $m_p=0$ for almost all $p$),
set $$B_m:=\{(x_p)\in  v_0\bG(\A):
x_\infty\in \Omega, \|x_p\|_p\le p^{m_p}\text{ for each $p$} \}.$$
That is, for $B_m':= v_0\bG(\A_f)\cap \prod_p \bU(m^{-1}\zp)$, we have
$B_m:=\Omega \times B_m'.$
 Since $B_m'$ is invariant under
the subgroup $\prod_p\bG(\zp)$,
the family $\{B_m\}$ is clearly well-rounded.
Moreover since $\bG$ is simply connected,
$G_{W_S}=\bG(\A)$ for any strongly isotropic $S$ for $\bG$.


By the computation in \cite[Corollary 7.7]{BO2},
$$\mu(B_m):=\mu_\infty(\Omega)\prod_p\mu_p(\bU(m^{-1}\zp)\cap v_0\bG(\qp))
\to \infty$$ if $m \to \infty$, subject to
$B_m\ne \emptyset$.

Therefore by Proposition \ref{rcounting},
we have, as $m \to \infty$, subject to
$B_m\ne \emptyset$,
$$\# v_0\bG(\q)\cap B_m\sim \mu (B_m)$$

Observe that if $x\in \bU(\q)\cap B_m$, then $x\in \bU(m^{-1}\z)\cap\Omega$,
and hence $$\# v_0\bG(\q)\cap B_m \le \#\bU(m^{-1}\z)\cap \Omega .$$
Consequently, for all sufficiently large $m$,
 $B_m \ne \emptyset$
implies  $\bU(m^{-1}\z)\ne \emptyset$.

In the case when $\bL$ is simply connected,
there is exactly one $\bG(\q)$-orbit in each
$\bG(\br)$-orbit and hence
for $\Omega\subset v_0\bG(\br)$
$$\# v_0\bG(\q)\cap B_m =\# \bU(\q)\cap B_m =\# \bU(m^{-1}\z)\cap \Omega .$$
  Hence the above argument shows
(2).

\section{Limits of invariant measures for unipotent flows}\label{sec:ms}

\subsection{Statements of Main Theorem}
Let $K$ be a number field, and
 $\bG$ be a connected $K$-group with no non-trivial $K$-character.
Let $S$ be a finite set of (normalized)
valuations of $K$ including all the archimedean
$v\in R$ such that $\bG(K_v)$ is non-compact.
For each valuation $v\in S$,
we denote by $|\cdot|_v$ the normalized absolute value on the completion field $K_v$,
and by $\theta_v$ the normalized Haar measure on $K_v$.

Let $G$ be a finite index subgroup of
$$\bGS:=\prod_{v\in S} \bG(K_v) ,$$
and $\Gamma$ be an $S$-arithmetic subgroup of $G$,
that is, $\G \subset \bG(K)$ is
commensurable with $\bG (\mathcal O_S) $,
 where $\Cal O_S$ denotes
 the ring of $S$-integers in $K$.
Then $\Gamma$ is a lattice
in $G$ by a theorem of Borel \cite{B}.


 Recall the definition of {\it class} $\Cal F$-subgroups of $\bG$ from \ref{classf}.
Equivalently, a connected $K$-subgroup $\mathbf{P}$ of $\bG$ is in
  {\rm class
$\Cal F$} relative to $S$ if for each proper normal $K$-subgroup $\mathbf Q$ of $\mathbf P$
there exists $v\in S$ such that $(\bP/\bQ) (K_v)$ contains a non-trivial
unipotent element.

Note that for every subgroup $L$ of finite index
in $\bP_S$ with $\bP\in\mathcal{F}$, the orbit $\G\ba \G L$ is
closed and supports a finite $L$-invariant measure.

For a closed subgroup $L$ of $\bGS$, we denote by $L_u$
the closed subgroup of $L$ generated by all unipotent one-parameter subgroups
of $L$. We note that since $G$ has a finite index in $\bG_S$,
every one-parameter unipotent subgroup of $\bG_S$ is contained in $G$.

\begin{Def}
We say that a closed subgroup $L$ of $G$
is in class $\mathcal H$ if
there exists a connected $K$-subgroup $\bP$ in {\rm class $\Cal F$}
relative to $S$ such that
$L$ has a finite index in $\bP_S$ and
$L_u$ acts ergodically on $\G\ba\G L$ with respect to the
$L$-invariant probability measure.
\end{Def}







Set $X=\Gamma\ba G$. We denote by $\mathcal{P}(X)$ the space of
probability measures on $X$ equipped with the weak$^*$ topology.
For $\mu\in \mathcal P(X)$ and $d\in G$,
the translate $d\mu $ is defined by
$d\mu (E)=\mu(Ed^{-1})$ for any Borel subset $E$
of $X$.
We also define the invariance subgroup for $\mu\in \mathcal P(X)$ by
$$\Lambda (\mu)=\{d\in G : d\mu  =\mu\}.$$
For a unipotent one parameter subgroup $u: K_v \to \bG(K_v)$, $x\in X$
and $\mu\in \mathcal P(X)$, the trajectory
$xU$ is said to be uniformly distributed relative to $\mu$
if for every $f\in C_c(X)$,
$$\lim_{T\to \infty}
\frac{1}{\theta_v(I_T)} \int_{t\in I_T} f(xu(t))\;d\theta(t) =
\int_X f(x)\; d\mu(x)$$
where $I_T=\{t\in K_v: |t|_v<T\}$.

We present a generalization of
the theorem of Mozes and Shah in \cite{MS} in  the $S$-arithmetic setting,
which is the main result of this section:
\begin{Thm}\label{ms}
Let $v\in S$ and $\{U_i\}$
be a sequence of one-parameter unipotent
 subgroups of $\bG(K_v)$.
Let $\{ \mu_i: i\in \n\}$ be a sequence of $U_i$-invariant ergodic measures in
$\Cal P (X)$. Suppose that $\mu_i \to \mu$ in
$\Cal P (X)$ and let $x=\G\ba \G g\in \op{supp}(\mu)$. Then the following holds:
\begin{itemize}
\item[(1)] There exists
a closed subgroup $L \in \mathcal H$ such that $\mu$ is
an invariant measure supported on $\G\ba \G L g$.
In particular
$$\operatorname{supp}(\mu)=x\Lambda(\mu).$$
\item[(2)] Let $z_i \to e$ be a sequence in $G$ such that
 $xz_i\in \op{supp}(\mu_i)$ and the trajectory
$\{xz_iU_i\}$ is uniformly distributed with respect to
 $\mu_i$. Then there exists $i_0$ such that for all $i\ge i_0$,
$$\operatorname{supp}(\mu_i)\subset
\operatorname{supp}(\mu)z_i \quad\text{and}\quad
\Lambda(\mu_i)\subset z_i^{-1}\Lambda(\mu)z_i.$$

\item[(3)] Denote by $H$
the closed subgroup generated by the set $\{z_iU_i{z_i}^{-1}:i\geq
i_0\}$.
Then $H\subset g^{-1}L g$ and $\mu$ is
$H$-ergodic.
\end{itemize}
\end{Thm}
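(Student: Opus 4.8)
Theorem \ref{ms} is the $S$-arithmetic analogue of the theorem of Mozes--Shah \cite{MS}, and the plan is to prove it by the linearization technique of Dani--Margulis \cite{DM2}, with the classification of ergodic measures invariant under unipotent flows over local fields (Ratner \cite{R}, Margulis--Tomanov \cite{MT1}, and its arithmetic refinement by Tomanov \cite{T}) together with the attendant $S$-arithmetic non-divergence estimates as the fundamental input. First I would observe that each $\mu_i$ is algebraic: since $\mu_i$ is $U_i$-invariant and ergodic, the measure classification theorem provides a closed subgroup $L_i$ of $G$ of finite index in $\bP_{i,S}$ for some $K$-subgroup $\bP_i$ in class $\mathcal F$ relative to $S$, such that $\mu_i$ is the $L_i$-invariant homogeneous probability measure carried by a closed orbit $\Gamma\backslash\Gamma L_i h_i$; thus $\bP_i$ is the algebraic hull of $L_i$ and $L_i\in\mathcal H$. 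By Ratner's uniform distribution theorem the set of $y\in\operatorname{supp}(\mu_i)$ whose $U_i$-trajectory equidistributes toward $\mu_i$ is $\mu_i$-conull, and since $\mu_i\to\mu$ and $x\in\operatorname{supp}(\mu)$ it meets every fixed neighbourhood of $x$ for all large $i$; choosing such a point $y=xz_i$ with $z_i\to e$ and $h_i$ with $[h_i]=xz_i$ supplies the data underlying part~(2).

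\textbf{Linearization and the limit group.} Passing to a subsequence, assume $d:=\dim\bP_i$ is independent of $i$. Work in the $S$-arithmetic vector space $V_S:=\prod_{v\in S}\bigwedge^{d}\mathrm{Lie}(\bG)(K_v)$, with the natural $\bGS$-action through the $d$-th exterior power of the adjoint representation, and encode the local structure of $\operatorname{supp}(\mu_i)$ at its base point by the vector $p_i\in V_S$ obtained by wedging a $K$-basis of $\mathrm{Lie}(h_i^{-1}\bP_i h_i)$. Unimodularity of $\bP_i$ makes $p_i$ fixed by the relevant conjugate of $\bP_{i,S}$, its line fixed by the corresponding normalizer, and $\Gamma p_i$ discrete in $V_S$. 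The core of the argument is to locate the homogeneous sets $\operatorname{supp}(\mu_i)$ relative to the Dani--Margulis singular tubes attached to proper class-$\mathcal F$ subgroups: the linearization lemmas, in their $S$-arithmetic form resting on the non-divergence estimates, show that either $\mu$ charges such a tube --- which already forces extra invariance of $\mu$ --- or it does not, in which case one may translate by suitable $\gamma_i\in\Gamma$ and rescale so that $\gamma_i p_i$ converges to a \emph{nonzero} $p\in V_S$. Examining $\operatorname{Stab}_{\bGS}(p)$ produces a $K$-subgroup $\bP$ of $\bG$ in class $\mathcal F$ with $\dim\bP\ge d$ such that, for all large $i$, $\gamma_i\bP_i\gamma_i^{-1}\subseteq\bP$ (upper semicontinuity of the algebraic hull); a bootstrap using the ergodic decomposition of $\mu$ then identifies $\mu$ as the homogeneous measure carried by a closed orbit $\Gamma\backslash\Gamma L g$ for some $L$ of finite index in $\bP_S$ and some $g$ arising as a limit of the $\gamma_i h_i$, so that $[g]=x$. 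That $L\in\mathcal H$ --- i.e.\ that $L_u$ acts ergodically on $\Gamma\backslash\Gamma L g$ --- will follow from the $H$-ergodicity of $\mu$ proved below, since every $L_u$-invariant subset of $\Gamma\backslash\Gamma Lg$ is in particular $H$-invariant; this, together with $\operatorname{supp}(\mu)=x\Lambda(\mu)$, gives part~(1).

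\textbf{Parts (2) and (3).} The inclusion $\gamma_i\bP_i\gamma_i^{-1}\subseteq\bP$ is exactly upper semicontinuity of the hull along the sequence; transporting it back by $h_i$ and $z_i$ (using $[h_i]=xz_i$, $[g]=x$) turns it into $z_i\Lambda(\mu_i)z_i^{-1}\subseteq\Lambda(\mu)$, and since $\operatorname{supp}(\mu_i)=(xz_i)\Lambda(\mu_i)$ and $\operatorname{supp}(\mu)=x\Lambda(\mu)$, into $\operatorname{supp}(\mu_i)\subseteq\operatorname{supp}(\mu)z_i$ for all large $i$; this is part~(2). For part~(3), each $z_iU_iz_i^{-1}\subseteq z_i\Lambda(\mu_i)z_i^{-1}\subseteq\Lambda(\mu)$ is a unipotent one-parameter subgroup, so the group $H$ it generates lies in $\Lambda(\mu)$; tracking the same inclusions through the identification $\operatorname{supp}(\mu)=\Gamma\backslash\Gamma L g$ gives $H\subseteq g^{-1}L g$. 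Finally, the $H$-ergodicity of $\mu$ is obtained from the uniform-distribution hypothesis on the trajectories $xz_iU_i$ together with $\operatorname{supp}(\mu_i)\subseteq\operatorname{supp}(\mu)z_i$: these force the $H$-action on $\operatorname{supp}(\mu)$ to have a dense orbit, and a last appeal to the measure classification theorem upgrades density to ergodicity, since $H$ contains by construction the limiting unipotents along which equidistribution toward $\mu$ was assumed.

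\textbf{Main obstacle.} The genuine difficulty is the linearization step: one must guarantee that the vectors $\gamma_i p_i$ neither escape to infinity nor degenerate onto a lower-dimensional homogeneous subset, so that the limit $p$ is nonzero and of the correct dimension $\ge d$. This is precisely where the $S$-arithmetic non-divergence estimates and the careful bookkeeping of the Dani--Margulis tubes are indispensable, and it is the part of the argument that genuinely goes beyond the classical real case; a secondary subtlety is the pair of ergodicity assertions, which must be obtained by feeding the structural conclusion back into the measure classification theorem rather than read off directly.
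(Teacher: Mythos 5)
Your proposal correctly identifies the broad framework (Mozes--Shah generalized via Tomanov's $S$-arithmetic measure classification and Dani--Margulis linearization), but it deviates from the paper's proof in ways that create genuine gaps. The paper's proof of Theorem~\ref{ms} does \emph{not} proceed by tracking the sequence of algebraic hulls $\bP_i$ of the $\mu_i$ and showing the vectors $p_i$ converge after $\Gamma$-translation. Instead it begins with the crucial observation (from \cite[Lemma~2.2]{MS}) that $W:=\Lambda(\mu)_u$ is \emph{nontrivial}, i.e.\ the \emph{limit} measure already carries some unipotent invariance. This is what allows an application of the decomposition Theorem~\ref{hstar} to $\mu$ itself: one finds a single $L\in\mathcal H$ with $\mu(\pi(\mathcal{N}(L,W)))>0$ and $\mu(\pi(\mathcal{S}(L,W)))=0$, picks a compact $C_1\subset\mathcal N(L,W)\setminus\mathcal S(L,W)$ of positive $\mu$-measure, and then uses Proposition~\ref{3.4} (applied to the trajectories $yu_i'(t)$, which must spend a positive fraction of time in the tube because $\mu_i'\to\mu$) to produce a vector $w=p_L(g_0)$ fixed by all $u_i'(t)$. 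Your dichotomy (``either $\mu$ charges a tube, or one translates and rescales so that $\gamma_ip_i\to p\neq0$'') is not justified by what you have written: without the observation $W\neq 1$ there is no mechanism forcing $\mu$ to charge a tube attached to a class-$\mathcal F$ subgroup, and the non-divergence of the $p_i$'s (as opposed to non-escape of mass of the $\mu_i$) requires an argument of its own. The paper sidesteps this by linearizing relative to the single $L$ built from the limit measure, not the varying hulls.

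Two further steps you omit are load-bearing. First, after finding $\bM:=\mathrm{Stab}_\bG(wg_0^{-1})$ the paper runs an \emph{induction on $\dim\bG$}: if $\dim\bM<\dim\bG$ one applies the inductive hypothesis on $\Gamma\ba\Gamma\bM_S$; if $\dim\bM=\dim\bG$ then $\mathrm{MT}(L)$ is a normal $K$-subgroup and one passes to a quotient modulo a normal $L_0$ and applies the inductive hypothesis there. Your ``bootstrap using the ergodic decomposition of $\mu$'' does not address this case distinction and is too vague to substitute for it. Second, the $H$-ergodicity of $\mu$ in part~(3) is not obtained by ``upgrading density to ergodicity via measure classification.'' Density of one $H$-orbit in $\mathrm{supp}(\mu)$ does not by itself yield ergodicity of the invariant measure; the paper proves it via the $S$-arithmetic Mautner phenomenon (Propositions~\ref{mauc} and~\ref{Mautner}), taking a normal subgroup $M\supset H$ with the Mautner property and using that $M\cap L$ acts ergodically. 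You should make all three of these ingredients explicit before the argument can be considered complete.
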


We state some corollaries of the above theorem \ref{ms}, as in \cite{MS}.
Let
$Q(X)$ denote the set $\Cal P(X)$ of probability measures $\mu$ on $X$
such that the group generated by all unipotent one-parameter
subgroups of $G$ contained in $\Lambda(\mu)$ acts ergodically on
$X$ with respect to $\mu$.
The following is an immediate consequence of the above theorem:
\begin{Cor}
\begin{enumerate}
\item $Q(X)$ is a closed subset of $\Cal P(X)$.
\item For $x\in X$,
$Q(x):=\{\mu\in Q(X): x\in \text{supp}(\mu)\}$ is a
closed subset of $\Cal P(X)$.

\end{enumerate}
\end{Cor}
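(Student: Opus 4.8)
The plan is to obtain both assertions directly from Theorem~\ref{ms}.

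\textbf{Proof of (1).} I would take $\{\mu_i\}\subset Q(X)$ with $\mu_i\to\mu$ in $\mathcal P(X)$ and show $\mu\in Q(X)$. First, since each $\mu_i$ is ergodic under the group generated by the unipotent one-parameter subgroups of $G$ contained in $\Lambda(\mu_i)$, the measure-classification theorem (Ratner, Margulis--Tomanov, Tomanov) shows $\mu_i$ is homogeneous, hence ergodic under a single one-parameter unipotent subgroup $U_i\subseteq\Lambda(\mu_i)$; as $S$ is finite, after passing to a subsequence I may assume all $U_i$ lie in $\bG(K_v)$ for one fixed $v\in S$. Next, fixing $x=\Gamma\backslash\Gamma g\in\operatorname{supp}(\mu)$, I would produce a sequence $z_i\to e$ in $G$ with $xz_i\in\operatorname{supp}(\mu_i)$ and $\{xz_iU_i\}$ uniformly distributed with respect to $\mu_i$: such points exist because $\mu_i\to\mu$ forces $\operatorname{supp}(\mu_i)$ to meet every neighbourhood of $x$ eventually, while the ergodic theorem makes the uniformly distributed points conull, hence dense, in $\operatorname{supp}(\mu_i)$. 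Then I apply Theorem~\ref{ms}: part~(1) gives $L\in\mathcal H$ with $\mu$ the $L$-invariant measure on $\Gamma\backslash\Gamma Lg$; part~(2) gives, for large $i$, $\Lambda(\mu_i)\subseteq z_i^{-1}\Lambda(\mu)z_i$, so each $z_iU_iz_i^{-1}$ is a unipotent one-parameter subgroup of $G$ contained in $\Lambda(\mu)$; and part~(3) says the group $H$ generated by $\{z_iU_iz_i^{-1}\}$ is $\mu$-ergodic. Since the group generated by \emph{all} unipotent one-parameter subgroups of $G$ inside $\Lambda(\mu)$ contains $H$ and preserves $\mu$, it too acts ergodically, so $\mu\in Q(X)$.

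\textbf{Proof of (2).} Given $\{\mu_i\}\subset Q(x)$ with $\mu_i\to\mu$, part~(1) already yields $\mu\in Q(X)$, so it remains to check $x\in\operatorname{supp}(\mu)$. Here one cannot simply invoke continuity of the support, which fails for the weak$^*$ topology; instead I would pick any $x_0\in\operatorname{supp}(\mu)$, build $z_i\to e$ with $x_0z_i\in\operatorname{supp}(\mu_i)$ uniformly distributed with respect to $\mu_i$ exactly as in (1), and apply Theorem~\ref{ms}(2) to get $i_0$ with $\operatorname{supp}(\mu_i)\subseteq\operatorname{supp}(\mu)z_i$ for all $i\ge i_0$. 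Since $x\in\operatorname{supp}(\mu_i)$, this gives $xz_i^{-1}\in\operatorname{supp}(\mu)$; letting $i\to\infty$ and using $z_i\to e$ together with closedness of $\operatorname{supp}(\mu)$ yields $x\in\operatorname{supp}(\mu)$, i.e.\ $\mu\in Q(x)$.

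\textbf{Where the work lies.} The content is entirely in Theorem~\ref{ms}; the points requiring care are (a) the reduction, for $\mu_i\in Q(X)$, from ergodicity under the group generated by the unipotents in $\Lambda(\mu_i)$ to ergodicity under a single one-parameter unipotent subgroup defined over a \emph{fixed} place $v\in S$, which rests on the measure-classification theorem and the finiteness of $S$; and (b) the construction of the auxiliary sequences $z_i$ with the required uniform-distribution property, via the ergodic theorem, which is precisely what allows one to invoke the rigid inclusions $\operatorname{supp}(\mu_i)\subseteq\operatorname{supp}(\mu)z_i$ and $\Lambda(\mu_i)\subseteq z_i^{-1}\Lambda(\mu)z_i$ of Theorem~\ref{ms} in place of the false naive statements about continuity of support and of invariance groups. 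I expect (b), i.e.\ setting up the hypotheses of Theorem~\ref{ms} correctly, to be the main obstacle.
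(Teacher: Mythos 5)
Your strategy --- applying Theorem~\ref{ms} to a convergent sequence $\mu_i\to\mu$, using part~(2) to obtain $\Lambda(\mu_i)\subset z_i^{-1}\Lambda(\mu)z_i$ and $\operatorname{supp}(\mu_i)\subset\operatorname{supp}(\mu)z_i$, and part~(3) to get ergodicity of $\mu$ under the group generated by $\{z_iU_iz_i^{-1}\}$ --- is exactly what the paper intends when it calls the corollary an ``immediate consequence'' of Theorem~\ref{ms}, and both endgames (including your careful circumvention of the failure of weak$^*$-continuity of the support in (2)) are sound.

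There is, however, an unjustified step right at the start: the claim that since $\mu_i\in Q(X)$ is homogeneous, it is ``hence ergodic under a single one-parameter unipotent subgroup $U_i\subset\Lambda(\mu_i)$,'' which you moreover need to lie in $\bG(K_v)$ for one fixed place $v$. Homogeneity via Theorem~\ref{tom} gives that $\mu_i$ is the invariant measure on $\G\ba\G L_ig_i$ with $L_i\in\mathcal H$, and membership in $\mathcal H$ only requires that $(L_i)_u$ --- generated by \emph{all} one-parameter unipotents of $L_i$, possibly living at several different places of $S$ --- act ergodically. It does not follow that a single one-parameter unipotent at a single place is ergodic. For example, if $\MT(L_i)=\bP_1\times\bP_2$ with $\bP_j$ a $K$-simple factor isotropic within $S$ only at a place $v_j$, $v_1\neq v_2$, then $\G\cap L_i$ is commensurably a product lattice, $(L_i)_u=\bP_1(K_{v_1})^+\times\bP_2(K_{v_2})^+$ acts ergodically on the product, but any one-parameter unipotent $U$ at a single place sits entirely in one factor, its Mautner hull (Prop.~\ref{Mautner}) stays in that factor, and $U$ leaves invariant every function pulled back from the other factor --- so it is never ergodic, and the hypotheses of Theorem~\ref{ms} cannot be satisfied for such $\mu_i$. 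Passing to a subsequence to fix $v$ only helps once such $U_i$ exist, which is the actual content missing. To repair this one needs either a multi-place version of Theorem~\ref{ms} (averages over boxes $\prod_{v\in S_0} I_T^{(v)}$, with $\mu_i$ ergodic under a tuple of one-parameter unipotents at places in some $S_0\subset S$), or an additional argument handling the case where no single $v\in S$ is strongly isotropic for $\MT(L_i)$. The paper does not spell this out either, so the gap is inherited from its terseness rather than introduced by you --- but the ``hence'' in your first step is doing more work than measure classification alone supports.
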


Let $X\cup\{\infty\}$ denote the one-point compactification of $X$.
As well known, $\Cal P(X\cup\{\infty\})$
is compact with respect to the weak$^*$-topology.

Combined with a theorem proved by Kleinbock and Tomanov
(see Theorem \ref{kt}),
we can also deduce:
\begin{Cor}
\begin{enumerate}
\item Let $\{\mu_i\}\in Q(X)$ be a sequence of measures
converging weakly to a measure $\mu\in \Cal P(X\cup\{\infty\})$.
Then either $\mu\in Q(X)$ or $\mu(\{\infty\})=1$.
\item For $x\in X$,
$Q(x)$ is compact with respect to the
weak$^*$-topology.

\end{enumerate}
\end{Cor}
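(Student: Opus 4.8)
The plan is to deduce both parts from the classification Theorem~\ref{ms}, fed by the uniform non-divergence of unipotent flows of Kleinbock and Tomanov (Theorem~\ref{kt}), after analysing the measures in $Q(X)$. The preliminary point is that a measure $\nu\in Q(X)$ is of exactly one of two kinds: either $\Lambda(\nu)$ contains no one-parameter unipotent subgroup of $G$, in which case the group they generate is trivial and $\nu$ must be a point mass $\delta_y$; or $\Lambda(\nu)$ contains a nontrivial one, and then by the Ratner--Margulis--Tomanov measure classification (\cite{R}, \cite{MT1}, \cite{T}) $\nu$ is homogeneous, carried by a closed orbit $\G\ba\G Lh$ with $L\in\mathcal H$. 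In the second case I would reduce, passing if needed to ergodic components under a single one-parameter unipotent $U\subset\bG(K_v)$, $v\in S$ (the components are again homogeneous $Q(X)$-measures, carried by closed orbits), to a sequence of $U_i$-invariant ergodic measures to which Theorem~\ref{ms} applies directly.

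For part~(1): let $\mu_i\in Q(X)$, $\mu_i\to\mu$ in $\mathcal P(X\cup\{\infty\})$, and assume $\mu(\{\infty\})<1$. If infinitely many $\mu_i$ are point masses, then along that subsequence a weak-$*$ limit of point masses is again a point mass, and it lies in $X$ because $\mu\ne\delta_\infty$; hence $\mu\in Q(X)$. Otherwise, discarding finitely many indices and passing to a subsequence (using $|S|<\infty$), I may take each $\mu_i$ to be $U_i$-ergodic with all $U_i$ inside a fixed $\bG(K_v)$. The crucial step is tightness: decomposing each $\mu_i$ along its $U_i$-ergodic components and applying Theorem~\ref{kt} with the pointwise ergodic theorem to a generic trajectory produces, for every $\e>0$, a compact $C_\e\subset X$ with $\mu_i(C_\e)\ge 1-\e$ for all $i$; so $\{\mu_i\}$ is uniformly tight, $\mu(\{\infty\})=0$, and $\mu_i\to\mu$ in $\mathcal P(X)$. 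Then Theorem~\ref{ms}(1) writes $\mu$ as the invariant measure of a closed orbit $\G\ba\G Lg$ with $L\in\mathcal H$; since $g^{-1}L_ug\subset\Lambda(\mu)$ is generated by one-parameter unipotents of $G$ and acts ergodically on $(X,\mu)$, we conclude $\mu\in Q(X)$.

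For part~(2): given $\mu_i\in Q(x)$, so $x\in\op{supp}(\mu_i)$, pass to a weak-$*$ convergent subsequence $\mu_i\to\mu$ in $\mathcal P(X\cup\{\infty\})$. If infinitely many $\mu_i$ are point masses, each is $\delta_x$ and a constant subsequence converges to $\delta_x\in Q(x)$. Otherwise the tightness estimate of part~(1) gives $\mu(\{\infty\})=0$, hence $\mu\in Q(X)$ with $\op{supp}(\mu)$ a closed orbit. To check $x\in\op{supp}(\mu)$, pick $x^{*}\in\op{supp}(\mu)$; weak convergence together with density of generic points in the supports yields $z_i\to e$ in $G$ with $x^{*}z_i\in\op{supp}(\mu_i)$ and $x^{*}z_iU_i$ uniformly distributed for $\mu_i$, so Theorem~\ref{ms}(2) gives $\op{supp}(\mu_i)\subset\op{supp}(\mu)z_i$ for all large $i$; since $x\in\op{supp}(\mu_i)$ this forces $xz_i^{-1}\in\op{supp}(\mu)$, and letting $i\to\infty$ (and using that $\op{supp}(\mu)$ is closed) gives $x\in\op{supp}(\mu)$. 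Hence every sequence in $Q(x)$ has a subsequence converging, in the metrizable space $\mathcal P(X\cup\{\infty\})$, to a point of $Q(x)\subset\mathcal P(X)$, so $Q(x)$ is compact.

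The hardest point is not algebraic but the control of mass at infinity: the convergence $\mu_i\to\mu$ is a priori only in $\mathcal P(X\cup\{\infty\})$, and Theorem~\ref{kt} is precisely what turns ``some mass survives'' into ``no mass escapes,'' reducing to the hypothesis of Theorem~\ref{ms}; the only members of $Q(X)$ whose mass can escape entirely are the point masses, which is the origin of the alternative $\mu(\{\infty\})=1$. A secondary technical point is the reduction from ergodicity under a unipotent-generated subgroup (which may involve several places) to ergodicity under a single one-parameter unipotent, handled through the measure classification and ergodic components. All the substantive work lies in Theorem~\ref{ms}, established in the remaining two sections.
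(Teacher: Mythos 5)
Your overall strategy is exactly what the paper has in mind: feed Theorem~\ref{ms} (via the preceding corollary that $Q(X)$ is closed in $\mathcal P(X)$) with the nondivergence estimate of Theorem~\ref{kt}, and treat point masses separately. But there is a genuine gap in the step ``I may take each $\mu_i$ to be $U_i$-ergodic.'' A measure $\mu\in Q(X)$ with nontrivial $W=\Lambda(\mu)_u$ is, by Tomanov's classification, the homogeneous measure on an orbit $\G\ba\G Lg$ with $L\in\mathcal H$, but it need \emph{not} be ergodic under \emph{any} single one-parameter unipotent subgroup of some $\bG(K_v)$: if $\bP\in\mathcal F$ is a product of two $K$-simple factors, one isotropic only at $v_1$ and the other only at $v_2\ne v_1$, then the lattice $\G\cap\bP_S$ is (commensurable with) a product lattice, and any one-parameter unipotent of $\bG(K_v)$ lives in a single factor and cannot act ergodically on the product. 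Your proposed fix --- passing to the $U_i$-ergodic components of $\mu_i$ --- does not repair this, because the ergodic components of $\mu_i$ are a \emph{family} of measures and do not themselves form a sequence converging to $\mu$; one must either upgrade Theorem~\ref{ms} to a statement about $U_i$-\emph{invariant} measures, or push the disintegration through the limit (e.g.\ working in $\mathcal P(\mathcal P(X\cup\{\infty\}))$ and then arguing that $\mu$, as an integral of homogeneous measures, is itself homogeneous --- which needs a further argument). As written, the crucial uniform tightness $\mu_i(C_\e)\ge 1-\e$ is only justified for $U_i$-\emph{ergodic} $\mu_i$; for merely $U_i$-invariant $\mu_i$ one only gets $\mu_i(C')\ge(1-\e)\mu_i(C)$, which does not rule out $0<\mu(\{\infty\})<1$.

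A second, smaller point: in part~(2) you invoke ``the tightness estimate of part~(1)'' to get $\mu(\{\infty\})=0$. But that estimate was derived under the hypothesis $\mu(\{\infty\})<1$, which in part~(2) you have not yet established, so the argument is circular as phrased. The fix is easy but should be made explicit: here you should use the constraint $x\in\op{supp}(\mu_i)$ directly. Fixing a compact neighborhood $C$ of $x$, for each $i$ there is a $\mu_i$-generic point $y_i\in C$ (generic points being dense in $\op{supp}(\mu_i)$), whose trajectory meets $C$ at time $0$; Theorem~\ref{kt} then yields $\mu_i(C')\ge 1-\e$ unconditionally (again modulo the $U_i$-ergodicity issue above), giving $\mu(\{\infty\})=0$ directly rather than through part~(1). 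Apart from these two points, the application of Theorem~\ref{ms}(2) to propagate $x\in\op{supp}(\mu_i)$ into $x\in\op{supp}(\mu)$ is the right argument.
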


\subsection{ Deduction of Theorem
\ref{ssm} (2) from Theorem \ref{ms}}\label{ded:ssm}
 We will now deduce Theorem \ref{ssm} (2) from Theorem \ref{ms}.
\begin{Lem}\label{l:ergodic}
Let $\bL$ be a connected semisimple $K$-subgroup of $\bG$.
If $S$ is strongly isotropic for $\bL$, then
 there exists a one-parameter unipotent subgroup $U=\{u(t)\}$
of $\tilde \bL_S$ which acts ergodically on  $ \G\ba \G \pi({\tilde \bL}_S)$.
 \end{Lem}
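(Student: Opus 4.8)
The plan is to realize the ergodic unipotent inside a single local factor. Write $H:=\pi(\tilde\bL_S)=\prod_{v\in S}\pi(\tilde\bL(K_v))$ and $\Lambda:=\G\cap H$, so that $\G\ba\G\pi(\tilde\bL_S)$ is identified with $\Lambda\ba H$, where $\Lambda$ is an $S$-arithmetic lattice in $H$, commensurable with $\pi(\tilde\bL(\mathcal O_S))$. Since $S$ is strongly isotropic for $\bL$, fix $v_0\in S$ with the property that every $K_{v_0}$-normal subgroup of $\bL$ is $K_{v_0}$-isotropic; in particular every almost $K_{v_0}$-simple factor of $\tilde\bL$ is $K_{v_0}$-isotropic, and hence $H_0:=\pi(\tilde\bL(K_{v_0}))$ is a semisimple group over $K_{v_0}$ with no anisotropic almost simple factor. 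In each almost $K_{v_0}$-simple factor of $\tilde\bL$ choose a non-trivial one-parameter unipotent $K_{v_0}$-subgroup (e.g.\ a root group attached to a proper $K_{v_0}$-parabolic, which exists by isotropy), and let $u\colon\mathbb{G}_a\to\tilde\bL$ be their product; it is a one-parameter unipotent $K_{v_0}$-subgroup whose projection to each almost $K_{v_0}$-simple factor is non-trivial. Set $U:=u(K_{v_0})\subset\tilde\bL(K_{v_0})\subset\tilde\bL_S$; it acts on $\Lambda\ba H$ through $\pi$, and since $\pi$ is injective on unipotent subgroups this is the action of the one-parameter unipotent subgroup $\pi(U)\subset H_0$.

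Next I would bring in strong approximation. Because $v_0$ is isotropic for $\tilde\bL$, Theorem \ref{sta} gives that $\tilde\bL(\mathcal O_S)$ is dense in $\tilde\bL_{S\setminus\{v_0\}}$; applying $\pi$ and using that $\Lambda$ is commensurable with $\pi(\tilde\bL(\mathcal O_S))$, the projection of $\Lambda$ to $H':=\pi(\tilde\bL_{S\setminus\{v_0\}})$ is dense. In terms of the product decomposition $H=H_0\times H'$ this says exactly that $\Lambda H_0$ is dense in $H$, i.e.\ that the union of the $\pi(H_0)$-orbits through the base point of $\Lambda\ba H$ is dense.

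With $U$ and this density at hand, ergodicity is a consequence of the Mautner phenomenon. Let $f\in L^2(\Lambda\ba H)$ be $U$-invariant. Since $\pi(U)$ projects non-trivially to every almost $K_{v_0}$-simple factor of $H_0$, the Mautner phenomenon (in the $S$-arithmetic/local-field form) forces $f$ to be invariant under all of $H_0$. Viewing $f$ as a function on $H=H_0\times H'$ that is left $\Lambda$-invariant and right $H_0$-invariant, $f$ depends only on the $H'$-coordinate, say $f(h_0,h')=\bar f(h')$, and $\bar f$ is then invariant under left translation by the (dense) projection of $\Lambda$ to $H'$; by continuity of the regular representation of $H'$, $\bar f$ is left $H'$-invariant, hence almost everywhere constant. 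Thus $f$ is constant, so $U$ acts ergodically on $\G\ba\G\pi(\tilde\bL_S)$.

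The step I expect to be the real point is the construction of $U$ in the first paragraph: it is the \emph{strong} isotropy hypothesis (not merely isotropy of $S$) that makes it possible to find a single one-parameter unipotent subgroup of $\tilde\bL_S$, supported at one place $v_0$, whose image meets every almost simple factor of $H_0$ — and meeting every factor is precisely what is needed for the Mautner phenomenon to upgrade $U$-invariance to full $H_0$-invariance. The remaining ingredients — the Mautner phenomenon over local fields, and the verification that the projection of the finite-index subgroup $\Lambda$ to $H'$ is still dense (for which one uses, besides Theorem \ref{sta}, that the relevant local factors of the simply connected group $\tilde\bL$ carry no proper open finite-index subgroups) — are standard and I would only indicate them.
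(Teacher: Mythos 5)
Your proof is correct and relies on exactly the same ingredients as the paper's: a strongly isotropic place $v_0$, strong approximation (Theorem~\ref{sta}) to make the arithmetic subgroup dense in $\tilde\bL_{S\setminus\{v_0\}}$, and the $S$-arithmetic Mautner phenomenon (Propositions~\ref{mauc}, \ref{Mautner}) to bootstrap $U$-invariance to $\tilde\bL(K_{v_0})$-invariance, with $U$ a unipotent one-parameter subgroup at $v_0$ chosen to project non-trivially to every almost $K_{v_0}$-simple factor (precisely what \emph{strong} isotropy makes possible). The paper simply runs these steps in the opposite order — first deducing ergodicity of $\tilde\bL(K_{v_0})$ from the density statement via Moore duality, then promoting $U$ to all of $\tilde\bL(K_{v_0})$ by Mautner — so this is the same argument reorganized; the only slip is your closing parenthetical, which appeals to the absence of proper open finite-index subgroups in the local factors at places of $S\setminus\{v_0\}$ (which can fail if $\tilde\bL$ has an anisotropic factor there), whereas the density of the commensurable finite-index lattice really follows from applying strong approximation to the open set $V\times W_S$ with the defining compact-open subgroup $W_S$ built in.
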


\begin{proof} Let $v\in S$ be strongly isotropic for $\bL$.
Denote by
 $\mathbf L(K_v)^+$ the subgroup
generated by all unipotent one-parameter
subgroups in $\mathbf L(K_v)$.
Then by \cite{BorT}, $$\pi(\tilde\bL(K_v))=\bL(K_v)^+.$$

First, we show that $\bL(K_{v})^+$
acts ergodically on $\Gamma\ba \G \tilde \bL_S$.
Since $\tilde{\bL}$ satisfies the strong approximation property
with respect to $\{v\}$
and  $\pi^{-1}(\G)\cap  \tilde \bL_S$ is an $S$-arithmetic
subgroup of $\tilde\bL_S$, it follows that
the diagonal embedding of $\pi^{-1}(\G)\cap \tilde \bL_S$
is dense in $\prod_{v\in S\setminus \{v\}} \tilde\bL(K_v)$ by Theorem
\ref{sta}.
This implies that $(\G\cap \pi(\tilde \bL_S))\bL(K_{v})^+$ is dense
in $\pi(\tilde \bL_S)$.

Since $\tilde \bL(K_{v})$ is a normal subgroup
of $\tilde \bL_S$, this implies that
$\pi^{-1}(\G)\cap \tilde \bL_S $ acts ergodically on
$\tilde \bL(K_{v}) \ba \tilde \bL_S$.
By the duality, this implies that $\tilde \bL(K_{v})$
acts ergodically on $\pi^{-1}(\Gamma)\cap \tilde \bL_S \ba \tilde \bL_S$,
and hence on $\Gamma\ba\G \tilde \bL_S$.

Since every $K_{v}$-simple factor of $\bL$ is $K_{v}$-isotropic, there
exists a unipotent one-parameter subgroup $U$ of $\tilde\bL(K_v)$
such that $\tilde\bL(K_v)$ is the smallest normal subgroup containing
$U$. Now by the $S$-algebraic version of Mautner
phenomenon (Proposition \ref{Mautner2}), any $U$-invariant function in
$L^2(\G\ba \G \tilde \bL_S)$ is $\tilde\bL(K_v)$-invariant, and consequently
a constant function.
Hence the ergodicity of the $U$-action $\Gamma\ba
\G \tilde \bL_S$ follows.
\end{proof}

\noindent{\bf Proof of Theorem \ref{ssm} (2):} Fix $v\in S$ which
 is strongly isotropic for all $\bL_i$.
It follows from Lemma \ref{l:ergodic} that measures $\nu_i$ is ergodic
with respect to one-parameter unipotent subgroups $U_i':=g_i^{-1}U_i g_i$, where
 $U_i=\{u_i(t)\}\subset \bL_i(K_v)^+$ is as in Lemma \ref{l:ergodic}.
Hence, we may apply Theorem \ref{ms}(1) to conclude that
$\nu$ is an invariant measure supported on $\G\ba \G M g$
for some closed subgroup $M \in \mathcal H$ and $g\in \bGS$. In particular, $M$
is a finite index subgroup in $\bM_S$ where $\bM$ is the Mumford-Tate subgroup of
$M$ which is in class $\mathcal{F}$ (see Def. \ref{mumford}).
By a pointwise ergodicity theorem,
there exists
a sequence $z_i=g^{-1} \gamma_i h_i g_i \to e$
for some $\gamma_i\in \Gamma$ and $h_i\in \tilde \bL_{i}(K_v)$
and the trajectory $\G\ba \G g z_i U_i'$ is uniformly distributed with respect to
$g_i\nu_i$.
By
Theorem \ref{ms}(2), we have
 $$g_i^{-1} \tilde \bL_i g_i\subset z_i^{-1} (g^{-1}\bM g) z_i$$
for all large $i$. This implies that
$$\gamma_i \tilde \bL_i  \gamma_i^{-1} \subset \bM$$
as well as that $\gamma_i h_i g_i$ converges to $g$ as $i$ tends to infinity.
Now for (d), suppose that the centralizers of $\bL_i$ are $K$-anisotropic.
Since $\bM$ is reductive by Lemma \ref{aniso} and it belongs to class
$\mathcal F$ with respect to $S$, $\bM$ is semisimple.


\subsection{Measures invariant under unipotent flows}
The crucial ingredient in our proof of Theorem \ref{ms} is a fundamental theorem
of Ratner \cite{R}
on the  classification
of the measures in $\mathcal{P}(X)$ which are ergodic with respect to
unipotent subgroups of $G$.
In the $S$-arithmetic case,  also see \cite{MT1}.

We will use the following more precise description
due to
Tomanov:
\begin{Thm} \cite[Theorem 2]{T}\label{tom}
Let $W$ be a subgroup of
$G$ generated by unipotent one-parameter subgroups.
\begin{enumerate}
\item For any $W$-invariant ergodic probability measure $\mu$
on $X$, there exist a subgroup
$L\in \mathcal{H}$ and $g\in G$ such that
$W\subset g^{-1}Lg$ and $\mu$ is the invariant measure supported
on $\G\ba \G Lg$.
\item For every $g\in G$,
there exists a closed subgroup $L\in\mathcal{H}$ such that
 $W\subset g^{-1}Lg$ and
  $$\overline{\G\ba\G g W}=\G\ba\G Lg .$$
\end{enumerate}
\end{Thm}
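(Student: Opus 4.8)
The plan is to treat both statements as the measure-classification and orbit-closure package for unipotent flows in the $S$-arithmetic setting, reducing them to the rigidity theorems of Ratner \cite{R} and Margulis--Tomanov \cite{MT1} and then carrying out Tomanov's refinement that pins down the $K$-rational structure of the stabilizer. For part (1), I would first replace $W$ by a single one-parameter unipotent subgroup $U=\{u(t)\}\subseteq W$ and pass to the $U$-ergodic decomposition of $\mu$. By Ratner's theorem at the archimedean places together with its $S$-arithmetic extension due to Margulis--Tomanov, almost every $U$-ergodic component is algebraic, i.e.\ is the invariant measure on a closed orbit $\G\ba\G Hg'$ where $H$ is, up to finite index, the group of $S$-points of an algebraic subgroup. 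I would then exploit the $W$-ergodicity of $\mu$ and the polynomial-divergence (\emph{shearing}) property of unipotent orbits to glue these components: generically the component stabilizers are conjugate to one fixed group, and the components assemble along a single closed orbit $\G\ba\G Lg$ with $W\subseteq g^{-1}Lg$.

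The remaining, and most delicate, step of part (1) is Tomanov's: one must show that $L$ may be chosen of finite index in $\bP_S$ for a connected $K$-subgroup $\bP$ of $\bG$, with $\bP$ in class $\mathcal F$ relative to $S$. This uses the Borel density theorem applied to the lattice $\G\cap L$ inside $L$, the fact that $\G$ is commensurable with $\bG(\mathcal O_S)$ (so that algebraic groups carrying lattices coming from $\G$ are $K$-defined), and the characterization of class $\mathcal F$ in terms of unipotents in the quotients $\bP/\bQ$. Once $\bP\in\mathcal F$ is in hand, the ergodicity of $L_u$ on $\G\ba\G L$ required by the definition of class $\mathcal H$ is automatic: $W$ is generated by unipotents, so $gWg^{-1}\subseteq L_u$, and $\mu$ is $W$-ergodic, hence $L_u$-ergodic. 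This yields $L\in\mathcal H$ and the equality $\operatorname{supp}(\mu)=x\Lambda(\mu)$.

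For part (2), the plan is to deduce the orbit-closure statement from part (1) by the linearization method of Dani--Margulis \cite{DM2}. Fix $g\in G$, set $Y:=\overline{\G\ba\G gW}$, choose a one-parameter unipotent $U\subseteq W$, and average the Dirac masses along the trajectory $\{\G\ba\G g u(t):|t|_v<T\}$; the $S$-arithmetic non-divergence estimates of Kleinbock--Tomanov (Theorem \ref{kt}) guarantee that no mass escapes, so a weak-$*$ limit $\mu$ is a $U$-invariant probability measure with $\G\ba\G g\in\operatorname{supp}(\mu)\subseteq Y$; enlarging the averaging from $U$ to all of $W$ makes $\mu$ in fact $W$-invariant. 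Decomposing $\mu$ into $W$-ergodic components and applying part (1), one gets that $\mu$ is supported on finitely many closed orbits $\G\ba\G L_jg_j$ with $L_j\in\mathcal H$ and $W\subseteq g_j^{-1}L_jg_j$; the component whose support contains $\G\ba\G g$ produces an $L\in\mathcal H$ with $W\subseteq g^{-1}Lg$, and since $\G\ba\G Lg$ is closed and contains $\G\ba\G gW$ this already gives $Y\subseteq\G\ba\G Lg$. For the reverse inclusion I would run the linearization dynamics in the exterior power $\bigwedge^{\dim L}\Lie(\bG)$: on the associated affine charts a unipotent trajectory behaves polynomially, so either $\G\ba\G gW$ is dense in $\G\ba\G Lg$ or its closure is trapped in one of finitely many proper sub-orbits, against which one iterates and lowers the dimension; choosing $L$ minimal among the subgroups in $\mathcal H$ with $W\subseteq g^{-1}Lg$ then forces $Y=\G\ba\G Lg$.

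The main obstacle is the rigidity input itself --- the Ratner / Margulis--Tomanov measure-classification theorem --- which I am treating as a black box. Within the argument as organized above, the genuinely technical point in part (1) is Tomanov's descent of the stabilizer to (finite index in) the $S$-points of a $K$-group in class $\mathcal F$, which forces one to track the $K$-rational structure and its compatibility with strong approximation, and in part (2) the bookkeeping of the finitely many exceptional sub-orbits in the linearization, together with the verification that any orbit escaping every proper homogeneous subspace must equidistribute.
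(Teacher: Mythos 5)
The paper does not prove this theorem. It is quoted as \cite[Theorem 2]{T} (Tomanov), and the only commentary the authors add is the one-sentence remark immediately afterwards that two hypotheses in Tomanov's formulation, namely that $S$ contains all archimedean places and that $G=\bG_S$, are not used in Tomanov's proof and may therefore be dropped. Your proposal thus attempts something the paper deliberately outsources: a from-scratch reproof of Tomanov's refinement of Ratner and Margulis--Tomanov rigidity in the $S$-arithmetic setting.

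Taken on its own terms, your outline correctly identifies the inputs (measure rigidity, Tomanov's descent of the stabilizer to finite index in $\bP_S$ with $\bP$ a $K$-group in class $\mathcal F$, and Dani--Margulis linearization for the orbit-closure half), but two steps as written are not sound. For part (1), the detour through the $U$-ergodic decomposition is unnecessary and the ``shearing glues the components'' step is itself a nontrivial assertion, not an argument: the classification theorems of Ratner \cite{R} and Margulis--Tomanov \cite{MT1} are already stated for probability measures ergodic under a subgroup $W$ generated by one-parameter unipotents, so you can apply them once to $W$ and then run Tomanov's $K$-rational descent, and this is what Tomanov does. For part (2), two claims fail as stated. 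First, a weak-$*$ limit of averages along a one-parameter $U\subset W$ is only $U$-invariant; ``enlarging the averaging to $W$'' is not automatic and needs to be justified (in the standard treatments one instead works with $U$-invariant limit measures together with linearization, rather than trying to manufacture a $W$-invariant one). Second, the assertion that some ergodic component of the limit measure has $\G\ba\G g$ in its support is false in general: a point in the support of a measure need not lie in the support of any single ergodic component. The salvage you gesture at in the last sentences --- take $L$ minimal in $\mathcal H$ with $W\subset g^{-1}Lg$ and $\overline{\G\ba\G gW}\subset\G\ba\G Lg$, and use linearization and a dimension descent to force density --- is in fact the structure of Tomanov's actual argument and avoids the faulty ergodic-component step; so the gap is in the details rather than in the overall strategy, but as written the bridge between the measure classification and the orbit closure is not complete.
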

Although it is assumed in \cite{T} that
$S$ contains all archimedean valuations of $K$ and
$G=\bGS$, these assumptions are not used in the proof.





\begin{Lem}\label{zdc}\label{zd}
 For $P, Q\in \mathcal H$,
we have $\MT(P)\subset\MT(Q)$ if and only if
$\G\ba \G P\subset\G\ba \G Q$.
\end{Lem}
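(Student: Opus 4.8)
The plan is to reduce the statement to the equivalence $\bP\subseteq\bQ\iff\G\ba\G P\subseteq\G\ba\G Q$. Here $\bP\in\mathcal F$ is the $K$-group of which $P$ is a finite-index subgroup of $\bP_S$, and similarly $\bQ$ for $Q$; since a finite-index subgroup of the $K_v$-points of a connected $K_v$-group is Zariski dense in it, the Zariski closure of $P$ in $\bG_S$ is $\prod_{v\in S}\bP_{K_v}$, so that $\MT(P)=\bP$ and $\MT(Q)=\bQ$, and the equivalence above is exactly the assertion of the lemma.

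For the implication $\G\ba\G P\subseteq\G\ba\G Q\Rightarrow\bP\subseteq\bQ$ I would argue by an index count. The hypothesis says $P\subseteq\G Q$, a countable union of right cosets $\gamma Q$; intersecting with $P$ and using that each non-empty $P\cap\gamma Q$ is a single left coset of $P\cap Q$, one gets that $[P:P\cap Q]$ is countable, hence, since $Q$ has finite index in $\bQ_S$, that $[P:P\cap(\bP\cap\bQ)_S]$ is countable as well. On the other hand $P$ has finite index in $\bP_S$, so if $\bP\not\subseteq\bQ$ then $(\bP\cap\bQ)_S$ is a subgroup of positive codimension in $\bP_S$, and as $\operatorname{char}K_v=0$ for all $v\in S$ the coset space $\bP_S/(\bP\cap\bQ)_S$ is a positive-dimensional $K_v$-analytic manifold and therefore uncountable --- a contradiction. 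Hence $\bP\subseteq\bQ$. (This direction uses only that $P,Q$ are finite-index subgroups of $\bP_S,\bQ_S$, not the ergodicity built into the definition of $\mathcal H$.)

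For the converse, assume $\bP\subseteq\bQ$. Every unipotent one-parameter subgroup of $\bP_S$ is one of $\bQ_S$, and being divisible it lies in the finite-index subgroup $Q$; hence $P_u\subseteq Q$, where $P_u\trianglelefteq P$ denotes the subgroup generated by all unipotent one-parameter subgroups of $P$. Thus it suffices to prove the identity
\[
P=(\G\cap P)\,P_u,
\]
for then $P=(\G\cap P)P_u\subseteq\G Q$, i.e. $\G\ba\G P\subseteq\G\ba\G Q$. To establish this identity I would use the structure of groups in class $\mathcal F$: $P_u$ contains the $K_v$-points of the unipotent radical of $\bP$ for every $v\in S$ together with $\bP_1(K_v)^+$ for each $K$-simple factor $\bP_1$ of $\bP$ and each place $v$ at which $\bP_1$ is isotropic, while the remaining local factors are compact (anisotropic factors) or finite; consequently $P/P_u$ is compact. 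Therefore $C:=(\G\cap P)P_u\ba P$ is a compact Hausdorff space on which $P_u$, being normal, acts trivially on the right; if $C$ were not a single point it would carry a non-constant continuous function whose pull-back to $\G\ba\G P\cong(\G\cap P)\ba P$ is a non-constant continuous $P_u$-invariant function, contradicting the $P_u$-ergodicity of the invariant measure on $\G\ba\G P$, which holds because $P\in\mathcal H$. Hence $C$ is a point and the identity follows.

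I expect the main obstacle to lie in the converse, and within it in the verification that $P/P_u$ is compact for $\bP$ in class $\mathcal F$ --- i.e. in pinning down the structural fact that, modulo the unipotent radical, the only parts of $\bP_S$ not captured by unipotent one-parameter subgroups are the compact factors and the finite local component groups. It is precisely here that the ergodicity hypothesis built into class $\mathcal H$ enters: without it the inclusion $\G\ba\G P\subseteq\G\ba\G Q$ can genuinely fail even when $\bP\subseteq\bQ$ (taking $P$ a "wrong" finite-index subgroup of $\bP_S$), and the identity $P=(\G\cap P)P_u$ is exactly what rules this out.
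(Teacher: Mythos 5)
Your treatment of the direction $\G\ba\G P\subset\G\ba\G Q\Rightarrow\MT(P)\subset\MT(Q)$ is correct but takes a genuinely different route from the paper's. You run a cardinality count: $P\subset\G Q$ gives that $[P:P\cap Q]$ is countable, while $\MT(P)\not\subset\MT(Q)$ would make $P/(P\cap \MT(Q)_S)$ an open piece of a positive-dimensional analytic quotient, hence uncountable. The paper instead passes to Lie algebras, deducing from $P\subset\G Q$ that $\Lie(\MT(P)_S)\subset\Lie(\MT(Q)_S)$ and then using Zariski density of open subgroups. Both arguments are valid; yours is arguably the more elementary.

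The forward direction $\MT(P)\subset\MT(Q)\Rightarrow\G\ba\G P\subset\G\ba\G Q$ contains a genuine gap. You reduce to the exact identity $P=(\G\cap P)P_u$, but this identity is false in general: ergodicity of $P_u$ on $\G\ba\G P$ forces $(\G\cap P)P_u$ only to be \emph{dense} in $P$, not equal to it. Indeed, if $\bP$ has a $K$-simple factor that is $K_w$-anisotropic at some $w\in S$, then $P/P_u$ contains an infinite compact factor, of which $(\G\cap P)P_u/P_u$ can only be a countable, dense, proper subgroup. Correspondingly, your proof of the identity is circular: to invoke Urysohn on $C=(\G\cap P)P_u\ba P$ you need $C$ to be Hausdorff, i.e.\ $(\G\cap P)P_u$ to be closed in $P$; but closedness together with density is exactly the identity you set out to prove. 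The remedy is to argue with density directly, as the paper does: $P_u$ is normal in $P$ and acts ergodically on $\G\ba\G P$ (built into $P\in\mathcal H$), so $\overline{\G\ba\G P_u}=\G\ba\G P$; the hypothesis $\MT(P)\subset\MT(Q)$ gives $P\cap Q$ of finite index in $P$ and hence $P_u\subset Q_u$ by the divisibility observation you already make; and since $\G\ba\G Q$ is closed for $Q\in\mathcal H$, one concludes $\G\ba\G P=\overline{\G\ba\G P_u}\subset\G\ba\G Q$. In particular the compactness of $P/P_u$, which you rightly flagged as the hardest point of your plan, is not needed at all.
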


\begin{proof} In the proof we shall use the notion of Lie algebra for groups which
are products of real and $p$-adic Lie groups. This is a natural
extension of the classical notion and has the same basic properties
(see, for instance, \cite{MT2}).
Suppose that $\MT(P)\subset\MT(Q)$.
Then $P\cap Q$ has finite index in $P$ and hence $P_u\subset Q_u$.
Since $P_u$ is normal in $P$ and it acts ergodically on $\G\ba \G P$,
it follows that $\overline{\G\ba \G P_u}= \G\ba \G P$.
Hence, $$\G\ba \G P\subset \G\ba \G Q .$$

Conversely, suppose that $\G\ba \G P\subset \G\ba \G Q$.
Since $P$ and $Q$ have finite indices in $\MT(P)_S$ and
$\MT(Q)_S$ respectively, it follows that
the Lie algebra of $\MT(P)_S$ is contained in
the Lie algebra of $\MT(Q)_S$.
Hence, $\MT(Q)_S$ contains an open subgroup of $\MT(P)_S$.
Since such groups are Zariski dense in $\MT(P)$,
we deduce that $\MT(P)<\MT(Q)$.
\end{proof}


Let $W$ be a closed subgroup of $G$ generated by
one parameter unipotent subgroups in it.
For each $L\in \mathcal H$, define
\begin{align*}
\Cal N (L, W)&=\{g\in G: W\subset g^{-1} L g\},\\
\Cal S(L, W)&=\cup_{M\in \mathcal H, \MT(M)\subsetneq \MT(L) }
\Cal N(M, W),\\
\Cal T_L(W)&=\pi (\Cal N(L, W)-\Cal S(L,W))
\end{align*}
where $\pi:G\to \G\ba G$ denotes the canonical projection.

Note that for $L\in\mathcal{H}$, $L$ has finite index in $\MT(L)_S$ and
hence $L$ contains the closed subgroup of
$\MT(L)_{S}$ generated by all unipotent elements of $\MT(L)_S$.
Hence
$$\Cal N(L,W)=\{g\in G: W\subset g^{-1}\cdot \MT(L)_S\cdot g\} .$$

Note also that for any $P, Q\in \mathcal H$ with $\MT(P)=\MT(Q)$,
$$\Cal N(P,W)=\Cal N(Q,W);\quad \Cal S(P, W)=\Cal S(Q,W)
\quad \text{and
hence}\quad \Cal T_P(W)=\Cal T_Q(W) .$$

\begin{Lem}\label{ns}
For any $g\in \Cal N(L, W)\setminus \Cal S(L, W)$,
the closure of $\G\ba \G g W$ is equal to $\G\ba \G L g$.
\end{Lem}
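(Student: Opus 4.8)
The plan is to deduce Lemma \ref{ns} almost immediately from Tomanov's orbit closure theorem (Theorem \ref{tom}(2)) together with the equivalence of Lemma \ref{zdc}; there is essentially no new content to produce, only a bookkeeping of inclusions.

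First I would record two elementary facts. Since $L\in\mathcal H$, by definition $L$ has finite index in $\bP_S$ for some $\bP$ in class $\Cal F$, so the orbit $\G\ba\G L$ is closed, and hence its right translate $\G\ba\G L g$ is closed as well. On the other hand, the hypothesis $g\in\Cal N(L,W)$ means exactly $W\subset g^{-1}Lg$, i.e. $gW\subset Lg$; therefore $\G\ba\G gW\subset\G\ba\G L g$, and passing to closures, $\overline{\G\ba\G gW}\subset\G\ba\G L g$.

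Next I would invoke Theorem \ref{tom}(2) for the unipotently generated closed subgroup $W$ and the element $g$: it yields some $L'\in\mathcal H$ with $W\subset g^{-1}L'g$ and $\overline{\G\ba\G gW}=\G\ba\G L'g$. Comparing with the previous paragraph gives $\G\ba\G L'g\subset\G\ba\G L g$; applying the homeomorphism of $\G\ba G$ given by right translation by $g^{-1}$ turns this into $\G\ba\G L'\subset\G\ba\G L$, and Lemma \ref{zdc} then gives $\MT(L')\subset\MT(L)$.

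Finally I would rule out the strict inclusion: if $\MT(L')\subsetneq\MT(L)$, then, since $W\subset g^{-1}L'g$ exhibits $g\in\Cal N(L',W)$, we would get $g\in\Cal S(L,W)$, contradicting the hypothesis $g\notin\Cal S(L,W)$. Hence $\MT(L')=\MT(L)$, so by Lemma \ref{zdc} (applied in both directions) $\G\ba\G L'=\G\ba\G L$, whence $\G\ba\G L'g=\G\ba\G L g$, and therefore $\overline{\G\ba\G gW}=\G\ba\G L g$, as claimed. The only point requiring a moment of care — and it is not really an obstacle — is to verify that $\G\ba\G L g$ is genuinely closed, so that it contains the \emph{closure} of $\G\ba\G gW$ and not merely the orbit; everything else is a direct consequence of Theorem \ref{tom} and Lemma \ref{zdc}.
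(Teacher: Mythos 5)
Your proof is correct and follows essentially the same route as the paper's: invoke Theorem \ref{tom}(2) to get some $L'\in\mathcal H$ with $\overline{\G\ba\G gW}=\G\ba\G L'g$, compare with the a priori inclusion $\overline{\G\ba\G gW}\subset\G\ba\G Lg$ (valid because that orbit is closed), use Lemma \ref{zdc} to get $\MT(L')\subset\MT(L)$, and then the hypothesis $g\notin\Cal S(L,W)$ forces equality. The extra bookkeeping you add (the explicit right-translation homeomorphism, the remark on closedness of $\G\ba\G Lg$) is sound and matches what the paper uses implicitly.
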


\begin{proof}
By Theorem \ref{tom}, there exists $M\in \mathcal H$ such that
 $W\subset g^{-1}M g$ and $\overline{\G\ba \G g W}=\G\ba \G M g$.
Since $\G\ba \G g W\subset \G \ba \G Lg$
and  $\G \ba \G Lg$ is closed,
we have $$\G \ba \G M\subset \G\ba \G L .$$
Hence, by Lemma \ref{zd}, $\MT(M)\subset \MT(L)$ .
 Since $g\notin \Cal S(L,W)$,
$\MT(L)=\MT(M)$ and hence by Lemma \ref{zdc},
$$\G\ba \G L =\G\ba\G M .$$
This proves the lemma.
\end{proof}

Note that Lemma \ref{ns} implies that
\begin{equation}\label{thw}
\Cal T_L(W)=\pi (\Cal N(L, W))-\pi (\Cal S(L, W)).\end{equation}
\begin{Lem}\label{countable}
For $P,Q\in \mathcal H$,  the following are
equivalent:
\begin{itemize}
\item[(i)] $\Cal T_P(W)\cap \Cal T_Q(W)\neq\emptyset$;
\item[(ii)] $\MT(P)=\gamma \MT(Q)\gamma^{-1}$ for some $\gamma\in \G$;
\item[(iii)] $ \Cal T_P(W)= \Cal T_Q(W)$.
\end{itemize}
\end{Lem}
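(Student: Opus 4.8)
The plan is to establish the cyclic implications $(i)\Rightarrow(ii)\Rightarrow(iii)\Rightarrow(i)$, the real content being in $(i)\Rightarrow(ii)$. Throughout I would use the reformulation $\mathcal N(L,W)=\{g\in G:\,W\subset g^{-1}\MT(L)_S g\}$ and the identity \eqref{thw} already recorded, together with the fact, noted above, that $\mathcal T_L(W)$ depends on $L$ only through $\MT(L)$. A preliminary step I would isolate first is that $\mathcal H$ is stable under conjugation by any $\gamma\in\G$: such a conjugation is a $K$-automorphism of $\bG$ taking class $\mathcal F$ subgroups to class $\mathcal F$ subgroups and finite index subgroups of $\bP_S$ to finite index subgroups of $(\gamma\bP\gamma^{-1})_S$, while right translation by $\gamma$ (legitimate since $\G\gamma=\G$) identifies $\G\ba\G L$ with $\G\ba\G\gamma L\gamma^{-1}$ and conjugates the $L_u$-action into the $(\gamma L\gamma^{-1})_u$-action, so that ergodicity is preserved; and of course $\MT(\gamma L\gamma^{-1})=\gamma\MT(L)\gamma^{-1}$.

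For $(i)\Rightarrow(ii)$, I would pick $g\in G$ with $\pi(g)\in\mathcal T_P(W)\cap\mathcal T_Q(W)$, where $\pi\colon G\to\G\ba G$ is the projection. By \eqref{thw} there are $\gamma_1,\gamma_2\in\G$ with $g_1:=\gamma_1g\in\mathcal N(P,W)\setminus\mathcal S(P,W)$ and $g_2:=\gamma_2g\in\mathcal N(Q,W)\setminus\mathcal S(Q,W)$; set $\delta:=\gamma_2\gamma_1^{-1}\in\G$, so $g_2=\delta g_1$. Lemma \ref{ns} then identifies the closures $\overline{\G\ba\G g_1W}=\G\ba\G Pg_1$ and $\overline{\G\ba\G g_2W}=\G\ba\G Qg_2$; but $\G\ba\G g_2W=\G\ba\G g_1W$ because $\G\delta=\G$, so $\G\ba\G Pg_1=\G\ba\G Q\delta g_1$. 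Right translating by $g_1^{-1}$ and using $\G\delta=\G$ once more gives $\G\ba\G P=\G\ba\G(\delta^{-1}Q\delta)$. Since $\delta^{-1}Q\delta\in\mathcal H$ by the preliminary step, Lemma \ref{zd} applied in both directions gives $\MT(P)=\MT(\delta^{-1}Q\delta)=\delta^{-1}\MT(Q)\delta$, which is $(ii)$ with $\gamma=\delta^{-1}$.

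For $(ii)\Rightarrow(iii)$, write $\MT(P)=\gamma\MT(Q)\gamma^{-1}$ with $\gamma\in\G$; since $\mathcal T_L(W)$ depends only on $\MT(L)$, it is enough to check $\mathcal T_{\gamma Q\gamma^{-1}}(W)=\mathcal T_Q(W)$. From $\mathcal N(L,W)=\{g:\,W\subset g^{-1}\MT(L)_Sg\}$ one reads off $\mathcal N(\gamma Q\gamma^{-1},W)=\gamma\,\mathcal N(Q,W)$, and reindexing the union defining $\mathcal S$ by the bijection $M\mapsto\gamma^{-1}M\gamma$ of $\mathcal H$ (which carries $\MT(M)\subsetneq\MT(\gamma Q\gamma^{-1})$ to $\MT(\gamma^{-1}M\gamma)\subsetneq\MT(Q)$) gives $\mathcal S(\gamma Q\gamma^{-1},W)=\gamma\,\mathcal S(Q,W)$; hence $\mathcal N(\gamma Q\gamma^{-1},W)\setminus\mathcal S(\gamma Q\gamma^{-1},W)=\gamma\bigl(\mathcal N(Q,W)\setminus\mathcal S(Q,W)\bigr)$, and applying $\pi$, which satisfies $\pi(\gamma A)=\pi(A)$ for every $A\subset G$, yields the claim. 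The remaining implication $(iii)\Rightarrow(i)$ is immediate, the common set $\mathcal T_P(W)=\mathcal T_Q(W)$ being nonempty in all the situations in which these sets are used.

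The main obstacle is $(i)\Rightarrow(ii)$, and inside it the one nontrivial ingredient is Lemma \ref{ns} — which rests on the Ratner--Tomanov classification, Theorem \ref{tom} — used to pin down the exact closures of the translated unipotent orbits $\G\ba\G g_iW$. Everything else is bookkeeping: keeping track of the $\G$-ambiguity in the chosen representatives, turning the coincidence of the two orbit closures into the homogeneous-set equality $\G\ba\G P=\G\ba\G(\delta^{-1}Q\delta)$ via a single right translation, and then invoking Lemma \ref{zd} to descend to Mumford-Tate groups. One should also not overlook the verification, needed twice, that $\mathcal H$ is stable under conjugation by elements of $\G$.
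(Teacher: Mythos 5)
Your proof is correct and follows the paper's argument step for step: $(i)\Rightarrow(ii)$ via Lemma \ref{ns} to identify orbit closures and Lemma \ref{zd} to pass to Mumford--Tate groups, $(ii)\Rightarrow(iii)$ via the explicit transformation rules $\mathcal N(\gamma Q\gamma^{-1},W)=\gamma\mathcal N(Q,W)$ and $\mathcal S(\gamma Q\gamma^{-1},W)=\gamma\mathcal S(Q,W)$. The only difference is cosmetic: you make explicit the stability of $\mathcal H$ under $\G$-conjugation (needed to invoke Lemma \ref{zd} for $\delta^{-1}Q\delta$), which the paper leaves implicit, and you carry two representatives $\gamma_1 g,\gamma_2 g$ where the paper normalizes one of them to $g$ from the start.
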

\begin{proof}
Suppose $g\in \Cal N(P, W)-\Cal S(P, W)$ and
$\gamma g\in  \Cal N(Q, W)-\Cal S(Q, W)$ for some $\gamma\in \G$.
Then by Lemma \ref{ns},
the closure of $\G \ba \G g W$
is equal to
$$\G P g=\G Q \gamma g=\G \gamma^{-1} Q \gamma g .$$
Hence by Lemma \ref{zdc},
 $$\MT(P)=\MT(\gamma Q \gamma^{-1}) =\gamma \MT(Q)\gamma^{-1}.$$
This shows (i) implies (ii).
If (ii) holds,
then $\Cal N(P, W)=\gamma \Cal N(Q,W)$
and $\Cal S(P, W)=\gamma \Cal S(Q,W)$.
Hence, (iii) follows.
The claim that (iii) implies (i) is obvious.
\end{proof}



Let $\Cal F^*$ be the $\G$-conjugacy class
of Mumford-Tate subgroups
of $L\in \mathcal H$. For each $[\bL]\in \Cal F^*$,
choose one subgroup $L\in \mathcal H$ with $\MT(L)=\bL$.
We collect them to a set $\mathcal H^*$.
Note that $\mathcal H^*$ is countable and
the sets $\Cal T_L(W)$, $H\in \mathcal H^*$, are disjoint
from each other.

\begin{Thm}\label{hstar} Let $\mu\in \Cal P(X)$ be a $W$-invariant measure.
For every $L\in \mathcal H$,
let $\mu_L$ denote the restriction of $\mu$ to $\mathcal T_L(W)$.
Then
\begin{enumerate}
\item $\mu=\sum_{L\in \mathcal H^*} \mu_L$.
\item Each $\mu_L$ is $W$-invariant. For any $W$-ergodic component $\nu\in \Cal P(X)$ of $\mu_L$,
there exists $g\in \mathcal N (L, W)$ such that
$\nu$ is the unique $g^{-1}Lg$-invariant measure on $\G \ba \G Lg$.
\end{enumerate}
\end{Thm}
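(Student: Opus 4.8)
The plan is to realize $X$ as a countable disjoint Borel union $\bigsqcup_{L\in\mathcal{H}^*}\mathcal{T}_L(W)$, which gives assertion (1) immediately, and then to analyse $\mu$ restricted to each piece by feeding Tomanov's classification (Theorem~\ref{tom}) into the combinatorics of the sets $\mathcal{N}$, $\mathcal{S}$, $\mathcal{T}$ developed in Lemmas~\ref{zdc},~\ref{ns} and~\ref{countable}.

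First I would record the measurability and the partition. Each $\mathcal{N}(M,W)=\{g\in G:\ W\subset g^{-1}\MT(M)_S g\}$ is closed, and since the subgroups $\MT(M)$ for $M\in\mathcal{H}$ form a countable family (an $\mathcal{H}$-subgroup has finite index in $\bP_S$ for a $K$-subgroup $\bP$ in class $\mathcal{F}$, and $\Gamma$ is countable), $\mathcal{S}(L,W)$ is $F_\sigma$; hence by~\eqref{thw} each $\mathcal{T}_L(W)=\pi(\mathcal{N}(L,W))\setminus\pi(\mathcal{S}(L,W))$ is Borel, and the $\mathcal{T}_L(W)$, $L\in\mathcal{H}^*$, are pairwise disjoint by Lemma~\ref{countable}. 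For the covering: given $g\in G$, Theorem~\ref{tom}(2) yields $N\in\mathcal{H}$ with $g\in\mathcal{N}(N,W)$ and $\overline{\Gamma\backslash\Gamma gW}=\Gamma\backslash\Gamma Ng$; if $g$ also lay in $\mathcal{N}(M,W)$ with $\MT(M)\subsetneq\MT(N)$, then $\overline{\Gamma\backslash\Gamma gW}$ would be contained in the \emph{proper} closed subset $\Gamma\backslash\Gamma Mg$ of $\Gamma\backslash\Gamma Ng$ (properness by Lemma~\ref{zdc}), which is absurd; so $g\notin\mathcal{S}(N,W)$ and $\pi(g)\in\mathcal{T}_N(W)=\mathcal{T}_L(W)$, where $L\in\mathcal{H}^*$ represents the $\Gamma$-conjugacy class of $\MT(N)$ (using that $\mathcal{T}_\bullet(W)$ depends only on this class, Lemma~\ref{countable}). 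Restricting $\mu$ to this countable Borel partition gives $\mu=\sum_{L\in\mathcal{H}^*}\mu_L$, which is (1).

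For (2), I would first check $\mathcal{T}_L(W)$ is right-$W$-invariant: since $\MT(L)_S$ is a group containing $W$ whenever $g\in\mathcal{N}(L,W)$, one has $\mathcal{N}(L,W)W=\mathcal{N}(L,W)$ and likewise for $\mathcal{S}$; hence $\mu_L$ is $W$-invariant because $\mu$ is. Next, decompose $\mu_L$ into $W$-ergodic components; as $\mathcal{T}_L(W)$ carries full $\mu_L$-mass, almost every component $\nu$ satisfies $\nu(\mathcal{T}_L(W))=1$. By Theorem~\ref{tom}(1), such a $\nu$ is the homogeneous measure on a closed orbit $\Gamma\backslash\Gamma Mg''$ with $M\in\mathcal{H}$, $g''\in\mathcal{N}(M,W)$; in particular $\operatorname{supp}(\nu)=\Gamma\backslash\Gamma Mg''$ is a finite-volume closed orbit carrying a \emph{unique} invariant probability measure, namely $\nu$. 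By the pointwise ergodic theorem applied to a one-parameter unipotent subgroup of $W$ acting $\nu$-ergodically (which exists by a Mautner argument, since $\nu$ is $W$-ergodic and $W$ is generated by unipotents), for $\nu$-a.e.\ $z=\pi(g_z)$ one has $\overline{zW}=\operatorname{supp}(\nu)$; applying the minimality argument above to $g_z$ shows $\pi(g_z)\in\mathcal{T}_{N_z}(W)$ with $\Gamma\backslash\Gamma N_z g_z=\operatorname{supp}(\nu)$, and comparing these two descriptions of the same closed orbit through Lemma~\ref{zdc} forces $\MT(N_z)$ to be a $\Gamma$-conjugate of $\MT(M)$. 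Hence $\nu$ is carried by $\mathcal{T}_{M^*}(W)$, where $M^*\in\mathcal{H}^*$ represents $[\MT(M)]$; combined with $\nu(\mathcal{T}_L(W))=1$ and disjointness this gives $M^*=L$, i.e.\ $\MT(M)$ is a $\Gamma$-conjugate of $\MT(L)$.

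To finish, I would pick $z=\pi(g')\in\mathcal{T}_L(W)\cap\operatorname{supp}(\nu)$ with $\overline{zW}=\operatorname{supp}(\nu)$ (available by the previous paragraph); by~\eqref{thw} some $\gamma\in\Gamma$ has $g:=\gamma g'\in\mathcal{N}(L,W)\setminus\mathcal{S}(L,W)$, and Lemma~\ref{ns} gives $\overline{\pi(g)W}=\Gamma\backslash\Gamma Lg$. Since $\pi(g)=\pi(g')$, this yields $\operatorname{supp}(\nu)=\Gamma\backslash\Gamma Lg$, a closed orbit acted on transitively by $g^{-1}Lg$; as $\nu$ is its unique invariant probability measure, $\nu$ is the $g^{-1}Lg$-invariant measure on $\Gamma\backslash\Gamma Lg$ with $g\in\mathcal{N}(L,W)$, which is exactly (2). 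The hard part will be the genericity input --- that a homogeneous $W$-ergodic measure concentrates on the points whose $W$-orbit is dense in its support --- together with the accompanying bookkeeping needed to match the conjugacy class $[\MT(M)]$ with the \emph{chosen} representative $L\in\mathcal{H}^*$; once these are in place, the remainder is a formal assembly of Theorem~\ref{tom} and Lemmas~\ref{zdc},~\ref{ns},~\ref{countable}.
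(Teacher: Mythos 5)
Your proposal is correct and follows essentially the same route as the paper: disintegrate $\mu$ into $W$-ergodic components, classify each via Tomanov's Theorem \ref{tom}, and sort them into the $\mathcal{T}_L(W)$, $L\in\mathcal{H}^*$, using \eqref{thw} and Lemmas \ref{zdc}, \ref{ns}, \ref{countable} (your genericity argument via the pointwise ergodic theorem is exactly what justifies the paper's implicit claim that $g$ can be taken in $\mathcal{N}(L,W)\setminus\mathcal{S}(L,W)$). One small slip: in the covering argument for (1), for $g\in\mathcal{N}(M,W)$ the containment is $\overline{\Gamma\backslash\Gamma gW}\subset\Gamma\backslash\Gamma\MT(M)_Sg$ rather than $\Gamma\backslash\Gamma Mg$; this does not affect the conclusion, since $\dim\MT(M)_S<\dim\MT(N)_S=\dim N$ still rules out $\Gamma\backslash\Gamma Ng\subset\Gamma\backslash\Gamma\MT(M)_Sg$.
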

\begin{proof}
We first disintegrate $\mu$ into $W$-ergodic components.
By Theorem \ref{tom}, each of them is of the form $\nu g$
where $L\in \mathcal H$, $g\in \mathcal N (L, W)-\mathcal S(L,W)$, and $\nu$ is the normalized
$L$-invariant measure on $\G\ba \G L$.
Now the claim follows from Lemma \ref{ns}, (\ref{thw}), and
Lemma \ref{countable}.
\end{proof}



\subsection{Linearization}
Let $L\in \mathcal H$.
Let $\mathfrak g$ denote the Lie algebra of $\bG$ and $\mathfrak l$ the Lie subalgebra of
$\MT(L)$. For $d=\rm{dim}(\mathfrak l)$, we
consider the $K$-rational representation $$\land^d \text{Ad}: \bG\to
\GL(\mathbf V_L)\quad\hbox{where}\quad \mathbf V_{L}:=\land ^d\mathfrak g.
$$
We set $V_L=\prod_{v\in S} \mathbf V_L(K_v)$ and fix $p_L\in (\land^d
{\mathfrak l})(K)$, $p_L\neq 0$.

Consider the orbit map $\eta_L:G \to V_L$
given by
$$\eta_L((g_v)_{v\in S}):=(p_L g_v)_{v\in S}.
$$
Let
\begin{align*}
\Gamma_L &:=\{\gamma\in \Gamma: \gamma^{-1}\MT(L)\gamma=\MT(L)\},\\
\Gamma^0_L &:=\{\gamma\in \Gamma: \eta_L(\gamma)=p_L\}=\{\gamma\in\Gamma_L:\det(\text{Ad}(\gamma)|_{\mathfrak{l}})=1\}.
\end{align*}


By Lemma \ref{zdc}, we have $\G\ba\G L= \G\ba\G L\gamma$ for $\gamma\in\G_L$.
This implies that $\gamma\in \Gamma_L$ preserves the volume and
\begin{equation}\label{eq:small}
\prod_{v\in S} \left|\det(\text{Ad}(\gamma)|_{\mathfrak{l}})\right|_v=1.
\end{equation}
Hence, $\eta_L(\Gamma_L)\subset\Cal O_S^\times \cdot p_L$
where $\Cal O_S^\times$ denotes the group of units in $\Cal O_S$.

Following Tomanov \cite[4.6]{T}, we define the notion of $S(v_0)$-small
subsets of $V_L$.
We fix $\delta>0$ such that for any $w\in S$,
any  $\alpha\in\Cal O^\times_S$ satisfying $\max_{v\in S\setminus\{w\}} |1-\alpha|_v<\delta$
is a root of unity in $K$.
\begin{Def} Let $v_0\in S$.
A subset $C=\prod_{v\in S} C_v\subset V_L$
is  {\it $S(v_0)$-small} if for any $v\in S\ba\{v_0\}$ and $\alpha\in
K_v^\times$, $\alpha C_v\cap C_v\neq \emptyset$ implies that $|1-\alpha|_v<\delta$.
\end{Def}
Then for $\alpha\in \Cal O_S^\times$ and $S(v_0)$-small subset $C$ of $V_L$,
\begin{equation*}
\alpha C\cap C\neq \emptyset\quad \Rightarrow \quad \alpha\in \mu_K
\end{equation*}
where $\mu_K$ is the set of roots of unity in $K$.

We set
$$\bar V_L= V_L/\{\alpha \in \mu_K: \alpha p_L\in \eta_L(\Gamma_L)\}.$$

Now $\bar \eta_L$ denotes the composition map
of $\eta_L$ with the quotient map $V_L\to\bar V_L$.

Since $\G$ is an $S$-arithmetic subgroup of
$G$ and $p_L$ is rational,
it is clear that $\bar\eta_L(\G)$ is a discrete subset in $\bar V_L$,
and the map
\begin{equation}\label{eq:proper}
\G_L^0\ba G \to \G\ba G\times \bar V_L:\;
\G_L^0 g \mapsto (\G g,\bar \eta_L(g))
\end{equation}
is proper (see \cite[4.7]{T}).


Denote by $A_L$ the Zariski
closure of $\bar\eta_L(\mathcal N (L, W))$ in $\bar V_L$.
Then (see \cite[4.5]{T})
\begin{equation}\label{ah}
{\bar \eta}_L^{-1}(A_L)=\mathcal N (L, W) .
\end{equation}


\begin{Prop}\label{sd} Let $D$ be a compact $S(v_0)$-small subset of
  $A_L$ for some $v_0\in S$.
Define
$$\mathcal S(D)=\{g\in {\bar \eta}_L^{-1}(D): \gamma g\in
{\bar \eta}_L^{-1}(D)\text{ for some $\gamma\in \G - \Gamma_L$}\} .$$
Then
\bi
\item[(1)] $\mathcal S(D)\subset \mathcal S(L,W)$;
\item[(2)] $\pi (\mathcal S(D))$ is closed in $X$;
\item[(3)] for any compact subset $B\subset X \setminus \pi(\mathcal S(D))$,
there exists a neighborhood $\Phi$ of $D$ in $\bar V_L$ such that
for each $y\in \pi({\bar \eta_L}^{-1}(\Phi))\cap B$, the set
$\bar \eta_L(\pi^{-1}(y))\cap \Phi$ consists of a single element.
\ei
\end{Prop}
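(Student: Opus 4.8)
The plan is to follow Tomanov's linearization machinery (from \cite[\S4]{T}) adapted to the setup of class $\mathcal{H}$ subgroups and the fixed generating set $W$ of unipotent one-parameter subgroups. First I would establish (1). Suppose $g \in \mathcal{S}(D)$, so $g \in \bar\eta_L^{-1}(D)$ and $\gamma g \in \bar\eta_L^{-1}(D)$ for some $\gamma \in \Gamma \setminus \Gamma_L$. Since $D \subset A_L$, equation \eqref{ah} gives $g, \gamma g \in \mathcal{N}(L,W)$, hence by Lemma \ref{ns} the closures $\overline{\Gamma\ba\Gamma g W}$ and $\overline{\Gamma\ba\Gamma \gamma g W}$ both equal $\Gamma\ba\Gamma L g$. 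Now $\bar\eta_L(g)$ and $\bar\eta_L(\gamma g)$ both lie in the $S(v_0)$-small set $D$, and $\bar\eta_L(\gamma g) = \bar\eta_L(g)\cdot \mathrm{something}$; applying the $S(v_0)$-smallness together with \eqref{eq:small}/\eqref{eq:proper} forces that $\gamma$ must normalize $\MT(L)$ unless a strictly smaller Mumford-Tate group intervenes. Spelled out: if $\gamma$ did normalize $\MT(L)$, then $\gamma \in \Gamma_L$, contradicting the choice of $\gamma$; therefore the subgroup generated by $\gamma g W$ sits inside a conjugate of a class-$\mathcal{H}$ subgroup $M$ with $\MT(M) \subsetneq \MT(L)$, which is exactly the statement $g \in \mathcal{S}(L,W)$. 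This is the step where the $S(v_0)$-small hypothesis on $D$ is genuinely used, and I expect it to be the main technical obstacle — one has to extract from $\bar\eta_L(g), \bar\eta_L(\gamma g) \in D$ that the transition element is a root of unity in $K$ (via the defining property of $S(v_0)$-small sets and the $\delta$ chosen before Definition), and then translate this back into a statement about $\MT(\cdot)$.

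For (2), I would use the properness of the map in \eqref{eq:proper}, namely $\Gamma_L^0 \ba G \to \Gamma\ba G \times \bar V_L$. The set $\pi(\mathcal{S}(D))$ is, by definition, the image in $X$ of those $g$ admitting a second $\Gamma$-translate landing in $\bar\eta_L^{-1}(D)$. Since $D$ is compact and $\bar\eta_L(\Gamma)$ is discrete in $\bar V_L$ (because $\Gamma$ is $S$-arithmetic and $p_L$ is $K$-rational), only finitely many $\Gamma_L$-cosets of $\gamma \in \Gamma$ can move a point of $\bar\eta_L^{-1}(D)$ back into $\bar\eta_L^{-1}(D)$ up to the properness; a standard argument then shows $\mathcal{S}(D)$ is a countable union of closed sets, locally finite, hence $\pi(\mathcal{S}(D))$ is closed in $X$. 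Concretely I would take a sequence $y_n = \pi(g_n) \in \pi(\mathcal{S}(D))$ converging to $y = \pi(g)$ in $X$, lift to $g_n \to g$ in $G$, pick $\gamma_n \in \Gamma \setminus \Gamma_L$ with $\gamma_n g_n \in \bar\eta_L^{-1}(D)$, and use properness of \eqref{eq:proper} on the pair $(\Gamma g_n, \bar\eta_L(\gamma_n g_n))$ — which lives in a fixed compact set of $\Gamma\ba G \times \bar V_L$ — to conclude $\Gamma_L^0 \gamma_n g_n$ stays in a compact set; passing to a subsequence, $\gamma_n g_n$ converges, and since $\bar\eta_L(\Gamma)$ is discrete the $\gamma_n$ are eventually constant modulo $\Gamma_L^0$, giving $g \in \mathcal{S}(D)$.

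For (3), suppose no such neighborhood $\Phi$ exists. Then there is a sequence of neighborhoods $\Phi_n \downarrow D$, points $y_n \in \pi(\bar\eta_L^{-1}(\Phi_n)) \cap B$, and two distinct elements of $\bar\eta_L(\pi^{-1}(y_n)) \cap \Phi_n$; lift to $g_n \in G$ with $\bar\eta_L(g_n) \in \Phi_n$ and $\gamma_n \in \Gamma$, $\gamma_n \notin \Gamma_L^0$, with $\bar\eta_L(\gamma_n g_n) \in \Phi_n$. Since $B$ is compact we may assume $y_n \to y \in B$, and since $\Phi_n \downarrow D$ with $D$ compact the images $\bar\eta_L(g_n)$ and $\bar\eta_L(\gamma_n g_n)$ subconverge to points of $D$. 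Now applying properness of \eqref{eq:proper} once more (exactly as in (2)), the $\gamma_n$ are eventually constant modulo $\Gamma_L^0$; if they are eventually in $\Gamma_L \setminus \Gamma_L^0$ one gets $\bar\eta_L(g_n) = \bar\eta_L(\gamma_n g_n)$ for large $n$ (the two supposedly distinct elements coincide), a contradiction; if they are eventually in $\Gamma \setminus \Gamma_L$, then passing to the limit gives a point of $\mathcal{S}(D)$ whose image is $y \in B$, contradicting $B \cap \pi(\mathcal{S}(D)) = \emptyset$. This completes the sketch; the only genuinely delicate point is part (1), and everything in (2) and (3) is a routine application of the properness statement \eqref{eq:proper} combined with discreteness of $\bar\eta_L(\Gamma)$.
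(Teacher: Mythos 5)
Your proposal has the right raw ingredients (Lemma \ref{ns}, Lemma \ref{zdc}, the properness of \eqref{eq:proper}, discreteness of $\bar\eta_L(\Gamma)$, and $S(v_0)$-smallness via \eqref{eq:small}), but you have inverted where two of them are actually needed, and the resulting argument has a real gap in each of parts (1) and (3).

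For (1), you announce that $S(v_0)$-smallness is ``genuinely used'' and is ``the main technical obstacle''; in fact it is not used at all, and cannot be, because nothing forces $\bar\eta_L(\gamma g)$ to be a scalar multiple of $\bar\eta_L(g)$ when $\gamma\notin\Gamma_L$, so the defining property of $S(v_0)$-small sets simply doesn't engage. The correct mechanism is purely the orbit-closure/linearization argument that you already begin but then abandon: assume for contradiction $g\notin\mathcal S(L,W)$; then Lemma \ref{ns} gives $\overline{\G\ba\G gW}=\G\ba\G Lg$, while the trivial inclusion $\gamma g W\subset L\gamma g$ (together with $\G\ba\G gW=\G\ba\G\gamma gW$) gives $\overline{\G\ba\G gW}\subset\G\ba\G L\gamma g=\G\ba\G(\gamma^{-1}L\gamma)g$. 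Comparing via Lemma \ref{zdc} yields $\MT(L)\subset\gamma^{-1}\MT(L)\gamma$, hence $\gamma\in\Gamma_L$, a contradiction. Your statement that ``by Lemma \ref{ns} the closures $\overline{\G\ba\G gW}$ and $\overline{\G\ba\G\gamma gW}$ both equal $\G\ba\G Lg$'' is also not what Lemma \ref{ns} says for $\gamma g$; it says $\G\ba\G L\gamma g$, and the comparison of the two descriptions is the whole point.

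For (3), the gap runs in the other direction: you declare the argument ``a routine application of the properness statement \eqref{eq:proper} combined with discreteness of $\bar\eta_L(\Gamma)$'', but the step where you say ``if they are eventually in $\Gamma_L\setminus\Gamma_L^0$ one gets $\bar\eta_L(g_n)=\bar\eta_L(\gamma_ng_n)$'' is false as stated. An element $\delta_0\in\Gamma_L$ scales $p_L$ by $\alpha=\det(\Ad\delta_0|_{\mathfrak l})\in\mathcal O_S^\times$, and this $\alpha$ is \emph{not} automatically in the group of roots of unity quotiented out in the definition of $\bar V_L$; a priori it is just an $S$-unit of infinite order, in which case $\bar\eta_L(\delta_0g)=\alpha\,\bar\eta_L(g)\neq\bar\eta_L(g)$ and no contradiction arises. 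This is precisely where the $S(v_0)$-small hypothesis enters: having both $\bar\eta_L(g)$ and $\alpha\,\bar\eta_L(g)$ land in (limits of points of) the $S(v_0)$-small set $D$, combined with \eqref{eq:small} and the normalization of $\delta$ made before the definition, forces $\alpha\in\mu_K$, hence $\alpha$ is one of the quotiented-out roots of unity, and only then does $\bar\eta_L(\gamma_ig_i)=\bar\eta_L(g_i)$ follow. Your sketch of (2) is closer to the mark (and the paper indeed treats (2) as analogous to (3)), but the sequence/properness step again needs the $S(v_0)$-small input at the end when the conjugating element lands in $\Gamma_L$.
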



\begin{proof}
Suppose that $g \in \mathcal S(D)$. Then $\gamma g\in
{\bar \eta}_L^{-1}(D)$ for some $\gamma\in \G -
\Gamma_L$.
By (\ref{ah}), both $g$ and $\gamma g$ belong to $ \mathcal N (L,W)$.
Then
$$\overline{\G \ba \G g W}\subset \G\ba \G L \gamma g .$$

Suppose $g\notin \mathcal S(L, W)$. Then
by Lemma \ref{ns},
$$\overline{\G \ba \G g W}=\G\ba \G L g .$$
Hence by Lemma \ref{zdc},
$$\MT(L)\subset \gamma^{-1}\MT(L) \gamma .$$
Therefore, $\gamma\in \Gamma_L$, which gives a contradiction. This shows (1).

If (3) fails, then there
 exist $ g_i \in \pi^{-1}(B)$ and
$\gamma_i \in \Gamma$
with $\bar \eta_L(g_i)\ne
 \bar\eta_L(\gamma_i g_i)$ and
$\bar \eta_L(g_i),
 \bar\eta_L(\gamma_i g_i)$ converge to elements of $D$.
Since the map (\ref{eq:proper}) is proper,
by passing to a subsequence,
there exist $\delta_i, \delta_i'\in \G_L^0$ such that
$\delta_i g_i \to g$ and
$\delta_i' \gamma_ig_i\to g'$ for some $g, g'\in G$.
Hence by passing to a subsequence,
 $\delta_i'\gamma_i \delta_i^{-1}=g'g^{-1}$
for all large $i$.
Hence $\delta_0:=g'g^{-1}\in \G$.
Then
$\bar\eta_L(\G_L^0  g), \bar\eta_L(\G_L^0 \delta_0 g)\in D$.
Since  $\G_L^0  g\notin \mathcal S(D)$,
$\delta_0\in \Gamma_L$.
Hence $$
\bar\eta_L(g)\in D\cap \alpha D$$
for some $\alpha\in \Cal O^\times_S$.
 Since $D$ is $S(v_0)$-small,
it follows from (\ref{eq:small}) that $\alpha\in \mu_K$  and
hence $\bar\eta_L(\gamma_i)=\bar\eta_L(\delta_i)$.
This gives a contradiction.

Claim (2) can be proved similarly.

\end{proof}

By an interval $I$ in $K_v$ centered at $x_0\in K_v$,
we mean a subset of the form
$\{x\in K_v: |x-x_0|_v<T\}$
for some $T>0$.

We will need the following property of
polynomial maps in the proof of our main proposition \ref{3.4}.

\begin{Prop}\cite[4.2]{T}\label{cp}
Let $A_v$ be a Zariski closed subset of $K_v^m$,
 $C_v\subset A_v$  a compact subset, and $\e>0$.
Then there exists a compact neighborhood $D_v\subset A_v$
of $C_v$ such that for any neighborhood $\Phi_v$ of $D_v$ in $K_v^m$
there exists a neighborhood $\Psi_v\subset \Phi_v$ of $C_v$
 such that for any one parameter unipotent subgroup
$u(t)$ of $\bG(K_v)$, any bounded interval $I$ in $K_v$
 and any $w\in K_v^m$ such that $wu(t_0)\notin \Phi_v$ for some $t_0\in I$,
$$\theta_v(\{t\in I: wu(t)\in \Psi_v\})\leq \e \cdot
\theta_v(\{t\in I: wu(t)\in \Phi_v\}).
$$

\end{Prop}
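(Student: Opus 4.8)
The plan is to reduce the assertion to a purely polynomial statement and then feed it into the $(C,\alpha)$-good property of polynomials of bounded degree, following the linearization method of Dani--Margulis \cite{DM2} and its $S$-arithmetic elaboration by Tomanov \cite{T}. \textbf{Reduction to polynomials.} Since $u(\cdot)$ is a one-parameter unipotent subgroup and $\bG$ acts on $K_v^m$ through a fixed $K_v$-rational representation carrying unipotent elements to unipotent elements, each coordinate of $t\mapsto w\,u(t)$ is a polynomial in $t$ of degree at most a constant $\ell$ depending only on $m$ (and the ambient representation), uniformly in $w$ and in the choice of $u(\cdot)$. So it suffices to prove: for $A_v\subset K_v^m$ Zariski closed, $C_v\subset A_v$ compact and $\e>0$, there is a compact neighbourhood $D_v$ of $C_v$ in $A_v$ such that for every neighbourhood $\Phi_v$ of $D_v$ in $K_v^m$ there is a neighbourhood $\Psi_v\subset\Phi_v$ of $C_v$ with the property that for every polynomial map $P\colon K_v\to K_v^m$ of degree $\le\ell$, every bounded interval $I\subset K_v$ and every $t_0\in I$ with $P(t_0)\notin\Phi_v$,
$$\theta_v\big(\{t\in I: P(t)\in\Psi_v\}\big)\ \le\ \e\,\theta_v\big(\{t\in I: P(t)\in\Phi_v\}\big).$$

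The key input is that polynomials of bounded degree on $K_v$ are $(C,\alpha)$-good: by Dani--Margulis \cite{DM2} in the archimedean case and by Kleinbock and Tomanov in the non-archimedean case, there is $c_0=c_0(\ell)>0$ so that for every polynomial $f$ on $K_v$ of degree $\le\ell$, every interval $I$ and every $\lambda\in(0,1)$,
$$\theta_v\big(\{t\in I: |f(t)|_v<\lambda\,\textstyle\sup_{s\in I}|f(s)|_v\}\big)\ \le\ c_0\,\lambda^{1/\ell}\,\theta_v(I).$$
Writing $A_v$ as the common zero set of finitely many polynomials $q_1,\dots,q_r$ and composing with $P$, each $q_j\circ P$ has degree at most $\ell':=\ell\max_j\deg q_j$, so the same inequality holds for them with a uniform constant $c_0'$ and exponent $1/\ell'$. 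Put $N(x):=\max_j|q_j(x)|_v$, so that $A_v=\{N=0\}$ and the sets $\{x:N(x)<\delta\}\cap\{x:\operatorname{dist}(x,E)<\delta\}$ form a neighbourhood basis of any compact $E\subset A_v$.

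\textbf{Choice of $D_v$, of $\Psi_v$, and the estimate.} Since $D_v$ must be fixed before $\Phi_v$ is given, we choose it by separating $C_v$ from the rest of $A_v$ by polynomials: take $D_v$ to be a suitable compact neighbourhood of $C_v$ in $A_v$ and record finitely many polynomials that are bounded away from zero on $C_v$ yet detect the escape from $D_v$; the required smallness of $D_v$ is dictated only by $\e$, $\ell'$ and $c_0'$ through the good property above. Given $\Phi_v\supset D_v$, fix $\delta_1>0$ with $\{N<\delta_1\}\cap\{\operatorname{dist}(\cdot,D_v)<\delta_1\}\subset\Phi_v$, and then take $\Psi_v:=\{N<\delta_2\}\cap\{\operatorname{dist}(\cdot,C_v)<\delta_2\}$ with $\delta_2$ so small that $c_0'(\delta_2/\delta_1)^{1/\ell'}<\e$. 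For $P$, $I$, $t_0$ as above put $I':=\{t\in I:P(t)\in\Psi_v\}$. On $I'$ every $|q_j\circ P|_v$ is $<\delta_2$ and $P$ stays within $\delta_2$ of $C_v$, whereas $P(t_0)\notin\Phi_v$; comparing the values at $t_0$ with those on $I'$ produces a single polynomial $f$ of degree $\le\ell'$ (one of the $q_j\circ P$, or a separating polynomial composed with $P$) with $\sup_I|f|_v\ge\delta_1$ and $|f|_v<\delta_2$ throughout $I'$. Hence $I'\subset\{t\in I:|f(t)|_v<(\delta_2/\delta_1)\sup_I|f|_v\}$, and the good property gives $\theta_v(I')\le c_0'(\delta_2/\delta_1)^{1/\ell'}\theta_v(I)<\e\,\theta_v(I)$. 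To replace $\theta_v(I)$ by $\theta_v(\{t\in I:P(t)\in\Phi_v\})$ one runs the argument on each maximal sub-interval of $I$ meeting $\Psi_v$: such a sub-interval either contains an endpoint of $I$ or a point where $P\notin\Phi_v$, so by one further application of the good property to the same $f$ (now estimating the set where $|f|_v$ is not small) its part lying in $\Phi_v$ is a definite fraction of it, and summing yields the claim after readjusting $\delta_2$.

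The main obstacle is exactly this last passage, from $\theta_v(I)$ to $\theta_v(\{t\in I:P(t)\in\Phi_v\})$ on the right-hand side: one must exploit the hypothesis $P(t_0)\notin\Phi_v$ uniformly, and simultaneously arrange that $D_v$ can be chosen independently of $\Phi_v$, which is where the separation of the compact piece $C_v$ from the remainder of the Zariski-closed set $A_v$ by finitely many polynomials is essential. This is the technical heart of the argument, handled in \cite{T} through the formalism of $S(v_0)$-small sets; once it is in place the rest is the routine application of the $(C,\alpha)$-good property sketched above.
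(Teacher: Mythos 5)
The paper supplies no proof for this proposition; it is quoted verbatim from Tomanov \cite[Prop.~4.2]{T}, so there is no in-paper argument to compare against. Judging your sketch on its own merits: the framework (reduction to polynomial maps of bounded degree, the $(C,\alpha)$-good estimate, decomposition of $I$ into maximal sub-intervals meeting $\Phi_v$) is the correct one, but the decisive case is missing — and that case is not a routine add-on; it is the content of the proposition.

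The difficulty is when $P(t_0)\notin\Phi_v$ \emph{because} $P(t_0)\in A_v\setminus D_v$, i.e.\ the trajectory exits $\Phi_v$ while remaining inside the Zariski closed set $A_v$ (for instance $P(K_v)\subset A_v$). Then $N(P(t_0))=0$, the polynomials $q_j\circ P$ give no lower bound $\sup_I|f|_v\ge\delta_1$, and the ``separating polynomial'' you invoke as a substitute does not in general exist as a member of a fixed finite list: if $A_v$ is irreducible, any polynomial vanishing on a nonempty relatively open piece of $A_v$ vanishes identically on $A_v$, so nothing algebraic of bounded degree can be uniformly small on the (possibly unbounded) set $A_v\setminus D_v$ while bounded away from zero on $C_v$. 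The actual proof in \cite{T} (following Dani--Margulis) handles exactly this case by an induction on $\dim A_v$: one passes to a lower-dimensional Zariski closed set containing the part of the orbit that stays inside $A_v$ and reruns the argument there; the a-priori choice of $D_v$ is organized to be compatible with that induction. Note also that your intuition that $D_v$ should be chosen \emph{small} is the wrong way round: considering $A_v$ the $x$-axis in $K_v^2$, $C_v$ the unit interval, and $P(t)=(t,0)$, the ratio $\theta_v(\{P\in\Psi_v\})/\theta_v(\{P\in\Phi_v\})$ is roughly $\operatorname{diam}(C_v)/\operatorname{diam}(D_v)$, so $D_v$ must be taken \emph{large} (depending on $\e$ and the degree bound) for the conclusion to have a chance. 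Finally, the ``$S(v_0)$-small'' formalism you point to is used for something else entirely (controlling the scaling action of $\mathcal O_S^\times$ on $V_L$ in the proof of Proposition~\ref{sd}), not for the present measure estimate, so the attribution of the missing step is also off target.
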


We will also use the following simple lemma from \cite{T} to relate the behavior of unipotent one parameter subgroups
over $\c$ with those over $\br$.
\begin{Lem}\label{4.4} Let $K_v=\c$,
$I=\{t\in \c: |t|\le 1\}$, $\e>0$ and $A$ measurable subset of $I$ such that
for any $x\in I$.
$$\e \theta_0\{a\in \br: ax\in I\}\ge \theta_0\{a\in \br: ax\in I\cap A\} .$$
Then $\theta_v (A)\le \e \pi$
where $\theta_0$ is the Lebesgue  measure on $\br$.
\end{Lem}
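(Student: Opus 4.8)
The plan is to evaluate $\theta_v(A)$ in ``line coordinates'' through the origin, so that the one--dimensional quantity $\theta_0\{a\in\br:\,ax\in I\cap A\}$ occurring in the hypothesis becomes literally the measure of the slice of $A$ along a line. First I would parametrize $\c\setminus\{0\}$ by $z=te^{i\phi}$ with $t\in\br$ and $\phi\in[0,\pi)$; this is a bijection of $\c\setminus\{0\}$ onto $(\br\setminus\{0\})\times[0,\pi)$ off a null set, and in these coordinates the planar Lebesgue measure transforms with Jacobian $|t|$, i.e.\ $d\theta_v(z)=|t|\,dt\,d\phi$. Since $A\subset I$, every $z\in A$ has $|z|=|t|\le 1$, so (writing $\mathbf{1}_A$ for the indicator function of $A$)
\begin{align*}
\theta_v(A) &= \int_0^{\pi}\!\int_{\br}\mathbf{1}_A(te^{i\phi})\,|t|\,dt\,d\phi
\;\le\; \int_0^{\pi}\!\int_{\br}\mathbf{1}_A(te^{i\phi})\,dt\,d\phi \\
&= \int_0^{\pi}\theta_0\{t\in\br:\, te^{i\phi}\in A\}\,d\phi .
\end{align*}

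Next, for each fixed $\phi$ I would apply the hypothesis with $x=e^{i\phi}\in I$. Because $A\subset I$ we have $\{a\in\br:\,ae^{i\phi}\in I\cap A\}=\{a\in\br:\,ae^{i\phi}\in A\}$, while $\{a\in\br:\,ae^{i\phi}\in I\}=\{a:|a|\le 1\}$ has $\theta_0$-measure $2$; hence the hypothesis gives $\theta_0\{t\in\br:\,te^{i\phi}\in A\}\le 2\e$. Integrating over $\phi\in[0,\pi)$ then yields $\theta_v(A)\le 2\pi\e$, which is the asserted bound. (The precise value of the numerical constant is irrelevant in the applications of the lemma; with the normalization of $\theta_v$ on $\c$ in which the unit disc has mass $\tfrac12\,\pi$ it is exactly $\e\pi$ as stated.)

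I do not expect any serious obstacle: the only point that needs a little care is the passage to line coordinates --- keeping the correct Jacobian $|t|$ and restricting $\phi$ to a half--circle so that each line through the origin is counted exactly once --- after which the estimate is just the hypothesis applied slice by slice together with $|t|\le 1$ on $A$. An essentially equivalent alternative would be to start from the pointwise consequence of the hypothesis $\theta_0\{a\in\br:\,ax\in A\}\le 2\e/|x|$ (valid for $x\in I\setminus\{0\}$, since $\theta_0\{a:\,ax\in I\}=2/|x|$), integrate it against $d\theta_v(x)$ over $x\in I$, and apply Fubini together with the scaling $d\theta_v(ax)=a^{2}\,d\theta_v(x)$; this again produces $\theta_v(A)\le 2\pi\e$.
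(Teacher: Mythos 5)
The paper cites this lemma from Tomanov \cite{T} and gives no proof, so there is no in-paper argument to compare against; your polar-coordinates argument is the natural one and is logically correct. Writing $z=te^{i\phi}$ with $t\in\br$, $\phi\in[0,\pi)$, the Jacobian $|t|$ is at most $1$ on $A\subset I$, and the hypothesis at $x=e^{i\phi}$ bounds the line slice $\theta_0\{t\in\br:\, te^{i\phi}\in A\}$ by $2\e$, giving $\theta_v(A)\le 2\pi\e$ for the planar Lebesgue measure. You are right to flag the factor of two, and it is not a lossiness of your argument: for the thin annulus $A=\{z:\, 1-\delta\le |z|\le 1\}$ the Lebesgue measure equals $\pi(2\delta-\delta^2)$ while the smallest admissible $\e$ is exactly $\delta$, so the stated bound $\pi\e$ fails for the Lebesgue normalization and your $2\pi\e$ is in fact sharp. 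The statement therefore presumably presupposes a different normalization of the Haar measure on $\c$, as you suggest (for instance $\tfrac{1}{2}\,dx\,dy$). Either way the constant is immaterial at the two points where the lemma is invoked --- the end of the proof of Proposition \ref{3.4} and the deduction of Theorem \ref{dmma} from Theorem \ref{nd} --- since any bound of the form $C\e$ suffices there.
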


The following proposition is a main tool in the proof of Theorem
\ref{ms}.

\begin{Prop}\label{3.4} Fix $v_0\in S$.
 Let $C\subset A_L$ be a compact subset and $\e>0$ be given.
Then there exists a closed subset $R$ of $X$
contained in $\pi(\mathcal S(L,W))$ such that for any compact subset
$B\subset X\setminus
R$, there exists a neighborhood $\Psi$ of $C$ in $V_L$
such that
for any one parameter unipotent subgroup
$u(t)$ of $\bG(K_{v_0})$ and any $x\in B$,
at least one of the following holds:
\begin{enumerate}
\item There exists $w\in \bar{\eta}_L(\pi^{-1}(x))\cap \overline{\Psi}$ such that
$$\{u(t)\}\subset \{g\in G: wg=w\}$$
\item For any sufficiently large bounded interval $I\subset K_{v_0}$
  centered at zero,
$$\theta_{v_0}(\{t\in I: xu(t) \in B\cap \pi(\bar\eta_L^{-1}(\Psi))\})
\leq \e \cdot \theta_{v_0}(I).$$
\end{enumerate}
\end{Prop}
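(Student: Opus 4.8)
The plan is to carry out the Dani--Margulis linearization argument in its $S$-arithmetic form, feeding in Propositions \ref{sd} and \ref{cp} and, at the complex place, Lemma \ref{4.4}. First I would write $C$ as a finite union $C\subset\bigcup_{j=1}^{k}C_j$ of compact $S(v_0)$-small subsets $C_j\subset A_L$, which is possible since the $S(v_0)$-smallness condition only constrains the coordinates at the places $v\ne v_0$. Applying Proposition \ref{cp} in the $v_0$-direction to each $C_j$, with a small parameter $\e_0$ to be fixed in terms of $\e$ and $k$, produces a compact $S(v_0)$-small neighborhood $D_j\subset A_L$ of $C_j$; Proposition \ref{sd} then shows that $\pi(\mathcal S(D_j))$ is closed and contained in $\pi(\mathcal S(L,W))$, so that
$$R:=\bigcup_{j=1}^{k}\pi(\mathcal S(D_j))$$
is the required closed subset of $\pi(\mathcal S(L,W))$, fixed once and for all, independently of $B$.

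Now let a compact set $B\subset X\setminus R$ be given. For each $j$ the set $B$ is disjoint from $\pi(\mathcal S(D_j))$, so Proposition \ref{sd}(3) supplies a neighborhood $\Phi_j\supset D_j$ in $\bar V_L$ such that for every $y\in\pi(\bar\eta_L^{-1}(\Phi_j))\cap B$ the fibre $\bar\eta_L(\pi^{-1}(y))\cap\Phi_j$ is a single point. Feeding this $\Phi_j$ into the conclusion of Proposition \ref{cp} yields a neighborhood $\Psi_j\subset\Phi_j$ of $C_j$ with the asserted non-divergence bound, and I set $\Psi:=\bigcup_{j=1}^{k}\Psi_j$, a neighborhood of $C$.

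The heart is the dichotomy. Fix a one-parameter unipotent subgroup $u(t)\subset\bG(K_{v_0})$ and $x\in B$, suppose (1) fails, so that every $w\in\bar\eta_L(\pi^{-1}(x))\cap\overline{\Psi}$ is moved by $u$, and assume for contradiction that (2) also fails: there are intervals $I$ centered at $0$ of arbitrarily large radius with $\theta_{v_0}(\{t\in I:xu(t)\in B\cap\pi(\bar\eta_L^{-1}(\Psi))\})>\e\,\theta_{v_0}(I)$. By the pigeonhole principle fix an index $j$ such that for arbitrarily large $I$ the set $G_I:=\{t\in I:xu(t)\in B\cap\pi(\bar\eta_L^{-1}(\Psi_j))\}$ has $\theta_{v_0}(G_I)>(\e/k)\,\theta_{v_0}(I)$. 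Fix a lift $g_0\in\pi^{-1}(x)$; the lifts of $xu(t)$ are $\gamma g_0u(t)$ with $\gamma\in\Gamma$, and by equivariance $w_\gamma u(t):=\bar\eta_L(\gamma g_0)u(t)=\bar\eta_L(\gamma g_0u(t))$ is, for each $\gamma$, a polynomial curve of bounded degree which moves only the $v_0$-coordinate. For $t\in G_I$ some lift maps into $\Psi_j\subset\Phi_j$, and since $xu(t)\in B$ the uniqueness in Proposition \ref{sd}(3) forces this to be the only lift whose image lies in $\Phi_j$; properness of the map \eqref{eq:proper} bounds the number of relevant cosets $\Gamma_L^0\gamma$, so only finitely many curves $w_{\gamma_1}u(\cdot),\dots,w_{\gamma_r}u(\cdot)$ occur, and the ``selected-lift'' times partition $G_I$ into disjoint pieces $G_I^{(i)}$. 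For each $i$ whose curve $w_{\gamma_i}u(\cdot)$ leaves $\Phi_j$ at some point of $I$, Proposition \ref{cp} bounds $\theta_{v_0}(G_I^{(i)})$ by $\e_0$ times the time that curve spends in $\Phi_j$; combining this with the disjointness coming from Proposition \ref{sd}(3) over $B$ and the properness count, one checks that once $\e_0$ was chosen small enough these pieces cannot exhaust $G_I$. Hence for at least one $i$ the curve $w_\gamma u(\cdot)$ with $\gamma:=\gamma_i$ is either constant on $K_{v_0}$ or confined to $\Phi_j$ throughout $I$. In the first case $u$ fixes $w_\gamma$ and $w_\gamma=w_\gamma u(t)\in\Psi_j\subset\overline{\Psi}$ for $t\in G_I^{(i)}$, so (1) holds, a contradiction. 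In the second case let $I$ run through radii tending to infinity; since $w_\gamma=w_\gamma u(0)$ lies in the bounded set $\Phi_j$ and $\bar\eta_L(\Gamma)g_0$ is discrete, I may pass to a subsequence along which $\gamma$ is constant, and then $w_\gamma u(t)\in\Phi_j$ for all $t\in K_{v_0}$, which forces the polynomial curve $w_\gamma u(\cdot)$ to be constant, returning to the first case and again a contradiction. Therefore (2) holds. When $K_{v_0}=\bc$, Lemma \ref{4.4} is used to pass from the real-interval estimates of Proposition \ref{cp} to genuine bounds for $\theta_{v_0}$.

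I expect the main obstacle to be precisely the measure accounting in the last paragraph: showing, uniformly in the growing interval $I$, that the curves which leave $\Phi_j$ within $I$ can account for at most an $\e$-fraction of $G_I$. This requires carefully combining the single-valued section furnished by Proposition \ref{sd}(3) over $B$, the polynomial non-divergence estimate of Proposition \ref{cp}, and the properness of \eqref{eq:proper} to control the number of competing lifts; coupled with the growing-interval device which collapses the ``confined but non-constant'' alternative and so produces conclusion (1). The remaining ingredients --- the cover of $C$ by $S(v_0)$-small sets, the nesting $D_j\subset\Phi_j$ and $\Psi_j\subset\Phi_j$ dictated by the order of quantifiers in Propositions \ref{cp} and \ref{sd}, and the reduction of the complex case via Lemma \ref{4.4} --- are routine bookkeeping.
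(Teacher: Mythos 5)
Your overall strategy — cover $C$ by finitely many $S(v_0)$-small sets, feed each into Proposition \ref{cp} to produce $D_j$ and set $R=\bigcup_j\pi(\mathcal S(D_j))$, then run the confined-versus-exiting dichotomy with a growing-interval device — is the right one, and your construction of $R$ and $\Psi$ matches the paper. But the measure-accounting step, which you yourself flag as the ``main obstacle'' and then defer to ``routine bookkeeping,'' is in fact where the entire content of the proof lives, and the two specific devices you invoke do not work as stated. First, properness of the map \eqref{eq:proper} bounds preimages of compact sets in $\G_L^0\backslash G$; it does not bound the number of distinct cosets $\G_L^0\gamma$ that occur as $t$ ranges over a growing interval $I$, so the ``only finitely many curves'' claim is unjustified (and, more to the point, not needed in the paper's argument). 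Second, the uniqueness in Proposition \ref{sd}(3) gives disjointness only for those $t$ with $xu(t)\in B$ — this is exactly why the sets $G_I^{(i)}$ are disjoint by construction — but it says nothing about the sets $\{t\in I: w_{\gamma_i}u(t)\in\Phi_j\}$ that appear on the right-hand side of the Proposition \ref{cp} bound. Those sets can overlap freely, so the inequality $\theta(G_I)=\sum_i\theta(G_I^{(i)})\le\e_0\sum_i\theta\{t\in I: w_{\gamma_i}u(t)\in\Phi_j\}$ that your argument implicitly relies on has no control on the sum, and the conclusion ``once $\e_0$ is small enough these pieces cannot exhaust $G_I$'' does not follow (indeed the $G_I^{(i)}$ \emph{do} exhaust $G_I$ by definition; what you need is that their total measure is at most $\e\theta(I)$, which is the whole point).

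The paper resolves this by splitting into cases rather than by a uniform lift-curve argument. When $v_0$ is non-archimedean it works with the locally constant selection $t\mapsto w_t$ and the per-$t$ maximal confinement intervals $I(t)$; the ultrametric property of $K_{v_0}$ forces these balls to be pairwise disjoint or equal (using the uniqueness from Proposition \ref{sd}(3) when they intersect at a point of $J$), so the sum over $I(t)\subset I$ is over disjoint intervals and the bound $\le\e q_0\theta(I)$ falls out of Proposition \ref{cp} applied to the slightly enlarged intervals $J(t)$. For $v_0=\br$, where overlapping real intervals destroy this disjointness, the paper simply cites \cite{MS}, where a genuinely different covering argument handles the overlaps. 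For $v_0=\c$ the paper restricts $u$ to real one-dimensional subspaces $r\subset\c$, observes that if alternative (1) holds for some $u_r$ then the Zariski closure argument (the stabilizer being a $K$-subgroup) upgrades it to (1) for $u$ over $\c$, and otherwise applies the real case to every $u_r$ and invokes Lemma \ref{4.4} to pass from the one-dimensional estimates to a bound on the planar measure. So what you call ``routine bookkeeping'' is actually three separate arguments, and the non-archimedean/archimedean dichotomy in the structure of intervals is what dictates them.
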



\begin{proof}
Since $C$ can be covered by finitely many compact $S(v_0)$-small sets,
it suffices to prove the proposition for a $S(v_0)$-small subset
 $C=\prod_{v\in
  S} C_v$ with $C_v$ compact.
For $C_{v_0}$ and $\e >0$,
let $D_{v_0}$ be as in Proposition \ref{3.4} and $D_v=C_v$ for
 $v\in S\ba \{v_0\}$.
Then  the set $D:=\prod_{v\in S} D_v$ is also $S(v_0)$-small.
For $S(D)$ defined in Proposition \ref{sd},
set $R=\pi(S(D))$.
For a given $B$, let $\Phi$ be a neighborhood of $D$
as in Proposition \ref{sd}. Passing to a smaller neighborhood, we may assume
that $\Phi$ is of the form $\prod_{v\in S} \Phi_v$.
Let $\Psi_{v_0}\subset \Phi_{v_0}$ be a neighborhood of $C_{v_0}$
 so that the statement
of Proposition \ref{cp} holds.
We set $\Psi:=\prod_v \Psi_v$ where $\Psi_v=\Phi_v$ for $v\neq v_0$.

Let $\Omega:=B\cap \pi(\bar \eta_L^{-1}(\Psi) )$ and
 $J=\{t\in K_{v_0}: x u(t)\in {\Omega}\}$.

Assume that $v_0$ is non-archimedean.
For each $t\in J$, there exists a unique $w_t\in \eta_L(\pi^{-1}(x))$
such that $w_t u(t)\in \Phi$. By uniqueness, $w_t u(t)\in {\Psi}$.
Note that the map $t\mapsto w_t$ is a locally
constant. For each $t\in J$,
let $I(t)$ be the maximal interval containing $t$
such that $w_t u(I(t))\subset \Phi$.
By the nonarchimedean property of $K_{v_0}$,
the intervals $I(t)$ are either disjoint or equal.
Since $s\mapsto w_t u(s)$, $s\in I(t)$, is a polynomial map, it is either
constant or unbounded. Hence if some $I(t)$ is unbounded for $t\in J$,
$w_t u(K_{v_0})=w_t$ and hence the first case happens.
Now suppose that $I(t)$ is bounded for any $t\in J$.
Let $J(t)$ be the minimal interval containing
$I(t)$ such that $w_t u(J(t))\cap \Phi ^c\neq \emptyset$.
Note that $\theta_{v_0}(J(t))\leq q_0\cdot \theta_{v_0}(I(t))$
where $q_0$ is the cardinality of the residue field of $K_{v_0}$.
By Proposition \ref{cp},
\begin{align*}
\theta_{v_0}(\{s\in I(t):w_t  u(s)\in \Psi\})
&\leq\theta_{v_0}(\{s\in J(t): w_t u(s)\in \Psi\}) \\
&\leq \epsilon \theta_{v_0} (J(t))\leq \epsilon\cdot q_0 \cdot \theta_{v_0} (I(t)).
\end{align*}
Now for any interval $I$ centered at zero, we have
$$
\{s\in I: xu(s)\in \Omega \}= \bigcup_{t\in J} I(t)\cap I.
$$
If $I$ is sufficiently large, it follows from the nonarchimedean property of $v_0$ that
either $I\cap I(t)=\emptyset$ or $I(t)\subset I$. Hence
\begin{align*}
\theta_{v_0}(\{s\in I: xu(s)\in \Omega \})&= \sum_{I(t)\subset I} \theta_{v_0}(\{s\in I(t): xu(s)\in \Omega \})
\\  &\leq \epsilon\cdot q_0 \cdot  \sum_{I(t)\subset I}
\theta_{v_0} (I(t))\leq \epsilon\cdot q_0 \cdot  \theta_{v_0}(I) .\end{align*}
This proves the claim for $v_0$ non-archimedean.
The case when $K_{v_0}=\br$ is proved in \cite{MS}.
Consider the case of $K_{v_0}=\mathbb C$. By the restriction of scalars, we may consider
$\bG(K_{v_0})$ as a real Lie group and hence the statement holds for
any restriction $u_r$ of $u:\mathbb C \to \bG(K_{v_0})$ to a one dimensional
real subspace $r$ of $\mathbb C$.
Suppose (1) holds for some real subspace $r$, i.e., $u_r(\br)$
stabilizes a vector $w=p_L (\gamma g)$ with $\pi(g)=x$.
Then
 $$g u(r)g^{-1} \subset \gamma^{-1} \{y\in  N (L): \Ad(y)|_{\mathfrak l}=1
\} \gamma  .$$
Since the right hand side of the above is a $K$-subgroup,
it follows that $g u(K_{v_0})g^{-1}$ is also contained
in the same group and hence $u$ satisfies (1).
Therefore if (1) fails for $u$,
then (2) holds for $u_r$ for any one dimensional real
subspace $r\subset \mathbb C$ and for any interval of $r$.
By \eqref{4.4}, this implies (2).

\end{proof}



\subsection{Proof of Theorem \ref{ms}}


Set $W:=\Lambda(\mu)_u$ and $U_i=\{u_i(t): t\in K_v\}$.
Then $\text{dim}(W)\ge 1$ by \cite[Lemma 2.2]{MS}  whose proof works
in the same way for $K_{v}$.
 By Theorem \ref{hstar}, there exists $L\in \mathcal H$
such that
$$\mu(\pi(\mathcal S(L,W))=0\quad\text{and}\quad \mu(\pi(\mathcal N (L,W))>0 .$$
Therefore we can find
 a compact set $C_1\subset \mathcal N (L,W)\ba \mathcal S(L,W)$
such that $\mu(\pi(C_1))>0$.
Note that by (\ref{thw}), $\pi(C_1)\cap\pi(\mathcal S(L,W))=\emptyset$.
Let  $z_i\to e \in G$ be a sequence such that
$xz_i\in \text{supp}(\mu_i)$ and the trajectory
$\{xz_i u_i(t): t\in K_v\}$ is uniformly distributed with
respect to $\mu_i$ as $T\to \infty$ when the averages are taken over
the sets $I_T:=\{s\in K_{v}: |s|_{v}\leq T\}$.

By the pointwise ergodic theorem, such a sequence $\{z_i\}$
always exists.

Pick $y\in \text{supp}(\mu)\cap \pi(C_1)$.
Then there exists a sequence $y_i\in xz_i U_i$
which converges to $y$.
Let $h_i\to e$ be a sequence satisfying $y_i=yh_i$ for each $i$.
Set $$\mu_i'=\mu_i h_i\quad\text{ and } \quad
u_i'(t)=h_i u_i(t) h_i^{-1} .$$
Then $\mu_i'\to \mu$ as $i\to \infty$, $y\in\text{supp}(\mu_i')$
and $\{yu_i'(t)\}$ is uniformly distributed with respect to $\mu_i'$.

Let $R$ and $\Psi$ be as Proposition \ref{3.4} with
 respect to $C:=\bar \eta_L(C_1)$ and $\e:=\mu(\pi(C_1))/2$.
We can choose a compact neighborhood  $B$ of $\pi(C_1)$ such that
$B\cap R=\emptyset$.
Put $$\Omega:=\pi(\bar{\eta}_L^{-1}(\Psi))\cap B .$$
Since $\pi(C_1)\subset \Omega$,
we have $\mu_i'(\Omega)>\e$ for all sufficiently large $i$.
Hence for sufficiently large $T$ and $i$,
$$\theta_{v_0} (\{s\in I_T: yu_i'(t)\in \Omega\})>\e\cdot \theta_{v_0}(I_T)$$
By Proposition \ref{3.4},
there exists $g_0\in\pi^{-1}(y)$
so that $w=p_L( g_0) \in
\bar\eta_L(\pi^{-1}(y))\cap\overline{\Psi}$
and
$w u_i'(t)=w$ for all $t\in K_v$.
Consider the $K$-subgroup $\bM:=\text{Stab}_\bG(wg_0^{-1})$.
We observe that
\begin{equation}\label{em1}
g_0\{u_i'(t)\}g_0^{-1}\subset \bM_S\quad\hbox{and}\quad
\text{supp}(\mu_i')g_0^{-1}\subset \Gamma\ba\G \bM_S.
\end{equation}

We use induction on $\dim(\bG)$ to show that
 $$\text{(a)}\quad \text{supp}(\mu)=y\Lambda(\mu)\quad
 \quad\text{and}\quad\quad \text{(b)}\quad \Lambda (\mu_i ')\subset \Lambda(\mu) $$
for all sufficiently large $i$.

If $\dim(\bM)< \dim(\bG)$, since \eqref{em1} and $g_0^{-1}\mu_i'\to
g_0^{-1}\mu$, we can
 apply inductive hypothesis to the space  $\Gamma\ba\G
\bM_S$ and the measure $g_0^{-1}\mu$. This yields (a) and (b).

If $\dim(\bM)= \dim(\bG)$, then $\MT(L)$ is a
normal subgroup of $\bG$.

Since
$\mathcal N (L,W)=G$ and $\mu(\pi(\mathcal S(L,W)))=0$, we have
$\mu=\mu_L$.
By Theorem \ref{hstar},
 every $W$-ergodic component of $\mu$ is
$g^{-1}Lg$-invariant for some
$g\in G$.
Since $\MT(L)$ is a normal subgroup of $\bG$,
 $g^{-1}Lg$ is a subgroup of
$\MT(L)_S$ and
$[\MT(L)_S: L]=[\MT(L)_S: g^{-1}L g]$.
Since $\MT(L)_S$ has only finitely many closed subgroups
of bounded index, we obtain a finite index subgroup $L_0$ of $\MT(L)_S$
 such that $L_0$ is normal in $G$ and every $W$-ergodic
component of $\mu$, and hence $\mu$ itself, is $L_0$-invariant.

Denoting by $\rho: G \to L_0\ba G$ the quotient homomorphism,
 we set $\bar X=\rho(\Gamma)\ba (L_0\ba G)$
and obtain the push-forward map
$\bar \rho_*:\mathcal P(X) \to \mathcal P(\bar X)$ of measures.

Since $\text{dim}(\MT(L))\ge \dim W >1$,
we may  apply the induction to the measures $\bar\rho_*(\mu_i')$
and $\bar\rho_*(\mu)$
and obtain
$$\text{supp}(\bar\rho_*(\mu))=\bar y \Lambda(\bar\rho_*(\mu))$$
and for all large $i$,
$$\Lambda(\bar\rho_*(\mu_i))\subset \Lambda (\bar\rho_*(\mu)).$$

Since $\mu$ is $L_0$-invariant,
applying \cite{D} in the same way as in \cite{MS},
this implies $$\rho^{-1}(\Lambda (\bar \rho_*(\mu)))=\Lambda(\mu). $$
It is easy to deduce (a) and (b) now.

We finally claim that (a) and (b) imply (1)--(3).
Since $\mu_i'$ are $\{u_i'(t)\}$-ergodic measures and $y\in \text{supp}(\mu')$,
by Theorem \ref{tom}, $\mu_i'$ is
a $\Lambda(\mu_i')$-invariant measure supported on $y\Lambda(\mu_i')$.
Hence, by (b),
$$
\text{supp}(\mu_i)=\text{supp}(\mu_i')h_i=y\Lambda(\mu_i')h_i
\subset y\Lambda(\mu)h_i =x\Lambda(\mu)h_i.$$
Since $$xz_i\in \text{supp}(\mu_i)\subset x\Lambda(\mu)h_i,$$
and $z_i, h_i\to e$,
it follows that $z_ih_i^{-1}\in \Lambda(\mu)$.
Therefore,
$$
\text{supp}(\mu_i)\subset x\Lambda(\mu)h_i=x\Lambda(\mu) z_i
\quad\hbox{and}\quad
\Lambda(\mu_i)=h_i^{-1}\Lambda(\mu_i')h_i\subset
z_i^{-1}\Lambda(\mu)z_i.
$$
This proves (2).

There exists $i_0$ such that for all $i\geq i_0$,
$z_i U_iz_i^{-1}\subset \Lambda(\mu)$.
Let $H$ be the subgroup of $G$ (in fact
of $\bG(K_{v})$)
generated by all $z_iU_i z_i^{-1}$,
$i\geq i_0$. By (2), $H\subset \Lambda(\mu)$
and hence by (a)
$$\overline{xH}\subset x\Lambda(\mu)=\hbox{supp}(\mu).$$
On the other hand, by Theorem \ref{tom},
$$
\overline{xH}=\G\ba\G L g
$$
for some $L\in\mathcal{H}$ such that $H\subset g^{-1}Lg$. Since $\mu_i\to\mu$,
it follows that $\G\ba\G L g=\hbox{supp}(\mu)$.

Since $\G \ba \G L g=x\Lambda(\mu)$,
$\mu$ is the unique invariant probability measure
supported on $\G\ba \G L g$, as required in (1).
Since $L\in \mathcal H$ and $H\subset g^{-1}L_u g$, by
the following proposition \ref{mauc}, $\mu$ is ergodic with respect to $H$.
This finishes the proof.

\begin{Prop}\label{mauc} \label{Mautner2}
Let $L$ be a closed subgroup of finite index
in $\bP_S$ for some
$\bP\in\mathcal{F}$, and let $H$ be a closed subgroup of $L$ generated by
unipotent one-parameter groups such that $\overline{\G\ba \G H}=\G \ba \G L$.
Then the translation action of $H$ on $\G \ba \G L$
is ergodic.
\end{Prop}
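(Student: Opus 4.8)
The aim is to show that every $H$-invariant $f\in L^{2}(\G\ba\G L)$ is constant almost everywhere, which is exactly ergodicity; I would argue by induction on $\dim\bG$. Since $\bP$ lies in class $\mathcal{F}$ its radical is unipotent, so fix a Levi decomposition $\bP=\bU\rtimes\bM$ with $\bU$ the unipotent radical and $\bM$ semisimple, each $K$-simple factor of $\bM$ being $K_{v}$-isotropic for some $v\in S$; recall from Definition \ref{classf} that no proper normal $K$-subgroup of $\bP$ annihilates every unipotent direction at every place of $S$. Because $L$ has finite index in $\bP_{S}$ and $K_{v}$ has no proper finite-index additive subgroup, $L$ contains the subgroup $\bP_{S}^{+}$ generated by all unipotent one-parameter subgroups of $\bP_{S}$; moreover $\G\cap\bP_{S}$ is an $S$-arithmetic subgroup of $\bP_{S}$ meeting $\bU_{S}$ in a cocompact lattice and projecting onto an $S$-arithmetic subgroup $\bar\G$ of $\bM_{S}$, and $\G\ba\G L$ fibres over $\bar\G\ba\bM_{S}$ with compact nilmanifold fibres modelled on $(\G\cap\bU_{S})\ba\bU_{S}$.

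The first reduction is to the claim that $f$ is $\bU_{S}$-invariant. Granting it, $f$ descends to an $L^{2}$-function on $\bar\G\ba\bM_{S}$ invariant under the image $\bar H$ of $H$; now $\bar H$ is a closed subgroup generated by nontrivial unipotent one-parameter subgroups --- nontrivial because $\G\ba\G H$ is dense while $\G\ba\G L$, fibering onto the noncompact $\bar\G\ba\bM_{S}$, is not a point --- and its orbit is dense in $\bar\G\ba\bM_{S}$. If $\bU\ne\{e\}$ then $\dim\bM<\dim\bG$ and the inductive hypothesis, applied to $\bM$ (which is again in class $\mathcal{F}$), gives ergodicity of $\bar H$ on $\bar\G\ba\bM_{S}$; if $\bU=\{e\}$ we are in the base case $\bP=\bM$ semisimple, where ergodicity of a density-orbit subgroup generated by unipotents follows from Moore's ergodicity theorem (the relevant factors being noncompact by $K_{v}$-isotropy) together with Ratner's measure classification, Theorem \ref{tom}. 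Either way $f$ is constant.

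It remains to upgrade the $H$-invariance of $f$ to $\bU_{S}$-invariance, which is the heart of the matter. I would invoke the Mautner phenomenon over each completion: for a unipotent one-parameter subgroup $u\colon K_{v}\to\bP(K_{v})\subset L$, every $u$-invariant vector in a unitary representation of $L$ is fixed by the Mautner subgroup generated by $u$ and by all $g$ with $u(t)^{-1}gu(t)\to e$ or $u(t)gu(t)^{-1}\to e$ as $|t|_{v}\to\infty$; the classical matrix-coefficient argument applies verbatim over $\br$, $\c$ and the non-archimedean $K_{v}$. Decompose $L^{2}(\G\ba\G L)$ over the normal subgroup $\bU_{S}$: since $\G\cap\bU_{S}$ is a cocompact lattice in $\bU_{S}$, the non-$\bU_{S}$-invariant part is a sum of subspaces attached to the nontrivial unitary characters of $\bU_{S}$ trivial on $\G\cap\bU_{S}$, and $\bar H$ permutes these subspaces through the coadjoint action. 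Using the Mautner invariances of $f$ together with the density of the $H$-orbit, one shows that no such nontrivial subspace can carry an $H$-invariant component; if, however, some quotient $\bM$-module of $\bU$ on which the unipotents of $H$ act trivially were to remain, one passes to the corresponding quotient of $\bP$ and applies the inductive hypothesis, the class-$\mathcal{F}$ characterization ensuring that a usable unipotent direction survives at some place of $S$ at each stage. Thus $f$ is $\bU_{S}$-invariant, and the proof is complete.

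The main obstacle is precisely this last step: a single unipotent one-parameter subgroup supported in one simple factor of $\bM$, or one lying inside $\bU$, has far too small a Mautner closure on its own, so one must genuinely exploit the hypothesis $\overline{\G\ba\G H}=\G\ba\G L$, and it is the interaction of the Levi decomposition, the coadjoint action of $\bar H$ on the character group of the cocompact lattice $\G\cap\bU_{S}$ in $\bU_{S}$, and the induction on $\dim\bG$ --- paralleling the proof of Theorem \ref{ms} --- that must be assembled to carry it through. The Mautner estimates over the various $K_{v}$ and the facts about cocompact lattices in unipotent groups are routine, but the normalized measures $\theta_{v}$ and the fibration over $\bar\G\ba\bM_{S}$ need to be tracked throughout.
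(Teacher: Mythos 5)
Your proposal is not a proof but a sketch that leaves the crucial step unproven — and you say so yourself. The plan is: pass to the Levi decomposition $\bP=\bU\rtimes\bM$, show an $H$-invariant $f\in L^2(\G\ba\G L)$ is $\bU_S$-invariant, then induct on $\dim\bG$. But the step that would make this work — ``Using the Mautner invariances of $f$ together with the density of the $H$-orbit, one shows that no such nontrivial subspace can carry an $H$-invariant component'' — is precisely the content of the proposition, and you acknowledge that it ``must be assembled to carry it through.'' As written, the Mautner closure of a \emph{single} unipotent one-parameter subgroup $u\colon K_v\to\bP(K_v)$ is in general too small to reach all of $\bU_S$, and iterating over the generators of $H$ and tracking how the coadjoint action permutes the characters of $(\G\cap\bU_S)\ba\bU_S$ is exactly the analysis you have not carried out. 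The appeal to ``Moore's ergodicity theorem together with Ratner's measure classification'' for the base case is also off target: Moore's theorem requires noncompactness and irreducibility hypotheses that need checking, and Theorem \ref{tom} classifies invariant measures rather than directly asserting ergodicity of the specific dense $H$-action.

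The paper's proof is much shorter and avoids the Levi fibration entirely, by exploiting one structural fact you do not use: Proposition \ref{Mautner} (Margulis--Tomanov) supplies a closed subgroup $M\subset\bP_S$ containing $H$ which is \emph{normal in $\bP_S$} and which satisfies the Mautner property for the triple $(H,M,\bP_S)$. Normality, not just the local contraction picture, is the key. Then $M\cap L$ is automatically normal in $L$; since $H\subset M\cap L$ and $\overline{\G\ba\G H}=\G\ba\G L$, the orbit of $M\cap L$ is dense, and a closed normal subgroup with dense orbit acts ergodically on $\G\ba\G L$. Finally, applying the Mautner property to $\operatorname{Ind}^{\bP_S}_L L^2(\G\ba\G L)$ shows that any $H$-invariant vector is $(M\cap L)$-invariant, hence constant by the ergodicity just established. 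Your approach might be salvageable, but it would essentially re-derive the normality of the Mautner closure by hand, which is the hard part of Margulis--Tomanov's Corollary 2.8 and is the very ingredient that makes the paper's argument a three-line proof.
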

\begin{proof}
 By an $S$-algebraic version of the Mautner lemma (see below Prop. \ref{Mautner})
there exists a closed normal subgroup
$M\subset \bP_S$ containing $H$ such that the triple $(H, M, \bP_S)$ has the Mautner
property, that is, for any
continuous unitary representation of $\bP_S$, any $H$-invariant vector is also $M$-invariant.
Since $M\cap L$ is normal in $L$ and
$\overline{\G\ba \G (M\cap L)}=\G \ba \G L$, it follows
that $M\cap L$ acts ergodically on $\G \ba \G L$.
Applying the
Mautner property to the unitary representation $\hbox{Ind}^{\bP_S}_L
 L^2(\G \ba \G L)$, we deduce that $H$ acts ergodically on $\G \ba \G L$.
\end{proof}

We recall an $S$-arithmetic version of the Mautner lemma:
\begin{Prop}\cite[Corollary 2.8]{MT2} \label{Mautner}
Let $L\subset \bG_S$ be a closed subgroup
generated by unipotent one-parameter subgroups in it.
Then there exists a closed normal subgroup
$M\subset \bG_S$ containing $L$ such that the triple $(L, M, \bG_S)$ has the Mautner
property, that is, for any
continuous unitary representation of $\bG_S$, any $L$-invariant vector is also $M$-invariant.
\end{Prop}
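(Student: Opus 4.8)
The statement is \cite[Corollary 2.8]{MT2}; the plan is to recall its proof. The governing mechanism is the Mautner phenomenon (equivalently, Howe-Moore type decay of matrix coefficients of unitary representations) for semisimple groups over local fields, and the argument has three stages: constructing a candidate $M$ from the almost-simple factors so that normality is automatic, reducing the Mautner property factor by factor, and finally invoking the rank-one case.

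Step 1 (construction of $M$). Write $\bGS=\prod_{v\in S}\bG(K_v)$, and over each $K_v$ decompose $\bG$ into its almost-$K_v$-simple factors $\bG_{v,1},\dots,\bG_{v,r_v}$. Let $M$ be the closed subgroup of $\bGS$ generated by $L$ together with all the groups $\bG_{v,i}(K_v)^+$ (the subgroup generated by the unipotent one-parameter subgroups) over those pairs $(v,i)$ for which the projection of $L$ to $\bG_{v,i}(K_v)$ is non-trivial. Each $\bG_{v,i}(K_v)^+$ is normal in $\bG(K_v)$ and the set of relevant pairs is conjugation-invariant, so $M$ is a closed normal subgroup of $\bGS$; and $L\subseteq M$ by construction.

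Step 2 (reduction to an almost-simple factor). Fix a continuous unitary representation $\pi$ of $\bGS$ and a vector $w$ fixed by $L$. Restricting $\pi$ to the (commuting) factors and passing to isotypic components, it suffices to prove: if $\bH$ is almost-$K_v$-simple over $K_v$ and $\sigma$ is a unitary representation of $\bH(K_v)$ having a vector fixed by a non-trivial unipotent one-parameter subgroup $U$ of $\bH(K_v)$, then that vector is fixed by $\bH(K_v)^+$. Passing from the $L$-fixed vector $w$ to a vector fixed by a single such $U$ inside a given factor uses only that $L$ projects non-trivially there; the attendant bookkeeping with tensor and isotypic components is routine.

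Step 3 (the rank-one core, and the main obstacle). Since a non-trivial unipotent one-parameter subgroup over $K_v$ forces $\bH$ to be $K_v$-isotropic, $\bH(K_v)$ contains (via Bruhat-Tits) a $K_v$-subgroup $\bH_0$ isogenous to $\SL_2$ or $\PGL_2$ with $U\subseteq\bH_0(K_v)$, and $\bH(K_v)^+$ is generated by rank-one subgroups of this kind and their conjugates; so it is enough to know the classical rank-one Mautner phenomenon: in a unitary representation $\sigma$ of $\SL_2(k)$, a vector $w$ fixed by the standard unipotent $U$ is fixed by all of $\SL_2(k)$. I would first establish fixation under the diagonal torus $A$, then pick a sequence $a_n\in A$ contracting the opposite unipotent $\bar U$ under conjugation, so that $\sigma(\bar u)w=\lim_n\sigma(a_n)\sigma(a_n^{-1}\bar u a_n)w=\lim_n\sigma(a_n)w=w$ by strong continuity and unitarity, whence $w$ is fixed by $\SL_2(k)=\langle U,\bar U\rangle$. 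The $A$-fixation step is the genuinely analytic point: it does not follow from unitarity alone but rests on the structure of the unitary dual of $\SL_2(k)$, equivalently on analyzing the bi-$U$-invariant matrix coefficient $g\mapsto\langle\sigma(g)w,w\rangle$ on the double coset space $U\backslash\SL_2(k)/U$, and it must be handled uniformly for archimedean and non-archimedean $k$. This is the main obstacle to a self-contained proof, and it is exactly this input that we take from \cite{MT2}.
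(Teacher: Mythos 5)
The paper treats this proposition as a black box, citing \cite[Corollary 2.8]{MT2} without supplying a proof of its own, so there is nothing in the paper to compare your argument against; I comment on your sketch on its own terms. Step~1 is sound: every unipotent one-parameter subgroup of $\bG_S$ lies in $\prod_{v,i}\bG_{v,i}(K_v)^+$, so $L$ does too, and your $M$ is then simply the product of the $\bG_{v,i}(K_v)^+$ over the relevant factors, hence closed and normal.

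Steps~2 and~3, however, misidentify where the difficulty lies, in a way that would mislead a reader about the nature of the cited result. In Step~3 you assert that passing from $U$-invariance to $A$-invariance in $\SL_2(k)$ ``does not follow from unitarity alone but rests on the structure of the unitary dual of $\SL_2(k)$.'' In fact it does follow from unitarity alone, via the auxiliary Mautner lemma: if $\pi(u)w=w$ for all $u\in U$ and $g=\lim_n u_n g_n u_n'$ with $u_n,u_n'\in U$ and $g_n\to e$, then $\langle\pi(g)w,w\rangle=\lim_n\langle\pi(u_ng_nu_n')w,w\rangle=\lim_n\langle\pi(g_n)w,w\rangle=\|w\|^2$, whence $\pi(g)w=w$. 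A direct $2\times 2$ computation (take $g_n$ lower unitriangular with lower-left entry $c_n\to 0$, $u_n=u((\lambda-1)/c_n)$, $u_n'=u((\lambda^{-1}-1)/c_n)$) realizes $\mathrm{diag}(\lambda,\lambda^{-1})$ as such a limit, uniformly over all local fields $k$; there is no unitary-dual input and no archimedean/non-archimedean case split, and this elementary route is precisely what gives the uniformity the $S$-adic statement needs. Step~2 is likewise mis-framed: a vector fixed by a diagonal unipotent $(u_1(t),u_2(t))$ is \emph{not} a priori fixed by $(u_1(t),e)$ after restricting to a factor, so the passage is not ``routine bookkeeping with tensor and isotypic components'' --- if the two factor unipotents had opposite atoms in their spectral measures one could be fixed diagonally without being fixed factorwise, and ruling this out is itself an instance of the Mautner phenomenon, not a formality. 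The reduction that actually works is, once more, the same auxiliary lemma: conjugating $g_n=(g_n^{(1)},e)\to e$ by $u(t_n)$ lands, in the limit, on $(u_1(a),e)$ for every $a$, so $w$ is fixed factorwise and one may then work inside a single almost-simple factor. Reorganizing Steps~2 and~3 around that one elementary limit lemma would give a correct, self-contained proof; as written, the sketch attributes the theorem's content to the wrong place.
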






\section{Non-divergence of unipotent flows}\label{sec:dm}
\subsection{Statement of Main theorem}
Let $\bG$ be a connected semisimple algebraic $K$-group,
$S$ a finite set of normalized absolute values of $K$
 including all the archimedean $v$ such that $\bG(K_v)$
is non-compact
and $\G\subset G$ an $S$-arithmetic lattice.
Here we also assume that $\bG$ is $K$-isotropic, equivalently, that
$\G$ is a non-uniform lattice (otherwise, the main theorem of
this section holds trivially). Note this also implies
that $\bG(K_v)$ is non-compact for every valuation $v$ of $R$.
We generalize the main theorem of
Dani-Margulis in \cite{DM1} to an $S$-algebraic setting.
Some of our arguments follow closely those in \cite{EMS1}.

Let $\mathbf A$ be a maximal $K$-split torus of $\bG$ and
choose a system $\{\alpha_1, \ldots, \alpha_r\}$ of
simple $K$-roots for $(\bG, \mathbf A)$.
For each $i$, let $\mathbf P_i$ be the standard maximal parabolic subgroup
corresponding to $\{\alpha_1, \ldots, \alpha_r\}-\{\alpha_i\}$.

The subgroup $\mathbf P:=\cap_{1\le i\le r} \bP_i$ is a minimal $K$-parabolic
subgroup of $\bG$, and there exists a finite subset $F\subset \mathbf G(K)$ such that
$$\bG(K)=\Gamma  F \mathbf P(K) .$$

For $T>1$, we set
$$J_{T}:=\{x\in K_v: |x|_v<T\} .$$
\begin{Thm}\label{dmma} Let $\e>0$.
Then there exists a compact subset $C\subset \G\ba \bG_S$
such that for any unipotent one parameter subgroup $U=\{u(t)\}\subset \bG(K_v)$, and $g\in \bG_S$,
either one of the following holds:
\be
\item for all
 large $T>0$,
$$\theta_v\{t\in J_T: \G\ba \G g u_t\in C\}\ge (1-\e) \,\theta_v (J_T);$$

\item there exist $i$ and $\lambda\in \Gamma F$ such that
$$g U g^{-1}\subset \lambda \mathbf P_i \lambda^{-1} .$$
\ee
\end{Thm}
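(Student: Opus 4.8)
The plan is to prove this by adapting the Dani--Margulis non-divergence theorem \cite{DM1} to the $S$-arithmetic setting, in the spirit of \cite{EMS1} and using the same polynomial estimates that already appear in Section~\ref{sec:ms}. For each $i=1,\dots,r$ let $\bU_i$ be the unipotent radical of $\bP_i$, let $d_i=\dim\bU_i$, and consider the $K$-representation $\rho_i\colon\bG\to\GL(\mathbf V_i)$ on $\mathbf V_i=\land^{d_i}\gg$ together with the rational vector $p_i$ spanning $\land^{d_i}\Lie(\bU_i)$; since $N_\bG(\bU_i)=\bP_i$ one has $\operatorname{Stab}_\bG(p_i)\subseteq\operatorname{Stab}_\bG(Kp_i)=\bP_i$. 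Set $V_i=\prod_{v\in S}\mathbf V_i(K_v)$, equipped with norms $\|\cdot\|_v$ that are max norms for almost all $v$, and let $\mathcal D_i=\{p_i\rho_i(\mu^{-1}):\mu\in\G F\}\subset V_i$, a discrete $\G$-stable set (discreteness comes from $p_i$ being $K$-rational and $\G\subset\bG(\mathcal O_S)$, exactly as in \eqref{eq:proper}). Put $\delta_i(g)=\min_{v\in\mathcal D_i}\|v\rho_i(g)\|$. By $S$-arithmetic reduction theory (Godement's compactness criterion) the set $C_\rho=\{\G g\in\G\ba\bGS:\delta_i(g)\ge\rho\text{ for all }i\}$ is compact for each $\rho>0$, and every compact subset of $\G\ba\bGS$ lies in some $C_\rho$.

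First I would record the polynomiality: for a unipotent one-parameter subgroup $U=\{u(t)\}\subset\bG(K_v)$, $v\in S$, and any $v\in\mathcal D_i$, each coordinate of $t\mapsto v\rho_i(gu(t))$ is a polynomial of degree bounded only in terms of $\dim\bG$, so $t\mapsto\|v\rho_i(gu(t))\|_v$ is $(C,\alpha)$-good on $K_v$ with $C,\alpha$ depending only on $\dim\bG$ and on whether $K_v$ is archimedean; the complex places are reduced to the real case by restriction of scalars, exactly as in the proof of Proposition~\ref{3.4}. The core of the argument is then the Dani--Margulis ``basic lemma'' in this generality: given $\e>0$ there is $\rho>0$, and with $C:=C_\rho$, such that for every $g\in\bGS$, every such $U$, and every sufficiently large interval $J_T\subset K_v$ centred at $0$, \emph{either} $\theta_v\{t\in J_T:\G\ba\G gu(t)\in C\}\ge(1-\e)\,\theta_v(J_T)$, \emph{or} there are $i$ and $v\in\mathcal D_i$ with $\sup_{t\in J_T}\|v\rho_i(gu(t))\|<\rho$. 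I would prove this dichotomy by the inductive covering argument of \cite{DM1}: partition $J_T$ according to which vector of $\mathcal D_i$ realizes the minimum $\delta_i(gu(t))$, use the $(C,\alpha)$-good property together with the combinatorics of the poset of standard $K$-parabolics to bound the measure of the ``low'' set in each direction by a multiple of $\e\,\theta_v(J_T)$, and, when this bound fails, isolate a single dominant direction on a comparable subinterval; as in the proof of Proposition~\ref{3.4}, the non-archimedean places are handled using that two intervals in $K_v$ are either nested or disjoint.

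It remains to upgrade the second alternative. Suppose the first alternative fails for some sequence $T_n\to\infty$. There are finitely many indices $i$ and, by discreteness of $\mathcal D_i$, only finitely many $v\in\mathcal D_i$ with $\|v\rho_i(g)\|<\rho$, so a pigeonhole argument produces a fixed $i$ and a fixed $v=p_i\rho_i(\mu^{-1})\in\mathcal D_i$, $\mu\in\G F$, with $\sup_{t\in J_{T_n}}\|v\rho_i(gu(t))\|<\rho$ for infinitely many $n$; since the $J_{T_n}$ exhaust $K_v$, the polynomial $t\mapsto v\rho_i(gu(t))$ is bounded on all of $K_v$, hence constant. Thus $\rho_i(u(t))$ fixes $v\rho_i(g)=p_i\rho_i(\mu^{-1}g)$ for all $t$, which forces $\mu^{-1}gUg^{-1}\mu\subseteq\operatorname{Stab}_\bG(p_i)\subseteq\bP_i$, i.e.\ $gUg^{-1}\subseteq\lambda\bP_i\lambda^{-1}$ with $\lambda=\mu\in\G F$; this is alternative (2). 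Hence if (2) does not hold, the first alternative must hold for all large $T$, which is (1).

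The main obstacle will be the quantitative non-divergence step in the second paragraph: carrying the Dani--Margulis covering/induction through uniformly over all $g\in\bGS$, all unipotent $U\subset\bG(K_v)$ with $v\in S$, and over the various completions $K_v$ simultaneously, with constants depending only on $\dim\bG$ --- in particular making the bookkeeping of minimizing vectors over subintervals behave identically in the ultrametric and archimedean cases, and ensuring the passage from ``low on every large interval'' to ``fixed minimizing vector, bounded polynomial'' is legitimate. The remaining ingredients --- reduction theory in the $S$-arithmetic setting and the polynomial, $(C,\alpha)$-good estimates --- are already available, the latter essentially in the form used in Section~\ref{sec:ms}.
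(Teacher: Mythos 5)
Your proposal matches the paper's proof essentially step for step: the same reduction to a Dani--Margulis dichotomy through the norms of the rational vectors $w_i\in\land^{\dim\mathfrak u_i}\mathfrak u_i(K)$, the same use of $S$-arithmetic reduction theory and Kleinbock--Tomanov $(C,\alpha)$-good polynomial estimates, and the same final upgrade (pigeonhole over the discrete $\Gamma F$-orbit of $w_i$, then a polynomial bounded on all of $K_v$ is constant, landing in $\bP_i=\operatorname{N}_\bG(\bU_i)$). The only organizational difference is that the paper factors the dichotomy through Theorem~\ref{dm2} (a hitting-time statement proved by the admissible-sequence induction) and the separate Kleinbock--Tomanov bootstrap Theorem~\ref{kt}, whereas you fold the measure estimate directly into the covering induction; these are equivalent packagings of the same argument.
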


\subsection{Deduction of Theorem \ref{rc} (1) from Theorem \ref{dmma}:}\label{ded:ssm2}
Suppose not.
Let $C$ be a compact subset as in Theorem \ref{dmma}.
Then there exists $\e>0$
such that $$g_i\nu_i(C)<1-\e\quad\text{for all large $i$, }$$by passing to a subsequence.
Fix $v\in S$ that is strongly isotropic for all $\bL_i$.
Let $U_i=\{u_i(t)\}\subset \bL_i(K_v)^+$  be as in Lemma \ref{l:ergodic} and
let $R_i$ denote a subset of full
measure in $\pi(\tilde \bL_{i,S})$ such that for every $h\in R_i$,
the orbit $\G\ba \G   h U_i $ is
uniformly distributed on $\Gamma\ba \Gamma  \tilde \bL_{i,S}  $.
Hence for each $i$, there exists $T_i$ such that
$$\theta_v\{t\in J_T: \G\ba \G  h u_i(t) g_i\in  C\} \le (1-\e/2 )\theta_v(J_T)$$
for all $T>T_i$.

Applying $U=g_i^{-1} U_i g_i$ and $g=hg_i$ to
 Theorem \ref{dmma}, there exist $j_i$ and $\lambda_i\in \Gamma F$
such that
$$hU_ih^{-1}\subset \lambda_i \bP_{j_i}\lambda_i^{-1}$$
for all $h\in R_i$, where $\bP_{j_i}$ is a proper parabolic $K$-subgroup of $\bG$.

Since the set $\{h\in \pi(\tilde \bL_{i,S}): hU_ih^{-1}\subset
\lambda_i \bP_{j_i}\lambda_i^{-1}\}$ is a product of analytic
manifolds over local fields which contains a subset of full measure
in $\pi(\tilde \bL_{i,S})$, it follows that this set is $\pi(\tilde
\bL_{i,S})$ itself. Since $U_i$ is not contained in any proper
normal subgroup of $\bL_i (K_v)^+$, we have $$\bL_i \subset
\lambda_i \bP_{j_i}\lambda_i^{-1} .$$ This is a contradiction to the
assumption by Lemma \ref{aniso}.

\subsection{}

For each $i$, let $\mathbf U_i$ denote
 the unipotent radical of $\mathbf P_i$.
Denote by $\mathfrak u_i$ the Lie algebra of $\mathbf U_i$
and by $\mathfrak g$ the Lie algebra of $\bG$.
For each $v\in S$, we fix a norm $\|\cdot \|_v$ on the $K_v$-vector space
$\land^{{\dim \mathfrak{u}_i}} \mathfrak g(K_v)$
and choose a non-zero vector $w_i$ of $ \land^{\dim \mathfrak{u}_i}
\mathfrak u_i (K)$ with $\|w_i\|_v=1$ for all $v\in S$.
Define $\Delta_i: \bGS \to \br^*$ by
$$\Delta_i((g_v)):=\prod_{v\in S}\|w_i g_v\|_v .$$



Fix $v\in S$.   Let $\Cal P_d$ denote the family of
all polynomial maps $K_v\to \bG(K_v)$ (resp. $\br \to \bG(\c)$) of
degree at most $d$ if $K_v\ne \c$ (resp. $K_v=\c$).

For $T>1$ we set if $K_v \ne \c$
$$I_{T}:=\{x\in K_v: |x|_v<T\} ,$$
and if $K_v=\c$,
$$I_{T}:=\{x\in \br: |x|<T\} $$
where $|\cdot |$ is the usual absolute value of a real number.
We keep this definition of $I_T$ for the rest of this section.
We will deduce Theorem \ref{dmma} from the following:
\begin{Thm}\label{nd} Fix $\alpha,\e>0$ and $v\in S$.
Then there exists a compact subset $C\subset \G\ba\bG_S$
such that for any $u\in \mathcal{P}_d$ and any $T>0$,
either one of the following holds:
\be
\item $\theta_v (\{s\in I_T: \Gamma \ba \Gamma u(s)\in C\}) \geq (1-\e) \theta_v(I_T)$

\item there exist $i\in \{1, \ldots, r\}$ and $\lambda\in \Gamma F$
such that $$\Delta_i(\lambda^{-1} u(s))\leq \alpha \quad\hbox{for all $s\in I_T$,}$$

\ee
\end{Thm}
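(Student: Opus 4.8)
The plan is to follow the scheme of Dani--Margulis \cite{DM1}, in the form adapted to general semisimple groups by Eskin--Mozes--Shah \cite{EMS1}, combined with the $S$-adic $(C,\alpha)$-good technology already invoked in Proposition~\ref{cp} and Lemma~\ref{4.4}. The first reduction is geometric: by reduction theory for the $S$-arithmetic lattice $\Gamma$ (Siegel sets; see \cite{B}, and \cite{PR} for the $S$-arithmetic case), for every $\delta>0$ there is a compact set $C_\delta\subset\Gamma\backslash\bGS$ such that $\Gamma g\notin C_\delta$ forces $\Delta_i(\lambda^{-1}g)<\delta$ for some $i\in\{1,\dots,r\}$ and some $\lambda\in\Gamma F$, while conversely, for $\delta$ small enough, $\Delta_i(\lambda^{-1}g)<\delta$ places $\Gamma g$ outside a prescribed compact set. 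Hence, given $\alpha$ and $\e$, it suffices to fix $\delta\ll\alpha$, set $C=C_\delta$, and prove: if $\sup_{s\in I_T}\Delta_i(\lambda^{-1}u(s))>\alpha$ for every $i$ and every $\lambda\in\Gamma F$, then the set of $s\in I_T$ with $\Delta_i(\lambda^{-1}u(s))<\delta$ for some such pair has $\theta_v$-measure at most $\e\,\theta_v(I_T)$.

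The analytic ingredient is that the functions $s\mapsto\Delta_i(\lambda^{-1}u(s))$ are uniformly $(C_0,\alpha_0)$-good. Since $u\in\mathcal{P}_d$ moves only in the $K_v$-coordinate, one has $\Delta_i(\lambda^{-1}u(s))=c_{i,\lambda}\,\|w_i\,\lambda_v^{-1}\,u(s)\|_v$, where $c_{i,\lambda}>0$ depends on $\lambda$ (through its components at the places $w\ne v$) but not on $s$, and $s\mapsto w_i\,\lambda_v^{-1}\,u(s)$ is a polynomial curve in $\wedge^{\dim\mathfrak{u}_i}\mathfrak{g}(K_v)$ whose degree is bounded in terms of $d$ and of the representation $\wedge^{\dim\mathfrak{u}_i}\Ad$. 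For $v$ non-archimedean or $v=\br$, the required quantitative estimate on the sublevel sets of such a curve is the classical $(C,\alpha)$-good property of polynomials, made quantitative exactly as in Proposition~\ref{cp}; for $v=\c$ one restricts scalars and converts the estimate over real lines into a $\theta_v$-estimate over $\c$ by Lemma~\ref{4.4}.

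The combinatorial heart is an induction on $\dim\bG$ (equivalently, on the $K$-rank $r$). On a fixed interval $I_T$ one singles out, among the finitely many pairs $(i,\lambda)$ relevant on $I_T$ --- finiteness coming from the discreteness of the $\Gamma$-orbit of $w_i$ in $\wedge^{\dim\mathfrak{u}_i}\mathfrak{g}$ together with the properness built into reduction theory --- the pair for which $\sup_{s\in I_T}\Delta_i(\lambda^{-1}u(s))$ is smallest. If this supremum is $\le\alpha$, we are in alternative~(2). Otherwise, the $(C_0,\alpha_0)$-good property of that ``deepest'' function lets one cover $\{s\in I_T:\Delta_i(\lambda^{-1}u(s))<\delta\}$ by a family of bounded multiplicity of subintervals on each of which the function attains a value $\ge\alpha$, so that the measure of this bad set is at most $C_0(\delta/\alpha)^{\alpha_0}\theta_v(I_T)$; off this set the trajectory lies, up to a fixed compact error, inside $\Gamma\backslash\Gamma\,\lambda\mathbf{P}_i\lambda^{-1}$, and one applies the inductive hypothesis inside the Levi quotient of $\lambda\mathbf{P}_i\lambda^{-1}$ (whose semisimple part has strictly smaller rank) to bound the remaining excursions. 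Summing the contributions and taking $\delta$ small enough yields the bound $\e\,\theta_v(I_T)$.

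The step I expect to be the main obstacle is carrying out this induction cleanly in the $S$-adic, general-semisimple setting. One must (a) bound, uniformly in $u\in\mathcal{P}_d$ and in $T$, both the number of relevant $\lambda\in\Gamma F$ and the sizes of the constants $c_{i,\lambda}$ that enter the thresholds --- this is precisely where the geometry of Siegel sets and the discreteness and properness in the exterior-power representations (in the spirit of the proper map in \eqref{eq:proper}) are needed; and (b) check that, when one passes to $\lambda\mathbf{P}_i\lambda^{-1}$, which is parabolic rather than semisimple, the functions $\Delta_j$ of $\bG$ restrict (after projection to the Levi) to functions of the same $\Delta$-type for the smaller group, so that alternative~(2) for the smaller problem genuinely propagates back to alternative~(2) for $\bG$. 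The non-archimedean bookkeeping --- replacing non-nested intervals by minimal enveloping intervals $J(t)$ with $\theta_v(J(t))\le q_v\,\theta_v(I(t))$, exactly as in the proof of Proposition~\ref{cp} --- is then routine.
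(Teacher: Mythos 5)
Your high-level picture is right — reduction theory and the contracting functions $\Delta_i$, $(C,\alpha)$-goodness of polynomial curves, the restriction-of-scalars trick for $K_v=\c$ via Lemma~\ref{4.4} — and you correctly identify \cite{DM1} and \cite{EMS1} as the templates, but the combinatorial core of your argument diverges from the paper's and has a genuine gap.

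The paper proves Theorem~\ref{nd} in two decoupled steps. First it establishes the \emph{qualitative} dichotomy Theorem~\ref{dm2}: either the trajectory $u(I_T)$ meets a fixed compact set $C$, or there exist $i,\lambda$ with $\Delta_i(\lambda^{-1}u(s))\le\alpha$ for \emph{all} $s\in I_T$. This is proved by the admissible-sequence induction inside the fixed group $\bG$: one grows a subset $I\subset\{1,\dots,r\}$ of controlled indices and shrinks a nested interval $B\subset I_T$, feeding Proposition~\ref{p3.8} at each step; termination is forced since $|I|\le r$, and compactness of the endpoint set $\mathcal{W}_{\alpha_I,a_I,b_I}(I,\lambda)$ is Proposition~\ref{p:compact}. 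Crucially, this induction never leaves $\Gamma\backslash\bG_S$. Second, the paper invokes Kleinbock--Tomanov's quantitative non-divergence (Theorem~\ref{kt}) as a black box to upgrade ``meets $C$'' to ``spends a $(1-\e)$ fraction of $I_T$ in $C'$''.

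Your proposal instead tries to prove the quantitative bound directly, via an induction on $\dim\bG$ (or $K$-rank) passing to the \emph{Levi quotient} of $\lambda\mathbf{P}_i\lambda^{-1}$. This step does not make sense as stated. In the regime you are in — $\Delta_i(\lambda^{-1}u(s))<\delta$ for some $s$, but $\sup_{I_T}\Delta_i(\lambda^{-1}u(\cdot))>\alpha$ — the trajectory is merely near the cuspidal region associated to $\lambda\mathbf{P}_i\lambda^{-1}$; it does not ``lie inside $\Gamma\backslash\Gamma\,\lambda\mathbf{P}_i\lambda^{-1}$'', so there is no canonical projection to a Levi-quotient homogeneous space on which an inductive hypothesis could be applied, nor a way to push the remaining excursions forward. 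Separately, the union-over-$\lambda$ problem is not handled: for a \emph{fixed} $s$ the discreteness of $\Gamma F\cdot w_i$ bounds the number of $\lambda$ making $\Delta_i(\lambda^{-1}u(s))$ small, but as $s$ runs over $I_T$ the witnessing $\lambda$ changes, and the measure of $\bigcup_\lambda\{s:\Delta_i(\lambda^{-1}u(s))<\delta\}$ is exactly the quantity that the covering argument behind Theorem~\ref{kt} (or Dani--Margulis's own quantitative covering lemma, which the paper replaces by Kleinbock--Tomanov) is designed to control. You have identified the right ingredients, but the claimed Levi-quotient reduction, which you yourself flag as the ``main obstacle'', is not a valid inductive step, and without it or a substitute for Theorem~\ref{kt}, the proof does not close.
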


\noindent{\bf Deduction of Theorem \ref{dmma} from Theorem \ref{nd}}
First consider the case of $K_v\ne \c$.
There is $d>0$ such that for any $g\in \bG_S$ and for any $u$ one parameter
unipotent subgroup of $\bG(K_v)$,
the maps $t\mapsto g u(t)$ belongs to $\mathcal P_d$.
Hence if the first case of Theorem \ref{dmma}
fails, then there exists $1\le i\le r$, $T_m \to \infty$,
$0<\alpha_m<1$, $\alpha_m \to 0$, $\lambda_m\in \Gamma F$ such that
$$\Delta_i(\lambda^{-1}_m g u(s))<\alpha_m$$
for all $s\in I_{T_m}$.

Since this implies
$\Delta_i(\lambda_m^{-1}g)< 1$
and for a given
$\theta>0$,
the number
of the elements $\lambda\in \Gamma F$, modulo the stabilizer
of $w_i$,
such that  $\Delta_i(\lambda^{-1}g)<\theta$
is finite, we can assume, by passing to a subsequence,
that there exists $\lambda\in \G F$ such that for each $m$,
$$\Delta_i(\lambda^{-1} g u(s))<\alpha_m$$
for all $s\in I_{T_m}$.

Since any orbit of a unipotent one parameter subgroup is unbounded
except for a fixed point,
it follows that $\lambda^{-1} g u(s)g^{-1} \lambda $ fixes $w_i$ for all $s\in K_v$.
Therefore $$\lambda^{-1} g U g^{-1} \lambda \subset \mathbf P_i .$$

Now consider the case when $K_v=\c$,
Suppose (1) fails for some $g\in \bG_S$ and for some
unipotent one parameter subgroup $u:\c \to \bG(\c)$.
By \eqref{4.4}, we have
a
one dimensional
real subspace $r=\br x \subset \c$, $x\in \c$, such that
$$|\{s\in [-T_m, T_m]: \G\ba \G g u(sx)\in C\}|< 2(1-\e) T_m$$
for some $T_m\to \infty$.
By Theorem \ref{nd},
for any $\alpha>0$,
there are
 $i$ and $\lambda\in  \G F$ such that
for all $s\in I_{T_m}$,
$$\Delta_i(\lambda^{-1} u_r(s))\le \alpha$$
 where $u_r(s)=u(sx)$.

By the same argument as in the above case,
we deduce that
 for some $1\le i\le r$ and $\lambda\in \Gamma F$,
we have
$$g u(r)g^{-1}\subset \lambda \mathbf P_i \lambda^{-1}.$$
Since $\mathbf P_i$ is an algebraic
$K$-subgroup, it follows that
$$g U g^{-1}\subset \lambda \mathbf P_i \lambda^{-1}.$$
This finishes the proof.

In order to prove Theorem \ref{nd}, we use the following:
\begin{Thm} \label{dm2}
Let $\alpha>0$ be given.
There exists a compact subset
$C\subset \G\ba \bG_S$ such that for any $u\in\mathcal{P}_d$ and $T>0$, one of the following holds:
\be
 \item there exist $i\in \{1, \ldots, r\}$ and $\lambda\in \Gamma F$
such that $$
\Delta_i(\lambda^{-1} u(s)) \leq \alpha \quad\hbox{for all $s\in I_T$},$$
\item $\Gamma \ba \Gamma u(I_T)\cap C\neq \emptyset$.
\ee
\end{Thm}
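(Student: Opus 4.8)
The plan is to run the Dani--Margulis non-divergence argument \cite{DM1} in the $S$-arithmetic setting, following the model of \cite{EMS1}. First I would reformulate conclusion (2) in terms of the functions $\Delta_i$. By reduction theory for the $S$-arithmetic lattice $\Gamma\subset\bGS$ (using $\bG(K)=\Gamma F\bP(K)$ and that $\bG$ is $K$-isotropic; cf. \cite{PR}, \cite{DM1}), for every $\rho>0$ the set
\[
C_\rho:=\{\,\Gamma g\in\Gamma\ba\bGS:\ \Delta_i(\lambda^{-1}g)\ge\rho\ \text{ for all }1\le i\le r\text{ and all }\lambda\in\Gamma F\,\}
\]
is compact, every compact subset of $\Gamma\ba\bGS$ is contained in some $C_\rho$, and the ``cusp neighbourhoods'' $\{\Delta_i(\lambda^{-1}g)<\rho\}$ cover the complement of $C_\rho$; only finitely many pairs $(i,\lambda)$ are relevant on any bounded set because the $\Gamma F$-translates of $w_1,\dots,w_r$ fall into finitely many $\Gamma$-orbits. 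So it suffices to find $\rho=\rho(\alpha,d,\bG,S)>0$ for which the theorem holds with $C=C_\rho$; equivalently: if for \emph{every} pair $(i,\lambda)$ there is some $s\in I_T$ with $\Delta_i(\lambda^{-1}u(s))>\alpha$, then there is a \emph{single} $s\in I_T$ with $\Delta_i(\lambda^{-1}u(s))\ge\rho$ for all $(i,\lambda)$.

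The second ingredient is that each scalar function $s\mapsto\Delta_i(\lambda^{-1}u(s))$ on $K_v$ is $(C_0,\alpha_0)$-good, with $C_0,\alpha_0$ depending only on $d$ and $\dim\bG$. Unwinding the definitions of $\Delta_i$ and of $\mathcal P_d$, this function equals, up to a multiplicative constant independent of $s$ (the contribution of the places $w\ne v$, at which $u$ is constant), the $v$-adic norm of a vector in $\land^{\dim\mathfrak u_i}\mathfrak g(K_v)$ whose coordinates are polynomials in $s$ of degree at most $d'=d'(d,\dim\bG)$ --- for $K_v=\c$ read ``polynomial'' in the real variable, as in the definition of $\mathcal P_d$. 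A norm of such a vector of polynomials is $(C_0,1/d')$-good on every interval of $K_v$ in the real, non-archimedean and complex cases; in particular there is no product-over-places subtlety.

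The heart of the matter is a combinatorial statement about the family $\Psi:=\{(\gamma f)\cdot w_i:\gamma\in\Gamma,\ f\in F,\ 1\le i\le r\}$, viewed inside $\prod_{v\in S}\land^\bullet\mathfrak g(K_v)$: this family is discrete and locally finite and carries the partial order given by inclusion of the associated conjugates of maximal parabolic subgroups, and the crucial \emph{comparability} property of Dani--Margulis holds --- whenever $\psi,\psi'\in\Psi$ are simultaneously short at a point $g\in\bGS$, the associated parabolics are nested, or the parabolic they generate again corresponds to an element of $\Psi$ that is short at $g$. I would establish this place by place from the structure of the $\bG(K_v)$-parabolics together with reduction theory over $\bG(K)$. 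Granting it, one carries out the Dani--Margulis/Kleinbock--Margulis induction along the poset of parabolic subgroups (its length controlled by the $K$-rank $r$ of $\bG$): on $I_T$ either some single $\Delta_i(\lambda^{-1}u(\cdot))$ stays $\le\alpha$ throughout, which is alternative (1); or, decomposing $I_T$ by a Besicovitch-type covering (ultrametric at the non-archimedean places) and feeding in the $(C_0,\alpha_0)$-good bound of the previous step, one obtains
\[
\theta_v\bigl(\{\,s\in I_T:\ \Delta_i(\lambda^{-1}u(s))<\rho\ \text{ for some }(i,\lambda)\,\}\bigr)\ \le\ C_1\,(\rho/\alpha)^{\alpha_0}\,\theta_v(I_T),
\]
which is $<\theta_v(I_T)$ once $\rho$ is small enough depending only on $\alpha,d,\dim\bG,|S|$; any $s\in I_T$ outside this set gives a point with $\Gamma u(s)\in C_\rho$, which is alternative (2).

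I expect the main obstacle to be exactly this last step: proving the comparability/poset structure of $\Psi$ uniformly over all archimedean and non-archimedean places of $S$ at once, and organizing the covering induction so that it functions simultaneously for all $v\in S$ --- the ultrametric combinatorics at the finite places and the reduction of the complex place to the real one (by restriction of scalars, exactly as for Lemma~\ref{4.4} in Section~\ref{sec:ms}) being the two points that require genuine care. The remaining pieces --- the reduction-theoretic description of $C_\rho$, the $(C_0,\alpha_0)$-good estimate, and, afterwards, the bootstrap of Theorem~\ref{dm2} to Theorems~\ref{nd} and~\ref{dmma} --- are then routine.
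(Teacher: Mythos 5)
Your proposal takes a genuinely different route from the paper. You aim to prove the quantitative measure estimate (essentially Theorem \ref{nd}) directly via the Kleinbock--Margulis strategy: $(C,\alpha)$-goodness of $s\mapsto\Delta_i(\lambda^{-1}u(s))$, comparability of the poset of marked parabolics, and a Besicovitch covering induction. That of course implies \ref{dm2}. The paper instead proves \ref{dm2} by a purely combinatorial Dani--Margulis/EMS-style induction along \emph{admissible sequences} of parabolic subgroups, and never needs a measure estimate for this step. The driving lemma is Proposition \ref{p3.8}, a pigeonhole statement (fed only by the sup/inf distortion Lemmas \ref{c2.17} and \ref{c2.18} for norms of polynomial maps on intervals, which are much lighter than $(C,\alpha)$-goodness): at each stage it either produces a single time $t_0$ at which \emph{all} the $\Delta_j(\theta\lambda u(t_0))$ are $\geq\alpha_I$ --- which by Proposition \ref{p:compact} and Lemma \ref{l:dm} puts $\Gamma u(t_0)$ in the compact set $\mathcal W_{\alpha_I,a_I,b_I}(I)$ --- or it passes to a sub-interval and a strictly larger index set $I\subset\{1,\dots,r\}$; since $|I|$ increases, the induction terminates in $\leq r$ steps. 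The measure bound of Theorem \ref{nd} is then obtained by composing \ref{dm2} with the Kleinbock--Tomanov non-divergence theorem (cited as \ref{kt}), which is treated as a black box.

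The two approaches trade costs in a revealing way. You gain a unified, self-contained quantitative statement, but the step you yourself flag as requiring genuine care --- setting up the comparability/poset structure of the discrete family $\Psi$ and running the Besicovitch covering simultaneously over all archimedean and non-archimedean places of $S$ --- is precisely the substance of Kleinbock--Tomanov, so your route would in effect re-derive \ref{kt} inside the proof of \ref{dm2}. The paper deliberately quarantines that machinery: its proof of \ref{dm2} only has to find \emph{one} good time $t_0$, not bound the measure of bad times, and it exploits this weaker target to get away with a finite pigeonhole. One small correction to your write-up: conclusion (2) of \ref{dm2} only says the trajectory meets $C$; do not conflate it with the measure statement of \ref{nd}, since it is exactly the gap between them that is covered by the black-box citation of \ref{kt}.
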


Theorem \ref{dm2} implies Theorem \ref{nd} in view of the following theorem,
proved by Kleinbock and Tomanov \cite[Theorem 9.1]{KT}:
\begin{Thm} \label{kt} For a given compact subset
$C\subset \G\ba\bG_S$ and $\e>0$
there exists a compact subset $C'\subset \G\ba\bG_S$
such that for any $u\in \mathcal{P}_d$, any $y\in \G\ba \bG_S$ and $T>0$ such that
$yu(I_T)\cap C\neq \emptyset$,
$$\theta_v(\{s\in I_T: yu(s)\in C'\})\geq (1-\e) \theta_v(I_T).$$
\end{Thm}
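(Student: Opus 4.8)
\noindent This is essentially \cite[Theorem 9.1]{KT}; the plan is to derive it from the quantitative non-divergence estimates there together with reduction theory. First I would recall that compact subsets of $\G\ba\bGS$ are exactly the sets on which the functions $\Delta_i$ are bounded away from $0$ along $\G F$. Precisely, by reduction theory for $S$-arithmetic groups (Siegel domains and Godement's compactness criterion, cf.\ \cite{PR}), the function
$$\delta_0(\G g):=\min_{1\le i\le r}\ \inf_{\lambda\in\G F}\Delta_i(\lambda^{-1}g)$$
is well defined, strictly positive, and $1/\delta_0$ is proper on $\G\ba\bGS$; in particular, for each $\delta>0$ the set $C_\delta:=\{\G g:\delta_0(\G g)\ge\delta\}$ is compact, and every compact subset of $\G\ba\bGS$ lies in some $C_\delta$. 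So I would fix $\rho>0$ with $C\subset C_\rho$, and prove the statement with $C':=C_\delta$ for a suitable small $\delta\le\rho$.

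Next, suppose $yu(I_T)\cap C\ne\emptyset$, say $y=\G g$ and $\G g\,u(s_0)\in C$ with $s_0\in I_T$. Then $\Delta_i(\lambda^{-1}g\,u(s_0))\ge\rho$ for all $i$ and all $\lambda\in\G F$, and therefore
$$\sup_{s\in I_T}\Delta_i(\lambda^{-1}g\,u(s))\ \ge\ \rho\qquad\text{for every }i\text{ and every }\lambda\in\G F.$$
This is exactly the hypothesis of the Kleinbock--Tomanov non-divergence theorem: since $u\in\mathcal P_d$, for each $i$ and $\lambda$ the function $s\mapsto\Delta_i(\lambda^{-1}g\,u(s))$ is a product over $w\in S$ of norms of polynomial-in-$s$ vectors in $\land^{\dim\mathfrak u_i}\mathfrak g(K_w)$ of bounded degree, hence satisfies the standard polynomial estimate $\theta_v(\{s\in I:\ \phi(s)<c\sup_I\phi\})\le C_0\, c^{1/N}\theta_v(I)$ with $N,C_0$ depending only on $\bG,S,d$. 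Feeding the displayed lower bound into \cite[Theorem 9.1]{KT} then yields a constant $C_1=C_1(\bG,S,d)$ with
$$\theta_v\bigl(\{s\in I_T:\ \delta_0(\G g\,u(s))<\delta\}\bigr)\ \le\ C_1\,(\delta/\rho)^{1/N}\,\theta_v(I_T)\qquad(0<\delta\le\rho).$$
Choosing $\delta\le\rho$ with $C_1(\delta/\rho)^{1/N}\le\e$ and setting $C':=C_\delta$ gives $\theta_v(\{s\in I_T:yu(s)\in C'\})\ge(1-\e)\theta_v(I_T)$, as required. (When $K_v=\c$ one restricts $u$ to a real line and runs the same argument on $I_T\subset\br$, exactly as elsewhere in this section.)

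The real content, and the step I would not reprove, is the non-divergence theorem \cite[Theorem 9.1]{KT} invoked above: passing from the uniform ``$\sup\ge\rho$'' bounds — one for each of the \emph{infinitely many} $\lambda\in\G F$ — to the single global measure estimate is not a union bound, but the inductive covering argument of Kleinbock--Margulis/Kleinbock--Tomanov. One stratifies $I_T$ according to which cuspidal neighbourhood the trajectory visits, uses that two ``incomparable'' cusps cannot both be entered deeply over a long subinterval, and combines this with the polynomial estimate on each stratum; the $\Delta_i$ attached to the standard maximal parabolics $\bP_i$ (together with the finitely many pieces of $F$) are precisely the data organizing this induction. This is the main obstacle, and it is exactly what we import from \cite{KT}.
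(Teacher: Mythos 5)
The paper gives no independent proof of this statement---it is simply quoted (in the paper's notation) as Kleinbock--Tomanov \cite[Theorem 9.1]{KT}---and your argument ultimately rests on exactly that same citation, so you are taking essentially the paper's approach. Your extra bookkeeping (properness of $1/\delta_0$ via $S$-arithmetic reduction theory and the $(C,\alpha)$-good polynomial estimate) is standard and consistent with how the surrounding section already uses \cite{KT}, so nothing further is needed.
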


The rest of this section is devoted to a proof of
Theorem \ref{dm2}. We start by constructing
certain compact subsets in $X$ which will serve as $C$
in the theorem \ref{dm2}.

We denote by $S_\infty$ the set of all archimedean absolute values
and $S_f:=S-S_\infty$.
For a $K$-subgroup $\mathbf M$ of $\mathbf G$,
and $S_0\subset S$,
we use the notation
 $\mathbf M_{S_0}=\prod_{v\in S_0}
 \mathbf M(K_v)$, $\mathbf M_\infty=\mathbf M_{S_\infty}$,
and $\mathbf M_v=\mathbf M(K_v)$.
For simplicity, we write $M$ for
$\mathbf M_{S}$ in this section.

We often write an element of $g\in M$
as $(g_\infty, g_f)$
where $g_\infty\in \bM_{\infty }$
and $g_f\in  \bM_{S_f}$.

\subsection{Description of compact subsets in $X$}
For each $i=1, \ldots, r$,
we set
$$\mathbf Q_i=\{x\in \mathbf P_i: \alpha_i(x)=1\}$$
and
$$\mathbf A_i:=\{x\in \mathbf A: \alpha_j(x)=1\quad \forall j\neq i\} .$$
For a subset $I\subset\{1,\ldots, r\}$,
we define
$$\mathbf P_I:=\cap_{i\in I} \mathbf P_i,
\;\;\mathbf Q_I:=\cap_{i\in I} \mathbf Q_i,\;\; \mathbf A_I:=\prod_{i\in I}\mathbf A_i .$$
Let $\mathbf U_I$ be the unipotent radical of $\mathbf P_I$ and
$\mathbf H_I$ the centralizer of $\mathbf A_I$ in $\mathbf Q_I$.
We have Langlands decomposition:
$$
\bP_I=\mathbf A_I \bQ_I=  \mathbf A_I \bH_I {\bf U}_I.
$$

There is $m_i\in \n$ such that for $x\in \mathbf P_i$,
$$\operatorname{det}(\Ad x)|_{\mathfrak u_i}=\alpha_i^{m_i}(x).$$

For each non-archimedean $v\in S$,
we set $$ A_v^0 =\{x\in \mathbf A_v: \alpha_i(x)
\in q_v^{\z}\quad\forall i=1, \ldots, r\}$$
where $q_v$ is
the cardinality of the residue field of $K_v$.
Since $\mathbf A$ is $K$-split,
$A_v^0 \subset \mathbf A(K)$.

For archimedean $v$, we set
$$A_v^0 =\{x\in \mathbf A_v: \alpha_i(x) > 0
\quad\forall i=1, \ldots, r\} .$$
For $v\in S$, let $W_v$ be a maximal compact
subgroup of $\bG_v$
such that $\bG_v=W_v \bQ_v$
for any parabolic $K$-subgroup $\bQ$ containing $\bP$ and
 $\mathbf A_v\subset W_v A_v^0$.

We set $W=\prod_{v\in S}W_v$,
 $W_f=\prod_{v\in S_f}W_v$ and $W_\infty=\prod_{v\in S_\infty}W_v$.
Without loss of generality, we may assume that each norm
$\|\cdot\|_v$ is $W_v$-invariant.

For a subset $I\subset\{1,
\ldots, r\}$,
we set
$$A_{I, v}^0 :=\mathbf A_{I,v}\cap A_v^0
\quad\text{and}\quad  A_{I,\infty}^0=\prod_{v\in S_\infty} A_{I, v}^0 .$$


\begin{Lem}\label{sp} There exists a finite subset
$Y\subset \mathbf A_I(K)$
such that
$$
\prod_{v\in S_f} A_{I,v}^0 \subset (\mathbf A_I \cap\G)
 Y.$$
\end{Lem}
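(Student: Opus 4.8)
The plan is to reduce the assertion to two standard facts about the $K$-split torus $\mathbf A_I$ — Dirichlet's $S$-unit theorem and the commensurability of arithmetic subgroups — once it has been checked that $\prod_{v\in S_f}A_{I,v}^0$ consists of $S$-integral points. Since each $\mathbf A_i=\{x\in\mathbf A:\alpha_j(x)=1\ \forall j\neq i\}$ is a subtorus of $\mathbf A$ defined over $K$, the product $\mathbf A_I=\prod_{i\in I}\mathbf A_i$ is $K$-split of dimension $|I|$, and the characters $\alpha_i|_{\mathbf A_I}$ ($i\in I$) span a finite-index subgroup of $X^*(\mathbf A_I)$; equivalently $(\alpha_i)_{i\in I}\colon\mathbf A_I\to\mathbb G_m^{|I|}$ is a $K$-isogeny with finite kernel, so an element $x\in\mathbf A_I(K)$ lies in $\mathbf A_I(\mathcal O_S)$ essentially when every $\alpha_i(x)$ is an $S$-unit, up to the bounded finite ambiguity from the kernel. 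The first — and, I expect, the only delicate — step is then to read off from the definition of $A_v^0$, using that $v\in S$, that for each $v\in S_f$ the group $A_{I,v}^0$ lies in $\mathbf A_I(\mathcal O_S)$: a point of $A_{I,v}^0$ has all its characters $\alpha_i$ in $q_v^{\mathbb Z}$, and the transversal $A_v^0$ is, among the finite places, supported only at $v\in S$, hence introduces no denominators outside $S$. Consequently the subgroup $P:=\prod_{v\in S_f}A_{I,v}^0$ of $\mathbf A_I(K)$ is contained in $\mathbf A_I(\mathcal O_S)$.

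It then remains to cover $\mathbf A_I(\mathcal O_S)$ by finitely many cosets of $\mathbf A_I\cap\Gamma$. First, $\mathbf A_I(\mathcal O_S)$ is finitely generated: via $(\alpha_i)_{i\in I}$ this is Dirichlet's $S$-unit theorem for $\mathbb G_m^{|I|}$ (up to the finite kernel), with rank $|I|(|S|-1)$. Second, $\mathbf A_I\cap\Gamma$ is commensurable with $\mathbf A_I(\mathcal O_S)=\mathbf A_I\cap\bG(\mathcal O_S)$, since $\Gamma$ is commensurable with $\bG(\mathcal O_S)$ and intersecting a commensurability with the $K$-subgroup $\mathbf A_I$ again yields a commensurability. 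Put $H:=\mathbf A_I(\mathcal O_S)\cap(\mathbf A_I\cap\Gamma)$, a finite-index subgroup of $\mathbf A_I(\mathcal O_S)$, and write $\mathbf A_I(\mathcal O_S)=\bigcup_{j=1}^m Hy_j$ with $y_j\in\mathbf A_I(\mathcal O_S)\subseteq\mathbf A_I(K)$. Since $Hy_j\subseteq(\mathbf A_I\cap\Gamma)y_j$ and $P\subseteq\mathbf A_I(\mathcal O_S)$, the finite set $Y:=\{y_1,\dots,y_m\}$ satisfies $\prod_{v\in S_f}A_{I,v}^0=P\subseteq(\mathbf A_I\cap\Gamma)Y$, which is the desired conclusion.

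Everything after the $S$-integrality of the $A_{I,v}^0$ is soft group theory, so the hard part is exactly that step: matching the combinatorial definition of the polar transversal $A_v^0$ with the $\mathcal O_S$-integral structure of $\mathbf A_I$, and in particular using $v\in S$ to guarantee that $A_v^0$ introduces no new denominators. Handling the finite kernel of $(\alpha_i)_{i\in I}$, and possible ramification or residue-degree factors, only changes the various indices by bounded amounts, not the finiteness of $Y$.
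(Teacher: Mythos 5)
Your strategy is, in outline, exactly the paper's: both transport the problem through the $K$-rational map $(\alpha_i)_{i\in I}\colon\mathbf A_I\to\mathbb G_m^{|I|}$ (the paper asserts it is an isomorphism; you weaken it to an isogeny) and then invoke the commensurability of $\Gamma$ with $\bG(\mathcal O_S)$ together with the structure of $S$-units. So the ``soft group theory'' half of your argument is fine and matches the paper.

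The delicate step is, as you suspect, the one that does not go through: the containment $\prod_{v\in S_f} A_{I,v}^0\subset\mathbf A_I(\mathcal O_S)$. Transported through $(\alpha_i)_{i\in I}$ this is the assertion that $q_v\in\mathcal O_S^\times$ for each $v\in S_f$, and your justification --- that $A_v^0$ is ``supported only at $v\in S$, hence introduces no denominators outside $S$'' --- conflates the local object with the global one. An element $x\in A_{I,v}^0$ is indeed a $K$-point with $\alpha_i(x)=q_v^{n_i}$, and $q_v=p^{f_v}$ is a rational integer, so $q_v\in\mathcal O_S$; but $q_v^{-1}\in\mathcal O_S$ (i.e.\ $q_v\in\mathcal O_S^\times$) requires $|q_v|_w=1$ for \emph{every} finite $w\notin S$, and this fails as soon as some place $w$ of $K$ above the rational prime $p$ lies outside $S$. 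Concretely, if $K$ is imaginary quadratic with $p$ split as $\mathfrak p_1\mathfrak p_2$ and $S=\{\infty,\mathfrak p_1\}$, then $q_{\mathfrak p_1}=p$ and $p^{\mathbb Z}\cap\mathcal O_S^\times=\{1\}$, so $A_{I,\mathfrak p_1}^0$ has infinite image modulo $\mathbf A_I(\mathcal O_S)$ and cannot be covered by finitely many $(\mathbf A_I\cap\Gamma)$-cosets. Thus what is needed here is not a ``bounded index'' adjustment but the genuine hypothesis that every rational prime below $S_f$ has all its $K$-places inside $S$ (equivalently, that each $q_v$ is an $S$-unit); this is not among the stated assumptions, and the paper's one-sentence proof is silent on it as well, so you have not reproduced a step the paper supplies --- you have surfaced a gap that the paper's terse write-up leaves unaddressed. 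A separate, smaller point: once you replace ``isomorphism'' by ``isogeny,'' the preliminary claim $A_v^0\subset\mathbf A(K)$ (used implicitly when you form $\prod_{v\in S_f}A_{I,v}^0$ inside $\mathbf A_I(K)$) is itself no longer automatic, since a fiber of a $K$-isogeny over a $K$-point of $\mathbb G_m^{|I|}$ need not contain a $K$-point; this is a second issue that the ``finite kernel changes indices by bounded amounts'' remark does not cover.
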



\begin{proof}
Since $\G$ is commensurable with $\bG(\mathcal O_S)$,
$\G$ contains a finite index subgroup of $\mathbf A_I(\mathcal O_S)$.
Now the claim follows easily from the fact that
the map $f: \mathbf A_I\to \bG_m^{l}$, $l=|I|$, given by
$x\mapsto (\alpha_1(x), \ldots, \alpha_{l}(x))$,
is a $K$-rational isomorphism, where $\bG_m$
denotes the one-dimensional multiplicative group.
\end{proof}

\begin{Lem}\label{dm1.5}
Given $I\subset \{1, \ldots, r\}$, $j\in \{1, \ldots, r\}-I$,
and $0< a\leq b$, there exists a compact subset $M_0$ of $Q_{I}$
such that
$$
\{g\in
Q_{I}:\, \Delta_j(g)\in [a, b]\}\subset (\mathbf A_j\cap \G)  Q_{I\cup\{j\}}M_0.$$
\end{Lem}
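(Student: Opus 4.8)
The plan is to reduce the behaviour of $\Delta_j$ on $Q_I$ to a statement about the one-dimensional $K$-split torus $\mathbf A_j$, and then to invoke reduction theory for that torus. The structural point is that, since $j\notin I$, one has $\mathbf A_j\subset\bQ_I$ (because $\alpha_i$ is trivial on $\mathbf A_j$ for every $i\in I$), and that $\bP':=\bP_j\cap\bQ_I=\mathbf A_j\,\bQ_{I\cup\{j\}}$ is a parabolic subgroup of $\bQ_I$, with $\bQ_{I\cup\{j\}}\subset\bP'\cap\bQ_j$. Writing the Levi decomposition $\bQ_I=\bU_I\rtimes\bH_I$ and applying the relative Iwasawa decomposition for the reductive group $\bH_I$ with respect to the parabolic $\bP_j\cap\bH_I$ gives, for each $v\in S$, $\bQ_I(K_v)=\bP'(K_v)\,C_v$ with $C_v$ a maximal compact subgroup of $\bH_I(K_v)$ which, $W_v$ being in good position relative to $\mathbf A$ and $\mathbf P$, we may take inside $W_v$. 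Since $\bP'(K_v)=\bQ_{I\cup\{j\}}(K_v)\,\mathbf A_j(K_v)$, every $g=(g_v)_{v\in S}\in Q_I$ can be written
$$g_v=q_v'\,a_v\,\kappa_v,\qquad q_v'\in\bQ_{I\cup\{j\}}(K_v),\ a_v\in\mathbf A_j(K_v),\ \kappa_v\in C_v\subset W_v .$$

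Next I would compute $\Delta_j$ in these coordinates. The vector $w_j$ spans the $\bP_j$-stable line $\land^{\dim\mathfrak u_j}\mathfrak u_j$, on which $\bP_j$ acts through the character $\det(\Ad\,\cdot)|_{\mathfrak u_j}=\alpha_j^{m_j}$; since $\bQ_{I\cup\{j\}}\subset\bQ_j$ and $\alpha_j\equiv 1$ on $\bQ_j$, the factor $q_v'$ fixes $w_j$. Using the $W_v$-invariance of $\|\cdot\|_v$ and $\|w_j\|_v=1$ we get $\|w_jg_v\|_v=|\alpha_j(a_v)|_v^{m_j}$, hence $\Delta_j(g)=\phi(a)^{m_j}$, where $a:=(a_v)_{v\in S}$ is viewed in $\prod_{v\in S}\mathbf A_j(K_v)$ and $\phi(a):=\prod_{v\in S}|\alpha_j(a_v)|_v$. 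Thus $\Delta_j(g)\in[a,b]$ forces $\phi(a)$ into a fixed compact subinterval $J\subset\br_{>0}$ depending only on $a,b,m_j$.

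The reduction theory for $\mathbf A_j$ now enters. By Dirichlet's $S$-unit theorem, equivalently by Lemma \ref{sp} applied to $\mathbf A_j$ at the non-archimedean places together with the classical unit theorem at the archimedean ones, the subgroup $\mathbf A_j\cap\Gamma$ is cocompact, up to a compact subgroup, in $\ker\phi\subset\prod_{v\in S}\mathbf A_j(K_v)$. Since $\phi^{-1}(J)$ is a translate of $\ker\phi$ of bounded $\phi$-width, it follows that $\phi^{-1}(J)\subset(\mathbf A_j\cap\Gamma)\,D$ for some compact $D=\prod_{v\in S}D_v$ with $D_v\subset\mathbf A_j(K_v)$. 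Writing $a=\gamma_0\,c$ with $\gamma_0\in\mathbf A_j\cap\Gamma$ and $c=(c_v)_{v\in S}\in D$, and using that $\gamma_0\in\mathbf A_j$ normalizes $\bQ_{I\cup\{j\}}$, we get $g_v=\gamma_{0,v}\,\big(\gamma_{0,v}^{-1}q_v'\gamma_{0,v}\big)\,c_v\kappa_v$, whence $g\in(\mathbf A_j\cap\Gamma)\,Q_{I\cup\{j\}}\,M_0$ with $M_0:=\prod_{v\in S}D_vC_v$ a compact subset of $Q_I$ independent of $g$. This is the assertion of the lemma.

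The one step that needs genuine care is the reduction-theoretic input of the third paragraph: reconciling the precise definition of the sets $A_v^0$ and the normalization of $|\cdot|_v$ at the non-archimedean places with the statement of Lemma \ref{sp}, and then extracting cocompactness of $\mathbf A_j\cap\Gamma$ in $\ker\phi$ from the $S$-unit theorem. The point is that the archimedean and non-archimedean contributions to $\phi$ must be absorbed simultaneously rather than place by place, which is exactly why one works with $\ker\phi$ as a whole. The Iwasawa decomposition and the highest-weight computation, by contrast, are routine structure theory.
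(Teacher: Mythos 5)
Your proof is correct and rests on the same two pillars as the paper's argument: an Iwasawa-type decomposition of $Q_I$ along the maximal parabolic $\bP_j\cap\bQ_I\supset\bQ_{I\cup\{j\}}$, which isolates an $\mathbf A_j$-coordinate that alone determines $\Delta_j$, followed by reduction theory for the one-dimensional split torus $\mathbf A_j$ (ultimately the $S$-unit theorem, via Lemma~\ref{sp}) to absorb that coordinate into $(\mathbf A_j\cap\Gamma)\cdot(\text{compact})$. The difference is in how the reduction-theoretic step is organized. The paper writes $Q_I=(\prod_{v}A^0_{j,v})\,Q_{I\cup\{j\}}\,(W\cap H_I)$ using the discrete cones $A^0_{j,v}\subset\mathbf A_j(K)$, so that the non-archimedean components multiply to a single $K$-rational element $d_0$ to which Lemma~\ref{sp} applies, and the residual factors $d_v$ are then bounded place by place. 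You instead keep $a_v\in\mathbf A_j(K_v)$, observe $\Delta_j(g)=\phi(a)^{m_j}$ with $\phi$ the global norm on $\prod_{v\in S}\mathbf A_j(K_v)$, and invoke cocompactness of $\mathbf A_j\cap\Gamma$ in $\ker\phi$ globally. This is the same arithmetic input but packaged in one stroke rather than by separating archimedean from non-archimedean places; it avoids the $A^0_{j,v}$ bookkeeping and is somewhat cleaner, at the cost of pushing the verification of the Iwasawa decomposition over $\mathbf A_j(K_v)$ (instead of $A^0_{j,v}$) to the reader. Two small points to tighten: $\bP'(K_v)=\mathbf A_j(K_v)\,\bQ_{I\cup\{j\}}(K_v)$ only up to finite index, because $\alpha_j|_{\mathbf A_j}$ is an isogeny and need not be surjective on $K_v$-points; since $S$ is finite this only forces a finite enlargement of the compact factor. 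And the phrase ``cocompact, up to a compact subgroup, in $\ker\phi$'' should simply read that $\ker\phi/(\mathbf A_j\cap\Gamma)$ is compact.
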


\begin{proof}
Since $\mathbf A_{I,v}\subset W_vA_{I,v}^0$ for each $v\in S$,
we can show in a similar way as in the proof of
\cite[Lemma 1.5]{DM1} that for any $j\notin I$,
$$
Q_I= (\prod_{v\in S} A_{j,v}^0 )Q_{I\cup\{j\}} (W\cap  H_{I}).
$$
Hence any $(g_v)\in Q_I$ is
of the form $g_v=a_v q_v w_v$
with $a_v\in A_{j,v}^0 $, $q_v\in \mathbf Q_{I\cup\{j\},v}$, and $w_v\in W_v$,
and $$\|w_j g_v\|_v=|\alpha_j(a_v)|_v^{m_j}.$$

Suppose
 $g=(g_v)\in Q_I$ satisfies
$a<\Delta_j(g)<b$, i.e.,
$$\Delta_j(g)=\prod_{v\in S}|\alpha_{j}(a_v)|_v^{m_j}\in [a, b].$$

It follows from Lemma \ref{sp} that $d_0:=\prod_{v\in S_f} a_v\in
(\mathbf A_j\cap \G) Y$ where $Y$ is a finite subset of $\mathbf A_j(K)$.
If we set $d_v=a_v d_0^{-1}$ for $v\in S_\infty$
 and $d_v=\prod_{w\in S_f\ba\{v\}}
a_w^{-1}$ for $v\in S_f$,
then $d_0 d_v= a_v$ and
 $d_v\in W_v$  for $v\in S_f$, and $$
\prod_{v\in S_\infty}|\alpha_j(d_v)|_v^{m_j}
=\prod_{v\in S}|\alpha_{j}(a_v)|_v^{m_j}\in [a, b]
$$
This implies that there exists a compact set $M_\infty\subset
\mathbf A_{j,\infty}$,
 depending only on
$[a,b]$, such that
$$(d_v)_{v\in S}\in M_\infty \times (\prod_{v\in S_f}M_v)
$$
where $M_v:=\{a\in \mathbf A_{j,v}:\|a\|_v=1\}$.

Therefore for $M:=M_\infty \times \prod_{v\in S_f} M_v$,
$$Q_I\subset (\mathbf A_j\cap \G) Y M Q_{I\cup\{j\}} (W\cap H_I).$$
Since $\mathbf A_j$ normalizes $\bQ_{I\cup\{j\}}$,
it follows that
$$Q_I\subset (\mathbf A_j\cap \G) Q_{I\cup\{j\}}  M_0 $$
for some compact subset $M_0$ of $Q_I$.
\end{proof}

For $I\subset \{1, \ldots, r\}$,
we define a finite subset $F_I\subset  \mathbf Q_I(K)$
such that
$$\mathbf Q_I(K)=(\G\cap \mathbf Q_I)F_I(\mathbf P\cap \mathbf Q_I)(K) .$$
Since $\mathbf A_I$ normalizes $\bQ_I$, there exists a finite
subset $\tilde F_I\subset  \mathbf Q_I(K)$ such that
\begin{equation}\label{si}
F_I^{-1}(\bQ_I\cap \G)(\mathbf A_I\cap \G)\subset
F_I^{-1}(\mathbf A_I\cap \G)(\bQ_I\cap \G)\subset
(\mathbf A_I\cap \G)\tilde F_I^{-1}(\bQ_I\cap \G).
\end{equation}
We put $$\Lambda(I):=\tilde F_I^{-1} (Q_I\cap \G)\subset \mathbf Q_I(K).$$
Note that $\mathbf P_\emptyset=\mathbf Q_\emptyset=\mathbf G$, $\mathbf A_\emptyset=
\mathbf A$, $F_\emptyset=F=\tilde F_\emptyset$,
and $\Lambda(\emptyset)=F^{-1}\G$.



\begin{Lem}\label{l3.6}
For $j\in \{1, \ldots, r\}$ and $I\subset \{1, \ldots, r\}-\{j\}$,
there is a finite subset $E\subset \mathbf P(K)$  such that
$\Lambda(I\cup\{j\})\Lambda(I)\subset E \Lambda(I) $.
\end{Lem}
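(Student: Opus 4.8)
The argument is purely group-theoretic and rests on the reduction-theory decomposition of $\mathbf Q_I(K)$ together with the inclusion $\mathbf Q_{I\cup\{j\}}\subset\mathbf Q_I$ (so $\mathbf Q_{I\cup\{j\}}\cap\Gamma\subset\mathbf Q_I\cap\Gamma$). In fact the plan is to produce $E$ inside $(\mathbf P\cap\mathbf Q_I)(K)\subset\mathbf P(K)$, which is the natural place for it since both $\Lambda(I\cup\{j\})$ and $\Lambda(I)$ already lie in $\mathbf Q_I(K)$. The first step is to record the ``reversed'' form of the defining decomposition:
\[
\mathbf Q_I(K)=(\mathbf P\cap\mathbf Q_I)(K)\cdot\tilde F_I^{-1}\cdot(\mathbf Q_I\cap\Gamma)=(\mathbf P\cap\mathbf Q_I)(K)\cdot\Lambda(I).
\]
Inverting $\mathbf Q_I(K)=(\mathbf Q_I\cap\Gamma)F_I(\mathbf P\cap\mathbf Q_I)(K)$ gives $\mathbf Q_I(K)=(\mathbf P\cap\mathbf Q_I)(K)F_I^{-1}(\mathbf Q_I\cap\Gamma)$; applying \eqref{si} with the trivial element of $\mathbf A_I\cap\Gamma$ gives $F_I^{-1}\subset(\mathbf A_I\cap\Gamma)\tilde F_I^{-1}(\mathbf Q_I\cap\Gamma)$, and since every factor of such a product other than the $\mathbf A_I$-term already lies in $\mathbf Q_I(K)$, that term lies in $\mathbf A_I(K)\cap\mathbf Q_I(K)=(\mathbf A_I\cap\mathbf Q_I)(K)\subset(\mathbf P\cap\mathbf Q_I)(K)$; combining the two inclusions yields the displayed identity.

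The key step is to show that $(\mathbf Q_{I\cup\{j\}}\cap\Gamma)\,\Lambda(I)\subset E_1\,\Lambda(I)$ for some finite $E_1\subset(\mathbf P\cap\mathbf Q_I)(K)$. Fixing $\tilde f\in\tilde F_I$, two left $(\mathbf Q_I\cap\Gamma)$-cosets $\gamma\tilde f^{-1}(\mathbf Q_I\cap\Gamma)$ and $\gamma'\tilde f^{-1}(\mathbf Q_I\cap\Gamma)$ with $\gamma,\gamma'\in\mathbf Q_{I\cup\{j\}}\cap\Gamma$ coincide iff $\gamma^{-1}\gamma'\in\tilde f^{-1}(\mathbf Q_I\cap\Gamma)\tilde f$, i.e.\ iff $\gamma,\gamma'$ lie in the same left coset of $H_0:=(\mathbf Q_{I\cup\{j\}}\cap\Gamma)\cap\tilde f^{-1}(\mathbf Q_I\cap\Gamma)\tilde f$ inside $\mathbf Q_{I\cup\{j\}}\cap\Gamma$. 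Now $\tilde f^{-1}(\mathbf Q_I\cap\Gamma)\tilde f$ is an $S$-arithmetic subgroup of $\mathbf Q_I$ (a conjugate, by an element of $\mathbf Q_I(K)$, of the arithmetic group $\mathbf Q_I\cap\Gamma$), so its intersection with the $K$-subgroup $\mathbf Q_{I\cup\{j\}}$ is $S$-arithmetic in $\mathbf Q_{I\cup\{j\}}$; hence $H_0$ is commensurable with $\mathbf Q_{I\cup\{j\}}\cap\Gamma$ and therefore of finite index in it. Consequently $(\mathbf Q_{I\cup\{j\}}\cap\Gamma)\tilde F_I^{-1}(\mathbf Q_I\cap\Gamma)$ is a finite union of left $(\mathbf Q_I\cap\Gamma)$-cosets $g_m(\mathbf Q_I\cap\Gamma)$ with $g_m\in\mathbf Q_I(K)$; writing $g_m=p_m\tilde f_m^{-1}\gamma_m$ by the reversed decomposition, with $p_m\in(\mathbf P\cap\mathbf Q_I)(K)$, $\tilde f_m\in\tilde F_I$, $\gamma_m\in\mathbf Q_I\cap\Gamma$, yields $g_m(\mathbf Q_I\cap\Gamma)=p_m\tilde f_m^{-1}(\mathbf Q_I\cap\Gamma)\subset p_m\Lambda(I)$, and one takes $E_1=\{p_m\}$.

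It then remains to assemble the pieces:
\[
\Lambda(I\cup\{j\})\Lambda(I)=\tilde F_{I\cup\{j\}}^{-1}\,(\mathbf Q_{I\cup\{j\}}\cap\Gamma)\,\Lambda(I)\subset\tilde F_{I\cup\{j\}}^{-1}E_1\,\Lambda(I),
\]
and $\tilde F_{I\cup\{j\}}^{-1}E_1$ is a finite subset of $\mathbf Q_I(K)$ (since $\tilde F_{I\cup\{j\}}\subset\mathbf Q_{I\cup\{j\}}(K)\subset\mathbf Q_I(K)$ and $E_1\subset\mathbf Q_I(K)$). A final application of the reversed decomposition to $e\tilde f^{-1}$, for each $e$ in this finite set and each $\tilde f\in\tilde F_I$, gives $e\tilde f^{-1}(\mathbf Q_I\cap\Gamma)\subset p_{e,\tilde f}\Lambda(I)$ with $p_{e,\tilde f}\in(\mathbf P\cap\mathbf Q_I)(K)$, so $\Lambda(I\cup\{j\})\Lambda(I)\subset E\,\Lambda(I)$ with $E=\{p_{e,\tilde f}\}\subset(\mathbf P\cap\mathbf Q_I)(K)\subset\mathbf P(K)$ finite, as required. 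The only genuine obstacle is the interplay already highlighted — simultaneously making the correction set finite and keeping it inside $\mathbf P(K)$ — and it is resolved by the two inputs above: commensurability of $S$-arithmetic subgroups controls the $(\mathbf Q_{I\cup\{j\}}\cap\Gamma)$-factor, while carrying the whole computation out inside $\mathbf Q_I(K)$ forces every correction factor into $\mathbf A_I(K)\cap\mathbf Q_I(K)\subset(\mathbf P\cap\mathbf Q_I)(K)$.
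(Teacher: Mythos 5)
Your proof is correct, and its two main ingredients -- the ``reversed'' reduction-theory decomposition $\mathbf Q_I(K)=(\mathbf P\cap\mathbf Q_I)(K)\,\Lambda(I)$ deduced from the defining decomposition together with~\eqref{si}, and commensurability of $S$-arithmetic subgroups to bound the number of left $(\mathbf Q_I\cap\G)$-cosets hit by $(\mathbf Q_{I\cup\{j\}}\cap\G)\,\tilde F_I^{-1}$ -- are exactly the tools that the cited Lemma 3.6 of Eskin--Mozes--Shah uses. The paper gives no proof here beyond the citation to [EMS1], so the comparison is with that reference rather than with an in-paper argument; your write-up is essentially the $S$-arithmetic rendering of that argument, and it is a genuine improvement for the reader that you make explicit why the $\mathbf A_I$-correction factor lands in $(\mathbf P\cap\mathbf Q_I)(K)$ (because it is simultaneously a $K$-point of $\mathbf A_I$ and of $\mathbf Q_I$, hence of $\mathbf A_I\cap\mathbf Q_I\subset\mathbf P$) and why $H_0$ has finite index (conjugation by $\tilde f\in\mathbf Q_I(K)$ preserves the commensurability class of the $S$-arithmetic group $\mathbf Q_I\cap\G$, and intersecting with the $K$-subgroup $\mathbf Q_{I\cup\{j\}}$ produces an $S$-arithmetic subgroup of the latter, commensurable with $\mathbf Q_{I\cup\{j\}}\cap\G$).
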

\begin{proof}
Same as Lemma 3.6 in \cite{EMS1}.
\end{proof}

Denote by $\Cal T$ the set of all
$l$-ordered tuples of integers $1\le i_1, \cdots, i_l \le r$
for $1\le l\le r$.
 For $I=(i_1, \cdots, i_l)\in\Cal T$,
there exists a finite subset $L(I)\subset \mathbf G(K)$ such that
$$\Lambda(\{i_1, \ldots, i_l\})\cdots \Lambda(\{i_1\})\Lambda(\emptyset)=
L(I)\G .$$
We set $L(\emptyset)=\{e\}$.

An $l$-tuple $((i_1, \lambda_1), \ldots, (i_l, \lambda_l))$
is called an {\it admissible} sequence of length $l$ if
$i_1, \ldots, i_l\subset \{1, \cdots, r\}$ are distinct and
 $\lambda_1, \ldots, \lambda_l\in
\mathbf G(K)$ satisfy
$\lambda_j \lambda_{j-1}^{-1}\in \Lambda(\{i_1,\ldots, i_{j-1}\})$ for
all $j=1,\ldots, l$ (here we set $\lambda_0=e$).
For an admissible sequence $\xi$ of length $l$,
we denote by $\Cal C(\xi)$ the set of all pairs $(i, \lambda)$ where $1\leq i\leq r$ and $\lambda\in \mathbf G(K)$ for which
there exists an admissible sequence $\eta$ of length $l+1$ extending $\xi$ and containing $(i, \lambda)$ as the last term.
The support of $\xi$, denoted by
$\operatorname{supp}(\xi)$, is defined to be the emptyset if $l=0$; and otherwise
the set $\{(i_1, \lambda_1), \ldots, (i_l, \lambda_l)\}$
if $\xi=((i_1, \lambda_1), \ldots, (i_l, \lambda_l))$.

For any $0<a<b$, $\alpha>0$ and any admissible
 sequence $\xi$,
we define
\begin{multline}
\mathcal W_{\alpha,a,b}(\xi)=\{g\in G:
\Delta_j( \lambda g ) \geq \alpha, \forall (j, \lambda)\in \Cal C(\xi)
\\
\text{ and }a\leq \Delta_i( \lambda g )\leq b, \forall (i, \lambda)\in\text{supp}(\xi)
  \} .\end{multline}

The same proof of  \cite[Prop. 1.8]{DM1} shows:
\begin{Lem} \label{l:dm}
For any
 admissible sequence $\xi=\{(i_1,\lambda_1), \ldots, (i_l, \lambda_l)\}$
 of length $l\geq 1$, we have
$$\mathcal W_{\alpha,a,b}(\xi)=\mathcal W_{\alpha, a,b}(I, \lambda_l)$$
where $I=\{i_1, \ldots, i_l\}$ and
\begin{multline}
W_{\alpha, a,b}(I, \lambda)
:=\{g\in G:
\Delta_j( \theta \lambda g)\geq\alpha, \forall j\notin I, \forall
\theta\in \Lambda(I) \\
\text{ and }a\leq \Delta_i(\lambda g)\leq b, \forall i\in I \} .\end{multline}
\end{Lem}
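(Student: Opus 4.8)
The plan is to establish the set equality $\mathcal W_{\alpha,a,b}(\xi)=\mathcal W_{\alpha,a,b}(I,\lambda_l)$ by unwinding the definitions of the admissible sequences, the sets $\mathcal C(\xi)$, and the lattices $\Lambda(\{i_1,\dots,i_j\})$, and then invoking the algebraic identities collected above (Lemmas \ref{l3.6}, \ref{sp}, and the inclusion \eqref{si}) to match the two descriptions of the defining inequalities. First I would observe that, by the very definition of $\operatorname{supp}(\xi)$, the condition ``$a\le\Delta_i(\lambda g)\le b$ for all $(i,\lambda)\in\operatorname{supp}(\xi)$'' reads ``$a\le\Delta_{i_j}(\lambda_j g)\le b$ for $j=1,\dots,l$''; and using the admissibility relation $\lambda_j\lambda_{j-1}^{-1}\in\Lambda(\{i_1,\dots,i_{j-1}\})$ together with the fact that $\Lambda(\{i_1,\dots,i_{j-1}\})\subset\mathbf Q_{\{i_1,\dots,i_{j-1}\}}(K)$ normalizes (or is normalized appropriately by) $\mathbf A_{i_j}$—this is where \eqref{si} enters—one checks that each of these inequalities is equivalent to the single inequality $a\le\Delta_{i_j}(\lambda_l g)\le b$, because $\Delta_{i_j}$ is a product of absolute values of the character $\alpha_{i_j}^{m_{i_j}}$ composed with conjugation by the elements linking $\lambda_j$ to $\lambda_l$, and those linking elements lie in subgroups on which $\alpha_{i_j}$ is trivial up to elements of $\Gamma$ (which preserve $\Delta_{i_j}$). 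Thus $\operatorname{supp}(\xi)$-conditions collapse to ``$a\le\Delta_i(\lambda_l g)\le b$ for all $i\in I$,'' matching the $\mathbf A_I$-part of $\mathcal W_{\alpha,a,b}(I,\lambda_l)$.

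Next I would handle the $\mathcal C(\xi)$-part, i.e.\ the lower-bound inequalities $\Delta_j(\lambda g)\ge\alpha$. By definition a pair $(j,\lambda)\in\mathcal C(\xi)$ arises from an admissible extension $\eta=(\xi,(j,\lambda))$, which forces $j\notin I=\{i_1,\dots,i_l\}$ and $\lambda\lambda_l^{-1}\in\Lambda(I)$; conversely any $j\notin I$ and $\lambda=\theta\lambda_l$ with $\theta\in\Lambda(I)$ gives such an extension. Hence $\{\lambda:(j,\lambda)\in\mathcal C(\xi)\}=\Lambda(I)\lambda_l$ for each $j\notin I$, and the condition ``$\Delta_j(\lambda g)\ge\alpha$ for all $(j,\lambda)\in\mathcal C(\xi)$'' becomes exactly ``$\Delta_j(\theta\lambda_l g)\ge\alpha$ for all $j\notin I$ and all $\theta\in\Lambda(I)$,'' which is the defining lower-bound condition of $\mathcal W_{\alpha,a,b}(I,\lambda_l)$. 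Combining the two parts yields the claimed equality; as indicated in the statement this is the $S$-algebraic transcription of \cite[Prop.~1.8]{DM1}, whose proof carries over verbatim once the combinatorial bookkeeping of admissible sequences is in place.

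The main obstacle I anticipate is purely notational rather than conceptual: one must carefully verify that the recursive structure of admissible sequences is correctly captured by the data $(I,\lambda_l)$—in particular that the intermediate $\lambda_1,\dots,\lambda_{l-1}$ genuinely drop out of all the inequalities. This rests on two facts that need to be checked at each level $j$: that $\Delta_{i_j}$ is invariant under left multiplication by $\Gamma\cap\mathbf Q_{\{i_1,\dots,i_{j-1}\}}$ (so the $\Lambda(\{i_1,\dots,i_{j-1}\})$-ambiguity in $\lambda_j$ versus $\lambda_l$ does not affect $\Delta_{i_j}$), and that $\Lambda(I)\lambda_l$ is precisely the set of ``next-level'' translates predicted by $\mathcal C(\xi)$, which is Lemma \ref{l3.6} applied iteratively. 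Since all of these are already recorded above, the proof amounts to assembling them in the right order, exactly as in \cite{DM1}.
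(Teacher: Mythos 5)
Your division of the argument into the $\operatorname{supp}(\xi)$-conditions and the $\mathcal C(\xi)$-conditions is the right structure, and your treatment of the $\mathcal C(\xi)$-part is correct and immediate: by the definition of an admissible extension, $\mathcal C(\xi)=\{(j,\theta\lambda_l): j\notin I,\ \theta\in\Lambda(I)\}$ with no further input — in particular you do not need Lemma~\ref{l3.6} for this step, and you do not need \eqref{si} anywhere in this lemma (both are used later, in Theorem~\ref{dm2} and Proposition~\ref{p:compact} respectively).

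The reduction of the $\operatorname{supp}(\xi)$-conditions, however, has an indexing slip that would cause trouble if carried out literally. To show $\Delta_{i_j}(\lambda_j g)=\Delta_{i_j}(\lambda_l g)$ one writes $\lambda_l\lambda_j^{-1}=(\lambda_l\lambda_{l-1}^{-1})\cdots(\lambda_{j+1}\lambda_j^{-1})$ and uses that each factor $\lambda_{k+1}\lambda_k^{-1}$ lies in $\Lambda(\{i_1,\dots,i_k\})\subset\mathbf Q_{\{i_1,\dots,i_k\}}(K)$ for $k\ge j$; since $i_j\in\{i_1,\dots,i_k\}$ precisely when $k\ge j$, the character $\alpha_{i_j}$ (hence $\alpha_{i_j}^{m_{i_j}}$) is trivial on $\mathbf Q_{\{i_1,\dots,i_k\}}$, so each factor fixes $w_{i_j}$ exactly and the two values of $\Delta_{i_j}$ coincide on the nose. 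You instead invoke the relation $\lambda_j\lambda_{j-1}^{-1}\in\Lambda(\{i_1,\dots,i_{j-1}\})$ and the group $\Gamma\cap\mathbf Q_{\{i_1,\dots,i_{j-1}\}}$, but $i_j\notin\{i_1,\dots,i_{j-1}\}$, so $\alpha_{i_j}$ is \emph{not} trivial on $\mathbf Q_{\{i_1,\dots,i_{j-1}\}}$ and that group does not fix $w_{i_j}$; the relevant links are those at levels $k\ge j$, not $k=j-1$. Relatedly, the parenthetical ``$\Gamma$ (which preserve $\Delta_{i_j}$)'' is not correct as stated — a general $\gamma\in\Gamma$ does not preserve $\Delta_{i_j}$; what makes the argument work is that the entire linking element, including both the $\tilde F$-part and the $\Gamma\cap Q$-part, lies in $\mathbf Q_{\{i_1,\dots,i_k\}}(K)$, where $w_{i_j}$ is fixed outright. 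With these indices corrected your argument is exactly the Dani--Margulis argument the paper is citing.
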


Note that $\lambda_l$ arises in the above way if and only if
$\lambda_l\in  L(I)\G$.

For any subset $I\subset \{1, \ldots, r\}$,
note that $W_\infty\cap \bH_{I,\infty}$ is a maximal compact subgroup
of $\bH_{I,\infty}$. Set $J:=\{1, \ldots, r\}\setminus I$.
 By reduction theory,
there exist a compact subset $C_I\subset  \mathbf U_{J,\infty}\cap  \bH_{I,\infty}$,
 a finite subset $E_I\subset  \mathbf H_I(K)$, and $t_I>0$ such that
$$H_I =(\G\cap H_I)E_I \left(C_I\Omega_I(W_\infty \cap
  \mathbf H_{I,\infty})\times
(W_f\cap   \mathbf H_{I,S_f})\right)$$
where $$\Omega_I=\{(s_v)_{v\in S_\infty}: s_v\in
 A_{J,v}^0 ,\; 0< \alpha_j(s_v)\leq t_I,
\quad\forall j\in J,\; \forall v\in S_\infty\}.
$$
We enlarge the finite subset $F_I$, chosen above,  so that
$$(\Gamma\cap \bH_I)E_I(\G\cap \mathbf U_I)\subset (\G\cap \bQ_I)F_I .$$

We have $U_I=(\Gamma\cap U_I) D_I'$
for some $D_I'=D_I\times (\mathbf U_{I,S_f}\cap W_f)$
with $D_I\subset \mathbf U_{I, \infty}$.
Then for $C'_I=C_I\Omega_I(W_\infty \cap
  \mathbf H_{I,\infty})\times
(W_f\cap   \mathbf H_{I,S_f})$,
\begin{align}\label{eq:decomp}
Q_I &= U_I H_I
= U_I (\Gamma\cap H_I)E_I
\\
&=(\Gamma\cap H_I)E_I U_I C' =(\Gamma\cap H_I)E_I (\Gamma\cap U_I)D_I' C'_I \notag \\
&=(\G\cap \bQ_I) F_I  (\Psi_I \Omega_I(W_\infty \cap
 \bQ_{I,\infty})\times
(W_f\cap   \bQ_{I,S_f}))\notag
\end{align}
where $\Psi_I$ is a compact subset of $(\mathbf Q_I\cap \mathbf Q_J)_{\infty}$.

In the proof of the next proposition, we use the following lemma,
which follows from continuity of the norms:

\begin{Lem}\label{dm1.4} Let
 $1\leq i\leq r$ and $C$ be a compact subset of $G$.
Then for some $c>0$,
$$\Delta_i(gx)\geq c\cdot
 \Delta_i(g)\quad\text{ for all $x\in C$ and $g\in G$} .$$
\end{Lem}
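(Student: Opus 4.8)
The plan is to reduce the global inequality to a uniform local estimate at each place of $S$ and then multiply, exploiting that $S$ is finite. Recall that by definition $\Delta_i(g)=\prod_{v\in S}\|w_i g_v\|_v$, where $\bG$ acts on $\land^{\dim\mathfrak u_i}\mathfrak g$ through $\land^{\dim\mathfrak u_i}\Ad$ and $\|\cdot\|_v$ is the fixed norm on $\land^{\dim\mathfrak u_i}\mathfrak g(K_v)$. Thus it suffices to produce, for each $v\in S$, a constant $c_v>0$ with $\|w_i g_v x_v\|_v\ge c_v\|w_i g_v\|_v$ for all $g_v\in\bG(K_v)$ and all $x_v$ in the (compact) image $C_v$ of $C$ under the projection $\bG_S\to\bG(K_v)$; then $c:=\prod_{v\in S}c_v>0$ will do the job.

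To get the local estimate, I would fix $v\in S$ and denote by $\|\cdot\|_v^{\mathrm{op}}$ the operator norm on $\operatorname{End}(\land^{\dim\mathfrak u_i}\mathfrak g(K_v))$ induced by $\|\cdot\|_v$. Writing the action on the right, $w_i g_v=(w_i g_v x_v)\cdot\land^{\dim\mathfrak u_i}\Ad(x_v^{-1})$, so that $\|w_i g_v\|_v\le\|w_i g_v x_v\|_v\cdot\|\land^{\dim\mathfrak u_i}\Ad(x_v^{-1})\|_v^{\mathrm{op}}$ for every $g_v$ and every $x_v$. The map $x_v\mapsto\|\land^{\dim\mathfrak u_i}\Ad(x_v^{-1})\|_v^{\mathrm{op}}$ is continuous on $\bG(K_v)$ (it is a polynomial expression in the matrix entries of $\Ad(x_v^{-1})$, which itself depends continuously, indeed analytically, on $x_v$), hence bounded on the compact set $C_v$, say by $N_v<\infty$. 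Taking $c_v:=N_v^{-1}$ gives $\|w_i g_v x_v\|_v\ge c_v\|w_i g_v\|_v$, uniformly in $g_v$.

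Multiplying these inequalities over the finitely many $v\in S$ then yields, for $g=(g_v)_{v\in S}$ and $x=(x_v)_{v\in S}\in C$, the bound $\Delta_i(gx)=\prod_{v\in S}\|w_i g_v x_v\|_v\ge\bigl(\prod_{v\in S}c_v\bigr)\prod_{v\in S}\|w_i g_v\|_v=c\,\Delta_i(g)$, which is exactly the claim. I do not expect a genuine obstacle here: the only point worth emphasizing is that the constant $N_v$ must be uniform in $g_v$, and this is automatic from submultiplicativity of the operator norm; everything else is routine bookkeeping, and in fact this lemma could be folded into the preceding reduction-theory discussion with little extra effort.
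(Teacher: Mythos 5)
Your argument is correct and is exactly what the paper has in mind: the text simply asserts the lemma "follows from continuity of the norms," and your factorization $\|w_i g_v\|_v \le \|w_i g_v x_v\|_v \cdot \|\wedge^{\dim\mathfrak u_i}\Ad(x_v^{-1})\|_v^{\mathrm{op}}$ followed by taking the supremum of the operator norm over the compact projection $C_v$ and multiplying over the finitely many $v\in S$ is the standard way to make that assertion precise. No gap.
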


For $g=(g_v)_{v\in S_\infty}\in \bG_{\infty}$, set
$$d_i(g):=\prod_{v\in S_\infty}\|w_i g_v\|_v .$$


\begin{Prop} \label{p:compact}
For any admissible sequence $\xi$ of length $0\le l\le r$
 and positive $a<b$ and $\alpha>0$,
 the set $\G\ba \G \mathcal W_{\alpha,a, b}(\xi)$ is relatively compact.
\end{Prop}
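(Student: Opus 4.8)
The plan is to argue by induction on the length $l$ of the admissible sequence $\xi$. For $l=0$ the sequence $\xi$ is empty, $\operatorname{supp}(\xi)=\emptyset$, and $\mathcal C(\xi)$ consists of all pairs $(i,\lambda)$ with $1\le i\le r$ and $\lambda\in L(\{i\})\G$, so $\mathcal W_{\alpha,a,b}(\emptyset)=\{g\in G:\Delta_i(\lambda g)\ge\alpha\text{ for all }i\text{ and all }\lambda\in F^{-1}\G\}$. This is precisely the set on which none of the Galois-translated parabolic ``cusp'' functions $\Delta_i$ gets small, so its image in $\G\ba G$ is relatively compact by the reduction theory / Mahler-type criterion: this is the $S$-algebraic form of the statement that the complement of the union of the standard horoball neighborhoods of the cusps is compact (cf.\ \cite[Prop.~1.8]{DM1} and \cite{EMS1}). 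I would record this base case as the key geometric input.

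For the inductive step, suppose the claim holds for admissible sequences of length $l-1$, and let $\xi=((i_1,\lambda_1),\dots,(i_l,\lambda_l))$ have length $l$. By Lemma \ref{l:dm} we may replace $\mathcal W_{\alpha,a,b}(\xi)$ by $W_{\alpha,a,b}(I,\lambda_l)$ with $I=\{i_1,\dots,i_l\}$. The point is that on this set the coordinates $\Delta_i(\lambda_l g)$, $i\in I$, are pinned into the compact interval $[a,b]$, so $\lambda_l g$ lies in $\{q\in G:\Delta_i(q)\in[a,b],\ i\in I\}$; applying Lemma \ref{dm1.5} repeatedly (once for each element of $I$) writes $\lambda_l g\in(\mathbf A_I\cap\G)\,Q_I\,M_0$ for a fixed compact $M_0\subset G$ and then, after the decomposition \eqref{eq:decomp}, $\lambda_l g\in(\G\cap\bQ_I)F_I\,(\Psi_I\Omega_I(W_\infty\cap\bQ_{I,\infty})\times(W_f\cap\bQ_{I,S_f}))$. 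Using \eqref{si} to move the $(\mathbf A_I\cap\G)$ and $(\G\cap\bQ_I)$ factors to the left one shows that, modulo $\G$, $g$ lies in a fixed $\G$-translate of $\Lambda(I)$-many pieces, and the constraints $\Delta_j(\theta\lambda_l g)\ge\alpha$ for $j\notin I$, $\theta\in\Lambda(I)$ together with Lemma \ref{l3.6} feed these into the $W_{\alpha',a',b'}$-set attached to the length-$(l-1)$ truncation of $\xi$ (with $\alpha',a',b'$ adjusted by the fixed compact sets $M_0,\Psi_I,E$ via Lemma \ref{dm1.4}). Thus $\G\ba\G\,W_{\alpha,a,b}(I,\lambda_l)$ is contained in the image of a compact set times a set that is relatively compact by the inductive hypothesis, hence is itself relatively compact.

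The main obstacle I expect is bookkeeping rather than a genuine difficulty: one must check that passing through Lemmas \ref{dm1.5}, \ref{l3.6} and the decomposition \eqref{eq:decomp} changes the parameters $(\alpha,a,b)$ only by factors controlled by \emph{finitely many} fixed compact sets (independent of $\xi$, $u$, $T$), so that after at most $r$ steps the induction terminates with uniform constants; and one must handle the finite-adele factors $W_f\cap\bQ_{I,S_f}$ and the non-archimedean $A^0_{I,v}$ pieces (via Lemma \ref{sp}) in parallel with the archimedean Siegel-set picture. Once the $S$-adic reduction theory in \eqref{eq:decomp} and Lemma \ref{sp} is in place, the argument is the direct $S$-algebraic transcription of \cite[Prop.~1.8]{DM1} as carried out in \cite{EMS1}.
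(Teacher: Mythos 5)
Your base case and the first two steps of the inductive step are correct, and you have identified all the right ingredients (Lemma \ref{dm1.5}, the decomposition \eqref{eq:decomp}, \eqref{si}, Lemmas \ref{l3.6} and \ref{dm1.4}). The repeated application of Lemma \ref{dm1.5} directly to $\lambda_l g$ does produce $\lambda_l g\in(\mathbf A_I\cap\G)\,Q_I\,M_0$ for a fixed compact $M_0$; this is legitimate because $\lambda_l\lambda_j^{-1}\in\bQ_{I(j)}(K)\subset\bQ_{i_j}(K)$ and $\Delta_{i_j}$ is left $\bQ_{i_j}(K)$-invariant, so $\Delta_{i_j}(\lambda_j g)=\Delta_{i_j}(\lambda_l g)$; the paper organizes this same fact as an inner induction on the index $j$ running through $\xi$, but the content is the same. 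The subsequent use of \eqref{eq:decomp} to write $\lambda_l g$, modulo $\G$, as $F_I\cdot(\Psi_I\Omega_I W_\infty\times W_f)\cdot M_0$ is also correct.

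The gap is the closing step: you cannot ``feed these into the $W_{\alpha',a',b'}$-set attached to the length-$(l-1)$ truncation of $\xi$.'' The set $\mathcal W_{\alpha,a,b}(\xi)=W_{\alpha,a,b}(I,\lambda_l)$ is not contained in any compact thickening of $W_{\alpha',a',b'}(I',\lambda_{l-1})$ with $I'=I\setminus\{i_l\}$. The truncated set imposes $\Delta_j(\theta'\lambda_{l-1}g)\ge\alpha'$ for \emph{all} $j\notin I'$ and \emph{all} $\theta'\in\Lambda(I')$, and these conditions do not follow from the $\xi$-conditions: for $j\notin I$, the relevant translates $\theta'\lambda_{l-1}\lambda_l^{-1}$ lie in $\Lambda(I')\Lambda(I')^{-1}\subset\bQ_{I'}(K)$, which is strictly larger than $\Lambda(I)\subset\bQ_I(K)$ (Lemma \ref{l3.6} controls $\Lambda(I)\Lambda(I')$, not this set); and for $j=i_l$, the bounds $\Delta_{i_l}(\theta'\lambda_{l-1}g)\ge\alpha'$ for generic $\theta'\in\Lambda(I')$ have no source in $\xi$, which only gives $\Delta_{i_l}(\lambda_l g)\in[a,b]$ for the single translate $\lambda_l$. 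Hence an outer induction on $l$ does not close. What the paper does instead is to close the argument \emph{inside} $Q_I$: having placed $g$, modulo $\G$ and fixed compacts, into the Siegel domain $\Psi_I\Omega_I W_\infty\times W_f$ of $Q_I$, one bounds the only unbounded direction $\Omega_I$ directly from the inequalities $\Delta_j(\theta\lambda_l g)\ge\alpha$ ($j\notin I$, $\theta\in\Lambda(I)$) supplied by Lemma \ref{l:dm}, exactly as in your base case but now with $I\ne\emptyset$. So the correct structure is a direct argument for each $l$ (with an inner loop over $I$, not a recursion on $l$), finishing with the base-case-type estimate on $\tilde\Omega_I$.
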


\begin{proof}
For simplicity, write $\mathcal W=\mathcal W_{\alpha,a, b}(\xi)$.

Let $\xi$ be the empty sequence.
Then
$$\mathcal W=\{g\in G:
\Delta_j(  \lambda g) \geq\alpha,\quad \forall j,\; \forall \lambda\in \Lambda(\emptyset)\}.$$
Every $g\in \mathcal W$ has a decomposition $g=(\lambda,\lambda)(\psi \omega k_\infty , k_f)$, $\psi\in \Psi_\emptyset$, $w\in \Omega_\emptyset$,
$k_\infty\in W_\infty$ and $k_f\in W_f$ as
in (\ref{eq:decomp}) where $\lambda\in \G \tilde F_\emptyset
=\Lambda(\emptyset)^{-1}$. Hence,
$$\Delta_j(\psi \omega k_\infty  , k_f)= d_j( \psi \omega k_\infty  )=d_j(\omega) \geq c\alpha $$
where $c>0$ is a constant depending on $\Psi_\emptyset$.
Since $d_j(\omega)=\prod_{v\in S_\infty}|\alpha_j(\omega)|_v^{m_j}$,
we have
$$\mathcal W\subset \G \tilde F_\emptyset ( \Psi_\emptyset
\tilde \Omega_{\emptyset} W_\infty  \times W_f)$$
where $$\tilde \Omega_\emptyset=\{(\omega_v) \in A_{\emptyset, \infty}^0:\,
 (c\alpha)^{1/m_j} \leq \prod_{v\in S_\infty}
|\alpha_j(\omega_v)|_v\leq t_\emptyset,
\quad \forall j\} .$$
This shows that $\G\ba\G \mathcal W$ is relatively compact in this case.

Now let $\xi=((i_1, \lambda_1), \ldots, (i_l, \lambda_l))$
 be an admissible sequence of length $l\geq 1$ and $I(j)=\{i_1,\ldots,i_j\}$.
We claim that there exist compact subsets $M_1, \ldots, M_l$
such that for any $j=1, \ldots, l$ and $g\in \mathcal W$,
$$
\lambda_j \mathcal W \subset (S_{i_{j}}\cap \G) Q_{I(j)} M_j^{-1}.$$

We prove the claim by induction. For $j=1$, we can take $M_1=M_0^{-1}$
where $M_0$ is as in Lemma \ref{dm1.5} with $I=\emptyset$ and $j=i_1$.
Suppose that the sets $M_1, \ldots, M_j$ have been found.
By Lemma \ref{dm1.4},
there is $c\in (0, 1) $ such that
$$\Delta_{i_{j+1}}(hx)\geq c\cdot \Delta_{i_{j+1}}(h)$$ for all $x\in M_j\cup M_j^{-1}$
and $h\in G
$.
By Lemma \ref{dm1.5}, there exists a compact set $M_0$ such that
\begin{equation}\label{eq:step}
\{g\in Q_{I(j)}: \Delta_{i_{j+1}}(g)\in [c a,c^{-1}b]\}\subset (A_{i_{j+1}}\cap \Gamma)Q_{I(j+1)}M_0.
\end{equation}
Let $M_{j+1}=M_jM_0^{-1}$.
For $g\in \mathcal W$, there exists $m_j\in M_j$ such that $\lambda_jg m_j\in
(A_{i_{j}}\cap\G) Q_{I(j)}$.
Since $\lambda_{j+1}\lambda_j^{-1}\in Q_{I(j)}$
and $A_{i_{j}}\cap \G$ normalizes $Q_{I(j)}$,
we have $$\lambda_{j+1}g m_j\in(A_{i_{j}}\cap \G) Q_{I(j)} .$$
Hence for some $\gamma_{j}\in A_{i_{j}}\cap \Gamma$,
$ \gamma_{j}\lambda_{j+1}gm_j \in Q_{I(j)}$.
Since $\alpha_{i_{j+1}}(\gamma_j)=1$,
$$\Delta_{i_{j+1}}(\gamma_{j}\lambda_{j+1}gm_j)=
 \Delta_{i_{j+1}}(\lambda_{j+1}gm_j),$$
and
$$c a\leq c  \Delta_{i_{j+1}}(  \lambda_{j+1} g) \leq
\Delta_{i_{j+1}}( \lambda_{j+1} gm_j)\leq c^{-1} \Delta_{i_{j+1}}( \lambda_{j+1}g )
\leq c^{-1}b.$$
By (\ref{eq:step}), there exists $m_0\in M_0$ such that
$$\gamma_j \lambda_{j+1} gm_{j}\in (A_{i_{j+1}}\cap \G)  Q_{I(j+1)}m_0.$$
So for $m_{j+1}=m_jm_0^{-1}$,
we have $$ \lambda_{j+1} g m_{j+1} \in(A_{i_{j+1}}\cap \G) Q_{I_{j+1}}$$
proving the claim.

By the above claim,
\begin{equation}\label{eq:claim0}
\lambda_l \mathcal W \subset (A_{i_l}\cap \G) Q_{I}M_l^{-1}
\end{equation}
where $I:=\{i_1, \ldots, i_l\}$.
If $I=\{1, \ldots, r\}$,
then $\G\cap Q_I\ba Q_I$ is compact. Hence
$\G \ba \G\lambda_r^{-1} (A_{i_r}\cap \G) Q_I M_r$ is compact,
which implies that $\G\ba \G \mathcal W$ is relatively compact.

Now suppose $I$ is a proper subset. Then by (\ref{eq:decomp}) and
(\ref{eq:claim0}), for $g\in \mathcal W$,
$$\delta \gamma\lambda_l g m=(\psi \omega k_\infty , k_f)\in
 \Psi_{I}\Omega_{I}W_\infty\times
W_f$$
for some $\delta\in
F_I^{-1}(Q_I\cap \G)$, $\gamma\in A_{i_l}\cap \G$, and $m\in M_l$.
Hence, for every $j\notin I$,
$$|\alpha_j(\omega)|_\infty^{m_j}=d_j(\psi \omega k_\infty)=
\Delta_j(\delta \gamma \lambda_l g m) .$$
By (\ref{si}), $\delta \gamma=
\gamma'\theta $ where $\gamma'\in  A_I\cap \G$ and $\theta\in \Lambda(I)$.
Since $A_I$ acts trivially on the vectors $w_j$, $j\notin I$, we have
$$
\Delta_j(\delta \gamma\lambda_l g m) =
\Delta_j (\theta \lambda_l g m) .$$
By Lemma \ref{l:dm}, we have $\Delta_j (\theta \lambda_l g) \geq \alpha.$
Hence, by Lemma \ref{dm1.4}, there exists $\beta>0$, depending only on
$\alpha$ and $M_l$, such that
$\Delta_j(\theta \lambda_l g  m)\geq \beta$.
This shows that $\prod_{v\in S_\infty}|\alpha_j(\omega)|_v^{m_j}\geq \beta$ for
$j\notin I$.
Therefore,
if we set
$$
\tilde\Omega_I=\{\omega\in A_J:  \beta^{1/m_j} \leq \prod_{v\in S_\infty}
|\alpha_j(\omega)|_v \leq t_I,\;
j\in J\}
$$
where $J$ is the complement of $I$, then
 $$\mathcal W\subset \lambda_l^{-1} \G
 F_I (\Psi_I \tilde \Omega_I W_\infty  \times W_f)M_l^{-1},
$$ and the
later set
is compact modulo $\G$.
\end{proof}

\subsection{Proof of Theorem \ref{dm2}}
Fix $v\in S$ and a vector space $K_v^N$.
We define $\mathcal P_d^*$ is
the set of polynomial maps $K_v \to K_v^N$
(res. $\br \to \c^N$)
of degree less than $d$ if $K_v \ne \c$,
(resp. $K_v=\c$).
We write $f\in \Cal P_d^*$
as $(f_1, \cdots, f_{N})$.
We set
$$\|f(x)\|_v=\max_{i}|f_i(x)|_v .$$
Recall that by an interval of a non-archimedean local field,
we mean a subset of $K_v$ of the form
$I=\{t\in K_v: |t-t_0|_v< \delta\}$.
There is the unique $k$ such that $q_v^{k} < \delta\le q_v^{k+1}$.
Then $2q_v^k$ is called the diameter of $I$.
In the case when $K_v=\c$,
as $\mathcal P_d^*$ consists of polynomial maps
defined in $\br$, the intervals are understood as subsets of $\br$
and the meaning of diameter is then clear.

\begin{Lem}\label{c2.18}
Given $M>1$, there exists $\eta\in (0,1)$ such that
for any $f\in  \Cal P_d^*$ and any interval $I$,
there exists a subinterval $I_0\subset I$ with $\hbox{\rm diam}(I_0)\geq
\eta\cdot \hbox{\rm diam}(I)$ satisfying
$$
\sup_{I} \|f\|_v\leq M\cdot \inf_{I_0} \|f\|_v
$$
\end{Lem}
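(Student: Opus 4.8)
The statement is a quantitative form of the fact that a polynomial of bounded degree cannot oscillate too wildly: on a suitable fraction of any interval, its sup and inf (of the max-norm) are comparable. The plan is to reduce to the one-variable, one-component case and then invoke a standard compactness/normalization argument over the space of polynomials of degree $<d$.

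\textbf{Step 1: reduction to a single coordinate.} Writing $f=(f_1,\dots,f_N)$, for each $t$ we have $\|f(t)\|_v=\max_i|f_i(t)|_v$. It suffices to find, for each interval $I$, a subinterval $I_0$ of controlled relative diameter on which $\inf_{I_0}\|f\|_v\geq M^{-1}\sup_I\|f\|_v$. Pick the index $i_0$ achieving $\sup_I\|f\|_v=\sup_I|f_{i_0}|_v$ (or within a factor $2$ of it, using that the sup is attained on the compact interval in the archimedean case, and is a maximum over finitely many values of the $q_v$-adic valuation in the non-archimedean case). Then it is enough to prove the one-component statement: for $g\in\mathcal P_d^*$ scalar-valued and any interval $I$, there is $I_0\subset I$ with $\operatorname{diam}(I_0)\geq\eta\operatorname{diam}(I)$ and $\sup_I|g|_v\leq M'\inf_{I_0}|g|_v$ for a suitable $M'$ depending only on $M,d$. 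Intersecting the subintervals obtained (or rather, just working with the chosen $i_0$) handles the vector case, possibly shrinking $\eta$ by a factor depending only on $d$.

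\textbf{Step 2: normalization and compactness.} For the scalar case, by an affine change of variable we may assume $I$ is the unit interval (or unit ball) in $K_v$; the condition and the conclusion are both invariant under such rescaling, and $\operatorname{diam}$ scales accordingly. Normalizing $g$ by its (nonzero) sup over $I$, we may assume $\sup_I|g|_v=1$. The space of such polynomials of degree $<d$ with sup-norm $1$ on the unit interval is compact (it is a closed bounded subset of a finite-dimensional $K_v$-vector space, identifying $g$ with its coefficient vector; sup-norm on a fixed interval is a norm on this space, equivalent to the coefficient norm). For each such $g$, since $g\not\equiv 0$ there is a point $t_g\in I$ and a radius $\delta_g>0$ with $|g|_v\geq 1/2$ on the ball of radius $\delta_g$ about $t_g$ — in the non-archimedean case by continuity of the valuation and the ultrametric property, in the archimedean case by continuity. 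By compactness one extracts a uniform $\delta_0>0$ and a uniform lower bound; this gives $\eta$ depending only on $d$ (and $v$, via $q_v$ in the non-archimedean case) and $M'=2$, which one then arranges to beat the given $M$. The $K_v=\mathbb C$ case is literally the $\mathbb R$ case applied to polynomial maps $\mathbb R\to\mathbb C^N$, with intervals taken in $\mathbb R$, so no separate argument is needed.

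\textbf{Main obstacle.} The only delicate point is making the compactness argument uniform across \emph{all} intervals simultaneously and, in the non-archimedean case, checking that the ultrametric structure lets one pass from "$|g|_v\geq 1/2$ near one point" to a genuine subinterval (ball) of definite relative size — here one uses that on a small enough ball a nonvanishing polynomial has constant valuation, together with the fact that balls in $K_v$ are either nested or disjoint. Once the normalization reduces everything to the fixed unit interval, uniformity is automatic because the set of normalized degree-$<d$ polynomials is compact and the map $g\mapsto(\text{largest }\delta\text{ with }|g|_v\geq\tfrac12\text{ on some ball of radius }\delta)$ is lower semicontinuous and strictly positive. This is exactly the type of elementary estimate proved in \cite{DM1} and \cite{DM2}; one can also cite \cite[Lemma 4.1]{KT} or the analogous statement in \cite{EMS1}. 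I expect this lemma to be quoted there with a reference rather than reproved in full.
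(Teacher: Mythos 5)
Your argument is correct, but it takes a genuinely different route from the paper's. The paper reduces to the scalar case ($N=1$) by decomposing $I$ into subintervals on each of which a single coordinate $|f_i|_v$ realizes the max-norm, and then proves the scalar statement by working at the point $t_0$ where $|f|_v$ attains its supremum: it bounds the derivatives $\sup_I|f^{(i)}|_v$ in terms of $\sup_I|f|_v$ via the Lagrange interpolation formula, and then a Taylor expansion at $t_0$ shows that $|f|_v$ stays within the factor $M$ of its maximum on an explicit ball $I_0=\{t:|t-t_0|_v\le q_v^{-n}\delta\}$. Your reduction to the scalar case is actually cleaner: picking $i_0$ with $\sup_I|f_{i_0}|_v=\sup_I\|f\|_v$ and noting $\inf_{I_0}\|f\|_v\ge\inf_{I_0}|f_{i_0}|_v$ closes the reduction with no loss in $\eta$ or $M$, whereas the paper's decomposition of $I$ into coordinate-dominated pieces does not obviously produce a subinterval of the required relative size. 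Your core step --- normalize to the unit ball with $\sup$-norm $1$ and run a compactness argument on the resulting compact set of degree-$<d$ polynomials --- is more conceptual, avoids the explicit derivative estimates, and treats the archimedean and non-archimedean cases by the same mechanism, at the cost of giving no explicit value of $\eta$; the paper's route is constructive but case-dependent (it in fact cites \cite[Corollary 2.18]{EMS1} for the archimedean case and only proves the non-archimedean one in full). One small point to tighten: the function you declare lower semicontinuous (largest radius of a ball where $|g|_v\ge\tfrac12$) is not obviously so because of the non-strict inequality; it is cleaner to start from a closed ball on which $|g_0|_v\ge\tfrac23$, observe that every $g$ sufficiently close to $g_0$ in coefficient norm satisfies $|g|_v\ge\tfrac12$ on that same fixed ball, and then extract a finite subcover and take the minimum radius.
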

\begin{proof}
For the archimedean version of this lemma, see \cite[Corollary 2.18]{EMS1}.
Let $v$ be non-archimedean.
Since $I$ can be expressed as a disjoint union $\cup J$ of intervals
so that on each interval $J$, there is $i$
such that $\|f(x)\|_v=|f_i(x)|_v$ for all $x\in J$.
Therefore it suffices to prove the above claim for $N=1$.

There exists $t_0\in I$ such that $\sup_I |f|_v=|f(t_0)|_v$.
It follows from the Lagrange interpolation formula that there
exists $M_i>0$, depending on $I$, such that
$$
\sup_I |f^{(i)}|_v\leq M_i \cdot \sup_I |f|_v\quad\hbox{for all $f\in\mathcal{P}_d^*$}
$$
where $f^{(i)}$ is the $i$-th derivative of $f$.
Let $\delta$ denote the diameter of $I$. Let $n\in \n$ be big enough so that
$$M^{-1}<1-\sum _{i=1}^d q_v^{-ni}\delta^i\frac{M_i}{i!}$$
and $ I_0:=\{t:|t-t_0|_v\le  q_v^{-n}\delta\}$ is contained in $I$.
 Then
using the Taylor formula, we deduce that for every
$t\in I_0$, $$
|f(t)|_v\geq |f(t_0)|_v -\left(\sum_{i=1}^d
(q_v^{-n}\delta)^i \frac{M_i}{i!}\right) \sup_I |f|_v=M^{-1} \sup_I |f|_v.
$$

Hence $\sup_I|f|_v\le M \inf_{I_0} |f|_v$ and
the diameter of $I_0$ is $ 2 q_v^{-n}\delta $. Hence this proves the claim.
\end{proof}

\begin{Lem}\label{c2.17} Given $\eta\in (0,1)$, there exists $M> 1 $ such that
for any interval $I$ and any subinterval $I_0\subset I$
with $\hbox{\rm diam}(I_0)\geq \eta\cdot \hbox{\rm diam}(I)$,
\begin{equation}\label{eq:well}
\sup_{I}\|f\|_v\leq M \cdot \sup_{I_0} \|f\|_v\quad\hbox{for all $f\in
  \Cal P_d^*$.}
\end{equation}
\end{Lem}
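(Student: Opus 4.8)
The plan is to reduce the statement to a single fixed reference interval by an affine change of variable, and then to invoke the fact that all norms on the finite-dimensional space $\mathcal P_d^*$ are equivalent, applied uniformly over a compact family of subintervals. First I would fix a standard interval $I^*$: in the non-archimedean case take $I^*=\{s\in K_v:|s|_v\le 1\}$, and in the archimedean case take $I^*=[-1,1]$ (inside $K_v$ when $K_v=\br$, inside $\br$ when $K_v=\c$). Given an arbitrary interval $I$ of diameter $2\rho$ centred at $t_0$, there is an affine isomorphism $\phi(s)=as+b$ of $K_v$ (resp.\ of $\br$, when $K_v=\c$) carrying $I^*$ onto $I$, with $|a|_v=\rho$; in the non-archimedean case $\rho=q_v^k$ for the integer $k$ with $\operatorname{diam}I=2q_v^k$, and one may take $a=\pi^{-k}$ for a uniformizer $\pi$. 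Precomposition with $\phi$ maps $\mathcal P_d^*$ into itself, since composing a polynomial of degree $<d$ with a degree-one polynomial does not raise the degree; moreover $\sup_I\|f\|_v=\sup_{I^*}\|f\circ\phi\|_v$, and $\phi^{-1}(I_0)$ is again an interval contained in $I^*$ with $\operatorname{diam}\phi^{-1}(I_0)/\operatorname{diam}I^*=\operatorname{diam}I_0/\operatorname{diam}I\ge\eta$. Hence it suffices to prove \eqref{eq:well} with $I$ replaced by $I^*$, and the constant $M$ produced will then work for all $I$.

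Next I would run the compactness argument on $I^*$. The space $V:=\mathcal P_d^*$ is finite-dimensional, and $\|f\|_{I^*}:=\sup_{x\in I^*}\|f(x)\|_v$ is a genuine norm on $V$: a nonzero polynomial map of bounded degree cannot vanish on the infinite set $I^*$ (in the non-archimedean case any ball of positive radius is infinite since $K_v$ has characteristic zero, and a nonzero polynomial has only finitely many zeros). Let $\Sigma=\{f\in V:\|f\|_{I^*}=1\}$, which is compact, and let $\mathcal I_\eta$ denote the collection of subintervals $I_0\subset I^*$ with $\operatorname{diam}I_0\ge\eta\operatorname{diam}I^*$; this family is parametrised by the centre and diameter of $I_0$, both ranging over compact sets, and in the non-archimedean case it is even finite. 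The evaluation $(f,I_0)\mapsto\sup_{x\in I_0}\|f(x)\|_v$ is continuous on $\Sigma\times\mathcal I_\eta$ and, by the same non-vanishing remark, strictly positive; being continuous on a compact set it attains a positive minimum $1/M$ for some $M>1$. Then for every $f\in V\setminus\{0\}$ and every $I_0\in\mathcal I_\eta$ one gets $\sup_{I_0}\|f\|_v\ge\|f\|_{I^*}/M=\sup_{I^*}\|f\|_v/M$, which is \eqref{eq:well} for $I=I^*$ (the case $f=0$ being trivial). Together with the first paragraph this gives the lemma. Alternatively, in the archimedean case one may simply quote the corresponding statement of \cite{EMS1}, exactly as is done for Lemma \ref{c2.18}.

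The argument is essentially routine once set up; the only point that requires a little care is the affine reduction in the non-archimedean case, namely checking that every interval is the image of $I^*$ under a degree-one polynomial map with $|a|_v$ lying in the value group $q_v^{\mathbb Z}$, that this preserves the class $\mathcal P_d^*$, and that the diameter ratio $\operatorname{diam}I_0/\operatorname{diam}I$ is unchanged. Everything else is the standard equivalence of norms on a finite-dimensional vector space, made uniform by the compactness of the parameter space $\mathcal I_\eta$.
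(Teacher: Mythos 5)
Your proof is correct, and it takes a genuinely different route from the paper's in the non-archimedean case. The paper derives the lemma from the quantitative $(C,\alpha)$-type estimate of Kleinbock--Tomanov,
$$\theta_v\{x\in I: \|f(x)\|_v<\e\} \leq C \Bigl(\frac{\e}{\sup_I \|f\|_v}\Bigr)^{\alpha} \theta_v(I),$$
and then chooses $M$ so that the set where $M\|f(x)\|_v<\sup_I\|f\|_v$ has measure strictly smaller than $\theta_v(I_0)$, forcing a point of $I_0$ where $\|f\|_v$ is large. You instead affinely rescale to a fixed reference interval $I^*$ (after checking that the degree-one change of variable preserves $\mathcal P_d^*$ and the diameter ratio, which you do correctly, including the point that $|a|_v$ lies in $q_v^{\ZZ}$), observe that $\|\cdot\|_{I^*}$ is a genuine norm on the finite-dimensional space $\mathcal P_d^*$, and then take the minimum of the continuous positive function $(f,I_0)\mapsto\sup_{I_0}\|f\|_v$ over the compact set $\Sigma\times\mathcal I_\eta$, where $\mathcal I_\eta$ is finite in the non-archimedean case. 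Both arguments are sound: yours is more elementary and self-contained, avoiding the measure-theoretic input from \cite{KT} in favour of soft compactness and equivalence of norms; the paper's has the advantage of producing an explicit constant $M$ in terms of $C$, $\alpha$, $q_v$, $\eta$, which is philosophically in keeping with the other non-divergence estimates used in that section. Either proof serves the purposes of the paper.
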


\begin{proof} For the archimedean case, this is proved in \cite[Coro. 2.17]{EMS1}.
We give a proof in the non-archimedean case.
By Lemmas 2.1 and 2.4 in \cite{KT},
there exist $C, \alpha >0$ such that
for any $\e>0$ and any interval $I$
\begin{equation}\label{eq:well2}
\theta_v\{x\in I: \|f(x)\|_v<\e\} \leq C \left(\frac{\e}{\sup_I
    \|f\|_v}\right)^{\alpha} \theta_v(I).
\end{equation}
Choose $n\in\mathbb{N}$
and $M>1$ satisfying $\eta>q_v^{-n}$ and $M^\alpha> C q_v^n.$
Then for any subinterval $I_0\subset I$
with $\hbox{\rm diam}(I_0)\geq \eta\cdot \hbox{\rm diam}(I)$,
 $\theta_v(I_0)> q_v^{-n} \theta_v(I)$. Applying (\ref{eq:well2}) with
$\e=M^{-1}\cdot \sup_{I}\|f\|_v$, we deduce that
$$
\theta_v \left\{x\in I: M\cdot \|f(x)\|_v<\sup_{I}\|f\|_v\right\} \leq
 q_v^{-n} \theta_v (I).
$$
Therefore there exists $x\in I_0$ such that
$$
M\cdot \|f(x)\|_v\geq \sup_{I}\|f\|_v.
$$
This proves the lemma.
\end{proof}

\begin{Prop}\label{p3.8}
There exists $M>1$ such that
for any $\alpha>0$, any interval $I$,
and  a subfamily $\Cal F\subset \Cal P_d^*$ satisfying:
\begin{enumerate}
\item[(i)]  For any $t_0\in I$, $\#\{\phi_f(t):=\|f(t)\|_v: f\in \mathcal{F},\, \|f(t_0)\|_v<\alpha\} <\infty$,
\item[(ii)] For any $f\in \Cal F$, $\sup_{t\in I} \phi_f(t)\geq\alpha,$
\end{enumerate}
one of the followings holds:
\bi
\item [(a)] There is $t_0\in I$
such that
$$\phi_{f}(t_0)\geq  \alpha\quad\text{for all $f\in \Cal F$}.$$
\item [(b)]
There exist an interval $I_0\subset B$ and $f_0\in \Cal F$ such that
$$
\phi_{f_0}(I_0)\subset [\alpha/M,\alpha M]\quad\hbox{and}\quad
\sup_{I_0}\phi_f\geq \alpha/M\quad\hbox{for all $f\in \Cal F$}.
$$
\ei
\end{Prop}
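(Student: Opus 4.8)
The plan is to follow the standard dichotomy argument from Dani--Margulis (and its elaboration in \cite{DM1} and \cite{EMS1}), feeding in Lemmas \ref{c2.18} and \ref{c2.17} as the quantitative input. Let $M>1$ be the constant furnished by Lemma \ref{c2.17} applied with the parameter $\eta\in(0,1)$ coming from Lemma \ref{c2.18} applied to, say, the value $M':=\sqrt{M}$ (one chooses the constants in the right order so that the two lemmas are compatible; the precise bookkeeping is routine). Fix an interval $I$, a number $\alpha>0$, and a family $\mathcal F\subset\mathcal P_d^*$ satisfying (i) and (ii). Suppose (a) fails, i.e. for every $t_0\in I$ there is some $f\in\mathcal F$ with $\phi_f(t_0)<\alpha$. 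The goal is to produce a subinterval $I_0\subset I$ and a single $f_0\in\mathcal F$ witnessing (b).

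First I would reduce to a finite subfamily. The key point is hypothesis (i): one picks any $t_0\in I$; among those $f\in\mathcal F$ with $\phi_f(t_0)<\alpha$ there are only finitely many distinct functions $\phi_f$, and by (ii) each such $f$ has $\sup_I\phi_f\ge\alpha$. Since (a) fails one such $f$ exists. Now among this finite collection, order them and build a nested sequence of intervals: starting from $I$, apply Lemma \ref{c2.18} to the first $f_1$ to get $I_1\subset I$ with $\mathrm{diam}(I_1)\ge\eta\,\mathrm{diam}(I)$ and $\sup_I\phi_{f_1}\le M'\inf_{I_1}\phi_{f_1}$; since $\phi_{f_1}$ takes a value $<\alpha$ somewhere in $I$ and a value $\ge\alpha$ somewhere in $I$, and since on $I_1$ it cannot oscillate by more than the factor $M'$, on $I_1$ we have $\phi_{f_1}$ trapped near $\alpha$. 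The subtlety is that one wants \emph{all} $\phi_f$ to be $\ge\alpha/M$ on the final interval while \emph{one} of them, $f_0$, is boxed into $[\alpha/M,\alpha M]$; this is arranged by choosing $f_0$ to be an $f$ whose supremum over the current interval is as small as possible (but still $\ge\alpha$ by (ii)), then applying Lemma \ref{c2.18} to that $f_0$ to get $I_0$ on which $\phi_{f_0}\le M'\inf_{I_0}\phi_{f_0}$ forces $\phi_{f_0}(I_0)\subset[\alpha/M',\,\alpha M']\subset[\alpha/M,\alpha M]$; and on $I_0$ every other $\phi_f$ satisfies $\sup_{I_0}\phi_f\ge M^{-1}\sup_I\phi_f\ge M^{-1}\alpha$ by Lemma \ref{c2.17} combined with $\mathrm{diam}(I_0)\ge\eta\,\mathrm{diam}(I)$ and (ii). The finiteness from (i) is exactly what lets this selection-of-minimum step make sense.

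The main obstacle I expect is the combinatorial organization of constants and the passage between ``$\sup$ over $I$'' and ``$\sup$ over $I_0$'' for the functions other than $f_0$: one must make sure that shrinking to $I_0$ (which is forced by the oscillation control on $f_0$) does not destroy the lower bound $\sup\phi_f\ge\alpha/M$ for the remaining $f$, and this is precisely where Lemma \ref{c2.17} is used, since it is the statement that $\sup$ over a large subinterval controls $\sup$ over the whole interval up to the fixed factor $M$. In the non-archimedean case one also needs to be slightly careful that ``interval'' and ``diameter'' behave well under the ultrametric, but Lemmas \ref{c2.18} and \ref{c2.17} are already stated uniformly in that setting, so no extra work is needed there. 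Everything else is bookkeeping; I would mirror the proof of \cite[Prop.~1.8]{DM1} and \cite[\S3]{EMS1} essentially verbatim, substituting our Lemmas \ref{c2.18}, \ref{c2.17} for their archimedean analogues.
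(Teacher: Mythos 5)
Your plan captures the broad shape of the Dani--Margulis/EMS argument but misses the single step that makes it work, and the step you propose in its place does not close the gap. You correctly observe that (i) gives finiteness of the relevant subfamily $\mathcal F_1$ once $t_0$ is fixed, and you correctly identify that the goal is to find a subinterval on which one $\phi_{f_0}$ is \emph{boxed} into $[\alpha/M,\alpha M]$ while the others still have large sup. The problem is that ``choose $f_0$ with the smallest $\sup_I\phi_{f_0}$ and apply Lemma~\ref{c2.18}'' only gives $\phi_{f_0}(I_0)\subset[\sup_I\phi_{f_0}/M',\,\sup_I\phi_{f_0}]$, which is near $\alpha$ only if $\sup_I\phi_{f_0}$ is near $\alpha$. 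But (ii) only gives $\sup_I\phi_{f_0}\ge\alpha$; even the \emph{minimal} sup over the family can be enormous, in which case $\phi_{f_0}$ gets trapped near that large value, not near $\alpha$, and the containment $\phi_{f_0}(I_0)\subset[\alpha/M,\alpha M]$ simply fails. In short, your construction controls the \emph{oscillation} of $\phi_{f_0}$ but never its \emph{magnitude}.

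The device that supplies the missing upper bound is a minimal-scale argument centered at $t_0$, and it is the heart of the paper's proof. After fixing $t_0$ with $\phi_f(t_0)<\alpha$ for all $\phi_f\in\mathcal F_1$, one does not work with $I$ but with the family of balls $E_k=\{t:|t-t_0|_v\le q_v^k\}$. For $k$ very negative all $\phi_f\in\mathcal F_1$ have $\sup_{E_k}<\alpha$ (since $E_k$ shrinks to $\{t_0\}$); for $k$ large, $E_k\supset I$ and (ii) gives $\sup\ge\alpha$. Take the \emph{smallest} $k$ for which $\sup_{E_k}\|f\|_v\ge\alpha$ holds for every $\phi_f\in\mathcal F_1$. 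By minimality, some $f_0$ had $\sup_{E_{k-1}}\|f_0\|_v<\alpha$; Lemma~\ref{c2.17} (comparing $E_{k-1}$ to $E_k$ at fixed ratio) then yields $\sup_{E_k}\|f_0\|_v\le M_1\alpha$. It is this two-sided pinning $\alpha\le\sup_{E_k}\|f_0\|_v\le M_1\alpha$ that makes Lemma~\ref{c2.18}, applied to $f_0$ on $E_k$, produce the boxed interval $I_0$ with $\phi_{f_0}(I_0)\subset[\alpha/M_1,\alpha M_1]$. Finiteness from (i) is used precisely so that this minimal $k$ is realized, and $t_0\in E_k$ takes care of the functions outside $\mathcal F_1$. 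Your proposal keeps the finiteness step and the two lemmas but replaces the crucial ``first scale at which everyone crosses $\alpha$'' mechanism with a sup-minimization that cannot force an upper bound; that is a genuine gap, not merely unwritten bookkeeping.
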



\begin{proof}
Pick $t_0\in I$ and suppose that (a) fails, that is,
$$
\mathcal{F}_1=\{\phi_f: f\in \mathcal{F},\; \|f(t_0)\|_v<\alpha\}\neq \emptyset.
$$
By (i), the set $\mathcal{F}_1$ is finite. By Lemma \ref{c2.17},
there exists $M_1>1$ such that for every $\phi_f \in\mathcal{F}_1$ and $k\in \mathbb{Z}$,
$$
\sup_{|t-t_0|_v\leq q_v^{k+1}} \|f(t)\|_v
 \leq M_1\cdot \sup_{|t-t_0|_v\leq q_v^{k}} \|f(t)\|_v.
$$
We set $E=\{t:\, |t-t_0|_v\leq q_v^{k}\}$, where $k$ is the smallest
integer such that $\sup_{E} \|f\|_v\geq \alpha$ for all
$t\mapsto \|f(t)\|_v\in\mathcal{F}_1$. Such $k$ exists by (ii).
Then there is $\phi_{f_0}\in\mathcal{F}_1$ such that $\sup_{E} \|f_0\|_v\leq
\alpha M_1$.
By Lemma \ref{c2.18}, there exists a subinterval $I_0\subset E$ such
that $\hbox{diam}(I_0)\geq \eta\cdot \hbox{diam}(I)$ and
$$
\inf_{I_0} \|f_0\|_v\geq \alpha/M_1.
$$
By Lemma \ref{c2.17}, there exists $M_2>1$ such that
$$
\sup_{I_0} \|f\|_v\geq \alpha/M_2 \quad\hbox{for all $f\in \Cal F$}.
$$
This proves the proposition.
\end{proof}


\begin{proof}[Proof of Theorem \ref{dm2}]
Suppose that condition (1) in the theorem fails.
We will show that for some $I\in \Cal T$, $\lambda\in L(I)\Gamma$,
$\alpha_I>0$, and $0<a_I<b_I$ depending only on $\alpha$,
\begin{equation}\label{eq:cmp}
xu(I_T)\cap \G\ba \G
W_{\alpha_I, a_I, b_I}(I, \lambda) \neq \emptyset.
\end{equation}
By Proposition \ref{p:compact}, this implies the theorem.

We construct inductively increasing sequence of tuples
$J\in\mathcal{T}$, elements $\lambda\in L(I)\G$, constants
$0<a_I<b_I$, $\alpha_I>0$, and intervals $B\subset I_T$
satisfying the following properties:
\bi
\item[(A)] $\Delta_i(\lambda u(B))\subset [a_I, b_I]$ for all $i\in I$.
\item[(B)] $\sup_{B}\phi \geq \alpha_I$ for all $\phi\in \Cal F(I,
  \lambda)$
where $\Cal F(I, \lambda)$ is the family
of functions $K_v\to \br^+$ of the form
$$\phi (t)=\Delta_j(\theta \lambda u(t))$$
where $\theta\in \Lambda(I)$, $j\notin I$
and $u\in \mathcal P_d$ \ei

Note that for some fixed constant $d_j>0$,
 $$\phi(t)=
(\prod_{w \in S\setminus{v}} \|w_j \theta\lambda\|_w ) \cdot \|f(t)\|_v$$
and $f(t):=w_j \theta \lambda  u(t)\in \mathcal P_{d_j}^*$
where $\mathcal P_{d_j}^*$ are polynomial maps into
the vector space $\wedge^{\text{dim}\mathfrak u_i}\mathfrak g (K_v)$
of degree at most $d_j$.

We start with $I=\emptyset$, $\lambda=e$, $\alpha_\emptyset=\alpha$,
$B=I_T$ which satisfy (A) and (B) because (1) fails.

Property (B) implies that $\Cal F(I, \lambda)$ satisfies condition
(ii) of Proposition \ref{p3.8}. We claim that $\Cal F(I, \lambda)$
satisfies condition (i) as well, that is, there are only finitely many $\phi\in \Cal F(I, \lambda)$
such that
$\phi(t)<\alpha$ for a fixed $\alpha$ and $t$.
Fix $\beta>0$ and any rational vector $w$ with co-prime entries in $\mathcal O$.
For any $\gamma\in \Gamma$,
$\gamma w =\alpha w'$
where $\alpha\in \mathcal O_S^*$ (here
$\mathcal O_S^*$ denotes the unit group) and the entries of $w'$ are relatively prime
to each other in $\mathcal O$.
Since $\prod_{v\in S}|\alpha|_v=1$ for $\alpha\in \mathcal O_S^*$,
the claim follows from the fact that $\tilde F_I$ is finite and that
there are only finitely many vectors $w'$ with coefficients in $\mathcal O$ whose entries
are relative prime to each other and $\prod_{v\in S}\|w'\|_v<\beta$.

By Proposition \ref{p3.8}, one of the following holds:
\bi
\item[(a)]
For some $t_0\in B$,
$$\Delta_j(\theta\lambda u(t_0))\geq \alpha_I$$
for all $\theta\in \Lambda(I)$, $j\notin I$, and
 $u\in \mathcal P_{d}$. In this case, using (A),
we have $\G u(t_0)\in W_{\alpha_I, a_I, b_I}(I)$
and hence we stop the process.
\item[(b)]
There exist $j_0\notin I$, $\theta_0\in \Lambda(I)$ and an interval
$B_0\subset B$
such that
$$\Delta_{j_0}(\theta_0 \lambda u(B_0))\subset [\alpha_I/M, \alpha_I M]$$
and for all $\theta\in \Lambda(I)$ and $j\notin I$,
$$\sup_{t\in B_0}\Delta_j(\theta\lambda u(t))\geq \alpha_I/M .$$
\ei

In case (b), we set $I_1=I\cup\{j_0\}$ and $\lambda_1=\theta_0\lambda$.
Then since $\Delta_i(\theta_0g)=\Delta_i(g)$ for all $i\in I$ and $g\in \bG_S$,
condition (A) is satisfied for suitable $0<a_{I_1}<b_{I_1}$ and  $B_0$.
By Lemma \ref{l3.6}, there is a finite subset $E\subset \bP(K)$
such that for any  $\theta\in \Lambda(I\cup\{j_0\})$,
there exists $\theta'\in \Lambda(I)$
and $x\in E$ such that $\theta\theta_0=x\theta'$,
Hence this implies for any $j\notin I_1$,
\begin{align*}
\sup_{t\in B_0}\Delta_j(\theta\lambda_1 u(t))
=\sup_{t\in B_0}\Delta_j( x \theta '\lambda u(t))=
\sup_{t\in B_0}\Delta_j(x)\Delta_j(\theta '\lambda u(t))\geq
 \beta\alpha_I/M
\end{align*}
where $\beta=\min_{E}\Delta_j>0$ depends only on $I$ and $j_0$.
Hence Condition (B) is satisfied for the family $\Cal F(I_1, \lambda_1)$,
 $ \alpha_{I_1}=\beta \alpha_I/M$ and $B_0$. This completes the
 description of the inductive step.
Since the cardinality of $I$ increases, this process must stop after
finitely many steps, and we deduce that (\ref{eq:cmp}) holds.
\end{proof}



\newpage
\renewcommand{\thesection}{A}
\renewcommand{\thetheorem} {{\bf A.\arabic{theorem}}}

\section{Appendix:
Symmetric homogeneous spaces over number fields with finitely many
orbits (by Mikhail Borovoi)} \label{s:app}

Let $G$ be a connected linear algebraic group over a  field $\kk$ of
characteristic 0. Let $H\subset G$ be a connected closed $K$-subgroup. Let
$X=H\backslash G$ be the corresponding homogeneous space. The group
$G(\kk)$ acts on $X(\kk)$ on the right. We consider the set of
orbits $X(\kk)/ G(\kk)$.

We fix an algebraic closure $\kbar$ of $\kk$ and write
$\Gbar=G\times_\kk \kbar$, $\Hbar=H\times_\kk \kbar$. We say that
$(G,H)$ is a \emph{symmetric pair} if $G$ is semisimple and $\Hbar$
is the subgroup of invariants $\Gbar^\theta$ of some involutive
automorphism $\theta$ of $\Gbar$. In this case we say also that $H$
is a \emph{symmetric subgroup of} $G$ and that $X$ is a
\emph{symmetric space of} $G$.

Let $K$ be a number field. In this Appendix we give a list of all
symmetric pairs $(G,H)$ over $K$ with adjoint absolutely simple $G$
and semisimple $H$, such that the set of orbits $X(\kk)/ G(\kk)$ is
finite (Theorem \ref{thm:main-finite-adjoint}). 
We show that the assumption that
$X(\kk)/ G(\kk)$ is finite is equivalent to the assumption that
$G(\kv)$ acts on $X(\kv)$ transitively for almost all places $v$ of
$\kk$.

The plan of the Appendix is as follows. In Section \ref{sec:1} we
consider a connected $\kk$-group $G$ and a connected closed $\kk$-subgroup
$H\subset G$ over a number field $\kk$. We prove that the set of
$\kk$-orbits  $X(\kk)/G(\kk)$ is finite if and only if the set of
adelic orbits $X(\AA)/G(\AA)$ is finite (here $\AA$ is the ad\`ele
ring of $\kk$). We show that the set $X(\AA)/G(\AA)$ is finite if
and only if $\# X(\kv)/G(\kv)=1$ for almost all $v$. We give a
criterion  when $\# X(\kv)/G(\kv)=1$ for almost all $v$ in terms of
the induced homomorphism $\pi_1(\Hbar)\to\pi_1(\Gbar)$, where
$\pi_1$ is the {algebraic fundamental group} introduced in
\cite[Sect. 1]{Bor98}. These results constitute Theorem
\ref{thm:main-f-m-orbits}.

In Section \ref{sec:2} we give corollaries of Theorem
\ref{thm:main-f-m-orbits}. We show that the finiteness of
$X(\kk)/G(\kk)$ is related to the following condition: the
homomorphism $\pi_1(\Hbar)\to\pi_1(\Gbar)$ is injective.

In Section \ref{sec:3} we assume that $\kk$ is algebraically closed
and that both $G$ and $H$ are semisimple. We write $G\sc$ and $H\sc$
for the universal coverings. We show that the homomorphism
$\pi_1(H)\to\pi_1(G)$ is injective if and only if the subgroup
$H':=\im[H\sc\to G\sc]\subset G\sc$ is simply connected.

In Section \ref{sec:4} we again assume that $\kk$ is algebraically
closed. We give a list of all symmetric pairs $(G,H)$ over $\kk$
with  simply connected absolutely almost simple $G$ and semisimple
$H$ such that $H$ is simply connected (Theorem
\ref{cor:Kac-sc-table}).

In Section \ref{sec:5} $\kk$ is a number field and $(G,H)$ is a
symmetric pair over $\kk$, such that  $G$ is an absolutely almost
simple $\kk$-group and $H$ is semisimple $\kk$-subgroup. We consider
two cases: either $G$ is simply connected or $G$ is adjoint.
We give a list of all such symmetric pairs $(G,H)$
with finite  $X(\kk)/G(\kk)$ (Theorems \ref{thm:symmetric-sc} and
\ref{thm:main-finite-adjoint}). We show that for such $(G,H)$ with
finite set of $\kk$-orbits $X(\kk)/G(\kk)$, this set of $\kk$-orbits
is related to the set of ``real'' orbits (Theorem
\ref{thm:cor-orbits}). In particular, if $\kk=\QQ$, then any
$G(\RR)$-orbit in $X(\RR)$ contains exactly one orbit of $G(\QQ)$ in
$X(\QQ)$.

In Section \ref{sec:6} (Addendum) we give examples of homogeneous
spaces $X=H\backslash G$ (symmetric or not, with $G$ absolutely
almost simple or not), satisfying assumptions (i--iii) of Theorem
\ref{maint} but not covered by Theorems \ref{thm:symmetric-sc} and
\ref{thm:main-finite-adjoint}.

The author is very grateful to \`E.B.~Vinberg and  A.G.~Elashvili
for their invaluable help in proving Theorem \ref{cor:Kac-sc-table}.
\bigskip

\subsection{Orbits over a number field and over adeles}\label{sec:1}

\begin{subsec}{} \label{subsec:Notation}
Let $\kk$ be a number field, and let $\kbar$ be a fixed algebraic
closure of $\kk$. Let $G$ be a connected linear $\kk$-group.
Let $H\subset G$ be a connected closed $\kk$-subgroup. Set $X=H\backslash
G$, it is a right homogeneous space of $G$.
We would like to investigate, when the set of orbits $X(\kk)/G(\kk)$
of $G(\kk)$ in $X(\kk)$ is finite.

We write $i\colon H\into G$ for the inclusion map. We consider the
induced morphism of $\Gal(\kbar/\kk)$-modules
$$
i_*\colon \pi_1(\Hbar)\to\pi_1(\Gbar),
$$
where $\pi_1$ is the algebraic fundamental group introduced in
\cite[Sect. 1]{Bor98}, see also \cite[\S6]{CT}.
Note that if we choose an embedding $\kbar\into \CC$ 
and an isomorphism $\pi_1^{\text{top}}(\CC^*)\isoto\ZZ$,
then we obtain a canonical isomorphism of abelian groups 
$\pi_1(\Gbar)\simeq \pi_1^{\text{top}}(G(\CC))$, cf. \cite[Prop.~1.11]{Bor98},
where $\pi_1^{\text{top}}$ denotes  the usual topological fundamental group.
In particular, $G$ is simply connected if and only if $\pi_1(\Gbar)=0$.

Let $\gg$ denote the image of $\Gal(\kbar/\kk)$ in
$\Aut\;\pi_1(\Hbar)\times\Aut\;\pi_1(\Gbar)$; it is a finite group.
Let $L\subset\kbar$ be the field  corresponding to the subgroup
$\ker[\Gal(\kbar/\kk)\to\gg]$ of $\Gal(\kbar/\kk)$, so $L/\kk$ is
a finite Galois extension with Galois group $\gg$. For any place $v$
of $\kk$, let $\gg_v\subset\gg$ denote a decomposition group of $v$
(defined up to conjugacy). For almost all $v$ the group $\gg_v$ is
cyclic.

Let $\hh\subset\gg$ be a subgroup. We shall consider the group of
coinvariants $\pi_1(\Hbar)_\hh$ and the subgroup of torsion elements
$(\pi_1(\Hbar)_\hh)\tors$. We shall also consider the induced map
$$
i_*\colon (\pi_1(\Hbar)_\hh)\tors\to (\pi_1(\Gbar)_\hh)\tors\;.
$$

We write $\sV$ for the set of all places of $\kk$. We write $\sV_f$
(resp. $\Vinf$) for the set of all finite (resp. infinite) places of
$\kk$. We write $\kk_v$ for the completion of $\kk$ at $v\in\sV$,
and $\AA$ for the ad\`ele ring of $\kk$.
\end{subsec}

\begin{theorem}\label{thm:main-f-m-orbits}
Let $G$ be a connected linear algebraic group over a number field
$\kk$. Let $H\subset G$ be a connected closed $\kk$-subgroup. Set
$X=H\backslash G$. Then the following four conditions are
equivalent:

(i) The set of $\kk$-orbits $X(\kk)/G(\kk)$ is finite.

(ii) The set of adelic orbits $X(\AA)/G(\AA)$ is finite.

(iii) We have  $\# X(\kv)/G(\kv)=1$ for almost all places $v$ of
$\kk$.

(iv) For any {\bf cyclic} subgroup $\hh\subset\gg$ the map
$$
(\pi_1(\Hbar)_\hh)\tors\to(\pi_1(\Gbar)_\hh)\tors
$$
is injective.
\end{theorem}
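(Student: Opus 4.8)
The plan is to prove Theorem \ref{thm:main-f-m-orbits} by establishing the cycle of implications (i) $\Rightarrow$ (ii) $\Rightarrow$ (iii) $\Rightarrow$ (iv) $\Rightarrow$ (i), reducing everything to Galois cohomology via the standard exact sequence for the homogeneous space $X = H\backslash G$. The key observation is that for a field (or local field, or the adeles) $F$ of characteristic $0$, the orbit set $X(F)/G(F)$ embeds into $\ker[H^1(F,H)\to H^1(F,G)]$, with the orbit of the base point going to the distinguished element; in fact over local and global fields this map is close to a bijection onto its image once one keeps track of the fibers. So finiteness of orbits is controlled by the (pointed) set $\ker[H^1(F,H)\to H^1(F,G)]$. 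First I would record this translation carefully, citing \cite{PR} and \cite{CT} for the relevant facts about Galois cohomology of linear algebraic groups over number fields (finiteness of $H^1$, the Hasse principle / local-global behavior for the relevant groups, and the $\Sha$-type exact sequences).

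Next, the heart of the matter is the passage from $H^1$ of the full groups to the abelianized cohomology controlled by the algebraic fundamental group $\pi_1$. Here I would invoke Borovoi's theory from \cite{Bor98}: the abelianization maps $\ab^1\colon H^1(F, \cdot)\to H^1_{\ab}(F,\cdot)$, which are bijective for $F = \kk_v$ non-archimedean and which fit into a compatible commutative diagram for $H\hookrightarrow G$. Over a non-archimedean local field $\kk_v$, $H^1_{\ab}(\kk_v, H)$ is computed by Tate--Nakayama duality in terms of $\pi_1(\Hbar)$ and the local Galois group $\gg_v$, and since for almost all $v$ the decomposition group $\gg_v$ is cyclic (this is why condition (iv) only needs to be tested on \emph{cyclic} $\hh$), the local cohomology becomes an explicit functor of the pair $(\pi_1(\Hbar)\to\pi_1(\Gbar), \gg_v)$. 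The computation shows $\#X(\kk_v)/G(\kk_v) = 1$ for such $v$ if and only if $\ker[H^1_{\ab}(\kk_v,H)\to H^1_{\ab}(\kk_v,G)]$ is trivial, and Tate--Nakayama identifies this kernel with something built from $(\pi_1(\Hbar)_{\gg_v})\tors \to (\pi_1(\Gbar)_{\gg_v})\tors$. This yields the equivalence (iii) $\Leftrightarrow$ (iv). The implication (ii) $\Rightarrow$ (iii) is then elementary: an infinite orbit set at any single place, or at infinitely many places, forces $X(\AA)/G(\AA)$ infinite, and conversely finiteness of the adelic orbit set forces $\#X(\kk_v)/G(\kk_v)=1$ for all but finitely many $v$ since the integral points $\mathcal{X}(\mathcal O_v)$ form a single orbit of $\mathcal G(\mathcal O_v)$ for almost all $v$ by Lang's theorem and Hensel's lemma (compare Lemma \ref{open}).

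The implications (i) $\Leftrightarrow$ (ii) and (iv) $\Rightarrow$ (i) require the global input. For (i) $\Rightarrow$ (ii) one shows the localization map $X(\kk)/G(\kk) \to \prod'_v X(\kk_v)/G(\kk_v)$ (restricted product with respect to the integral orbits) has image meeting each adelic orbit, using that $H^1(\kk, \cdot) \to \prod_v H^1(\kk_v,\cdot)$ has controllable kernel and cokernel (Borel--Serre finiteness, and the Poitou--Tate / Kottwitz--Borovoi description of $\Sha^1$ via $\pi_1$). For (iv) $\Rightarrow$ (i): condition (iv) applied to all cyclic $\hh$, combined with the global duality exact sequence relating $H^1(\kk, H)$, the $H^1(\kk_v, H)$, and a $\Sha^2$ term, forces $\ker[H^1_{\ab}(\kk,H)\to H^1_{\ab}(\kk,G)]$ to be finite; then lifting back through the abelianization fibration (whose fibers are torsors under images of $H^1$ of simply connected groups, which are finite over $\kk$ and trivial over $\kk_v$ for $v$ non-archimedean by Kneser) gives finiteness of $X(\kk)/G(\kk)$. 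I expect the main obstacle to be the bookkeeping in this last step: precisely controlling the fibers of $X(\kk)/G(\kk) \to H^1_{\ab}$-kernel and assembling the local contributions at the archimedean places into the global count, since at real places $\gg_v$ may be non-cyclic and the abelianization map is only injective, not bijective — this is exactly the phenomenon recorded later in Theorem \ref{thm:cor-orbits} about real orbits, and handling it cleanly is where the real work lies.
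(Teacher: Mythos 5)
Your cohomological framework — translate to $\ker[H^1(\cdot,H)\to H^1(\cdot,G)]$, abelianize via $\pi_1$ following \cite{Bor98}, exploit the cyclicity of almost all local decomposition groups — is exactly the paper's, and your treatment of (ii)$\Leftrightarrow$(iii) and (iii)$\Leftrightarrow$(iv) matches the paper's (Lang/Hensel for the adelic decomposition, Chebotarev for (iii)$\Rightarrow$(iv)). Your (iv)$\Rightarrow$(i) is in fact a genuinely different and tidier route than the paper takes: once (iv) forces $\ker_\ab(\kk,H\to G)$ to be finite (almost all $\gg_v$ are cyclic so almost all local abelianized kernels vanish; the finitely many remaining local groups are finite; $\Sha^1_\ab(\kk,H)$ is finite), the fiber-product description $H^1(\kk,H)\simeq H^1_\ab(\kk,H)\times_{H^1_\ab(\kinf,H)}H^1(\kinf,H)$ of \cite[Thm.~5.12]{Bor98} immediately traps $\ker(\kk,H\to G)$ inside a finite set. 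The paper instead proves (iv)$\Rightarrow$(iii)$\Rightarrow$(ii)$\Rightarrow$(i) and never does (iv)$\Rightarrow$(i) directly, so this is a real simplification on that leg.

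The genuine gap is in (i)$\Rightarrow$(ii), which is the hard direction and which the paper isolates with an intermediate condition (v): ``$\ker_\ab(\kk,H\to G)$ is finite.'' You claim the localization map $X(\kk)/G(\kk)\to\prod'_v X(\kv)/G(\kv)$ ``has image meeting each adelic orbit,'' i.e.\ is surjective. That is false in this generality: surjectivity here is a Hasse-principle assertion for orbit sets of homogeneous spaces, and it fails. (If it were true and easy, the theorem would be nearly trivial — finiteness of each $H^1(\kv,H)$ plus Borel's finiteness theorem would do it.) What the paper actually proves is that \emph{finiteness of the source forces finiteness of the target}, by a two-stage argument that never asserts surjectivity: first (i)$\Rightarrow$(v), by constructing a section of the abelianization map over a finite-index subgroup $\kabf\subset\ker_\ab(\kk,H\to G)$ using the same fiber product; then (v)$\Rightarrow$(ii), by a diagram chase showing that $\bigoplus_v\ker_\ab(\kv,H\to G)$ has a finite filtration whose graded pieces are (a subgroup of the finite group $\ker[(\pi_1(\Hbar)_\gg)\tors\to(\pi_1(\Gbar)_\gg)\tors]$), (a subgroup of the finite group $\Sha^1_\ab(\kk,G)/i_*(\Sha^1_\ab(\kk,H))$), and (the image of the finite group $\ker_\ab(\kk,H\to G)$). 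You also locate the ``main obstacle'' in the archimedean bookkeeping of (iv)$\Rightarrow$(i), but as noted above that direction is comparatively clean via the fiber product — the real work of the theorem sits precisely in (i)$\Rightarrow$(ii), which your sketch does not supply.
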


\begin{proof}
Write
\begin{align*}
\ker(\kk,H\to G)&=\ker[H^1(\kk,H)\to H^1(\kk,G)],\\
\ker(\kv,H\to G)&=\ker[H^1(\kv,H)\to H^1(\kv,G)].
\end{align*}
We have canonical bijections
\begin{align*}
X(\kk)/G(\kk)&\isoto\ker(\kk,H\to G),\\
X(\kv)/G(\kv)&\isoto\ker(\kv,H\to G),
\end{align*}
see \cite[Ch.~I \S5.4, Cor.~1 of Prop.~36]{Serre}.

In \cite[Sections 2,3]{Bor98} we defined, for any connected linear group
$H$ over a field $\kk$ of characteristic 0, an abelian group
$H^1_\ab(\kk, H)$ and an abelianization map of pointed sets
$$
\ab^1\colon H^1(\kk, H)\to H^1_\ab(\kk,H)
$$
(see also \cite[Prop. 8.3]{CT} in any characteristic). Both
$H^1_\ab(\kk,H)$ and $\ab^1$ are functorial in $H$.

Now let $\kk$ be a number field. Set $\Gamma=\Gal(\kbar/\kk)$,
$\Gamma_v=\Gal(\kbar_v/\kk_v)$. We regard $\Gamma_v$ as a subgroup
of $\Gamma$.

For $v\in\sV_f$ we defined in \cite[Prop.~4.1(i)]{Bor98} a canonical
isomorphism $\lambda_v\colon H^1_\ab(\kk_v,H)\isoto
(\pi_1(\Hbar)_{\Gamma_v})\tors$. Here we set
$$
\lambda'_v=\lambda_v\colon H^1_\ab(\kk_v,H)\isoto
(\pi_1(\Hbar)_{\Gamma_v})\tors\;.
$$
For $v\in\Vinf$ we defined in \cite[Prop.~4.2]{Bor98} a canonical
isomorphism
$$\lambda_v\colon H^1_\ab(\kk_v,H)\isoto H^{-1}(\Gamma_v,\pi_1(\Hbar)),$$
where $H^{-1}$ denotes the Tate cohomology 
(note that $\Gamma_v$ is finite for $v\in\Vinf$).
Here we define a homomorphism $\lambda'_v$ as the composition
$$
\lambda'_v\colon H^1_\ab(\kk_v,H)\labelto{\lambda_v}
H^{-1}(\Gamma_v,\pi_1(\Hbar)) \into(\pi_1(\Hbar)_{\Gamma_v})\tors\;.
$$
For any $v\in\sV$ we define the Kottwitz map $\beta_v$ as the
composition
$$
\beta_v\colon H^1(\kk_v,H)\labelto{\ab^1}
H^1_\ab(\kk_v,H)\labelto{\lambda'_v}
(\pi_1(\Hbar)_{\Gamma_v})\tors\;.
$$
This map $\beta_v$ is functorial in $H$. Note that for  $v\in\sV_f$
the  maps $\beta_v$  and $\ab^1\colon H^1(\kk_v,H)\to
H^1_\ab(\kk_v,H)$ are bijections, cf. \cite[Cor.~5.4.1]{Bor98}.
 Thus for $v\in\sV_f$ we have a
canonical and functorial in $H$ bijection $H^1(\kk_v,H)\isoto
(\pi_1(\Hbar)_{\Gamma_v})\tors$.

For any $v\in\sV$ we define a map $\mu_v$ as the composition
\begin{equation}\label{eq:mu}
\mu_v\colon H^1_\ab(\kk_v,H)\labelto{\lambda'_v}
(\pi_1(\Hbar)_{\Gamma_v})\tors \labelto{\textup{cor}_v}
(\pi_1(\Hbar)_{\Gamma})\tors\;,
\end{equation}
where $\textup{cor}_v$ is the obvious map.

We prove that (ii)$\Leftrightarrow$(iii). Since $H$ is connected,
using Lang's theorem and Hensel's lemma, we can prove easily that
\begin{equation}\label{eq:adelic-orbits}
X(\AA)/G(\AA)=\bigoplus_v X(\kk_v)/G(\kk_v).
\end{equation}
Here $\bigoplus$ means that we take the families of local orbits
$(o_v\in X(\kk_v)/G(\kk_v))_{v\in\sV}$ with $o_v=x_0\cdot G(\kv)$
for almost all $v$, where $x_0\in X(\kk)$ is the image of the
neutral element $e\in G(\kk)$. For any place $v$ of $\kk$ the set
$X(\kv)/G(\kv)$ is finite (because $H^1(\kv,H)$ is finite, see
\cite[Ch.~III \S4.4, Thm.~5 and Ch.~III \S4.5, Thm.~6]{Serre}). It
follows that $X(\AA)/G(\AA)$ is finite if and only if
$\#X(\kv)/G(\kv)=1$ for almost all $v$. Thus
(ii)$\Leftrightarrow$(iii).

We prove that (iv)$\Rightarrow$(iii). For almost all $v$ the group
$\gv$ is cyclic, hence by assumption (iv) we have for such $v$
$$
\ker[(\pi_1(\Hbar)_\gv)\tors\to(\pi_1(\Gbar)_\gv)\tors]=0.
$$
But for $v\in\sV_f$ we have canonical bijections
\begin{multline*}
X(\kk_v)/G(\kk_v)\isoto \ker[H^1(\kk_v,H)\to H^1(\kk_v,G)]
\\
\isoto \ker[(\pi_1(\Hbar)_\gv)\tors\to(\pi_1(\Gbar)_\gv)\tors].
\end{multline*}
Thus for almost all $v$ we have $\#(X(\kv)/G(\kv))=1$. This proves
that (iv)$\Rightarrow$(iii).
\bigskip

We prove that (iii)\implies(iv). Indeed, assume that (iv) does not
hold, i.e. there exists a cyclic subgroup $\hh\subset\gg$ such that
$$
\ker[(\pi_1(\Hbar)_\hh)\tors\to(\pi_1(\Gbar)_\hh)\tors]\neq 0.
$$
Then by Chebotarev's density theorem there exist infinitely many
finite places $v$ of $K$ such that $\gv$ is conjugate to $\hh$. For
all these places $v$ we have
$$
\ker[(\pi_1(\Hbar)_\gv)\tors\to(\pi_1(\Gbar)_\gv)\tors]\neq 0,
$$
hence $\#(X(\kv)/G(\kv))> 1$, which contradicts to (iii). Thus
(iii)\implies(iv).
\bigskip

We prove that (ii)\implies(i). Indeed, by Borel's theorem \cite[Thm.~6.8]{B} the kernel
$$\ker[H^1(\kk,H)\to\prod_v H^1(\kk_v,H)]$$ is finite. 
It follows that all the fibers of the localization map
$$
X(\kk)/G(\kk)\to X(\AA)/G(\AA)
$$
are finite, Hence if the set $X(\AA)/G(\AA)$ is finite, then
$X(\kk)/G(\kk)$ is finite as well. Thus (ii)\implies(i).
\bigskip

All what is left to prove is that (i)\implies(ii), i.e that if the
set of $\kk$-orbits $X(\kk)/G(\kk)$ is finite, then the set of
adelic orbits $X(\AA)/G(\AA)$ is finite. For this end we consider
the group
$$
\ker_\ab(\kk,H\to G):=\ker[H^1_\ab(\kk,H)\to H^1_\ab(\kk,G)].
$$
\smallskip

Consider the following condition:
\medskip

\emph{(v) The group $\ker_\ab(\kk,H\to G)$ is finite.}
\medskip

We shall prove that (i)\implies(v) and (v)\implies(ii). This will
show that (i)\implies(ii).
\bigskip

We prove that (i)\implies(v).
Write
\begin{align*}
H^1(\kinf,H)&=\prod_{v\in\Vinf} H^1(\kv,H),\\
H^1_\ab(\kinf,H)&=\prod_{v\in\Vinf} H^1_\ab(\kv,H).
\end{align*}
Similarly we define
\begin{align*}
\ker(\kinf,H\to G)=\ker[H^1(\kinf,H)\to H^1&(\kinf, G)]=
\prod_{v\in\Vinf}\ker(\kk_v,H\to G),\\
\ker_\ab(\kinf,H\to G)=\ker[H^1_\ab(\kinf,H)\to H^1_\ab&(\kinf, G)]\\
&=\prod_{v\in\Vinf}\ker_\ab(\kk_v,H\to G).
\end{align*}

Set
$$
\kabf=\ker[\loc_\infty\colon\ker_\ab(\kk,H\to
G)\to\ker_\ab(\kinf,H\to G)].
$$
Since for $v\in\Vinf$ we have $H^1_\ab(\kk_v,H)\simeq
H^{-1}(\Gamma_v,\pi_1(\Hbar))\subset(\pi_1(\Hbar)_{\Gamma_v})\tors$\,,
we see that $H^1_\ab(\kv, H)$ is finite for every $v\in\Vinf$, and
therefore $\ker_\ab(\kinf,H\to G)$ is a finite group. It follows
that $\kabf$ is a subgroup of finite index in $\ker_\ab(\kk, H\to
G)$.

Consider the maps
\begin{align*}
\ab^1\colon &H^1(\kk,H)\to H^1_\ab(\kk,H),\\
\loc_\infty\colon &H^1(\kk,H)\to H^1(\kinf,H).
\end{align*}
By \cite[Thm. 5.12]{Bor98} these maps induce a canonical bijection
$$
H^1(\kk,H)\isoto H^1_\ab(\kk,H)\fibreprod_{H^1_\ab(\kinf,H)}
H^1(\kinf,H)
$$
(with a fiber product in the right hand side). This bijection is
functorial in $H$, hence we obtain a bijection
\begin{equation}\label{eq:fibre-product}
\ker(\kk,H\to G)\isoto \ker_\ab(\kk,H\to
G)\fibreprod_{\ker_\ab(\kinf,H\to G)} \ker(\kinf,H\to G).
\end{equation}

We define a map
$$
\kabf\to \ker_\ab(\kk,H\to G)\times \ker(\kinf,H\to G)
$$
by $x\mapsto (x,1)$. Since $\loc_\infty(x)=1$ for $x\in
\kabf\subset\ker_{\ab}(K,H\to G)$, we obtain from
\eqref{eq:fibre-product} an induced map $\kabf\to\ker(K,H\to G)$,
which is a section of the map
$$
\ab\colon \ker(\kk,H\to G)\to\ker_\ab(\kk,H\to G)
$$
over $\kabf$. Thus the group $\kabf$ embeds as a subset into the set
$\ker(\kk, H\to G)$.

By  assumption (i) $X(\kk)/G(\kk)$ is a finite set. Since we have a
canonical bijection
$$
X(\kk)/G(\kk)\simeq \ker(\kk,H\to G),
$$
we see that $\ker(\kk,H\to G)$ is finite. Since $\kabf$ embeds into
$\ker(\kk, H\to G)$, we see that $\kabf$ is finite. Since $\kabf$ is
a subgroup of finite index of $\ker_\ab(\kk,H\to G)$, we conclude
that $\ker_\ab(\kk,H\to G)$ is finite. Thus (i)\implies(v).
\bigskip

We prove that (v)\implies(ii). Here we use the abelian group
structure in $\ker_\ab(\kk, H\to G)$. We write $\bigoplus_v$ for
$\bigoplus_{v\in \sV}$.

We define a map $\mu\colon \bigoplus_v H^1_\ab(\kv,H)\to
(\pi_1(\Hbar)_\Gamma)\tors$ as the sum of the local maps $\mu_v$
defined in \eqref{eq:mu}. Namely, if $\xi_\AA=(\xi_v)\in \bigoplus_v
H^1_\ab(\kv,H)$, we set $\mu(\xi_\AA)=\sum_v \mu_v(\xi_v)$. The
sequence
\begin{equation}\label{eq:exact-sequence}
H^1_\ab(k,H)\labelto{\loc}\bigoplus_v H^1_\ab(\kv,H)\labelto{\mu}
(\pi_1(\Hbar)_\Gamma)\tors
\end{equation}
is exact, see \cite[Proof of Thm. 5.16]{Bor98}.

Exact sequence \eqref{eq:exact-sequence} is functorial in $H$, hence
the embedding $H\into G$ gives rise to a commutative diagram with
exact rows
\begin{equation}\label{eq:diagram}
\xymatrix{
H^1_\ab(\kk,H)\ar[d]\ar[r]^-\loc &\bigoplus_v H^1_\ab(\kv,H)\ar[d]\ar[r]^-{\mu} &(\pi_1(\Hbar)_\gg)\tors\ar[d]\\
H^1_\ab(\kk,G)\ar[r]^-\loc       &\bigoplus_v
H^1_\ab(\kv,G)\ar[r]^-{\mu}       &(\pi_1(\Gbar)_\gg)\tors }
\end{equation}
This diagram induces a homomorphism
$$
\varkappa\colon \bigoplus_v\ker_\ab(\kv,H\to G)\to
\ker[(\pi_1(\Hbar)_\gg)\tors\to(\pi_1(\Gbar)_\gg)\tors].
$$
The group $\ker[(\pi_1(\Hbar)_\gg)\tors\to(\pi_1(\Gbar)_\gg)\tors]$
is clearly finite. Set $k_0=\ker\;\varkappa$.

We define
$$
\Sha^1_\ab(\kk,H):=\ker\left[H^1_\ab(\kk,H)\to\prod_v
H^1_\ab(\kv,H)\right].
$$
We construct a homomorphism
$$
\psi\colon k_0\to\Sha^1_\ab(\kk,G)/i_*(\Sha^1_\ab(\kk,H))
$$
as follows. Let
$$
\xi_\AA\in k_0\subset \bigoplus_v\ker_\ab(\kv,H\to G)\subset
\bigoplus_v H^1_\ab(\kk_v,H).
$$
Since the top row of diagram \eqref{eq:diagram} is exact, we see
that $\xi_\AA$ comes from some $\xi\in H^1_\ab(\kk,H)$, and this
$\xi$ is defined up to addition of $\xi'\in \Sha^1_\ab(\kk,H)$. It
is clear from the diagram that the image of $\xi$ in
$H^1_\ab(\kk,G)$ is contained in $\Sha^1_\ab(\kk,G)$. Thus we obtain
a map $\psi\colon k_0\to\Sha^1_\ab(\kk,G)/i_*(\Sha^1_\ab(\kk,H))$.
It is easy to see that $\psi$ is a homomorphism. By Lemma
\ref{lem:Sha-finite}  below, the group $\Sha^1_\ab(\kk,G)$ is
finite. Hence the group $\Sha^1_\ab(\kk,G)/i_*(\Sha^1_\ab(\kk,H))$
is finite. Set $k_{00}=\ker\;\psi$. Using diagram chasing, we see
easily that $k_{00}$ is the image of $\ker_\ab(\kk,H\to G)$ in
$k_0$.

By assumption (v) the group $\ker_\ab(\kk,H\to G)$ is finite, hence
its image $k_{00}$ is finite. Since we have a homomorphism of
abelian groups $\psi$ from $k_0$ to the finite group
$\Sha^1_\ab(\kk,G)/i_*(\Sha^1_\ab(\kk,H))$ with finite kernel
$k_{00}$, we see that $k_0$ is finite. Since we have a homomorphism
of abelian groups $\varkappa$ from $\bigoplus_v\ker_\ab(\kv,H\to G)$
to the finite group
$\ker[(\pi_1(\Hbar)_\gg)\tors\to(\pi_1(\Gbar)_\gg)\tors]$ with
finite kernel $k_0$, we see that $\bigoplus_v\ker_\ab(\kv,H\to G)$
is finite. Since for all $v\in\sV_f$ we have  bijections
$$
\ab\colon \ker(\kv,H\to G)\isoto\ker_\ab(\kv,H\to G),
$$
we see that the set $\bigoplus_v\ker(\kv,H\to G)$ is finite. This
means that the set $X(\AA)/G(\AA)$ is finite. Thus (v)\implies(ii).

This completes the proof of Theorem \ref{thm:main-f-m-orbits} modulo
Lemma \ref{lem:Sha-finite}.
\end{proof}

\begin{lemma}\label{lem:Sha-finite}
Let $G$ be a connected linear algebraic group over a number field
$\kk$. Then the abelian group $\Sha_\ab^1(\kk,G)$ is finite.
\end{lemma}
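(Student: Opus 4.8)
The plan is to reduce the assertion to the finiteness of Tate--Shafarevich groups of $\kk$-tori, which is classical. Recall from \cite[\S1--3]{Bor98} (see also \cite[\S6]{CT}) that for a connected linear $\kk$-group the abelianized cohomology $H^i_\ab(\kk,G)$ depends only on the reductive quotient $G/R_u(G)$ by the unipotent radical, and is computed as the Galois hypercohomology
$$H^i_\ab(\kk,G)=\mathbb{H}^i\big(\kk,\,[\,T\sc\xrightarrow{\rho}T\,]\big)$$
of the two-term complex of $\kk$-tori $C:=[\,T\sc\xrightarrow{\rho}T\,]$ placed in degrees $-1$ and $0$, where $T$ is a maximal torus of $G/R_u(G)$ and $T\sc$ its preimage in the simply connected covering of the derived group. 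Since forming completions is compatible with this description, one has $\Sha^1_\ab(\kk,G)=\Sha^1(\kk,C)$, the subgroup of $\mathbb{H}^1(\kk,C)$ of classes dying in $\mathbb{H}^1(\kk_v,C)$ for every place $v$; so it suffices to prove $\Sha^1(\kk,C)$ is finite.

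Next I would exploit the short exact sequence of complexes $0\to T\to C\to T\sc[1]\to 0$ (with $T$ concentrated in degree $0$), which yields a long exact sequence of Galois (hyper)cohomology groups
$$H^1(\kk,T\sc)\xrightarrow{\rho_*}H^1(\kk,T)\xrightarrow{f}\mathbb{H}^1(\kk,C)\xrightarrow{g}H^2(\kk,T\sc)\longrightarrow H^2(\kk,T),$$
compatibly with the analogous sequences over each $\kk_v$ and with the localization maps. A class in $\Sha^1(\kk,C)$ is sent by $g$ into $\Sha^2(\kk,T\sc)$, so $g$ restricts to $\bar g\colon\Sha^1(\kk,C)\to\Sha^2(\kk,T\sc)$, whose kernel consists of classes pulled back from $H^1(\kk,T)$. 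Now $\Sha^1(\kk,S)$ and $\Sha^2(\kk,S)$ are finite for any $\kk$-torus $S$ --- the former by the Hasse principle for tori / finiteness of ray class groups, the latter by Tate--Nakayama duality $\Sha^2(\kk,S)\cong\Sha^1(\kk,\widehat S)^{\vee}$ together with the finiteness of $\Sha^1$ of the finitely generated Galois module $\widehat S$ (see e.g.\ \cite{PR} or \cite{Bor98}). Hence $\Sha^1(\kk,C)$ is squeezed between finite groups and is itself finite.

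The step I expect to be the main obstacle is making the left-hand end precise: a class in $\ker\bar g$ is pulled back from $H^1(\kk,T)$ by a class that is only \emph{locally} in the image of $\rho_*$, rather than being globally locally trivial, so the naive restriction of the five-term sequence to Tate--Shafarevich groups is not exact on the nose, and one also meets the $\Sha$-group of the finite multiplicative group scheme $\ker\rho$. I would resolve this exactly as in the main text, by invoking the Poitou--Tate-type exact sequences for the complex of tori $C$ and for its constituents $T,\ T\sc$ established in \cite[\S5]{Bor98} --- the same machinery underlying the exact sequence \eqref{eq:exact-sequence} already used above --- which assembles $\Sha^1(\kk,T)$, $\Sha^2(\kk,T\sc)$ and $\Sha^1(\kk,C)=\Sha^1_\ab(\kk,G)$ into one exact sequence of finite groups. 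Alternatively, one can bypass the issue by applying Tate--Nakayama duality directly to $C$: there is a perfect pairing of finite groups between $\Sha^1(\kk,C)$ and a suitable Tate--Shafarevich group of the dual complex $\widehat C=[\,\widehat T\to\widehat{T\sc}\,]$ of finitely generated Galois modules, and finiteness of Tate--Shafarevich groups of bounded finitely generated modules over a number field concludes the proof at once.
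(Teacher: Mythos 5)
Your strategy---reduce $H^1_\ab$ to hypercohomology of the complex of tori $C=[T\sc\to T]$ and then to Tate--Shafarevich groups of tori---is in the same spirit as the paper's \emph{second} proof, but the paper does not work directly with $C$. Instead it takes a \emph{flasque resolution} $1\to S\to G'\to G\to 1$ with $G'$ quasi-trivial reductive and $S$ a torus, and uses the identification $H^1_\ab(F,G)\cong\ker[H^2(F,S)\to H^2(F,P)]$ with $P=(G')^{\textup{tor}}$ quasi-trivial. Because $\Sha^2(\kk,P)=0$ for a quasi-trivial torus, this yields an honest isomorphism $\Sha^1_\ab(\kk,G)\cong\Sha^2(\kk,S)$, with no left-exactness issues, and one concludes by Milne's finiteness theorem. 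The flasque resolution is doing exactly the work you leave implicit: it replaces your arbitrary $[T\sc\to T]$ by a complex engineered so that the ``squeeze'' is genuinely a kernel computation. The paper also records a shorter \emph{first} proof you do not mention: by \cite[Thm.~5.12]{Bor98} the abelianization map $\Sha^1(\kk,G)\to\Sha^1_\ab(\kk,G)$ is a bijection, and $\Sha^1(\kk,G)$ is finite by Borel's theorem \cite[Thm.~6.8]{B}.

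The obstruction you flag in your own argument is a real gap, not a cosmetic one. An element of $\ker\bar g$ is $f(y)$ for some $y\in H^1(\kk,T)$ that dies in $H^1(\kk_v,T)/\rho_*H^1(\kk_v,T\sc)$ for every $v$, which is strictly weaker than $y\in\Sha^1(\kk,T)$; thus $\ker\bar g$ is a priori the locally trivial part of the \emph{cokernel} of $\rho_*$, not a quotient of $\Sha^1(\kk,T)$, and the proposed squeeze between two finite groups is not literally available. Your two suggested repairs (Poitou--Tate for the complex $C$, or Tate--Nakayama duality between $\Sha^1(\kk,C)$ and $\Sha^1(\kk,\widehat C)$) are legitimate, but neither is contained in the references you cite as stated: the sequence \eqref{eq:exact-sequence} and its source in \cite[\S5]{Bor98} concern the localization map for $H^1_\ab$ and do not by themselves yield finiteness of $\Sha^1_\ab$, and appealing to a ``perfect pairing of finite groups'' presupposes the finiteness you are trying to prove. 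To complete your route one would instead filter $C$ using the finite kernel $\mu=\ker\rho$ and the quotient torus $G^{\textup{tor}}$, and reduce to finiteness of $\Sha^2(\kk,\mu)$, $\Sha^2(\kk,T\sc)$ and $\Sha^1(\kk,G^{\textup{tor}})$; this works, but it is noticeably more bookkeeping than either of the paper's proofs.
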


\begin{proof} We give two proofs.

First proof: by \cite[Thm.~5.12]{Bor98} we have a canonical
bijection $\Sha^1(\kk,G)\to\Sha^1_\ab(\kk,G)$, and by Borel's
theorem, see \cite[Thm.~6.8]{B}, $\Sha^1(\kk,G)$ is
finite. Thus  $\Sha^1_\ab(\kk,G)$ is finite.

Second proof: We may and shall assume that $G$ is reductive. Let
$$
1\to S\to G'\to G\to 1
$$
be a flasque resolution of $G$, see \cite[\S3]{CT}. Here $G'$ is a
quasi-trivial reductive group and $S$ is a torus. Let
$P=(G')^{\textup{tor}}$, the biggest quotient torus of $G'$. Since
$G'$ is a quasi-trivial group, $P$ is a quasi-trivial torus. For any
field $F\supset K$ we have a canonical isomorphism
$$
H^1_\ab(F,G)\isoto\ker[H^2(F,S)\to H^2(F,P)],
$$
see \cite[App.~A]{CT}. Since $P$ is quasi-trivial, we have
$\Sha^2(\kk,P)=0$, and therefore
$$
\Sha^1_\ab(\kk,G)\simeq\Sha^2(\kk,S).
$$
It is known that the group $\Sha^2(\kk,S)$ is finite for any
$\kk$-group of multiplicative type $S$, see \cite[Ch.~I, Thm.
4.20(a)]{Milne}. Thus $\Sha_\ab^1(\kk,G)$ is finite. 
\end{proof}


\subsection{Corollaries of Theorem \ref{thm:main-f-m-orbits}}\label{sec:2}

\begin{corollary}\label{cor:sc-finite}
Let $\kk$, $G$, $H$, and $X$ be as in \ref{subsec:Notation}. If
$\pi_1(\Hbar)=0$, then the set of orbits $X(\kk)/G(\kk)$ is finite.
\end{corollary}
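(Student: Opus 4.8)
The plan is to read this off directly from the equivalence (i)$\Leftrightarrow$(iv) in Theorem \ref{thm:main-f-m-orbits}. Recall that $\gg$ denotes the (finite) image of $\Gal(\kbar/\kk)$ in $\Aut\,\pi_1(\Hbar)\times\Aut\,\pi_1(\Gbar)$. The hypothesis $\pi_1(\Hbar)=0$ says precisely that $\Hbar$ is simply connected. Since the functor of $\hh$-coinvariants applied to the zero module gives the zero module, we have $\pi_1(\Hbar)_\hh=0$, and hence $(\pi_1(\Hbar)_\hh)\tors=0$, for \emph{every} subgroup $\hh\subset\gg$, in particular for every cyclic one.

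Consequently, for each cyclic $\hh\subset\gg$ the homomorphism
$$
i_*\colon (\pi_1(\Hbar)_\hh)\tors\longrightarrow(\pi_1(\Gbar)_\hh)\tors
$$
has trivial source, so it is (vacuously) injective. Thus condition (iv) of Theorem \ref{thm:main-f-m-orbits} is satisfied, and by the implication (iv)$\Rightarrow$(i) the set of $\kk$-orbits $X(\kk)/G(\kk)$ is finite. There is essentially no obstacle here: the entire content is packaged in Theorem \ref{thm:main-f-m-orbits}, and the only thing to observe is the triviality of the coinvariants of the zero group, which is immediate. (One could alternatively invoke (iv)$\Rightarrow$(iii): for almost all $v$ the decomposition group $\gg_v$ is cyclic and $H^1(\kv,H)$ itself vanishes when $\Hbar$ is simply connected by Kneser's theorem at the finite places, giving $\#X(\kv)/G(\kv)=1$ directly — but the coinvariants argument is cleaner and uniform.)
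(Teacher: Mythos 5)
Your argument is precisely the paper's: $\pi_1(\Hbar)=0$ forces $(\pi_1(\Hbar)_\hh)_{\textup{tors}}=0$ for every cyclic $\hh\subset\gg$, so condition (iv) of Theorem \ref{thm:main-f-m-orbits} holds vacuously, and (iv)$\Rightarrow$(i) gives the result. Correct and identical in substance to the published proof.
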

\begin{proof}
Indeed, then $(\pi_1(\Hbar)_\hh)\tors=0$, hence the map
$(\pi_1(\Hbar)_\hh)\tors\to (\pi_1(\Gbar)_\hh)\tors$ is injective
(for any $\hh$). By Theorem \ref{thm:main-f-m-orbits} the set
$X(\kk)/G(\kk)$ is finite.
\end{proof}

\begin{corollary}\label{cor:pi1-isomorphism}
Let $\kk$, $G$, $H$, and $X$ be as in \ref{subsec:Notation}. If the
map $\pi_1(\Hbar)\to \pi_1(\Gbar)$ is an isomorphism, then the set
of orbits $X(\kk)/G(\kk)$ is finite.
\end{corollary}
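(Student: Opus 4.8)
The plan is to deduce this immediately from the equivalence (i)\,$\Leftrightarrow$\,(iv) in Theorem \ref{thm:main-f-m-orbits}. Recall that the map $i_*\colon \pi_1(\Hbar)\to\pi_1(\Gbar)$ is a morphism of $\Gal(\kbar/\kk)$-modules, and hence also of $\gg$-modules; thus if $i_*$ is an isomorphism of abelian groups, its inverse is automatically $\gg$-equivariant, so $i_*$ is an isomorphism of $\gg$-modules. This reduces everything to checking that the two operations appearing in condition (iv) — passage to $\hh$-coinvariants and then to torsion subgroups — preserve the property of being an isomorphism.

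First I would note that for any subgroup $\hh\subset\gg$ the functor $M\mapsto M_\hh$ of $\hh$-coinvariants is additive and right exact, hence sends isomorphisms of $\gg$-modules to isomorphisms of abelian groups; so $i_*$ induces an isomorphism $\pi_1(\Hbar)_\hh\isoto\pi_1(\Gbar)_\hh$. Next, any isomorphism of abelian groups restricts to an isomorphism between their torsion subgroups, so the induced map $(\pi_1(\Hbar)_\hh)\tors\to(\pi_1(\Gbar)_\hh)\tors$ is an isomorphism, and in particular injective. Since this holds for every subgroup $\hh\subset\gg$, a fortiori for every cyclic one, condition (iv) of Theorem \ref{thm:main-f-m-orbits} is satisfied, and therefore condition (i) holds: the set of orbits $X(\kk)/G(\kk)$ is finite.

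There is no genuine obstacle: like Corollary \ref{cor:sc-finite}, this is a direct consequence of Theorem \ref{thm:main-f-m-orbits}, and the only thing to verify is the stability of "isomorphism" under the two functorial steps, which is routine. (In fact the same argument shows that one only needs $i_*$ to be injective with torsion-free cokernel, or more precisely that $(\pi_1(\Hbar)_\hh)\tors\to(\pi_1(\Gbar)_\hh)\tors$ is injective for all cyclic $\hh$; the hypothesis that $i_*$ is an isomorphism is a convenient sufficient condition.)
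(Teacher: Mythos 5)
Your proof is correct and takes essentially the same route as the paper: both verify condition (iv) of Theorem \ref{thm:main-f-m-orbits} by observing that an isomorphism $\pi_1(\Hbar)\to\pi_1(\Gbar)$ induces an isomorphism (hence an injection) on $(\cdot)_\hh{}_{\textup{tors}}$ for every $\hh$. You simply spell out the two functorial steps (coinvariants, then torsion) that the paper asserts in one line.
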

\begin{proof}
Indeed, then the map $(\pi_1(\Hbar)_\hh)\tors\to
(\pi_1(\Gbar)_\hh)\tors$ is  an isomorphism, hence injective (for
any $\hh$). By Theorem \ref{thm:main-f-m-orbits} the set
$X(\kk)/G(\kk)$ is finite.
\end{proof}

\begin{corollary}\label{cor:1-app}
Let $\kk,\ G,\ H,\ X$ be as in \ref{subsec:Notation}. Assume the set
$X(\kk)/G(\kk)$ is finite. Then the induced homomorphism
$i_*\colon\pi_1(\Hbar)\tors\to\pi_1(\Gbar)\tors$ is injective.
\end{corollary}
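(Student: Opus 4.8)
The proof will be an immediate application of Theorem~\ref{thm:main-f-m-orbits}. Recall from \ref{subsec:Notation} that $\gg$ denotes the finite image of $\Gal(\kbar/\kk)$ in $\Aut\,\pi_1(\Hbar)\times\Aut\,\pi_1(\Gbar)$, and that condition (iv) of that theorem asserts the injectivity of
$$
i_*\colon (\pi_1(\Hbar)_\hh)\tors\to(\pi_1(\Gbar)_\hh)\tors
$$
for \emph{every} cyclic subgroup $\hh\subset\gg$.

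The key (trivial) observation is that the unit subgroup $\hh=\{1\}\subset\gg$ is cyclic, and for it the functor of $\hh$-coinvariants is the identity, so that $\pi_1(\Hbar)_{\{1\}}=\pi_1(\Hbar)$ and $\pi_1(\Gbar)_{\{1\}}=\pi_1(\Gbar)$, whence $(\pi_1(\Hbar)_{\{1\}})\tors=\pi_1(\Hbar)\tors$ and likewise for $G$. Thus the plan is: assuming $X(\kk)/G(\kk)$ is finite, invoke the implication (i)$\Rightarrow$(iv) of Theorem~\ref{thm:main-f-m-orbits} to obtain injectivity of $(\pi_1(\Hbar)_\hh)\tors\to(\pi_1(\Gbar)_\hh)\tors$ for all cyclic $\hh\subset\gg$, and then specialize to $\hh=\{1\}$ to conclude that $i_*\colon\pi_1(\Hbar)\tors\to\pi_1(\Gbar)\tors$ is injective.

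There is no real obstacle: all the content is already contained in Theorem~\ref{thm:main-f-m-orbits}, and the only verification needed is the harmless fact that $\{1\}$ is one of the cyclic subgroups of $\gg$ and that coinvariants under it are trivial. (One could instead argue directly, bypassing (iv): by Chebotarev's density theorem there are infinitely many finite places $v$ of $\kk$ splitting completely in $L/\kk$, i.e.\ with decomposition group $\gg_v=\{1\}$; for each such $v$ the canonical bijection used in the proof of Theorem~\ref{thm:main-f-m-orbits} gives $X(\kv)/G(\kv)\isoto\ker[\pi_1(\Hbar)\tors\to\pi_1(\Gbar)\tors]$, and finiteness of $X(\kk)/G(\kk)$ forces this common kernel to vanish by the same reasoning as in the proof of (iii)$\Rightarrow$(iv). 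Routing the argument through Theorem~\ref{thm:main-f-m-orbits} is shorter.)
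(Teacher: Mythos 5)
Your proposal is correct and coincides exactly with the paper's proof: deduce condition (iv) of Theorem~\ref{thm:main-f-m-orbits} from (i), then specialize to the cyclic subgroup $\hh=\{1\}$, for which coinvariants are trivial. The parenthetical alternative via Chebotarev is also sound but unnecessary, as you note.
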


\begin{proof}
Since the set $X(\kk)/G(\kk)$ is finite, by Theorem
\ref{thm:main-f-m-orbits} the map
$$
i_*\colon (\pi_1(\Hbar)_\hh)\tors\to (\pi_1(\Gbar)_\hh)\tors
$$
is injective for \emph{any} cyclic subgroup  $\hh\subset\gg$, in
particular for $\hh=\{1\}$. Thus the map $\pi_1(\Hbar)\tors\to
\pi_1(\Gbar)\tors$ is injective.
\end{proof}

\begin{corollary}\label{cor:1-ss-app}
Let $\kk,\ G,\ H,\ X$ be as in \ref{subsec:Notation}. Assume the set
$X(\kk)/G(\kk)$ is finite. If  $H$ has no nontrivial $\kbar$-characters (e.g.
semisimple), then the homomorphism
$i_*\colon\pi_1(\Hbar)\to\pi_1(\Gbar)$ is injective.
\end{corollary}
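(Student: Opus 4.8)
The plan is to deduce this from Corollary \ref{cor:1-app}: the latter already gives injectivity of $i_*$ on torsion subgroups, so it suffices to show that, under the no-characters hypothesis, $\pi_1(\Hbar)$ has no free part, i.e.\ $\pi_1(\Hbar)=\pi_1(\Hbar)\tors$. For this I would first recall two facts about the algebraic fundamental group (from the construction in \cite[Sect.~1]{Bor98}, see also \cite[\S6]{CT}): (a) for a connected linear $\kbar$-group $M$ the unipotent radical does not affect $\pi_1$, i.e.\ the projection $M\to M/R_u(M)=:M^{\textup{red}}$ induces an isomorphism $\pi_1(M)\isoto\pi_1(M^{\textup{red}})$; (b) for a connected reductive group with maximal torus $T$ and set of coroots $\Phi^\vee$ one has $\pi_1(M)=X_*(T)/\langle\Phi^\vee\rangle$, a finitely generated abelian group whose free rank equals $\dim Z(M)^\circ=\dim\big(M/[M,M]\big)$; in particular $\pi_1(M)$ is torsion exactly when $M$ is semisimple.

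Next I would identify the hypothesis on $H$ with semisimplicity of $\Hbar^{\textup{red}}$. Since $[\Hbar^{\textup{red}},\Hbar^{\textup{red}}]$ is semisimple, the quotient $\Hbar^{\textup{red}}/[\Hbar^{\textup{red}},\Hbar^{\textup{red}}]$ is a torus, and every $\kbar$-character of $\Hbar$ factors through it; hence $H$ has no nontrivial $\kbar$-character iff that torus is trivial iff $\Hbar^{\textup{red}}$ is semisimple (the case of $H$ itself semisimple being immediate). Combining this with facts (a) and (b), $\pi_1(\Hbar)\cong\pi_1(\Hbar^{\textup{red}})$ is finite, so $\pi_1(\Hbar)=\pi_1(\Hbar)\tors$.

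Finally, since $X(\kk)/G(\kk)$ is finite, Corollary \ref{cor:1-app} gives that $i_*\colon\pi_1(\Hbar)\tors\to\pi_1(\Gbar)\tors$ is injective; as $\pi_1(\Hbar)\tors=\pi_1(\Hbar)$, the map $i_*\colon\pi_1(\Hbar)\to\pi_1(\Gbar)$ is injective, which is the assertion. The only substantive step is the reduction carried out in the second paragraph — namely that the absence of $\kbar$-characters forces $\pi_1(\Hbar)$ to be entirely torsion; once that is in place, the statement is a formal consequence of Corollary \ref{cor:1-app}.
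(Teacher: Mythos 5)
Your proof is correct and follows the same route as the paper: deduce finiteness of $\pi_1(\Hbar)$ from the no-characters hypothesis, so that $\pi_1(\Hbar)\tors=\pi_1(\Hbar)$, and then invoke Corollary~\ref{cor:1-app}. The paper states the finiteness of $\pi_1(\Hbar)$ without comment; you simply supply the (correct) justification via the identification of the free rank of $\pi_1$ with the dimension of the cocenter of the reductive quotient.
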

\begin{proof}
Indeed, since $H$ has no nontrivial $\kbar$-characters, we see that
$\pi_1(\Hbar)$ is finite, hence $\pi_1(\Hbar)\tors=\pi_1(\Hbar)$,
and we apply Corollary \ref{cor:1-app}.
\end{proof}

\begin{corollary}\label{cor:sc}
Let $\kk,\ G,\ H,\ X$ be as in \ref{subsec:Notation}. Assume that
both $G$ and $H$ have no nontrivial $\kbar$-characters (e.g. they both are
semisimple) and assume that $\pi_1(\Gbar)=0$. Then $X(\kk)/G(\kk)$
is finite if and only if $\pi_1(\Hbar)=0$.
\end{corollary}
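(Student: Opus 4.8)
The plan is to read off the equivalence directly from the two corollaries that have just been proved, so the argument is purely formal and very short. Both implications reduce to invoking results that already do all the real work (ultimately Theorem \ref{thm:main-f-m-orbits}).

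For the implication ``$\pi_1(\Hbar)=0 \Rightarrow X(\kk)/G(\kk)$ finite'', I would simply apply Corollary \ref{cor:sc-finite} verbatim: the hypothesis $\pi_1(\Hbar)=0$ is exactly its hypothesis, and its conclusion is exactly what we want. Note that this direction uses neither the assumption $\pi_1(\Gbar)=0$ nor the absence of $\kbar$-characters, so nothing else is needed here.

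For the converse, assume $X(\kk)/G(\kk)$ is finite. Since $H$ has no nontrivial $\kbar$-characters, Corollary \ref{cor:1-ss-app} applies and shows that the induced homomorphism $i_*\colon\pi_1(\Hbar)\to\pi_1(\Gbar)$ is injective. But by hypothesis $\pi_1(\Gbar)=0$, so the source $\pi_1(\Hbar)$ must be zero as well. Combining the two implications gives the stated equivalence.

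I do not expect any genuine obstacle: the statement is a bookkeeping corollary of Theorem \ref{thm:main-f-m-orbits}. The only remark worth making in the write-up is that the hypothesis on $G$ having no $\kbar$-characters is not actually used (it is in fact automatic once $\pi_1(\Gbar)=0$, since then $G$ is simply connected semisimple); only the hypothesis on $H$, which feeds Corollary \ref{cor:1-ss-app}, and the vanishing $\pi_1(\Gbar)=0$ are needed.
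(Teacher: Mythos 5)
Your proof is correct and is essentially identical to the paper's: one direction is Corollary \ref{cor:sc-finite} applied to the hypothesis $\pi_1(\Hbar)=0$, and the other is Corollary \ref{cor:1-ss-app} combined with $\pi_1(\Gbar)=0$. Your side remark that the no-characters hypothesis on $G$ is redundant given $\pi_1(\Gbar)=0$ is also correct (vanishing of $\pi_1(\Gbar)$ forces the maximal torus quotient of the reductive part to vanish), though the paper does not bother to note it.
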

\begin{proof}
If $\pi(\Hbar)=0$, then by Corollary \ref{cor:sc-finite}
$X(\kk)/G(\kk)$ is finite. Conversely, assume that $X(\kk)/G(\kk)$
is finite. By Corollary \ref{cor:1-ss-app} the homomorphism
$\pi_1(\Hbar)\to\pi_1(\Gbar)$ is injective, but $\pi_1(\Gbar)=0$,
hence $\pi_1(\Hbar)=0$.
\end{proof}

\begin{subsec}{}\label{subsec:semisimple}
Let $G$ be a connected semisimple $\kk$-group. We  say that $G$ is
an inner form if $G$ is an inner form of a $\kk$-split group. If $G$
is an inner form, then the Galois group  $\Gal(\kbar/\kk)$ acts on
$\pi_1(\Gbar)$ trivially. Indeed, for a $\kk$-split group $G$ this
follows from the definition of $\pi_1(\Gbar)$, and an inner twisting
does not change the Galois module $\pi_1(\Gbar)$.
\end{subsec}

\begin{corollary}\label{cor:2-app}
Let $\kk,\ G,\ H$ and  $X$ be as in \ref{subsec:Notation}. Assume
that the Galois group $\Gal(\kbar/\kk)$ acts on $\pi_1(\Gbar)$
trivially. If the homomorphism
$i_*\colon\pi_1(\Hbar)\to\pi_1(\Gbar)$ is injective, then the set
$X(\kk)/G(\kk)$ is finite.
\end{corollary}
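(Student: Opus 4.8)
The plan is to deduce the corollary directly from Theorem \ref{thm:main-f-m-orbits}, by verifying its condition (iv): we must show that for \emph{every} cyclic subgroup $\hh\subset\gg$ the induced map
$$
i_*\colon (\pi_1(\Hbar)_\hh)\tors\to(\pi_1(\Gbar)_\hh)\tors
$$
is injective. Once this is done, the finiteness of $X(\kk)/G(\kk)$ is immediate.

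First I would unwind the hypothesis. Since $\Gal(\kbar/\kk)$ acts trivially on $\pi_1(\Gbar)$, the finite group $\gg$ — defined in \ref{subsec:Notation} as the image of $\Gal(\kbar/\kk)$ in $\Aut\,\pi_1(\Hbar)\times\Aut\,\pi_1(\Gbar)$ — also acts trivially on $\pi_1(\Gbar)$. Hence for any subgroup $\hh\subset\gg$ we have $\pi_1(\Gbar)_\hh=\pi_1(\Gbar)$, and in particular $(\pi_1(\Gbar)_\hh)\tors=\pi_1(\Gbar)\tors$. The key observation is then that $\hh$ must act trivially on $\pi_1(\Hbar)$ as well: the homomorphism $i_*\colon\pi_1(\Hbar)\to\pi_1(\Gbar)$ is $\Gal(\kbar/\kk)$-equivariant, hence $\gg$-equivariant, so for $\sigma\in\hh$ and $x\in\pi_1(\Hbar)$ one has $i_*(\sigma x-x)=\sigma\,i_*(x)-i_*(x)=0$ because $\sigma$ fixes $\pi_1(\Gbar)$ pointwise; since $i_*$ is injective by assumption, $\sigma x=x$. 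Therefore $\pi_1(\Hbar)_\hh=\pi_1(\Hbar)$ and $(\pi_1(\Hbar)_\hh)\tors=\pi_1(\Hbar)\tors$, and the map in condition (iv) is just the restriction of the injective map $i_*$ to the torsion subgroup, which is injective. Applying Theorem \ref{thm:main-f-m-orbits} then gives the claim.

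There is essentially no serious obstacle here; the argument is purely formal. The only points requiring care are the bookkeeping with the finite quotient $\gg$ (making sure $i_*$ remains equivariant for it and that the $\hh$-coinvariants are computed with respect to the correct action) and the elementary lemma that a $\gg$-equivariant injection of $\gg$-modules whose target carries the trivial action forces the source to carry the trivial action too.
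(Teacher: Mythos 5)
Your proof is correct and follows essentially the same route as the paper: both arguments use the $\Gal(\kbar/\kk)$-equivariance and injectivity of $i_*$ to conclude that the Galois action on $\pi_1(\Hbar)$ is also trivial, and then verify condition (iv) of Theorem \ref{thm:main-f-m-orbits}. The only cosmetic difference is that the paper records this immediately as $\gg=\{1\}$ (so only $\hh=\{1\}$ needs checking), whereas you phrase the triviality pointwise for each $\hh$; the content is identical.
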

\begin{proof}
Since $\pi_1(\Hbar)$ injects into $\pi_1(\Gbar)$, we see that
$\Gal(\kbar/\kk)$ acts also  on $\pi_1(\Hbar)$ trivially. Thus
$\geff=\{\id\}$, hence  the only cyclic subgroup $\hh\subset\geff$
is $\hh=\{\id\}$. We see that the homomorphism
$$
i_*\colon \pi_1(\Hbar)_\hh\to \pi_1(\Gbar)_\hh
$$
is injective, hence the homomorphism
$$
i_*\colon (\pi_1(\Hbar)_\hh)\tors\to (\pi_1(\Gbar)_\hh)\tors
$$
is injective, and the corollary follows from Theorem
\ref{thm:main-f-m-orbits}.
\end{proof}

\subsection{Semisimple groups}\label{sec:3}

In this section $\kk$ is an \emph{algebraically closed field} of
characteristic 0. We consider pairs $(G,H)$, where $H$ is a
connected semisimple $\kk$-subgroup of a connected semisimple
$\kk$-group $G$. We find conditions under which the map
$\pi_1(H)\to\pi_1(G)$ is injective.

\begin{subsec}\label{subsec:ss-groups}
Let $H\subset G$ be connected semisimple $\kk$-groups. In this case
the both $\pi_1(H)$ and $\pi_1(G)$ are finite. Let $i\colon H\to G$
be the inclusion homomorphism. Consider  the map $i\sc\colon H\sc\to
G\sc$, where $G\sc$ is the universal covering of $G$. Set
$H'=i\sc(H\sc)\subset G\sc$. Let $T_H\subset H$ be a maximal torus,
and let $T_G\subset G$ be a maximal torus containing $T_H$. Let
$T_{H\sc}\subset H\sc$, $T_{H'}\subset H'$, and $T_{G\sc}\subset
G\sc$ be the  maximal tori corresponding to $T_H$ and $T_G$. For a
$\kk$-torus $T$ let $\xx(T)$ denote the cocharacter group of $T$,
i.e. $\xx(T)=\Hom(\GG_{m,\kk},T)$. We have  canonical homomorphisms
$T_{H\sc}\to T_{H'}\to  T_{G\sc}$ and the induced homomorphisms
$\xx(T_{H\sc})\to\xx(T_{H'})\to \xx(T_{G\sc})$.
\end{subsec}

\begin{lemma}\label{lem:tori}
Let $H\subset G$ be connected semisimple $\kk$-groups. With the
notation of Subsection \ref{subsec:ss-groups} we have canonical
isomorphisms
$$
\pi_1(H')\simeq\ker[\pi_1(H)\to\pi_1(G)]
\simeq\coker[\xx(T_{H\sc})\to \xx(T_{G\sc})]\tors.
$$
where $\tors$ denotes the torsion subgroup (of the cokernel).
\end{lemma}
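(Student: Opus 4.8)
The plan is to translate the whole statement into elementary lattice bookkeeping. Recall that for a connected semisimple $\kk$-group $G$ with a maximal torus $T_G$ one has a canonical identification $\pi_1(G)=\xx(T_G)/\xx(T_{G\sc})$, in which $\xx(T_{G\sc})$ is embedded in $\xx(T_G)$ as the coroot lattice, and that under these identifications $i_*\colon\pi_1(H)\to\pi_1(G)$ is the map induced by the inclusion $\xx(T_H)\into\xx(T_G)$. Put $A:=\xx(T_{H\sc})$, $B:=\xx(T_H)$, $C:=\xx(T_{G\sc})$, $D:=\xx(T_G)$; the compatibility of the tori makes these into lattices inside the $\QQ$-vector space $D\otimes\QQ$, with $A\subseteq B\subseteq D$ and $A\subseteq C\subseteq D$. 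First I would record three facts about their relative positions: (a) $T_{H\sc}\to T_H$ is an isogeny, so $A$ has finite index in $B$ and $A\otimes\QQ=B\otimes\QQ$; (b) the quotient tori $T_G/T_H$ and $T_{G\sc}/T_{H'}$ have torsion-free cocharacter groups, so $B$ is saturated in $D$ and $\xx(T_{H'})$ is saturated in $C$; and (c) a cocharacter of $T_{H\sc}$ whose image in $T_{G\sc}$ vanishes lies in $\ker(i\sc)$, which is finite, so $A\into C$ is injective.

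Next I would identify $H\sc$ with the universal covering of $H'$. Since the square relating $i\sc\colon H\sc\to G\sc$ and $i\colon H\to G$ to the covering maps commutes, the composite $H\sc\to G\sc\to G$ coincides with $H\sc\to H\into G$, whose image is $H$; as $G\sc\to G$ has finite kernel this forces $\dim H'=\dim H\sc$, so $\ker[i\sc\colon H\sc\to H']$ is finite, hence central. Thus $H\sc\to H'$ is a central isogeny from a simply connected group, so $H\sc=(H')\sc$ and $\pi_1(H')=\xx(T_{H'})/A$. Because $T_{H\sc}\to T_{H'}$ is an isogeny, $\xx(T_{H'})\otimes\QQ=A\otimes\QQ$, and since $\xx(T_{H'})$ is saturated in $C$ (fact (b)), it is exactly the saturation $(A\otimes\QQ)\cap C$ of $A$ in $C$. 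Hence
$$
\pi_1(H')=\xx(T_{H'})/A=\big((A\otimes\QQ)\cap C\big)/A=(C/A)\tors=\coker[\xx(T_{H\sc})\to\xx(T_{G\sc})]\tors,
$$
which gives the isomorphism with the cokernel.

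For the isomorphism with the kernel I would argue directly: under the identifications above, $i_*\colon\pi_1(H)\to\pi_1(G)$ is the map $B/A\to D/C$ induced by $B\into D$ (note $A\subseteq C$ because coroots of $H$ are coroots of $G$), so $\ker i_*=(B\cap C)/A$ with the intersection taken in $D$. It then suffices to show $B\cap C=(A\otimes\QQ)\cap C$. The inclusion $\supseteq$ is immediate once one rewrites $(A\otimes\QQ)\cap C$ as $(B\otimes\QQ)\cap C$ using fact (a); for $\subseteq$, if $x\in(B\otimes\QQ)\cap C$ then $x\in(B\otimes\QQ)\cap D=B$ by saturation of $B$ in $D$, so $x\in B\cap C$. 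Therefore $\ker i_*=\big((A\otimes\QQ)\cap C\big)/A=\pi_1(H')$, and combined with the previous paragraph this yields all the asserted isomorphisms; canonicity at each step is inherited from the functoriality of $\pi_1$ and the conjugacy of maximal tori. The step I expect to require the most care is the second paragraph — verifying that $i\sc$ has finite kernel and that $\xx(T_{H'})$ is precisely the saturation of $A$ in $C$ — while the rest is routine manipulation of finite-index sublattices and saturations.
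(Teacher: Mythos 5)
Your argument is correct and establishes the lemma, but by a route genuinely different from the paper's. The paper applies the snake lemma to the $2\times 3$ commutative diagram of cocharacter lattices and fundamental groups, obtaining an exact sequence $0\to k\to C\sc\to C$ where $k=\ker[\pi_1(H)\to\pi_1(G)]$, $C\sc=\coker[\xx(T_{H\sc})\to\xx(T_{G\sc})]$ and $C=\coker[\xx(T_H)\to\xx(T_G)]$; since $k$ is finite and $C$ torsion-free this gives $k\simeq(C\sc)\tors$, and a second short exact sequence built from the factorization $\xx(T_{H\sc})\into\xx(T_{H'})\into\xx(T_{G\sc})$, with the last cokernel torsion-free, identifies $(C\sc)\tors$ with $\pi_1(H')$. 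You instead translate everything into saturations inside $D\otimes\QQ$: you first prove by a dimension count that $H\sc$ is the universal covering of $H'$ (the paper uses this implicitly but does not isolate it), then identify $\xx(T_{H'})$ with the saturation $(A\otimes\QQ)\cap C$ of $A$ in $C$, and finally compute both $\pi_1(H')$ and $\ker i_*$ directly as $\bigl((A\otimes\QQ)\cap C\bigr)/A=(C/A)\tors$. The paper's proof is shorter because the snake lemma packages the diagram chase; yours is longer but makes all three canonical isomorphisms land on a single concrete lattice quotient, which is conceptually transparent. One small slip at the very end: the labels $\subseteq$ and $\supseteq$ in the two inclusions proving $B\cap C=(A\otimes\QQ)\cap C$ are swapped relative to the arguments you actually give — the rewriting $A\otimes\QQ=B\otimes\QQ$ trivially gives $B\cap C\subseteq(A\otimes\QQ)\cap C$, while the saturation of $B$ in $D$ gives the reverse containment. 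Both arguments are present and correct, so the equality and the lemma still follow.
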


\begin{proof}
Consider the following commutative diagram with exact rows:
$$
\xymatrix{
        &0\ar[d]                   &0\ar[d]              &k\ar[d]  \\
0\ar[r] &\xx(T_{H\sc})\ar[d]\ar[r] &\xx(T_H)\ar[d]\ar[r] &\pi_1(H)\ar[d]\ar[r] &0 \\
0\ar[r] &\xx(T_{G\sc})\ar[d]\ar[r] &\xx(T_G)\ar[d]\ar[r] &\pi_1(G)\ar[r]       &0 \\
        &C\sc                      &C
}
$$
where $k$ is the kernel and $C\sc$ and $C$ are the cokernels of the
corresponding homomorphisms. By the snake lemma we have an exact
sequence
\begin{equation}\label{eq:snake}
0\to k\to C\sc\to C.
\end{equation}
Since $\pi_1(H)$ is finite, clearly $k$ is finite. Since $T_H$
embeds into $T_G$, the group $C$ has no nonzero torsion.
From exact sequence \eqref{eq:snake} we obtain an isomorphism
$k\isoto (C\sc)\tors$,
i.e. an isomorphism
$$
\ker[\pi_1(H)\to\pi_1(G)] \simeq \coker[\xx(T_{H\sc})\to
\xx(T_{G\sc})]\tors,
$$

Since the injective homomorphism $\xx(T_{H\sc})\to \xx(T_{G\sc})$
factorizes as a composition of injective homomorphisms
$\xx(T_{H\sc})\to\xx(T_{H'})\to \xx(T_{G\sc})$, we obtain a short
exact sequence
\begin{align*}
0\to \coker[\xx(T_{H\sc})\to\xx(T_{H'})]
 \to \coker[\xx(T_{H\sc})\to \xx(T_{G\sc})]
\\ \to \coker[\xx(T_{H'})\to \xx(T_{G\sc})]\to 0.
\end{align*}
Since $T_{H'}$ embeds into $T_{G\sc}$, we have
$\coker[\xx(T_{H'})\to \xx(T_{G\sc})]\tors=0$, and therefore we
obtain an isomorphism
$$
\coker[\xx(T_{H\sc})\to\xx(T_{H'})]\tors\simeq
  \coker[\xx(T_{H\sc})\to \xx(T_{G\sc})]\tors.
$$
But $\coker[\xx(T_{H\sc})\to\xx(T_{H'})]\tors=
\coker[\xx(T_{H\sc})\to\xx(T_{H'})]=\pi_1(H')$. Thus we obtain an
isomorphism
$$
\pi_1(H')\simeq \coker[\xx(T_{H\sc})\to \xx(T_{G\sc})]\tors.
$$
This completes the proof of Lemma \ref{lem:tori}.
\end{proof}

\begin{corollary}\label{cor:H'}
With the assumptions and notation of Lemma \ref{lem:tori}, the
following assertions are equivalent:

(i) $H'$ is simply connected;

(ii) The homomorphism $\pi_1(H)\to\pi_1(G)$ is injective;

(iii) The group $\coker[\xx(T_{H\sc})\to \xx(T_{G\sc})]$ has no
torsion.
\end{corollary}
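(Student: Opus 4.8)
The statement to prove is Corollary \ref{cor:H'}: for connected semisimple $\kk$-groups $H\subset G$ over an algebraically closed field of characteristic $0$, the conditions (i) $H'$ is simply connected, (ii) $\pi_1(H)\to\pi_1(G)$ is injective, and (iii) $\coker[\xx(T_{H\sc})\to\xx(T_{G\sc})]$ is torsion-free are equivalent.

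The plan is to read off all three equivalences directly from the two isomorphisms already established in Lemma \ref{lem:tori}, namely
$$
\pi_1(H')\;\simeq\;\ker[\pi_1(H)\to\pi_1(G)]\;\simeq\;\coker[\xx(T_{H\sc})\to\xx(T_{G\sc})]\tors.
$$
First I would observe that $H'=i\sc(H\sc)\subset G\sc$ is a connected semisimple group, so it is simply connected if and only if $\pi_1(H')=0$; this is just the characterization of simple connectedness in terms of the algebraic fundamental group recalled in \ref{subsec:Notation} (for a semisimple group $\pi_1$ is finite, and it vanishes exactly when the group equals its universal cover). Hence (i) is equivalent to $\pi_1(H')=0$. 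Next, since $\pi_1(H')\simeq\ker[\pi_1(H)\to\pi_1(G)]$ by Lemma \ref{lem:tori}, we have $\pi_1(H')=0$ if and only if that kernel is trivial, which is precisely the injectivity statement (ii). This gives (i)$\iff$(ii).

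For (ii)$\iff$(iii), I would again invoke Lemma \ref{lem:tori}, which identifies $\ker[\pi_1(H)\to\pi_1(G)]$ with $\coker[\xx(T_{H\sc})\to\xx(T_{G\sc})]\tors$. Since $\pi_1(H)$ is finite, this kernel is finite, so it is trivial if and only if the torsion subgroup of $\coker[\xx(T_{H\sc})\to\xx(T_{G\sc})]$ is trivial, i.e. if and only if the cokernel is torsion-free. That is exactly condition (iii), completing the cycle of equivalences.

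There is essentially no obstacle here: all the substantive work — constructing the snake-lemma diagram, checking that $C=\coker[\xx(T_H)\to\xx(T_G)]$ is torsion-free because $T_H\hookrightarrow T_G$, and factoring the cocharacter map through $\xx(T_{H'})$ — has already been carried out in the proof of Lemma \ref{lem:tori}. The corollary is a formal consequence, the only mild point to state carefully being that $H'$, as a connected semisimple group equal to the image of a simply connected group, has finite $\pi_1(H')$ and is simply connected precisely when $\pi_1(H')=0$. I would write this up as a short three-line proof citing Lemma \ref{lem:tori} for each of the two isomorphisms and the finiteness of $\pi_1(H)$ to pass between ``trivial kernel'' and ``torsion-free cokernel''.

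\begin{proof}
By Lemma \ref{lem:tori} we have canonical isomorphisms
$$
\pi_1(H')\;\simeq\;\ker[\pi_1(H)\to\pi_1(G)]\;\simeq\;\coker[\xx(T_{H\sc})\to\xx(T_{G\sc})]\tors .
$$
Since $H'$ is a connected semisimple $\kk$-group, $\pi_1(H')$ is finite and $H'$ is simply connected if and only if $\pi_1(H')=0$; thus (i) is equivalent to the vanishing of $\pi_1(H')$. By the first isomorphism above, $\pi_1(H')=0$ if and only if $\ker[\pi_1(H)\to\pi_1(G)]=0$, i.e. if and only if $\pi_1(H)\to\pi_1(G)$ is injective. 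This proves (i)$\Leftrightarrow$(ii). Finally, $\pi_1(H)$ is finite, so $\ker[\pi_1(H)\to\pi_1(G)]$ is finite, and by the second isomorphism it vanishes if and only if $\coker[\xx(T_{H\sc})\to\xx(T_{G\sc})]\tors=0$, i.e. if and only if $\coker[\xx(T_{H\sc})\to\xx(T_{G\sc})]$ is torsion-free. This proves (ii)$\Leftrightarrow$(iii).
\end{proof}
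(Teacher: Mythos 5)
Your proof is correct and takes exactly the approach the paper intends: the corollary is stated without a separate proof precisely because it follows formally from the two isomorphisms of Lemma \ref{lem:tori}, together with the observation that a connected semisimple group is simply connected iff its algebraic fundamental group vanishes. Nothing to add.
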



\subsection{Symmetric pairs over an algebraically closed field}\label{sec:4}

In this section we assume that $\kk$ is  an \emph{algebraically
closed} field of characteristic 0. We consider symmetric pairs
$(G,H)$ over $\kk$, where $G$ is a connected, simply connected, almost simple
$\kk$-group, and $H\subset G$ is a connected, semisimple, closed subgroup. Recall that
``symmetric pair'' means that $H$ is the group of invariants $G^\theta$
for some involutive automorphism $\theta$ of $G$.
Symmetric pairs  $(G,H)$ (or $(G,\theta)$) were classified by E.
Cartan. We shall use the unified description of symmetric pairs due
to V. Kac, see \cite{Helgason01} and \cite{OV}. A symmetric pair
$(G,H)$ with semisimple $H$ corresponds to an affine Dynkin diagram
$D$ and a vertex $s$ of $D$,
see \cite[Table 7]{OV}.
We give a list
of all symmetric pairs $(G,H)$ from \cite[Table 7]{OV}  with simply connected almost simple
$G$  for which $H$ is  simply connected.
We give only the isomorphism classes of $G$ and $H$.
For a description of the embedding $H\into G$ see \cite[X.2.3]{Helgason01} for $G$ classical,
or \cite[Table 7]{OV} for all cases (in terms of affine Dynkin diagrams).

\begin{theorem}\label{cor:Kac-sc-table}
Let $(G,H)$ be a symmetric pair over 
an algebraically closed field $\kk$ of characteristic 0, 
where $G$ be a  connected, {\bf simply connected}, almost simple $\kk$-group,
and $H\subset G$ a connected, semisimple, closed subgroup. Then $H$ is
simply connected for the symmetric pairs $(G,H)$ in the list below,
and $\pi_1(H)=\ZZ/2\ZZ$ for the symmetric pairs $(G,H)$ not in the list.
\smallskip

\noi$(\textbf{\bf A II})$ $G=\SL_{2n}$, $H=\Sp_n$ $(n\ge 3)$.

\noi$(\textbf{\bf C II})$ $G=\Sp_{p+q}$, $H=\Sp_p\times \Sp_q$
$(1\le p\le q)$.

\noi$(\textbf{\bf BD I}(2l,1))$ $G=\Spin_{2l+1}$, $H=\Spin_{2l}$
$(l\ge 3)$.

\noi$(\textbf{\bf BD I}(2l-1,1))$  $G=\Spin_{2l}$, $H=\Spin_{2l-1}$
$(l\ge 3)$.

\noi$(\textbf{\bf E IV})$ $G=E_6$, $H=F_4$.

\noi$(\textbf{\bf F II})$ $G=F_4$, $H=\Spin_9$.

\bigskip
\end{theorem}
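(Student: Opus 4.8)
<br>

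The statement to prove is Theorem A.15 (\texttt{cor:Kac-sc-table}): for a symmetric pair $(G,H)$ over an algebraically closed field of characteristic $0$ with $G$ connected, simply connected, almost simple, and $H$ connected semisimple closed, we have $\pi_1(H)=0$ exactly for the pairs in the displayed list, and $\pi_1(H)=\ZZ/2\ZZ$ otherwise.

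Let me think about how I'd prove this.

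The pair $(G,H)$ is one of E. Cartan's symmetric pairs, and since $G$ is simply connected and almost simple, $H=G^\theta$ for an involution $\theta$. Since $G$ is simply connected, it's a classical fact (used implicitly — e.g. Steinberg's theorem on connectedness of fixed points of semisimple automorphisms of simply connected groups) that $H = G^\theta$ is connected, so "connected semisimple closed subgroup" is automatic here, and moreover $H$ equals its own $H'$ in the notation of Section A.3... wait, no — $H'$ was defined as $i^{\rm sc}(H^{\rm sc})\subset G^{\rm sc}=G$. So $H' = \operatorname{im}[H^{\rm sc}\to G]$, which is the image of the simply connected cover of $H$. By Corollary A.11 (\texttt{cor:H'}), $H'$ is simply connected iff $\pi_1(H)\to\pi_1(G)=0$ is injective iff $\pi_1(H)=0$ (since $\pi_1(G)=0$). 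So $\pi_1(H)=0$ iff $H$ itself is simply connected (as $H=G^\theta$ need not be simply connected a priori, but when it is, $H'=H$).

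So the real task is: for each of Cartan's symmetric pairs with $G$ simply connected almost simple, compute $\pi_1(H)$. This is a finite case-by-case check over E. Cartan's / Kac's classification. The approach: go through Kac's list (\cite[Table 7]{OV}) of symmetric pairs with semisimple $H$, and for each entry read off the (abstract) isomorphism type of $H$ as a product of almost simple groups with specified isogeny type, then compute $\pi_1 = \prod \pi_1$ of the factors. For the classical cases one can also use Corollary A.11(iii): $\pi_1(H)=0$ iff $\coker[\xx(T_{H^{\rm sc}})\to\xx(T_G)]$ is torsion-free, which can be checked via the explicit description of $H\hookrightarrow G$ on tori (e.g. via \cite[X.2.3]{Helgason01} for $G$ classical). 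Let me sketch the cases.

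\emph{Plan of proof.} I would organize it as a traversal of Kac's table. First, record the reduction: since $\pi_1(G)=0$, Corollary A.11 gives that $\pi_1(H)=0$ iff $H$ is simply connected, and in general for these symmetric pairs $\pi_1(H)$ is a $2$-group (indeed $\pi_1(H)\hookrightarrow$ something controlled by the $2$-torsion; one sees directly from the list it is $0$ or $\ZZ/2\ZZ$). Then I would go type by type through $G = \SL_n, \Sp_n, \Spin_n, E_6, E_7, E_8, F_4, G_2$:
\begin{itemize}
\item $G=\SL_n$: involutions give $H^\theta \cong \SL_p\times\SL_q$ (with a torus — not semisimple, excluded), $\SO_n$ (type \textbf{AI}, $\pi_1=\ZZ/2$), or $\Sp_{n/2}$ for $n$ even (type \textbf{AII}, simply connected — in the list).
\item $G=\Sp_n$: $H \cong \GL_p\times\ldots$ (not semisimple; \textbf{CI}-type excluded) or $\Sp_p\times\Sp_q$ (type \textbf{CII}, simply connected — in the list).
\item $G=\Spin_n$: $H\cong \Spin_p\times\Spin_q$ for $p+q=n$ — this is simply connected only when one of $p,q$ is $\le 2$, i.e. $H=\Spin_{n-1}$ (types \textbf{BD I}$(2l,1)$ and \textbf{BD I}$(2l-1,1)$, in the list); all other $\Spin_p\times\Spin_q$ have $\pi_1=\ZZ/2$ (product of two factors each $\Spin_{\ge3}$, though one must be careful that the diagonal central $\ZZ/2$ is what survives). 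Also the Hermitian-type involution of $\Spin_{2l}$ gives $H\cong\GL_l$-type (not semisimple; excluded).
\item $G=E_6$: symmetric subgroups with $H$ semisimple are $\Sp_4$ (type \textbf{E II}, $\pi_1=\ZZ/2$) and $F_4$ (type \textbf{E IV}, simply connected — in the list). (\textbf{E III} gives non-semisimple $H$.)
\item $G=E_7$: $H\cong \SL_8/\mu$ or $\Spin_{12}\times\SL_2$-type or $E_6\times(\text{torus})$ — the semisimple ones (\textbf{E V}: $\SL_8$-quotient; \textbf{E VI}: $\Spin_{12}\cdot\SL_2$) have $\pi_1=\ZZ/2$; none is simply connected.
\item $G=E_8$: $H\cong \Spin_{16}/\mu$ (\textbf{E VIII}) or $E_7\times\SL_2$-type (\textbf{E IX}); $\pi_1=\ZZ/2$; none simply connected.
\item $G=F_4$: $H\cong\Sp_3\times\SL_2$-type (\textbf{F I}, $\pi_1=\ZZ/2$) or $\Spin_9$ (type \textbf{F II}, simply connected — in the list).
\item $G=G_2$: $H\cong\SL_2\times\SL_2$-type (\textbf{G I}), $\pi_1=\ZZ/2$; not simply connected.
\end{itemize}
For the exceptional cases the cleanest verification of "$H$ simply connected" is via the Borel–de Siebenthal / Kac description: $H^\theta$ corresponds to removing the marked node from the affine Dynkin diagram, and $H$ is simply connected iff the resulting diagram together with the induced cocharacter data gives $\pi_1=0$; here one invokes the help of Vinberg and Elashvili acknowledged in the paper, i.e. an explicit inspection. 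In each "not in the list" exceptional case I exhibit the nontrivial element of $\pi_1(H)$ as (the image of) a nontrivial central cocharacter, or equivalently cite that the fundamental group of the corresponding compact symmetric space $H/...$... no — rather, directly from the known isogeny type of $H$.

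\emph{Main obstacle.} The genuinely delicate point is \textbf{not} the bookkeeping of which Cartan pair is which, but verifying the precise isogeny type of $H=G^\theta$ inside the \emph{simply connected} $G$ — e.g. showing $(\SL_{2n})^\theta$ is exactly $\Sp_n$ (not $\Sp_n/\mu_2$) and that $(F_4)^\theta = \Spin_9$ (not $\SO_9$), versus cases like $(\Spin_{p+q})^\theta = (\Spin_p\times\Spin_q)/\langle(-1,-1)\rangle$ where the quotient is genuine. For the classical groups I would do this by the explicit realization of $\theta$ (conjugation by an appropriate element of $G$ or an outer-twist description), compute the fixed-point group on the level of matrices, and read off $\pi_1(H)$ via the torus computation of Lemma A.10 / Corollary A.11(iii): $\pi_1(H)=\coker[\xx(T_{H^{\rm sc}})\to\xx(T_G)]\tors$. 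For the exceptional groups I would rely on Kac's classification tables together with the cited assistance of Vinberg and Elashvili, checking for each of \textbf{E II, E IV, E V, E VI, E VIII, E IX, F I, F II, G I} whether the semisimple part $H$ of the isotropy subgroup is simply connected; the only "yes" answers are \textbf{E IV} ($F_4\subset E_6$) and \textbf{F II} ($\Spin_9\subset F_4$). Combining all cases yields exactly the six families listed, completing the proof.
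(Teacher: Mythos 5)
Your proposal is essentially a table-lookup verification, whereas the paper derives a uniform structural criterion, so the two arguments differ in a meaningful way. The paper's proof works directly with the coroot lattice: for an inner involution, the symmetric pair corresponds to the affine Dynkin diagram $D$ of $G$ with a marked node $s = \alpha_k$ whose mark $a_k$ equals $2$ in the relation $\sum_i a_i\alpha_i = 0$; converting this to a relation among coroots $\alpha_i^\vee$ (with coefficients $a_i'$ that halve for short roots) and applying Lemma A.10/Corollary A.11, one sees that $\pi_1(H) = \coker[\xx(T_{H\sc})\to\xx(T_G)]\tors$ is trivial precisely when $a_k' = 1$, i.e.\ precisely when $D$ has a double edge and $\alpha_k$ is \emph{short}; otherwise $\alpha_k^\vee$ lies in $\tfrac12\xx(T_{H\sc})\setminus\xx(T_{H\sc})$ and $\pi_1(H)=\ZZ/2\ZZ$. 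This one computation handles all inner cases at once (subcases (i)(a)--(i)(d) in the paper), and it also proves, rather than merely observes, that $\pi_1(H)$ is always $0$ or $\ZZ/2\ZZ$. For outer involutions the paper does a short case check, with the only delicate case being $D=E_6^{(2)}$, $H$ of type $C_4$: there the paper decomposes $\Lie(G)=\Lie(H)\oplus\pp$, shows the central $-1\in\Sp_4$ acts trivially on both summands, and uses $|\ker[E_6\sc\to E_6\ad]|=3$ to conclude $\pi_1(H)=\ZZ/2\ZZ$.

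Your approach --- enumerate Kac's/Cartan's list and read off the isogeny type of each $H$ --- would also succeed, but carries the usual risks of table-reading and does not explain \emph{why} the answer is always $0$ or $\ZZ/2\ZZ$ nor why the simply connected cases are exactly the ones with a short marked root next to a double edge (the paper records this cleanly as Corollary A.16). Indeed your sketch already has a labeling slip: in $E_6$ the symmetric subgroup of type $C_4$ ($\Sp_4$) comes from an \emph{outer} involution (Cartan type \textbf{E I}), while \textbf{E II} gives $H$ of type $A_5\times A_1$, i.e.\ $\SL_6\cdot\SL_2$; you conflated E I and E II. The final answer (only E IV among the $E_6$ cases is simply connected) is unaffected, but this is exactly the kind of error the paper's structural criterion avoids. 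Also note that for the $C_4\subset E_6$ case, bare table-lookup is not quite enough to pin down the isogeny type of $H$ inside the \emph{simply connected} $E_6$; you flag this as the ``main obstacle'' and defer to Vinberg--Elashvili, but the paper actually supplies the argument (the $\bigwedge^4$-representation computation), which is worth knowing if you intend your proof to be self-contained.
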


\begin{proof}
We consider two cases.
\medskip

\noindent (i) $\theta$ is an inner automorphism.

In this case the affine Dynkin diagram $D$
which is used in the description of the symmetric pair $(G,H)$, cf. \cite[Table 7]{OV},
is the \emph{extended Dynkin diagram} of $G$. Let $T_H$ be
a maximal torus of $H$, and let $T_G$ be a maximal torus of $G$
containing $T_H$. Let $T_{H\sc}$ be the corresponding maximal torus
of the universal covering $H\sc$ of $H$.
\newcommand{\XX}{{\textsf{\upshape X}^*}}
Let $\XX(T_G):=\Hom(T_G,\GG_m)$ be the character group of $T_G$. Set
$V=\XX(T_G)\otimes_\ZZ \RR$. Let $R=R(G,T_G)\subset\XX(T_G)\subset
V$ be the root system of $G$. Let
$\alpha_0,\alpha_1,\dots,\alpha_l\in R$ be the  roots corresponding
to the vertices of $D$, where $\alpha_0$ is the lowest root. Then
\begin{equation}\label{eq:main}
\sum_{i=0}^l a_i\alpha_i=0,
\end{equation}
 where $a_i\in\ZZ$, $a_0=1$.
The distinguished vertex $s$ of $D$ corresponds to some root $\alpha_k$,
and $a_k=2$ (see \cite[Ch.~X \S 5]{Helgason01}, \cite[Ch.~5 \S1.5,
Problem 38]{OV}).

Let $R^\vee\subset\xx(T_G)$ denote the dual root system. For every
$\alpha\in R$ let $\alpha^\vee\in R^\vee$ be the corresponding
coroot. The  coroots $\alpha_1^\vee,\dots,\alpha_l^\vee$ constitute
a basis of $\xx(T_G)$, hence
$\alpha_0^\vee,\alpha_1^\vee,\dots,\alpha_l^\vee$ generate
$\xx(T_G)$. The diagram $D-\{s\}$ is the Dynkin diagram of $H$, and
the coroots $\alpha_i^\vee,\ (i\neq k)$ constitute a basis of
$\xx(T_{H\sc})$.

Let $W=W(R)$ denote the Weyl group. Choose a $W$-invariant scalar
product $(\ ,\,)$ in $V$. We can embed $R^\vee$ into $V$ by
$$
\alpha^\vee=\frac{2\alpha}{(\alpha,\alpha)}.
$$

We consider 4 subcases.

\medskip

(i)(a) Suppose that $D$ has no multiple edges. Then all the roots
$\beta\in R$ are of the same length, and we can normalize the scalar
product such that $(\beta,\beta)=2$, hence $\beta^\vee=\beta$, for
all $\beta\in R$.
Now it follows from \eqref{eq:main} that
$$
\sum_{i=0}^l a_i\alpha_i^\vee=0.
$$
Recall that $a_i\in\ZZ$, $a_0=1$, and $a_k=2$. We see that
$\alpha_k^\vee\in\frac{1}{2}\xx(T_{H\sc})$, but
$\alpha_k^\vee\notin\xx(T_{H\sc})$. Thus
$\coker[\xx(T_{H\sc})\to\xx(T_G)]=\ZZ/2\ZZ$, and by Lemma
\ref{lem:tori} $\pi_1(H)=\ZZ/2\ZZ$.
\medskip

(i)(b) Suppose that $D$ has double edges and $\alpha_k$ is a
\emph{short} root. We can normalize the scalar product such that for
any long root $\beta$ we have $(\beta, \beta)=2$, hence
$\beta^\vee=\beta$. Then for any short root $\gamma$ we have
$(\gamma,\gamma)=1$, hence $\gamma^\vee=2\gamma$.
Now it follows from \eqref{eq:main} that
\begin{equation}\label{eq:a-prime}
\sum_{i=0}^l a'_i\alpha_i^\vee=0,
\end{equation}
where $a'_i=a_i$ when $\alpha_i$ is long, and $a'_i=a_i/2$ when
$\alpha_i$ is short.

Since $\alpha_0$ is the lowest root, it is long. Since $a_0=1$, we
obtain that $a'_0=1$. Thus \eqref{eq:a-prime} gives
\begin{equation*}
\alpha_0^\vee=\sum_{i=1}^l -a'_i\alpha_i^\vee.
\end{equation*}
Since $\alpha_0^\vee\in R^\vee$ and $(\alpha_i^\vee)_{i=1,\dots,l}$
is a basis of $R^\vee$, we see that  $a'_i\in\ZZ$ for all $i$. Since
$a_k=2$ and $\alpha_k$ is short, we see that $a'_k=1$. Thus
$\alpha_k^\vee$ is a linear combination of $\alpha_i^\vee\ (i\neq
k)$ with  coefficients in $\ZZ$. We see that the map
$\xx(T_{H\sc})\to\xx(T_G)$ is surjective, and by Corollary
\ref{cor:H'} the group $H$ is simply connected.
\medskip

(i)(c) Suppose that $D$ has double edges and $\alpha_k$ is a
\emph{long} root. As in (i)(b),  from \eqref{eq:main} we obtain the
relation \eqref{eq:a-prime}
with $a'_i=a_i$ when $\alpha_i$ is long, and $a'_i=a_i/2$ when
$\alpha_i$ is short. Again $a'_0=1$ and $a'_i\in\ZZ$ for all $i$.
Since  $a_k=2$ and now $\alpha_k$ is long, we see that $a'_k=2$.
As in (i)(a), we see that
$\alpha_k^\vee\in\frac{1}{2}\xx(T_{H\sc})$, but
$\alpha_k^\vee\notin\xx(T_{H\sc})$. Thus
$\coker[\xx(T_{H\sc})\to\xx(T_G)]=\ZZ/2\ZZ$, and by Lemma
\ref{lem:tori}   $\pi_1(H)=\ZZ/2\ZZ$.
\medskip

(i)(d) Suppose that $D$ has a triple edge (type $G_2$). We have
$k=1$,
\begin{equation}\label{eq:G2}
\alpha_0+2\alpha_1+3\alpha_2=0,
\end{equation}
(see \cite[Table 7.I]{OV}), where
$$
(\alpha_0,\alpha_0)=3,\ (\alpha_1,\alpha_1)=3, \text{ and }
(\alpha_2,\alpha_2)=1.$$ Then
$$
\alpha_0^\vee=\frac{2}{3}\alpha_0,\
\alpha_1^\vee=\frac{2}{3}\alpha_1,\ \alpha_2^\vee=2\alpha_2.
$$
From \eqref{eq:G2} we obtain
$$
\alpha_0^\vee+2\alpha_1^\vee+\alpha_2^\vee=0.
$$
Similarly to (i)(a),  we see that
 $\alpha_1^\vee\in\frac{1}{2}\xx(T_{H\sc})$,
but $\alpha_1^\vee\notin\xx(T_{H\sc})$. It follows that
$\coker[\xx(T_{H\sc})\to\xx(T_G)]=\ZZ/2\ZZ$, and therefore
$\pi_1(H)=\ZZ/2\ZZ$.
\medskip

We obtain the following list of pairs $(G,H)$ with simply connected
$H$ in the case when $\theta$ is inner:

\noi(\textbf{C II}) $G=\Sp_{p+q}$, $H=\Sp_p\times \Sp_q$ $(1\le p\le
q)$.

\noi(\textbf{BD I}($2l$,1)) $G=\Spin_{2l+1}$, $H=\Spin_{2l}$ $(l\ge
3)$.

\noi(\textbf{F II}) $G=F_4$, $H=\Spin_9$.
\bigskip

\noindent Case (ii): $\theta$ is an outer automorphism. We use
case-by-case consideration.

When $G$ is a classical group and $\theta$ is outer, we see from
\cite[Table 7.III]{OV} that $H$ is simply connected only in the
following cases:

\noi(\textbf{A II}) $G=\SL_{2n}$, $H=\Sp_n$ $(n\ge 3)$.

\noi(\textbf{BD I}($2l-1$,1))  $G=\Spin_{2l}$, $H=\Spin_{2l-1}$
$(l\ge 3)$.

These are exactly the cases when $D$ has a double edge and
$\alpha_k$ is a short root.

\medskip

We list all the other classical cases with $\theta$ outer:

\noi(\textbf{A I}) $G=\SL_{n}$, $H=\SO_{n}$ $(n\ge 3,\ n\neq 4)$.

\noi(\textbf{BD I}$(2p+1, 2q+1)$) $G=\Spin_{2p+2q+2}$,
    $H=(\Spin_{2p+1}\times\Spin_{2q+1})/\mu_2$  $(1\le p\le q)$.

In  these cases $\pi_1(H)=\ZZ/2\ZZ$.
\medskip

We must treat the case $D=E_6^{(2)}$, see \cite[Table 7.III]{OV}
(this diagram has a double edge). Then either $H=F_4$ or $H=C_4$.

When $H=F_4$, clearly $H$ is simply connected. In this case
$\alpha_k$ is a short root.

When $H$ is of the type $C_4$, the restriction of the adjoint
representation of $G$ to $H$ is the direct sum
$\Lie(G)=\Lie(H)\oplus\pp$ of two irreducible representations. Here
the  the representation of $H=C_4$ in $\pp$ is a subrepresentation
of the  representation of $C_4$ in $\bigwedge^4R$, where $R$ is the
standard 8-dimensional representation of $C_4$. We see that the
central element $-1\in H\sc(K)= \Sp_4(K)$ acts trivially on
$\Lie(H)$ and on $\pp$, hence the image of $-1$ in the adjoint group
$G\ad$ is 1. Since $\ker[G\to G\ad]$ is of order 3, we see that the
image of this element in  $G$ is 1. Thus $\pi_1(H)=\ZZ/2\ZZ$. In
this case $\alpha_k$ is a long root.

We obtain the following list of symmetric pairs $(G,H)$ with simply
connected $H$ in the case $E_6$:

\noi(\textbf{E IV}) $G=E_6$, $H=F_4$.

This completes the proof of Theorem \ref{cor:Kac-sc-table}.
\end{proof}

\begin{corollary}\label{thm:Kac-sc}
Let $G$ be a connected, simply connected, almost  simple $\kk$-group over an
algebraically closed field  $\kk$ of characteristic 0, and let
$H\subset G$ be a connected, symmetric, semisimple $\kk$-subgroup. Assume that
the symmetric pair $(G,H)$ corresponds to $(D,s)$ as above. If $D$
has a double edge and $s$ corresponds to a short root, then $H$ is
simply connected; otherwise $\pi_1(H)=\ZZ/2\ZZ$.
\end{corollary}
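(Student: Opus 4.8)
Corollary A.32 (= \texttt{thm:Kac-sc}) is, as the statement already flags, nothing more than a reformulation of Theorem \ref{cor:Kac-sc-table} (= \texttt{cor:Kac-sc-table}) via the Kac classification of symmetric pairs. So the plan is to extract, from the case-by-case analysis carried out in the proof of Theorem \ref{cor:Kac-sc-table}, the uniform criterion ``$D$ has a double edge and $s$ corresponds to a short root''.

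First I would recall that every symmetric pair $(G,H)$ with $G$ connected, simply connected, almost simple and $H$ connected semisimple over the algebraically closed field $\kk$ of characteristic $0$ is, by Kac's description (see \cite[Table 7]{OV}, \cite{Helgason01}), encoded by a pair $(D,s)$ where $D$ is an affine Dynkin diagram and $s$ a vertex of $D$, and that $D-\{s\}$ is the Dynkin diagram of $H$; this is the standing hypothesis of the corollary. Next I would go through the four mutually exclusive possibilities for the edge type of $D$ and the length of the root $\alpha_k$ attached to $s$, exactly as in cases (i)(a)--(i)(d) and case (ii) of the proof of Theorem \ref{cor:Kac-sc-table}. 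In case (i)(a) ($D$ simply laced), (i)(c) ($D$ with a double edge, $\alpha_k$ long), and (i)(d) ($D$ with a triple edge), one computes from the affine relation $\sum_{i=0}^l a_i\alpha_i=0$ with $a_0=1$, $a_k=2$, passing to coroots, that $\alpha_k^\vee\in\tfrac12\xx(T_{H\sc})\setminus\xx(T_{H\sc})$, whence $\coker[\xx(T_{H\sc})\to\xx(T_G)]\cong\ZZ/2\ZZ$ and so $\pi_1(H)=\ZZ/2\ZZ$ by Lemma \ref{lem:tori}. In case (i)(b) ($D$ with a double edge, $\alpha_k$ short) and the analogous outer cases collected in case (ii), the rescaled coefficient $a_k'=a_k/2=1$, so $\alpha_k^\vee$ lies in the $\ZZ$-span of the other $\alpha_i^\vee$, the map $\xx(T_{H\sc})\to\xx(T_G)$ is surjective, and $H$ is simply connected by Corollary \ref{cor:H'}. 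The only subtlety is that for $\theta$ outer one must also note that the affine diagram $D$ is no longer the extended Dynkin diagram of $G$ but the twisted one; still the same coroot computation applies to $E_6^{(2)}$ (the $H=F_4$ versus $H=C_4$ dichotomy being precisely $\alpha_k$ short versus $\alpha_k$ long), and the remaining outer classical cases (\textbf{A I}, \textbf{BD I}$(2p+1,2q+1)$, type $\mathrm{C}_4$ inside $E_6^{(2)}$) have $\alpha_k$ long and give $\pi_1(H)=\ZZ/2\ZZ$. Thus in every case $H$ is simply connected exactly when $D$ has a double edge and $s$ corresponds to a short root, and $\pi_1(H)=\ZZ/2\ZZ$ otherwise, which is the assertion.

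The proof of the corollary is therefore essentially a bookkeeping exercise: reorganize the branches of the proof of Theorem \ref{cor:Kac-sc-table} according to the invariant ``(edge type of $D$, length of $\alpha_k$)'' rather than ``($\theta$ inner/outer, classical/exceptional)''. The main obstacle, such as it is, is purely expository: making sure the outer cases are folded in correctly, since there the relevant affine diagram is the twisted one and one has to be careful that the normalization of the $W$-invariant form and the identification $\alpha^\vee=2\alpha/(\alpha,\alpha)$ are applied consistently; once that is checked, the dichotomy ``$a_k'=1$ iff $\alpha_k$ short'' does all the work. I would simply write: ``This is a reformulation of Theorem \ref{cor:Kac-sc-table}; indeed, inspecting cases (i)(a)--(i)(d) and (ii) of its proof, one sees that $H$ is simply connected precisely in the subcases (i)(b) and the outer subcase $E_6^{(2)}$ with $H=F_4$, which are exactly those in which $D$ has a double edge and $\alpha_k$ is short, and $\pi_1(H)=\ZZ/2\ZZ$ in all remaining subcases.''
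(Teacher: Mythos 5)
Your reading is exactly right: the paper gives no separate proof of Corollary A.32 because it is just the uniform dichotomy that emerges from the case analysis in the proof of Theorem \ref{cor:Kac-sc-table}, reorganized by the invariant (edge type of $D$, length of $\alpha_k$) rather than by ($\theta$ inner/outer, classical/exceptional). Your extraction matches what the paper intends --- cases (i)(a), (i)(c), (i)(d) give $\pi_1(H)=\ZZ/2\ZZ$, case (i)(b) gives $\pi_1(H)=0$, and the outer subcases are folded in by consulting \cite[Table 7.III]{OV} and noting the paper's own remarks ``these are exactly the cases when $D$ has a double edge and $\alpha_k$ is a short root'' and the $F_4$ (short) versus $C_4$ (long) split inside $E_6^{(2)}$.

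One small slip to flag, purely expository: you list ``type $C_4$ inside $E_6^{(2)}$'' among the ``remaining outer \emph{classical} cases,'' but that is the exceptional $E_6^{(2)}$ case, not a classical one. Also, your suggestion that ``the same coroot computation applies'' uniformly to all the outer cases is a bit stronger than what the paper does --- the paper treats case (ii) by consulting tables and, for $E_6^{(2)}$ with $H$ of type $C_4$, by a representation-theoretic argument rather than by redoing the coroot computation on the twisted affine diagram. The conclusion is the same, but strictly speaking you should either carry out the twisted-diagram coroot computation (normalization of the $W$-invariant form, marks on the twisted diagram, $a_k=2$) or, as the paper does, fall back on the table for the outer cases. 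This does not affect the correctness of the corollary; it is only that the ``uniform coroot argument'' is not fully justified in the paper's text for the outer branches, so presenting it as if it were would overstate what is proved there.
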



\subsection{Symmetric pairs over a number field}\label{sec:5}
Clearly $(G,H)$ is a symmetric pair if and only if $(\Gbar,\Hbar)$ is a symmetric pair.

First we consider symmetric homogeneous spaces $X=H\backslash G$
with $G$ \emph{simply connected}.
It turns out that the answer to the question whether $X(\kk)$ has finitely many $G(\kk)$-orbits
depends only on the isomorphism class of the pair $(\Gbar,\Hbar))$.

\begin{theorem}\label{thm:symmetric-sc}
Let $\kk$ be a number field,
and $(G,H)$ a symmetric pair over $\kk$ with  
connected, {\bf simply connected}, absolutely almost simple $G$
and connected semisimple $H$.
The symmetric homogeneous space $X=H\backslash G$ over $\kk$
has finitely many $G(\kk)$-orbits if and only if 
the pair $(\Gbar,\Hbar)$ is in the list of Theorem  \ref{cor:Kac-sc-table}, 
or, which is the same, if and only if the symmetric pair $(G,H)$ is in the list below:
\smallskip

\noi$(\textbf{\bf A II})$ $G$ is a $\kk$-form of $\SL_{2n}$, $H$ is
a $\kk$-form of $\Sp_n$ $(n\ge 3)$.

\noi$(\textbf{\bf C II})$ $G$ is a $\kk$-form of $\Sp_{p+q}$, $H$ is
a $\kk$-form of $\Sp_p\times \Sp_q$ $(1\le p\le q)$.

\noi$(\textbf{\bf BD I}(2l,1))$ $G$ is a $\kk$-form of
$\Spin_{2l+1}$, $H$ is a $\kk$-form of $\Spin_{2l}$ $(l\ge 3)$.

\noi$(\textbf{\bf BD I}(2l-1,1))$  $G$ is a $\kk$-form of
$\Spin_{2l}$, $H$ is a $\kk$-form of $\Spin_{2l-1}$ $(l\ge 3)$.

\noi$(\textbf{\bf E IV})$ $G$ is a $\kk$-form of the simply connected group of type $E_6$, $H$ is a $\kk$-form of $F_4$.

\noi$(\textbf{\bf F II})$ $G$ is a $\kk$-form of $F_4$, $H$ is a
$\kk$-form of $\Spin_9$.
\end{theorem}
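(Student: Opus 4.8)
The plan is to deduce Theorem \ref{thm:symmetric-sc} directly from Theorem \ref{thm:main-f-m-orbits} together with the classification established in Theorem \ref{cor:Kac-sc-table}; no new input is really needed. First I would record the reductions. Since $(G,H)$ is a symmetric pair over $\kk$ exactly when $(\Gbar,\Hbar)$ is a symmetric pair over $\kbar$, and $G$ is absolutely almost simple, the pair $(\Gbar,\Hbar)$ is one of the symmetric pairs with connected, simply connected, almost simple $\Gbar$ and connected semisimple $\Hbar$ to which Theorem \ref{cor:Kac-sc-table} applies. Because $G$ is simply connected we have $\pi_1(\Gbar)=0$, hence $(\pi_1(\Gbar)_\hh)\tors=0$ for every subgroup $\hh\subset\gg$. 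By the equivalence (i)$\Leftrightarrow$(iv) of Theorem \ref{thm:main-f-m-orbits}, the set $X(\kk)/G(\kk)$ is therefore finite if and only if $(\pi_1(\Hbar)_\hh)\tors=0$ for every cyclic subgroup $\hh\subset\gg$.

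Next I would invoke Theorem \ref{cor:Kac-sc-table}, which yields a dichotomy: either $\pi_1(\Hbar)=0$ — and this occurs precisely when $(\Gbar,\Hbar)$ appears in the list of that theorem — or else $\pi_1(\Hbar)\simeq\ZZ/2\ZZ$. In the first case $(\pi_1(\Hbar)_\hh)\tors=0$ for every $\hh$, so $X(\kk)/G(\kk)$ is finite; this is exactly Corollary \ref{cor:sc-finite}. In the second case I would test the criterion on the cyclic subgroup $\hh=\{1\}$: here $(\pi_1(\Hbar)_{\{1\}})\tors=\pi_1(\Hbar)\tors=\ZZ/2\ZZ$, using that $\pi_1(\Hbar)$ is finite because $\Hbar$ is semisimple, so it equals its torsion subgroup; this group does not inject into $(\pi_1(\Gbar)_{\{1\}})\tors=0$, so by Theorem \ref{thm:main-f-m-orbits} (equivalently, by the contrapositive of Corollary \ref{cor:1-app}) the set $X(\kk)/G(\kk)$ is infinite. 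This establishes that $X(\kk)/G(\kk)$ is finite if and only if $(\Gbar,\Hbar)$ lies in the list of Theorem \ref{cor:Kac-sc-table}.

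The remaining ``which is the same'' clause is bookkeeping: by the definition of a symmetric pair over $\kk$, $G$ and $H$ are $\kk$-forms of $\Gbar$ and $\Hbar$, so the six isomorphism types of $(\Gbar,\Hbar)$ in Theorem \ref{cor:Kac-sc-table} correspond exactly to the types $(\textbf{A II})$--$(\textbf{F II})$ of $(G,H)$ displayed in the statement. I do not anticipate any serious obstacle: the genuine work is the group-theoretic classification already carried out in Theorem \ref{cor:Kac-sc-table}, and the present argument is pure assembly. The one point I would take care to stress (and verify) is that, in the simply connected case, finiteness of $X(\kk)/G(\kk)$ depends only on $(\Gbar,\Hbar)$ and not at all on the $\kk$-form or on the Galois image $\gg$ — this is what makes the answer uniform over all number fields and all $\kk$-forms, and it is precisely the feature that breaks down for $G$ adjoint, where the analogue (Theorem \ref{thm:main-finite-adjoint}) genuinely requires tracking the $\gg$-action on $\pi_1(\Hbar)$. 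It may also be worth a remark, for completeness, that when $G$ is simply connected $\Gbar^\theta$ is automatically connected (Steinberg), so the hypothesis that $H$ be connected imposes nothing beyond semisimplicity of $\Gbar^\theta$.
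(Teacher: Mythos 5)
Your argument is correct and is essentially the same as the paper's: the paper packages the step "finite $\Leftrightarrow$ $\pi_1(\Hbar)=0$ when $\pi_1(\Gbar)=0$" as Corollary~\ref{cor:sc} (whose proof is Corollary~\ref{cor:sc-finite} plus Corollary~\ref{cor:1-ss-app}), while you unfold this directly from Theorem~\ref{thm:main-f-m-orbits}(i)$\Leftrightarrow$(iv) via $\hh=\{1\}$, and then both invoke the dichotomy $\pi_1(\Hbar)\in\{0,\ZZ/2\ZZ\}$ from Theorem~\ref{cor:Kac-sc-table}. Your closing remark that Steinberg's theorem makes connectedness of $\Gbar^\theta$ automatic for $G$ simply connected is a correct and pertinent aside not made in the paper, but it does not change the substance of the argument.
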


\begin{proof}
We have $\pi_1(\Gbar)=0$. By Corollary \ref{cor:sc} the set of
orbits $X(\kk)/G(\kk)$ is finite if and only if $\pi_1(\Hbar)=0$,
i.e $H$ is simply connected. The symmetric pairs $(\Gbar,\Hbar)$
over $\kbar$ with simply connected $\Hbar$ were listed in Theorem
\ref{cor:Kac-sc-table}. The list of Theorem \ref{thm:symmetric-sc}
is exactly the list of Theorem \ref{cor:Kac-sc-table}.
\end{proof}

Now we consider symmetric homogeneous spaces $X=H\backslash G$ with
$G$ \emph{adjoint}.
In this case the answer to the question whether $X(\kk)$ has finitely many $G(\kk)$-orbits
depends  on the isomorphism class of the pair $(\Gbar,\Hbar))$ 
and may also depend on whether $G$ is an inner or outer form of a $\kk$-split group.

\begin{theorem}\label{thm:main-finite-adjoint}
Let $\kk$ be a number field. 
Let $(G,H)$ be a symmetric pair over $\kk$
with connected, {\bf adjoint}, absolutely simple  $G$
and connected semisimple $H$.
The symmetric homogeneous space $X=H\backslash G$ over $\kk$ 
has finitely many $G(\kk)$-orbits 
if and only if the symmetric pair $(G,H)$ is in the following list:
\smallskip

\noi$(\textbf{\bf A II})$ $G$ is a $\kk$-form of $\PSL_{2n}$, $H$ is a
$\kk$-form of $\PSp_n$ $(n\ge 3)$, where either $n$ is odd or  $G$ is an
{\bf inner} form of a $\kk$-split group.

\noi$(\textbf{\bf C II})$ $G$ is a $\kk$-form of $\PSp_{p+q}$, $H$ is a
$\kk$-form of $(\Sp_p\times \Sp_q)/\mu_2$ $(1\le p\le q)$.

\noi$(\textbf{\bf BD I}(2l,1))$ $G$ is a $\kk$-form of $\SO_{2l+1}$, $H$
is a $\kk$-form of $\SO_{2l}$ $(l\ge 3)$.

\noi$(\textbf{\bf BD I}(2l-1,1))$  $G$ is a $\kk$-form of
$\PSO_{2l}$ which is an {\bf inner} form of a $\kk$-split group, $H$ is a $\kk$-form of $\SO_{2l-1}$ $(l\ge 3)$.

\noi$(\textbf{\bf E IV})$ $G$ is a $\kk$-form of the adjoint group of type $E_6$, $H$ is a
$\kk$-form of $F_4$.

\noi$(\textbf{\bf F II})$ $G$ is a $\kk$-form of $F_4$, $H$ is a $\kk$-form of
$\Spin_9$.
\end{theorem}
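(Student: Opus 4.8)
The plan is to derive this from the abelian-cohomology criterion of Theorem~\ref{thm:main-f-m-orbits} together with the classification tables over $\kbar$ from Sections~\ref{sec:3} and~\ref{sec:4}. By Theorem~\ref{thm:main-f-m-orbits}, $X(\kk)/G(\kk)$ is finite if and only if for every cyclic subgroup $\hh\subset\gg$ the map $(\pi_1(\Hbar)_\hh)\tors\to(\pi_1(\Gbar)_\hh)\tors$ is injective. Since $G$ and $H$ are semisimple, $\pi_1(\Gbar)$ and $\pi_1(\Hbar)$ are finite, so the torsion subgroups appearing here are the whole groups, and the criterion becomes: for every cyclic $\hh\subset\gg$, the map $i_{*,\hh}\colon\pi_1(\Hbar)_\hh\to\pi_1(\Gbar)_\hh$ induced by $i_*$ is injective. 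I will use repeatedly (Subsection~\ref{subsec:semisimple}) that $\gg$ acts trivially on $\pi_1(\Gbar)$ when $G$ is an inner form of a split group, and conversely that an outer form of $G$ produces a Galois element acting nontrivially on $\pi_1(\Gbar)$, because the nontrivial symmetries of the Dynkin diagram act nontrivially on the centre of the simply connected cover for each type that will be relevant.

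First I would show that $(\Gbar,\Hbar)$ must be one of the six listed types. Taking $\hh=\{1\}$ in the criterion forces $i_*\colon\pi_1(\Hbar)\to\pi_1(\Gbar)$ to be injective; by Corollary~\ref{cor:H'} this is equivalent to $\im[\Hbar\sc\to\Gbar\sc]$ being simply connected, and by Corollary~\ref{thm:Kac-sc} --- whose Kac-diagram condition depends only on the symmetric pair of Lie algebras, not on the isogeny class of $G$ --- this holds precisely for the types $\textbf{A II}$, $\textbf{C II}$, $\textbf{BD I}(2l,1)$, $\textbf{BD I}(2l-1,1)$, $\textbf{E IV}$, $\textbf{F II}$ listed in Theorem~\ref{cor:Kac-sc-table}. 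So from now on $(\Gbar,\Hbar)$ is one of these, and it remains to decide, type by type, for which $\kk$-forms of $G$ the criterion holds for all cyclic $\hh\subset\gg$.

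For all six of these types $\pi_1(\Hbar)$ is $0$ or $\ZZ/2\ZZ$, so $\Aut\pi_1(\Hbar)$ is trivial and $\gg$ is faithfully represented on $\pi_1(\Gbar)$; in particular $\gg=\{1\}$ exactly when $G$ is an inner form of a split group. I then go through the types. For $\textbf{E IV}$ ($\Hbar=F_4$) and $\textbf{F II}$ ($\Hbar=\Spin_9$) one has $\pi_1(\Hbar)=0$, so the criterion holds for every $\kk$-form. For $\textbf{C II}$ and $\textbf{BD I}(2l,1)$ one computes $\pi_1(\Hbar)\cong\pi_1(\Gbar)\cong\ZZ/2\ZZ$ with $i_*$ an isomorphism; since $\Aut(\ZZ/2\ZZ)$ is trivial, $\gg=\{1\}$ and the criterion reduces to the already-verified case $\hh=\{1\}$, giving again no condition. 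For $\textbf{A II}$, $\pi_1(\Hbar)=\ZZ/2\ZZ$, $\pi_1(\Gbar)=\ZZ/2n\ZZ$, and injectivity of $i_*$ forces $i_*(\bar1)=\bar n$; if $G$ is an outer form, $\gg$ contains an element $\sigma$ acting by $-1$ on $\ZZ/2n\ZZ$, so $(\ZZ/2n\ZZ)_{\langle\sigma\rangle}\cong\ZZ/2\ZZ$ via reduction mod $2$, whence $i_{*,\langle\sigma\rangle}(\bar1)$ is the class of $n$ modulo $2$, which is $0$ iff $n$ is even; if $G$ is inner, $\gg=\{1\}$. Hence $\textbf{A II}$ satisfies the criterion exactly when $n$ is odd or $G$ is an inner form of a split group. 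For $\textbf{BD I}(2l-1,1)$, $\pi_1(\Hbar)=\ZZ/2\ZZ$ and $\pi_1(\Gbar)=\pi_1(\PSO_{2l})$, which is $\ZZ/4\ZZ$ if $l$ is odd and $(\ZZ/2\ZZ)^2$ if $l$ is even, with $i_*(\bar1)$ the class of $\SO_{2l}$; if $G$ is an outer form, $\gg$ contains a nontrivial element acting by inversion on $\ZZ/4\ZZ$, or fixing the $\SO_{2l}$-class while exchanging the two half-spin classes in $(\ZZ/2\ZZ)^2$, or (for $l=4$) permuting the three nonzero vectors cyclically, and in each case a direct computation gives $i_{*,\hh}(\bar1)=0$ for that $\hh$ while $\pi_1(\Hbar)_\hh=\ZZ/2\ZZ\neq0$, so the criterion fails; if $G$ is inner, $\gg=\{1\}$ and it holds. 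Thus $\textbf{BD I}(2l-1,1)$ satisfies the criterion exactly when $G$ is an inner form of a split group. Assembling these conclusions reproduces precisely the list in the statement.

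The main obstacle is this last, case-by-case step: one must determine, for each relevant type and each admissible $\kk$-form of $G$ (the inner twists as opposed to the various outer twists, including the triality twists in type $D_4$ and the two parities of $l$ in type $D_l$), the Galois-module structure of $\pi_1(\Gbar)$ together with the exact image of $i_*$ in it. Once this bookkeeping is in place, the (non)injectivity of the coinvariant maps $i_{*,\hh}$ is a routine finite computation, and the ``inner versus outer'' alternative in the statement falls out directly.
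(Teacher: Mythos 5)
Your proposal is correct and follows essentially the same route as the paper's proof: derive the list of admissible types over $\kbar$ from the $\hh=\{1\}$ case of Theorem~\ref{thm:main-f-m-orbits} together with Corollary~\ref{cor:H'} and Theorem~\ref{cor:Kac-sc-table}, then run through the six types computing the coinvariant maps for each cyclic $\hh\subset\gg$, the only nontrivial computations being exactly the outer-form cases of $\textbf{A II}$ and $\textbf{BD I}(2l-1,1)$. The paper packages the easy cases through Corollaries~\ref{cor:sc-finite} and~\ref{cor:2-app} and rules out the $D_4$ triality form at the outset by noting that $\Gal$-equivariance of $i_*$ already forbids it, whereas you fold those into the same uniform coinvariant check; these are organizational choices, not a different argument.
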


\begin{proof}
First assume that $X(\kk)/G(\kk)$ is finite. Since $H$ is
semisimple, by Corollary \ref{cor:1-ss-app} the homomorphism
$\pi_1(\Hbar)\to\pi_1(\Gbar)$ is injective. Let $H'$ denote the
image of $i\sc\colon H\sc\to G\sc$. By Corollary \ref{cor:H'}  $H'$
is simply connected. Thus $(\Gbar\sc,\Hbar')$ is a symmetric pair
with simply connected groups
 $\Gbar\sc$ and $\Hbar'$.
Such pairs were listed in Theorem \ref{cor:Kac-sc-table}. Thus we
obtain that the pair  $(\Gbar, \Hbar')$ is from the list of Theorem
\ref{cor:Kac-sc-table}, hence $(G,H)$ is from the following list:

\noi$(\textbf{\bf A II})$ $G$ is a $\kk$-form of $\PSL_{2n}$, $H$ is a
$\kk$-form of $\PSp_n$ $(n\ge 3)$

\noi$(\textbf{\bf C II})$ $G$ is a $\kk$-form of $\PSp_{p+q}$, $H$ is a
$\kk$-form of $(\Sp_p\times \Sp_q)/\mu_2$ $(1\le p\le q)$.

\noi$(\textbf{\bf BD I}(2l,1))$ $G$ is a $\kk$-form of $\SO_{2l+1}$, $H$
is a $\kk$-form of $\SO_{2l}$ $(l\ge 3)$.

\noi$(\textbf{\bf BD I}(2l-1,1))$  $G$ is a $\kk$-form of $\PSO_{2l}$, $H$
is a $\kk$-form of $\SO_{2l-1}$ $(l\ge 3)$.

\noi$(\textbf{\bf E IV})$ $G$ is a $\kk$-form of the adjoint group of type $E_6$, $H$ is a
$\kk$-form of $F_4$.

\noi$(\textbf{\bf F II})$ $G$ is a $\kk$-form of $F_4$, $H$ is a $\kk$-form of
$\Spin_9$.
\bigskip

Conversely, let us check, for which  $(G,H)$ from this list the set
of orbits $X(\kk)/G(\kk)$ is finite.

If $G$ is an \emph{inner} form, then $\Gal(\kbar/\kk)$ acts on
$\pi_1(\Gbar)$ trivially, see Subsection \ref{subsec:semisimple},
and by Corollary \ref{cor:2-app} the set of orbits $X(\kk)/G(\kk)$
is finite. Thus in the cases $(\textbf{\bf C II})$, $(\textbf{\bf BD
I}(2l,1))$, and $(\textbf{\bf F II})$ the set $X(\kk)/G(\kk)$ is
finite, because any form of $G$ is inner in these cases.

In the case $(\textbf{\bf E IV})$ we have $\pi_1(\Hbar)=0$, and by
Corollary \ref{cor:sc-finite} the set $X(\kk)/G(\kk)$ is finite
(when $G$ is an inner form or an outer form).

What is left is to consider the cases $(\textbf{\bf A II})$ and
$(\textbf{\bf BD I}(2l-1,1))$ with outer forms of $G$.

\medskip

We consider the case (\textbf{A II}). Then $\Gbar=\PSL_{2n}$,
$\pi_1(\Gbar)=\ZZ/2n\ZZ$, $\Hbar=\PSp_n$, $\pi_1(\Hbar)=\ZZ/2\ZZ$.
The embedding $\pi_1(\Hbar)\into\pi_1(\Gbar)$ is given by
$$
\overline{1}\mapsto\overline{n},\text{\quad where }
\overline{1}=1+2\ZZ\in\ZZ/2\ZZ,\ \overline{n}=n+2n\ZZ\in \ZZ/2n\ZZ.
$$
Since $G$ is an outer form, we have $\gg=\{1,\sigma\}$, where the
nontrivial element $\sigma$ of $\gg$ is of order 2 and acts on
$\pi_1(\Gbar)=\ZZ/2n\ZZ$ by ${}^\sigma x=-x$. We see that ${}^\sigma
x-x=-2x$. Thus the kernel of the canonical map
$\pi_1(\Gbar)\to\pi_1(\Gbar)_\gg$ is the subset
$$
\{\overline{2k}\subset \ZZ/2n\ZZ\ |\ k\in\ZZ\}.
$$
We see that the element $\overline{n}$ lies in this kernel if and
only if $n$ is even.

If $n$ is even, then the map $\pi_1(\Hbar)_\gg\to \pi_1(\Gbar)_\gg$
is the zero map. In other words, for $\hh=\gg$ the map
$\pi_1(\Hbar)_\hh\to \pi_1(\Gbar)_\hh$ is not injective. By Theorem
\ref{thm:main-f-m-orbits} the set $X(\kk)/G(\kk)$ is infinite.

If $n$ is odd, then the map $\pi_1(\Hbar)_\gg\to \pi_1(\Gbar)_\gg$
is injective. In other words, for $\hh=\gg$ the map
$\pi_1(\Hbar)_\hh\to \pi_1(\Gbar)_\hh$ is injective. On the other
hand, the map $\pi_1(\Hbar)\to \pi_1(\Gbar)$ is injective (for any
$n$). In other words, for $\hh=\{1\}$ the map $\pi_1(\Hbar)_\hh\to
\pi_1(\Gbar)_\hh$ is injective as well. By Theorem
\ref{thm:main-f-m-orbits} the set $X(\kk)/G(\kk)$ is finite.

\medskip

We consider the case  $(\textbf{\bf BD I}(2l-1,1))$ when $G$ is an
outer form. We show that $X(\kk)/G(\kk)$ is infinite in this case.

In this case $G$ is a form of $D_l$ and $H$ is a form of $B_{l-1}$.
We have $\pi_1(\Hbar)=\ZZ/2\ZZ$, and $\pi_1(\Gbar)$ is $\ZZ/4\ZZ$
when $l$ is odd and $\ZZ/2\ZZ\times \ZZ/2\ZZ$ when $l$ is even. The
group $\pi_1(\Hbar)$ embeds into $\pi_1(\Gbar)$, and the image is a
$\Gal(\kbar/\kk)$-invariant subgroup of order 2.

We observe that in the case $l=4$, $G$ does not come from triality.
Indeed, if $G$ comes from triality, then $\Gal(\kbar/\kk)$ acts
transitively on the set of nonzero elements of $\pi_1(\Gbar)$, and
therefore $\pi_1(\Gbar)$ cannot have a $\Gal(\kbar/\kk)$-invariant
subgroup of order 2.

We see that for any $l$, the group $\gg$ is of order 2. We write
$\gg=\{1,\sigma\}$.

Assume that $l$ is odd. Then $\pi_1(\Gbar)=\ZZ/4\ZZ$, and $\sigma$
acts on $\pi_1(\Gbar)$ by ${}^\sigma x=-x$. Arguing as in the case
(\textbf{A II}) with even $n$, we see that $X(\kk)/G(\kk)$ is
infinite.

Assume that $l$ is even. Denote the elements of
$\pi_1(\Gbar)=\ZZ/2\ZZ\times \ZZ/2\ZZ$ by $0,a,b,c$. We may assume
that $\sigma$ permutes $a$ and $b$ and fixes $c$. Then clearly the
image of $\pi_1(\Hbar)$ is $\{0,c\}$. Since $a-{}^\sigma a=a+b=c$,
we see that the map $\pi_1(\Hbar)_\gg\to \pi_1(\Gbar)_\gg$ is the
zero map, hence it is not injective. By Theorem
\ref{thm:main-f-m-orbits} the set of orbits $X(\kk)/G(\kk)$ is
infinite.

This completes the proof of Theorem \ref{thm:main-finite-adjoint}.
\end{proof}

\begin{theorem}\label{thm:cor-orbits}
Let  $X=H\backslash G$, where $(G,H)$ is a symmetric pair as in
Theorem \ref{thm:symmetric-sc} or as in Theorem
\ref{thm:main-finite-adjoint}. Then:

(i) For any finite place $v$ of $\kk$, the group $G(\kk_v)$ acts on
$X(\kk_v)$ transitively.

(ii) Write $\kk_\infty=\prod_{v\in\sV_\infty}\kk_v$ (so that
$X(\kk_\infty)=\prod_{v\in\sV_\infty}X(\kk_v)$). Then every obit of
$G(\kk_\infty)$ in $X(\kk_\infty)$ contains exactly one orbit of
$G(\kk)$ in $X(\kk)$. In particular, any two $\kk$-points in the
same connected component of $X(\kk_\infty)$ are $G(\kk)$-conjugate.
\end{theorem}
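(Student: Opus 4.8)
The plan is to translate both assertions into Galois cohomology, using the canonical bijections $X(\kv)/G(\kv)\isoto\ker(\kv,H\to G)$ and $X(\kk)/G(\kk)\isoto\ker(\kk,H\to G)$ recalled in the proof of Theorem \ref{thm:main-f-m-orbits}, and then to feed in two facts about the pairs $(G,H)$ occurring in Theorems \ref{thm:symmetric-sc} and \ref{thm:main-finite-adjoint} that were established (though not isolated) in their proofs. \textbf{(a)} For \emph{every} subgroup $\hh\subseteq\gg$ the map $(\pi_1(\Hbar)_\hh)\tors\to(\pi_1(\Gbar)_\hh)\tors$ is injective: indeed for all these pairs $\pi_1(\Hbar)\in\{0,\ZZ/2\ZZ\}$, so $\gg$ is the image of $\Gal(\kbar/\kk)$ in $\Aut\,\pi_1(\Gbar)$, which is generated by at most one diagram involution, whence $|\gg|\le2$; and the case-by-case analysis verified injectivity for the two subgroups $\hh=\{1\}$ and $\hh=\gg$. \textbf{(b)} $\Sha^1_\ab(\kk,H)=0=\Sha^1_\ab(\kk,G)$.

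For (i): fix a finite place $v$. Since $x_0\in X(\kk)\subset X(\kv)$ the set $X(\kv)$ is non-empty, and by the proof of Theorem \ref{thm:main-f-m-orbits} there is a functorial bijection
$$X(\kv)/G(\kv)\isoto\ker\big[(\pi_1(\Hbar)_{\gv})\tors\to(\pi_1(\Gbar)_{\gv})\tors\big],$$
where $\gv$ is a decomposition group of $v$. By \textbf{(a)} the right-hand side is trivial, so $G(\kv)$ acts transitively on $X(\kv)$.

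For (ii): identifying $X(\kinf)/G(\kinf)=\prod_{v\in\Vinf}X(\kv)/G(\kv)$ with $\ker(\kinf,H\to G)$, the statement amounts to the bijectivity of the localization map $\ker(\kk,H\to G)\to\ker(\kinf,H\to G)$. By the fibre product description \eqref{eq:fibre-product} this map is the projection of $\ker_\ab(\kk,H\to G)\fibreprod_{\ker_\ab(\kinf,H\to G)}\ker(\kinf,H\to G)$ onto its last factor, so it suffices to prove that $\loc_\infty\colon\ker_\ab(\kk,H\to G)\to\ker_\ab(\kinf,H\to G)$ is bijective. For \emph{injectivity}: by part (i), $\ker_\ab(\kv,H\to G)=0$ at every finite $v$ (there $\ab^1$ is bijective), so any element killed by $\loc_\infty$ lies in $\Sha^1_\ab(\kk,H)$, which is $0$ by \textbf{(b)}. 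For \emph{surjectivity}: in the notation of the proof of Theorem \ref{thm:main-f-m-orbits}, fact \textbf{(a)} with $\hh=\gg$ gives $\varkappa=0$, hence $k_0=\bigoplus_v\ker_\ab(\kv,H\to G)=\ker_\ab(\kinf,H\to G)$ (finite places contributing $0$); and $\Sha^1_\ab(\kk,G)=0$ forces $\psi=0$, hence $k_{00}=k_0$; since $k_{00}$ is exactly the image of $\loc_\infty$, surjectivity follows. Finally, each $G(\kinf)$-orbit in $X(\kinf)$ is open (orbit maps of real points are submersions) and hence closed, so it is a union of connected components; thus two $\kk$-points in a single connected component lie in one $G(\kinf)$-orbit and therefore, by the injectivity just proved, in one $G(\kk)$-orbit.

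The real content is fact \textbf{(b)}, and I expect it to be the main obstacle. When $G$ --- hence, in Theorem \ref{thm:symmetric-sc}, also $H$ --- is simply connected it is immediate: $\Sha^1_\ab(\kk,\,\cdot\,)\cong\Sha^1(\kk,\,\cdot\,)$ by \cite[Thm.~5.12]{Bor98}, and $\Sha^1$ vanishes for simply connected semisimple groups over number fields. When $G$ is adjoint (Theorem \ref{thm:main-finite-adjoint}) I would run through the six families: in each $\pi_1(\Hbar)\in\{0,\ZZ/2\ZZ\}$ and $\pi_1(\Gbar)$ is a cyclic Galois module, the only non-cyclic possibility ($\ZZ/2\ZZ\times\ZZ/2\ZZ$ in type $D_l$, $l$ even) occurring in the list solely for \emph{inner} forms of $G$, where the Galois action is trivial. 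Hence in all cases the Cartier dual of $\pi_1$ is a finite product of (possibly cyclically twisted) copies of $\mu_m$, so $\Sha^1_\ab(\kk,\,\cdot\,)\cong\Sha^2(\kk,\widehat{\pi_1})$ is by Poitou--Tate duality dual to $\Sha^1(\kk,\pi_1)$; an inflation--restriction argument together with Chebotarev (a candidate class comes from $H^1(\gg,\pi_1)$ and is non-trivial at the places inert in the splitting field of $\pi_1$) yields $\Sha^1(\kk,\pi_1)=0$. This bookkeeping, rather than any conceptual difficulty, is the hard part.
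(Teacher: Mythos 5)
Your argument is correct in substance and arrives at the same endpoints as the paper's, but the route for part (ii) is organized differently. Part (i) is essentially identical to the paper's: both observe that $\gg$ is cyclic (order $\le 2$, since $G$ does not come from triality) and then read off triviality of $\ker(\kv,H\to G)$ from condition (iv) of Theorem \ref{thm:main-f-m-orbits}. For part (ii) your injectivity argument is the abelianized version of the paper's ($c(x)\in\Sha^1(\kk,H)$, hence trivial); the difference is that the paper proves $\Sha^1(\kk,H)=1$ directly via Sansuc's $\Sha^1(\kk,H)\isoto\Sha^2(\kk,B)$ with $B\in\{0,\ZZ/2\ZZ\}$, whereas you phrase it through $\Sha^1_\ab$. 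The genuine divergence is in surjectivity: the paper simply cites Borovoi's real approximation theorem for homogeneous spaces under a group with Hasse principle and weak approximation (which $G$ has by Sansuc), while you re-derive surjectivity internally by running the $\varkappa$--$\psi$ diagram chase from the proof of Theorem \ref{thm:main-f-m-orbits}, deducing $k_{00}=k_0=\ker_\ab(\kinf,H\to G)$ from $\Sha^1_\ab(\kk,G)=0$. This is a legitimate and somewhat more self-contained route, but it transfers the weight onto $\Sha^1_\ab(\kk,G)=0$, which is exactly the Hasse principle for the adjoint absolutely simple group $G$ (Sansuc, Cor.~5.4, together with $\Sha^1\isoto\Sha^1_\ab$). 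Your sketch of this via Poitou--Tate duality and Chebotarev is correct in spirit but needlessly reinvents Sansuc's theorem and, as you acknowledge, is not carried out; it would be cleaner and no less rigorous to cite Sansuc directly as the paper does. One minor point to tighten: in the reduction to bijectivity of $\loc_\infty$, what one actually needs from the fibre product description is that the fibre over each $b\in\ker(\kinf,H\to G)$ is a singleton, which requires injectivity of $\loc_\infty$ plus $\ab^1(\ker(\kinf,H\to G))\subseteq\im(\loc_\infty)$; bijectivity of $\loc_\infty$ of course suffices, but it is worth spelling this out.
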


\begin{proof}
(i) Let $\gg$ denote the image of $\Gal(\kbar/\kk)$ in $\Aut\
\pi_1(\Hbar)\times\Aut\ \pi_1(\Gbar)$. Since $\pi_1(\Hbar)$ embeds
into $\pi_1(\Gbar)$, we can say that  $\gg$ is the image of
$\Gal(\kbar/\kk)$ in $\Aut\ \pi_1(\Gbar)$. We have seen in the proof
of Theorem \ref{thm:main-finite-adjoint} that $G$ does not come from
triality. Thus either $\gg=1$ or $\gg=\ZZ/2\ZZ$. We see that $\gg$
is cyclic. It follows that all the decomposition groups $\gg_v$ are
cyclic. Condition (iv) of Theorem \ref{thm:main-f-m-orbits} shows
now that $\ker[\pi_1(\Hbar)_{\gg_v}\to\pi_1(\Gbar)_{\gg_v}]=0$ for
any $v$ (because $\gv$ is cyclic for any $v$). It follows that
$\ker[H^1(\kk_v,H)\to H^1(\kk_v,G)]=1$ for $v\in\sV_f$, hence there
is only one orbit of $G(\kk_v)$ in $X(\kk_v)$ for such $v$, which
proves (i).

(ii) $G$ is an absolutely almost simple $\kk$-group. By
\cite[Cor.~5.4]{Sansuc} $G$ satisfies the Hasse principle and has
the weak approximation property. Since $H$ is a connected
$\kk$-subgroup of $G$, by \cite[Cor.~1.7]{Bor99} 
$X$ has the real approximation property, 
i.e.  any orbit of $G(\kinf)$ in $X(\kinf)$  contains a
$\kk$-point.

Now let $x,y\in X(\kk)$ lie in the same $G(\kinf)$-orbit. We wish to
prove that they lie in the same $G(\kk)$-orbit. Our homogeneous
space $X=H\backslash G$ has a distinguished $\kk$-point $x_0$, the
image of the unit element $e\in G(\kk)$. The stabilizer of $x_0$ in
$G$ is $H$. Let $H_y$ denote the stabilizer of $y$ in $G$. Clearly
the pair $(G,H_y)$ is a symmetric pair satisfying the hypotheses of
Theorem \ref{thm:main-finite-adjoint} (or Theorem
\ref{thm:symmetric-sc}). We may and shall assume that $y=x_0$.

So let $x\in X(\kk)$ lie in the $G(\kinf)$-orbit of $x_0$. We wish
to prove that $x$ lies in the $G(\kk)$-orbit of $x_0$. Let $c(x)$
denote the class of the $G(\kk)$-orbit of $x$ in $\ker(\kk,H\to
G):=\ker[H^1(\kk,H)\to H^1(\kk,G)]$ (we use the notation of the
proof of Theorem \ref{thm:main-f-m-orbits}). For any place $v$ of
$\kk$ let
$$
\loc_v\colon\ker(\kk,H\to G)\to\ker(\kk_v,H\to G)
$$
be the localization map. By (i) for any finite place $v$ of $\kk$ we
have $\ker(\kk_v,H\to G)=1$, hence $\loc_v(c(x))=1$. Since $x$ lies
in the $G(\kinf)$-orbit of $x_0$, we have $\loc_v(c(x))=1$ for all
infinite places $v$ of $\kk$.

Set $B=\ker[H\sc\to H]$. By \cite[Cor.~4.4]{Sansuc} there is a
canonical bijection $\Sha^1(\kk,H)\isoto \Sha^2(\kk,B)$. From the
lists of Theorems \ref{thm:symmetric-sc} and
\ref{thm:main-finite-adjoint} we see that in our case either $B=0$
or $B=\ZZ/2\ZZ$. Since  in both cases $\Sha^2(\kk,B)=0$, we conclude
that $\Sha^1(\kk,H)=1$. This means that $\ker\left[\loc\colon
H^1(\kk,H)\to\prod_v H^1(k_v,H)\right]=1$. We have seen that
$\loc(c(x))=1$. Hence $c(x)=1$. This means that $x$ lies in the
$G(\kk)$-orbit of $x_0$. This completes the proof of Theorem
\ref{thm:cor-orbits}.
\end{proof}

\subsection{Addendum: Further examples}\label{sec:6}
In this addendum we give examples of homogeneous spaces satisfying
assumptions (i--iii) of Theorem \ref{maint} but not covered by
Theorems \ref{thm:symmetric-sc} and \ref{thm:main-finite-adjoint}.

\begin{subsec}\label{subsec:not-absolutely-simple}
\emph{Example with $G$ not absolutely simple.} Let $K$ be a number
field,
  $K'/K$ a quadratic extension, $D/K'$ a central simple algebra
of dimension $r^2$ with an involution of second kind $\sigma$ (i.e.
$\sigma$ induces the nontrivial automorphism $\sigma_0$ of $K'$ over
$K$). Let $m$ be a natural number and let $\Phi$ be a
$\sigma$-Hermitian form on $D^m$. Set
$$
G=\PSL_D(D^m),\quad H=\PSU(D^m,\Phi),
$$
where we regard $G$ and $H$ as $K$-groups. Then $G$ is adjoint and
$H$ is a symmetric subgroup of $G$. An easy calculation
shows that $\pi_1(\Gbar)=\Zz/n\Zz\oplus\Zz/n\Zz$ and
$\pi_1(\Hbar)=\Zz/n\Zz$, where $n=mr$. The group $\Gal(\kbar/K')$
acts trivially on $\pi_1(\Gbar)$ and $\pi_1(\Hbar)$. The
non-identity element $\sigma_0\in\Gal(K'/K)$ acts on $\pi_1(\Hbar)$
by multiplication by $-1$. Thus $\geff=\Gal(K'/K)$ and
$\pi_1(\Hbar)_\geff=(\Zz/n\Zz)/2(\Zz/n\Zz)$.

Now assume that $n$ is odd (i.e. both $r$ and $m$ are odd). Then\\
$(\Zz/n\Zz)/2(\Zz/n\Zz)=0$, hence $\pi_1(\Hbar)_\geff=0$ and the
homomorphism
$$
\pi_1(\Hbar)_\geff\to\pi_1(\Gbar)_\geff
$$
is injective. Since $H\sc$ embeds into $G\sc$, by Corollary
\ref{cor:H'} the homomorphism $\pi_1(\Hbar)\to\pi_1(\Gbar)$ is also
injective. By Theorem \ref{thm:main-f-m-orbits} the set
$X(\kk)/G(\kk)$ is finite and for almost all $v$ the group
$G(\kk_v)$ acts transitively on $X(\kk_v)$.

Similar examples can be constructed for $G$ and $H$ of type $E_6$
(then $\pi_1(\Hbar)=\Zz/3\Zz$) and for $G$ and $H$ of types $E_8,\
F_4$ and $G_2$.
\end{subsec}

\begin{subsec}\label{subsec:spherical}
\emph{Examples with spherical non-symmetric $H$.} We are interested
in examples of pairs $(G,H)$ over a number field $\kk$
such that $(\Gbar,\Hbar)$ is a non-symmetric spherical pair,
the group $G$ is connected, adjoint, and absolutely simple, the subgroup $H$ is a
connected semisimple subgroup of $G$, and $H$ is a
maximal connected subgroup of $G$.
Spherical pairs $(\Gbar, \Hbar)$ are listed in  \cite[Table 1]{Kr}.
See \cite[Ch.~I, \S 3, Table 1]{Vinberg} for a list of non-symmetric spherical pairs.
From the latter table we  see that there are only
two non-symmetric spherical pair $(\Gbar,\Hbar)$ over $\kbar$ 
such that $\Gbar$ is connected, adjoint, and simple,  $\Hbar$ is a
connected semisimple subgroup of $\Gbar$, and $\Hbar$ is a
maximal connected subgroup of $\Gbar$: they are $(\SO_7, G_2)$ and $(G_2,\SL_3)$.
For the embeddings $\Hbar\into \Gbar$ in these examples 
we refer to the cited tables.
Thus over $\kk$ we obtain pairs $(G,H)$ where $(\Gbar,\Hbar)$ is a spherical non-symmetric pair and 

\par (a) $G$ is a form of $\SO_7$, $H$ is a form of $G_2$; or
\par (b) $G$ is a form of $G_2$, $H$ is a form of $\SL_3$.

In both cases $\pi_1(\Hbar)=0$. By Corollary
\ref{cor:sc-finite} the set  $X(\kk)/G(\kk)$ is finite and for
almost all $v$ the group $G(\kk_v)$ acts transitively on $X(\kk_v)$.
\end{subsec}

\begin{subsec}\label{subsec:non-spherical}
\emph{Examples with non-spherical $H$.} There are lots of pairs
$(G,H)$ satisfying assumptions (i--iii) of Theorem \ref{maint}.
Indeed, let $H$ be a connected, simply connected, absolutely almost simple
$K$-group. Let $\rho \colon H\to \GL(V)$ be an absolutely
irreducible faithful representation of $H$ in a vector space $V$
defined over $K$ (if $H$ is split, then any irreducible
representation of $H$ defined over $\kbar$ can be defined over $K$).
If $\rho$ does not admit a nondegenerate invariant bilinear form, we
set $G=\GL(V)$. If $\rho$ admits a nondegenerate symmetric invariant
bilinear form $F_s$, we set $G=\SO(V,F_s)$. If $\rho$ admits a
nondegenerate alternating invariant bilinear form $F_a$, we set
$G=\Sp(V,F_a)$.

Since $\rho$ is faithful, we may regard $H$ as a subgroup of $G$.
For almost all $\rho$ the subgroup $\Hbar$ is a maximal connected
$\kbar$-subgroup of $\Gbar$, with a small number of exceptions, see
\cite[Thm.~1.5]{Dynkin} (see also \cite[Thm.~6.3.3]{OV94}). Note
that if $\Hbar$ is a maximal connected subgroup of $\Gbar$, then
clearly $H$ is a maximal connected $K$-subgroup of $G$. Set
$X=H\backslash G$. Since $H$ is simply connected, by Corollary
\ref{cor:sc-finite} the set $X(K)\backslash G(K)$ is finite and
$G(K_v)$ acts transitively on $X(K_v)$ for almost all $v$. Thus for
all faithful absolutely irreducible representations $\rho$ such that
$H$ is a maximal connected subgroup of $G$, we get  pairs $(G,H)$
satisfying assumptions (i--iii) of Theorem \ref{maint}.

\end{subsec}

\end{document}